\newcommand\reallywidetilde[1]{\ThisStyle{%
  \setbox0=\hbox{$\SavedStyle#1$}%
  \stackengine{-.1\LMpt}{$\SavedStyle#1$}{%
    \stretchto{\scaleto{\SavedStyle\mkern.2mu\AC}{.5150\wd0}}{.6\ht0}%
    }{O}{c}{F}{T}{S}%
}}
\setlist{nosep}
\newcommand\ie{i.\,e.\ }
\newcommand\eg{e.\,g.\ }
\newcommand\define[1]{\textit{#1}}
\newcommand\pullback[1]{\underset{#1}{\times}}
\newcommand\pushout[1]{\underset{#1}{\union}}
\renewcommand\equiv{\simeq}
\newcommand{\xto}{\xrightarrow}
\newcommand\xmono{\xhookrightarrow}
\newcommand\mono{\longhookrightarrow}
\newcommand\epi{\longrightarrow\mathrel{\mkern-25mu}\longrightarrow}
\newcommand\xdhrightarrow[2][]{%
  \mathrel{\ooalign{$\xrightarrow[#1\mkern4mu]{#2\mkern4mu}$\cr%
  \hidewidth$\rightarrow\mkern4mu$}}
}
\newcommand\xepi{\xdhrightarrow}
\newcommand\fibration{\epi}
\newcommand\xfibration{\xepi}
\newcommand\union{\cup}
\newcommand\N{\mathbb{N}_0}
\DeclareMathOperator{\id}{id}
\DeclareMathOperator{\cocart}{cocart}
\DeclareMathOperator{\cart}{cart}
\DeclareMathOperator{\Str}{Str}
\DeclareMathOperator{\comp}{comp}
\DeclareMathOperator{\matecorrbetwadjs}{matecorr}
\DeclareMathOperator{\pr}{pr}
\DeclareMathOperator{\incl}{incl}
\DeclareMathOperator{\const}{const}
\DeclareMathOperator{\codisc}{codisc}
\DeclareMathOperator{\disc}{disc}
\DeclareMathOperator{\mate}{mate}
\DeclareMathOperator{\Span}{Span}
\DeclareMathOperator{\ev}{ev}
\DeclareMathOperator{\CoCartFib}{CoCart}
\renewcommand{\epsilon}{\varepsilon}
\renewcommand{\to}{\longrightarrow}
\newcommand{\twoto}{\implies}
\newcommand\op{\text{op}}
\newcommand\adj{\dashv}
\newcommand\CAT{\text{CAT}}
\newcommand\Cat{\text{Cat}}
\newcommand\Set{\text{Set}}
\newcommand\simplexcat{\boldsymbol{\Delta}}
\newcommand\Spaces{\mathcal{S}}
\newcommand\globCSCocart{\text{CSCocart}^{\text{glob}}(\simplexcat^\op)}
\DeclareMathOperator{\LaxSect}{LocRARI}
\DeclareMathOperator{\Env}{Env}
\DeclareMathOperator{\Map}{Map}
\DeclareMathOperator{\Fun}{Fun}
\DeclareMathOperator{\Lax}{Lax}
\newcommand\twoCat{\text{2-Cat}}
\renewcommand\bar{\overline}
\renewcommand\tilde{\reallywidetilde}
\newcommand\laxto{\leadsto}
\newcommand\xlaxto[1]{\overset{#1}{\leadsto}}
\definecolor{diagmarker}{HTML}{FF0000}
\definecolor{diaggray}{HTML}{595959}
\definecolor{nlab}{HTML}{226622}
\definecolor{pastellred}{HTML}{FFB8B1}
\definecolor{pastellgreen}{HTML}{B5EAD6}
\definecolor{otherpastellgreen}{HTML}{00ab41}
\definecolor{pastellorange}{HTML}{FFD6A5}
\definecolor{pastellblue}{HTML}{A0C4FF}
\definecolor{pastelldarkblue}{HTML}{9BAADD}
\definecolor{pastelllila}{HTML}{BDB2FF}
\definecolor{pastellyellow}{HTML}{FDFFB6}
\definecolor{pastellviolett}{HTML}{5C5CD6}
\newcommand{\arrow}{%
\begin{tikzpicture}[diagram]%
    \matrix[objects,wide]{%
        |(a)| \bullet \& |(b)| \bullet \\%
    };%
    \path[maps,->]%
        (a) edge (b)%
    ;%
\end{tikzpicture}%
}
\numberwithin{equation}{subsection}
\renewcommand\define[1]{{\color{otherpastellgreen}\underline{#1}}}
\title{A Model Independent Universal Property for the Lax $2$-Functor Classifier}
\author{Johannes Gloßner}
\date{\today}
\begin{document}

\maketitle

\begin{abstract}
    In this article we provide a model-independent definition of the concept of lax $2$-functors
    from $(\infty,2)$-category theory and show that it agrees with the existing and widely 
    used combinatorial model for those in terms of inert-cocartesian functors, 
    which is utilized for example in the foundational work of Gaitsgory and Rozenblyum 
    on Derived Algebraic Geometry to talk about the lax Gray tensor product.
\end{abstract}

\tableofcontents




  \section{Introduction}



  %
  %
  %

    \subsection*{Motivation}


      One of the strengths of (higher) category theory is the abstract formulation of universal properties 
      as well as the ability to work with and reason about them. 
      For a mathematical object to have a simple universal property often helps to reduce the 
      complexity of problems involving this object. In this article we give a model-independent definition of 
      the ubiquitous notion of lax $2$-functors from $(\infty,2)$-category theory 
      by providing such a simple universal property for their classifying objects, 
      not depending on a particular combinatorial model or the use of ordinary $2$-category theory.

      From the viewpoint of classical ordinary $2$-category theory, lax $2$-functors between $2$-categories 
      appear in a lot of places. 
      Monad objects inside a $2$-category $\mathcal{K}$ can be identified with lax functors 
      $\ast \laxto \mathcal{K}$ out of the one-object $2$-category, 
      lax monoidal functors $(\mathcal{V},\otimes) \laxto (\mathcal{W}, \otimes)$ are the same thing 
      as lax functors $B\mathcal{V} \laxto B\mathcal{W}$ between the monoidal categories viewed 
      as one-object $2$-categories $B\mathcal{V}$, respectively $B\mathcal{W}$ and even 
      $\mathcal{V}$-enriched categories with object set $X$ can be seen equivalent to 
      lax functors $\codisc(X) \laxto B\mathcal{V}$, out of the \textit{codiscrete}, sometimes also \textit{chaotic}, 
      $2$-category $\codisc{X}$ on the set $X$.
      On the other hand, lax $2$-functors served as the basis for a vast generalization of the $1$-categorical 
      Grothendieck construction, functors $E \to C$ between ordinary categories having small fibers can be 
      equivalently described as lax $2$-functors $C \laxto \Span(\Set)$ into a certain $2$-category 
      of spans in the ordinary category $\Set$ of small sets.
      Furthermore, the notion of lax functors extends even deeper into the theory of $2$-categories
      as one can identify $2$-functors $A \otimes_{\text{lax}} B \to C$ out of the lax Gray tensor product 
      with special, sometimes called \textit{cubical}, lax functors $A \times B \laxto C$ out 
      of the cartesian product of $2$-categories. 

      Homotopy-coherent implementations of these viewpoints using combinatorial models 
      have allowed us to access these structures in the realm of higher category theory. 
      Lax monoidal functors between monoidal $(\infty,1)$-categories for example are defined 
      through Lurie's model of $\infty$-operads and maps between them, so-called inert-cocartesian functors,
      in \cite[Definition 2.1.2.7]{ha}.
      Homotopy-coherent monads on $(\infty,1)$-categories were at first also modelled as 
      algebras in monoidal $(\infty,1)$-categories of endofunctors in \cite[Definition 4.7.0.1]{ha}.
      In \cite[Definition 2.2.17]{gepnerhaugseng} Gepner and Haugseng model categories 
      enriched over a monoidal $(\infty,1)$-category using a generalization of 
      inert-cocartesian functors between non-symmetric generalized $\infty$-operads.
      In \cite[Theorem 4.1]{blomstraighteningofeveryfunctor} Blom proves the generalised
      straightening-unstraightening equivalence mentioned above in the model of Segal objects in the
      $(\infty,1)$-category of $(\infty,1)$-categories and inert-cocartesian functors 
      between those.

      Prominently, the viewpoint on the lax Gray tensor product presented above is exploited in 
      Gaitsgory and Rozenblyum's work on Derived Algebraic Geometry \cite{GR} to serve as a definition of
      the lax Gray tensor product in a model of $(\infty,2)$-categories.
      Based on the lax Gray tensor product and now proven conjectures about its properties, see \cite{gaitsgoryrozenblyumconjectures} 
      for an overview of the work involved,
      they procede to prove many general $(\infty,2)$-categorical results about 
      constructing, extending and uniquely characterising homotopy-coherent six functor formalisms
      from less coherent input data. 

      A model-independent definition of lax $2$-functors could complement these combinatorial
      descriptions by providing a common ground to compare them and to reason about them 
      and the examples from an abstract point of view.


    \subsection*{Intuition}

    Borrowing intuition from the classical theory of ordinary $2$-categories, 
    lax $2$-functors are supposed to be
    a generalization of the usual notion of 
    $2$-functors between $2$-categories. Usual $2$-functors $F \colon A \to B$ are the 
    most immediate realization of the idea of structure-preserving morphism of $2$-categories. 
    They have for example for each object $a$ of $A$ an identification $\id_{F(a)} \equiv F(\id_a)$,
    as well as for each $1$-morphism composite $g \circ f$ in $A$ an identification
    $F(g) \circ F(f) \equiv F(g \circ f)$, \ie they preserve $1$-morphism identities and compositions 
    up to identification.
    A lax $2$-functor $F$ from A to B, denoted by $F \colon A \laxto B$, on the other hand, 
    relaxes these preservation properties to the structure of not-necessarily invertible $2$-morphisms
    $\delta \colon \id_{F(a)} \twoto F(\id_a)$ and $\gamma \colon F(g) \circ F(f) \twoto F(g \circ f)$,
    but keeps their naturality.
    In order to have these $2$-morphisms still behave in a unital and associative way
    one also needs to retain for example the following identifications of $2$-morphisms
    \[\begin{tikzcd}[cramped,column sep=small]
      {F(a)} & {F(\tilde{a})} & {F(\tilde{a})} & \equiv & {F(a)} & {F(\tilde{a})} & \equiv & {F(a)} & {F(a)} & {F(\tilde{a})}
      \arrow["{F(f)}", from=1-1, to=1-2]
      \arrow[""{name=0, anchor=center, inner sep=0}, "{F(f)}"', curve={height=30pt}, from=1-1, to=1-3]
      \arrow[""{name=1, anchor=center, inner sep=0}, "{F(\id_{\tilde{a}})}"', from=1-2, to=1-3]
      \arrow[""{name=2, anchor=center, inner sep=0}, shift left=3, curve={height=-18pt}, equals, from=1-2, to=1-3]
      \arrow[""{name=3, anchor=center, inner sep=0}, "{F(f)}", curve={height=-12pt}, from=1-5, to=1-6]
      \arrow[""{name=4, anchor=center, inner sep=0}, "{F(f)}"', curve={height=12pt}, from=1-5, to=1-6]
      \arrow[""{name=5, anchor=center, inner sep=0}, "{F(\id_{\tilde{a}})}"', from=1-8, to=1-9]
      \arrow[""{name=6, anchor=center, inner sep=0}, shift left=3, curve={height=-18pt}, equals, from=1-8, to=1-9]
      \arrow[""{name=7, anchor=center, inner sep=0}, "{F(f)}"', curve={height=30pt}, from=1-8, to=1-10]
      \arrow["{F(f)}"', from=1-9, to=1-10]
      \arrow["\gamma"', shorten >=3pt, Rightarrow, from=1-2, to=0]
      \arrow["\delta"', shorten <=3pt, shorten >=3pt, Rightarrow, from=2, to=1]
      \arrow[shorten <=3pt, shorten >=3pt, equals, from=3, to=4]
      \arrow["\delta"', shorten <=3pt, shorten >=3pt, Rightarrow, from=6, to=5]
      \arrow["\gamma"', shorten >=3pt, Rightarrow, from=1-9, to=7]
    \end{tikzcd}\]
    and
    \[\begin{tikzcd}[cramped,column sep=small]
      & {F(a_1)} & {F(a_2)} &&&& {F(a_1)} & {F(a_2)} \\
      {F(a_0)} &&& {F(a_3)} & \equiv & {F(a_0)} &&& {F(a_3)}
      \arrow["{F(g)}", from=1-2, to=1-3]
      \arrow["{F(h)}", from=1-3, to=2-4]
      \arrow["{F(g)}", from=1-7, to=1-8]
      \arrow[""{name=0, anchor=center, inner sep=0}, "{F(h \circ g)}"{description}, curve={height=12pt}, from=1-7, to=2-9]
      \arrow["{F(h)}", from=1-8, to=2-9]
      \arrow["{F(f)}", from=2-1, to=1-2]
      \arrow[""{name=1, anchor=center, inner sep=0}, "{F(g \circ f)}"{description}, curve={height=12pt}, from=2-1, to=1-3]
      \arrow[""{name=2, anchor=center, inner sep=0}, "{F(h \circ (g \circ f))}"', curve={height=24pt}, from=2-1, to=2-4]
      \arrow["{F(f)}", from=2-6, to=1-7]
      \arrow[""{name=3, anchor=center, inner sep=0}, "{F((h \circ g) \circ f)}"', curve={height=24pt}, from=2-6, to=2-9]
      \arrow["\gamma", shorten <=1pt, shorten >=4pt, Rightarrow, from=1-2, to=1]
      \arrow["\gamma", shorten <=14pt, shorten >=10pt, Rightarrow, from=1-3, to=2]
      \arrow["\gamma"', shorten <=14pt, shorten >=10pt, Rightarrow, from=1-7, to=3]
      \arrow["\gamma", shorten <=1pt, shorten >=4pt, Rightarrow, from=1-8, to=0]
    \end{tikzcd}\]
    which are also implicitely part of the ordinary notion of $2$-functor between $2$-categories.
    Let us remark here, that while for ordinary $2$-categories these axioms are enough, they are by 
    no means exhaustive in the realm of $(\infty,2)$-categories, where \eg also higher arity associativity 
    needs to be taken into account as structure.
    Actual $2$-functors of $(\infty,2)$-categories should still be examples of these more general kinds 
    of $2$-functors. 
    In fact, they are then the lax $2$-functors that are \define{strict}, in the sense that the 
    $2$-morphisms $\delta$ and $\gamma$ are invertible.

    In ordinary $2$-category theory, lax $2$-functors can furthermore be classified by certain $2$-functors in 
    the following sense. For every $2$-category $A$ there exists a $2$-category $\Lax(A)$, which we call the 
    \define{lax $2$-functor classifier of $A$}, together 
    with a lax $2$-functor $\iota \colon A \laxto \Lax(A)$ which is supposed to be the initial lax $2$-functor 
    out of $A$, \ie every lax $2$-functor $F \colon A \laxto B$ into another $2$-category $B$ can be 
    factored uniquely as 
    \begin{equation*}
      A \xlaxto{\iota} \Lax(A) \xto{\bar{F}} B
    \end{equation*}
    that is, a strict $2$-functor $\bar{F} \colon \Lax(A) \to B$, precomposed with $A \laxto \Lax(A)$.
    We say that the $2$-functor $\bar{F} \colon \Lax(A) \to B$ \define{classifies} the lax $2$-functor $F$.
    Thus, in order to define the notion of lax $2$-functors it suffices to define the lax $2$-functor
    classifiers and consider strict $2$-functors out of them. As usual $2$-functors should also be 
    considered as lax $2$-functors, they can be classified too. In the case of the identity $2$-functor
    $\id \colon A \to A$ we obtain the strict $2$-functor 
    $\lambda \coloneqq \overline{\id} \colon \Lax(A) \to A$.


    \subsection*{Definition}

    To generalise lax $2$-functors into the realm of $(\infty,2)$-category theory
    we now introduce a model-independent universal property for the $(\infty,2)$-category
    $\Lax(A)$ with respect to its strict $2$-functor $\lambda \colon \Lax(A) \to A$.
    More precisely, it will not depend on combinatorial descriptions or the use 
    of ordinary $2$-category theory, but solely rely on basic abstract features of 
    $(\infty,2)$-categories and $(\infty,1)$-category theory.

    \pagebreak

    \begin{definition}
      \label{definitionlaxfunctorclassifier}
      Let $A$ be a $(\infty,2)$-category. A $2$-functor $F \colon X \to A$ is said to have 
      \define{local right adjoint sections} if it satisfies the following two properties.
      \begin{enumerate}
        \item The induced functor $F^{\equiv} \colon X^{\equiv} \to A^{\equiv}$ on underlying 
          spaces of objects is an equivalence.
        \item For every pair of objects $x, \tilde{x}$ of $X$, the induced functor
          \begin{equation*}
            F_{x, \tilde{x}} \colon X(x, \tilde{x}) \to A(F(x), F(\tilde{x}))
          \end{equation*}
          on Hom-$(\infty,1)$-categories admits a fully-faithful right adjoint.
      \end{enumerate}
      For a $2$-functor 
      \begin{equation}
            \begin{tikzpicture}[diagram]
                \matrix[objects] {%
                  |(a)| X \& \& |(b)| Y \\
                  \& |(c)| A \\
                };
                \path[maps,->]
                (a) edge node[above]  {$H$} (b)
                (a) edge node[below left]   {$F$} (c)
                (b) edge node[below right]  {$G$} (c)
                ;
            \end{tikzpicture}
        \end{equation}
        over $A$ between two such $2$-functors $F$ and $G$ having local right adjoint sections we say that it 
        \define{commutes with the local right adjoint sections} if the induced 
        square
        \begin{equation}
            \begin{tikzpicture}[diagram]
                \matrix[objects] {%
                  |(a)| X(x,\tilde{x}) \& |(b)| Y(Hx,H\tilde{x}) \\
                  |(c)| A(Fx,F\tilde{x}) \& |(d)| A(GHx,GH\tilde{x}) \\
                };
                \path[maps,->]
                (a) edge node[above]  {$H_{x,\tilde{x}}$} (b)
                (c) edge node[below]  {$\equiv$} (d)
                (a) edge node[left]   {$F_{x,\tilde{x}}$} (c)
                (b) edge node[right]  {$G_{Hx,H \tilde{x}}$} (d)
                ;
            \end{tikzpicture}
        \end{equation}
        is vertically right adjointable, 
        or in this case equivalently, the unit of the adjunction involving $G_{Hx, H \tilde{x}}$ 
        becomes invertible after precomposition with $H_{x,\tilde{x}}$ followed by 
        precomposition with the right adjoint of $F_{x,\tilde{x}}$. 
        These two conditions define a non-full subcategory $\LaxSect(A)$ 
        of the strict slice $(\infty,1)$-category $\twoCat_{/ A}$, 
        \ie the $(\infty,1)$-categorical slice over $A$ of the $(\infty,1)$-category $\twoCat$ of $(\infty,2)$-categories.
        A \define{lax $2$-functor classifier of $A$} is an initial object $\lambda \colon \Lax(A) \to A$
        in $\LaxSect(A)$.


    \end{definition}

    To illustrate this definition with an example, let us denote the Hom-local right adjoints of 
    $\lambda_{a,\tilde{a}}$ by $\iota_{a,\tilde{a}}$.
    Then the units of these Hom-adjunctions will provide us with the 
    desired identity comparison $2$-cells
    $\id_a \twoto \iota_{a, a} (\lambda_{a, a} (\id_a)) \equiv \iota_{a,a}(\id_a)$
    and composition comparison $2$-cells 
    \begin{equation*}
      \iota_{a_1, a_2} (g) \circ \iota_{a_0,a_1}(f) \twoto \iota_{a_0,a_2}(\lambda_{a_0,a_2}(\iota_{a_1, a_2} (g) \circ \iota_{a_0,a_1}(f))) \equiv \iota_{a_0,a_2}(g \circ f) .
    \end{equation*}

    \subsection*{Comparison to a combinatorial model}

    The main goal of this article is to review an existing model-dependent 
    definition of lax functors, discussed for example in \cite[Chapter 10.3]{GR}, 
    as well as the construction of their classifiers, and to show that 
    these classifiers satisfy our universal property.
    More precisely, the model is obtained by starting with the so-called complete $2$-fold Segal space model 
    for $(\infty,2)$-categories from \cite{barwickschommerpries}, 
    and then unstraightening these 
    complete Segal objects $X$
    in the $(\infty,1)$-category of small $(\infty,1)$-categories
    with $X_0$ a space to 
    cocartesian fibrations over $\simplexcat^\op$.
    In this model, the cocartesian functors play the role of strict $2$-functors.
    By restricting the class of morphisms in $\simplexcat^\op$ to the so-called \define{inert} simplex maps,
    one obtains a definition of lax functors between globular complete Segal cocartesian fibrations by 
    considering functors over $\simplexcat^\op$ that are only required to preserve cocartesian lifts
    along the inert simplex maps. 
    Inert-cocartesian functors out of an $(\infty,2)$-category $A$, modelled this way, also admit an explicitely 
    constructed classifier, the \define{envelope} $\Env(A)$ of $A$, together with a distinguished 
    strict $2$-functor $\lambda \colon \Env(A) \to A$ classifying the identity inert-cocartesian functor on $A$.
    This also already appeared in the literature for example as \cite{AMGR}.
    Our main goal is then made precise by \autoref{theoremenvelopeislaxfunctorclassifier}, 
    or, to put it shortly, by

    \begin{theorem*}
      The $2$-functor $\lambda \colon \Env(A) \to A$ is a lax functor classifier in the sense of 
      \autoref{definitionlaxfunctorclassifier}.
    \end{theorem*}


    \subsection*{Foundations}
    
    As the heart of this article is the model-independent formulation of a universal property of an 
    $(\infty,2)$-categorical notion it is desirable to make precise on which model-independent aspects 
    of $(\infty,2)$-category theory it relies on.
    Furthermore, this also entails the need to be model-independet in our use of $(\infty,1)$-category theory. 
    To this end, we kept the interpretation of the aforementioned model of $(\infty,2)$-categories and 
    lax $2$-functors between them, as well as the proofs involved in attaining 
    \autoref{theoremenvelopeislaxfunctorclassifier} general and model-independent enough 
    so that they will be intepretable, \ie make sense, in any \textit{synthetic} $(\infty,1)$-category theory, 
    such as the ones that are being developed by Cisinski, Cnossen, Nguyen and Walde in their book project 
    \cite{formalizationbookproject}, or by Riehl and Shulman in \cite{riehlshulman}, 
    and by Buchholtz and Weinberger in \cite{buchholtzweinberger}.
    To this end, for $1$-categories, we want to avoid specific set-theoretic implementations, 
    such as model structures, and combinatorial descriptions of $1$-categories, 
    like complete Segal spaces, and the use of strict $1$-categories, 
    except for the $n$-simplices $\Delta^n$, \ie the finite non-empty totally ordered sets.
    Synthetic $(\infty,1)$-category theory however means even more than this. 
    It entails that our use of $(\infty,1)$-category theory will be general enough 
    to be also interpretable in the theory of internal higher categories in any $(\infty,1)$-topos, 
    as for example developed by Martini and Wolf in for example \cite{Martini2021}, \cite{MW2021}.
    We refer the reader to 
    \autoref{sectioninertcocartfunsandtheenvelopeconstr} for 
    more details on the relevant model-independent aspects of 
    $(\infty,2)$-category theory
    and to the \autoref{appendixbackgroundononecategorytheory}
    for a more in-depth and thorough development of the necessary $(\infty,1)$-categorical statements
    involved in the proof of \autoref{theoremenvelopeislaxfunctorclassifier}.

  \subsection*{Organisation of the article}



    In \autoref{sectioninertcocartfunsandtheenvelopeconstr} 
    we start by recalling the aforementioned model of $(\infty,2)$-categories 
    then make precise in what sense 
    the above introduced universal property of the lax $2$-functor classifier is actually
    model-independent and on what aspects of $(\infty,2)$-category theory it relies on.
    Afterwards, we introduce the tractable model of lax $2$-functors therein, given by the inert-cocartesian functors.
    Then we proceed to give the explicit classifier construction that exists for these
    inert-cocartesian functors and discuss some of its relevant properties.

    In \autoref{sectiontheenvelopesatisfiestheUPofthelaxfunclassifier} 
    we start by relating  the model-independent notions of \autoref{definitionlaxfunctorclassifier}, \ie
    of $2$-functors having local right adjoint sections and $2$-functors commuting with them, 
    to more model-dependent counterparts
    and then finally prove \autoref{theoremenvelopeislaxfunctorclassifier}.

    In the \autoref{appendixbackgroundononecategorytheory} 
    we prove the relevant statements from $(\infty,1)$-category theory that the other sections
    rely on by developing the necessary background material on adjunctions and cocartesian fibrations in more detail,
    starting with the definitions.

\section*{Acknowledgements}

  The author gratefully acknowledges support from the SFB 1085 Higher Invariants in Regensburg, funded by the DFG.
  The author wishes to thank his advisor Denis-Charles Cisinski for his ongoing support, patience
  and the opportunity to explore where curiosity leads.
  The author wishes to thank Benjamin Dünzinger and Bastiaan Cnossen for their ongoing help, patience and 
  willingness to listen to even the strangest ideas.

  \section*{Conventions and Notation}
    \label{sectionconventionsandfoundations}

    We will refer to $(\infty,1)$-categories as just \textit{$1$-categories}, or even just \textit{categories}.
    Furthermore we will refer to $(\infty,2)$-categories as just \textit{$2$-categories}.




    \begin{notation}
      We write $\Fun(A,B)$ for the category of functors from $A$ to $B$. 
      We will denote natural transformations $\alpha$ from a functor $F$ to a functor $G$ by 
      $\alpha \colon F \twoto G$ to distinguish them from ordinary morphisms.
      For a functor $F \colon A \to B$ we write $F_\ast \colon \Fun(T,A) \to \Fun(T,B)$ for the 
      postcomposition with $F$ and ${}_\ast F \colon \Fun(B,T) \to \Fun(A,T)$
      for the precomposition with $F$.
      We write $\Delta_A \colon A \to \Fun(\Delta^1,A)$ for the precomposition functor ${}_\ast (\Delta^1 \to \Delta^0)$,
      and $\ev_0 , \ev_1 \colon \Fun(\Delta^1,A) \to A$ for the domain and the codomain functor, respectively.
      We denote the ($\infty$-)category of small $1$-categories by $\Cat$, its full subcategory of small spaces, 
      also called $\infty$-groupoids, by $\Spaces$ 
      and its full subcategory of simplices, \ie non-empty finite totally ordered sets, by $\simplexcat$.
      We write $(-)^\equiv \colon \Cat \to \Spaces$ for the right adjoint to the inclusion functor 
      $\Spaces \mono \Cat$, so that for a category $A$ we get its underlying space of objects 
      denoted by $A^\equiv$.
      We denote by $\Map(A,B) \coloneqq \Fun(A,B)^\equiv$ the space of functors from $A$ to $B$.
      For a cospan of functors $f \colon A \to C \longleftarrow B \colon g$ we sometimes write its pullback as 
      $A \times^{(f,g)}_C B$ to emphasise the dependence on $f$ and $g$, 
      in contrast to the notation $A \times_C B$.
      When a functor $F \colon A \to B$ is left adjoint to a functor $U \colon B \to A$ 
      we denote this by $F \adj U$.
      Functors $X$ to $B$ that are cocartesian or cartesian fibrations will be denoted by $X \fibration B$.
      For a category $A$ we write $\comp \colon \Fun(\Delta^1,A) \times_A \Fun(\Delta^1,A) \to \Fun(\Delta^1,A)$ 
      for the composition functor of its morphisms.
      For a category $A$ and on object $a$ of $A$ we write $A_{/ a}$ for the slice category over $a$ 
      and ${}^{a /} \hspace{-0.3em} A$ for the slice category under $a$.
      For two functors $p \colon A \to I$ and $q \colon B \to I$, seen as objects in $\Cat_{/ I}$,
      we define the relative functor category over $I$ from $(A,p)$ to $(B,q)$ via the following
      pullback square.
      \[\begin{tikzcd}[cramped]
        {\Fun_{/ I}((A,p),(B,q))} & {\Fun(A,B)} \\
        {\Delta^0} & {\Fun(A,I)}
        \arrow[from=1-1, to=1-2]
        \arrow[from=1-1, to=2-1]
        \arrow["\lrcorner"{anchor=center, pos=0.125}, draw=none, from=1-1, to=2-2]
        \arrow["{q_\ast}", from=1-2, to=2-2]
        \arrow["p"', from=2-1, to=2-2]
      \end{tikzcd}\]
    \end{notation}


  \section{Inert-Cocartesian Functors and the Envelope Construction}
    \label{sectioninertcocartfunsandtheenvelopeconstr}

    In this section we will start with a recollection of a particular model of $2$-categories, 
    one closely related to the notion of complete $2$-fold Segal space, which is also the one used 
    in \cite[Appendix on $(\infty,2)$-categories]{GR}.
    Afterwards, we will introduce a model of lax $2$-functors for this model of $2$-categories, 
    which is the one used in \cite[Chapter 10.3]{GR} to define and work with the lax Gray tensor product.
    For this model there exists an explicit construction of its classifying object, 
    for which we will later prove the model-independent universal property.
    The construction of the classifying objects in this generality already appeared for example in 
    \cite{gepnerhaugseng}.
    Lastly we will show that this classifier exhibits several properties which are very similar to 
    the ones used in \autoref{definitionlaxfunctorclassifier} 
    of a $2$-functor having local right adjoint sections.
    All these statements can also be found in \cite[Chapter 11.A]{GR}.

    \subsection{Globular Complete Segal Cocartesian Fibrations over $\simplexcat^\op$ and Inert-Cocartesian Functors}
      \label{recollectiontwocatsasglobularcompletesegalfibrations}
      It is a classical result,
      for example discussed in \cite{barwickschommerpries},
      that one can model $2$-categories as complete $2$-fold Segal spaces, 
      defined as certain bisimplicial objects in spaces. For our purposes it is actually more convenient to use the equivalent 
      formulation as simplicial objects $X \colon \simplexcat^\op \to \Cat$ in the 
      category $\Cat$ of (small) $1$-categories, which are
      Segal, complete and \define{globular}, \ie satisfy the extra 
      assumption that $X_0$ is a space. The corresponding notion of 
      $2$-functors in this model is simply given by natural transformations.

      Using the Straightening-Unstraightening equivalence 
      \begin{equation*}
        \Fun(\simplexcat^\op, \Cat) \equiv \CoCartFib(\simplexcat^\op)
      \end{equation*}
      between functors to $\Cat$ and cocartesian fibrations we can alternatively
      describe these special simplicial objects in $\Cat$ as cocartesian fibrations 
      $X \fibration \simplexcat^\op$ over the category $\simplexcat^\op$ 
      which satisfy equivalently expressed Segal axioms, completeness axioms
      and satisfy that the fiber $X_0$ over $\Delta^0$ is a space.
      Let us denote this category by $\globCSCocart$ for later reference.
      Here, the corresponding role of $2$-functors is played by the cocartesian functors 
      between such globular complete Segal cocartesian fibrations over $\simplexcat^\op$.

      To give some intuition, let us take a globular complete Segal object $X \colon \simplexcat^\op \to \Cat$,
      or equivalently unstraightened as a cocartesian fibration $p \colon X \fibration \simplexcat^\op$.
      We interpret the space $X_0$ as the space of objects $X^\equiv$ of the $2$-category $X$.
      We think of $X_1$ as the category of $1$-morphisms and $2$-cells between those,
      and the two evaluations functors $X_0 \leftarrow X_1 \rightarrow X_0$ as the source and target 
      functors
      \ie the fibers of $(s,t) \colon X_1 \to X_0 \times X_0$ over a pair of objects $(a,b)$ should exactly model the Hom-$1$-categories
      $X(a,b)$ varying appropriately in both variables over the space of objects $X^\equiv \coloneqq X_0$.
      The functor $X_0 \to X_1$ maps objects to their identity $1$-morphisms.
      By the Segal axiom we can identify $X_2 \equiv X_1 \times_{X_0} X_1$.
      In the fibrational viewpoint the composition of two $1$-morphisms is then given by 
      cocartesian pushforward along $(0,2) \colon \Delta^1 \mono \Delta^2$ in $\simplexcat^\op$.

    We now continue by reviewing what we actually need to know about $2$-categories and 
    $2$-category theory in order to model-independently define our universal property 
    \autoref{definitionlaxfunctorclassifier}.
    Essentially, we require the existence of the $1$-category $\twoCat$ of $2$-categories,
    and our universal property will be stated entirely in terms of basic properties of $\twoCat$, that do 
    not depend on how it was defined.

    \begin{remark}
      We will only use the following basic ingredients from $2$-category theory.
      Note that this list is by no means exhaustive, \ie it does not intend to capture 
      all of $2$-category theory, but it should be a minimal subset with respect to 
      \autoref{definitionlaxfunctorclassifier} that is interpretable 
      in any model of $2$-category theory, not just the above introduced complete $2$-fold Segal spaces 
      or the globular complete Segal cocartesian fibrations over $\simplexcat^\op$.

      \begin{enumerate}

        \item We have a $1$-category $\twoCat$ of all (small) $2$-categories, (strict) $2$-functors between them,
        $2$-natural isomorphisms between those and so on.

        \item Every $2$-category $A$ has an \define{underlying space of objects $A^\equiv$}. More precisely,
          this assignment should be functorial and 
          $(-)^\equiv \colon \twoCat \to \Spaces$ is supposed to be the right adjoint to the 
          fully-faithful inclusion $\Spaces \mono \twoCat$ of spaces into
          $2$-categories.
          In particular we get that every (strict) $2$-functor $F \colon A \to B$ induces a
          \define{functor on underlying spaces of objects} $F^\equiv \colon A^\equiv \to B^\equiv$.

        \item Every $2$-category $A$ has \define{Hom-$1$-categories}, parametrized by its space of 
          objects 
          \begin{equation*}
            A(- , - ) \colon (A^\equiv)^\op \times A^\equiv \to \Cat .
          \end{equation*}
          For our purposes, we do not need further functoriality in non-invertible $1$- or $2$-morphisms.


        \item Every (strict) $2$-functor $F \colon A \to B$ induces 
          \define{$1$-functors on the Hom-$1$-categories}, natural in the space of objects of $A$, 
          $F_{- , - } \colon A(- , - ) \twoto B(F^\equiv (- ), F^\equiv (- ))$. We will usually abuse notation and just write $B(F(-),F(-))$ for the codomain of these functors.

        \item For every triangle of $2$-functors 
          \begin{equation}
            \begin{tikzpicture}[diagram]
                \matrix[objects] {%
                  |(a)| A \& \& |(b)| B \\
                  \& |(c)| C \\
                };
                \path[maps,->]
                (a) edge node[above]  {$H$} (b)
                (a) edge node[below left]   {$F$} (c)
                (b) edge node[below right]  {$G$} (c)
                ;
            \end{tikzpicture}
        \end{equation}
        commuting by a $2$-natural isomorphism we already know that we get a commutative triangle of functors on 
        object spaces
        \begin{equation}
            \begin{tikzpicture}[diagram]
                \matrix[objects] {%
                  |(a)| A^\equiv \& \& |(b)| B^\equiv \\
                  \& |(c)| C^\equiv \\
                };
                \path[maps,->]
                (a) edge node[above]  {$H$} (b)
                (a) edge node[below left]   {$F$} (c)
                (b) edge node[below right]  {$G$} (c)
                ;
            \end{tikzpicture}
        \end{equation}
        and we want this to also extend to commutative squares of actions on Hom-categories
        \begin{equation}
            \begin{tikzpicture}[diagram]
                \matrix[objects] {%
                  |(a)| A(a,\tilde{a}) \& |(b)| B(Ha,H\tilde{a}) \\
                  |(c)| C(Fa,F\tilde{a}) \& |(d)| C(GHa,GH\tilde{a}) \\
                };
                \path[maps,->]
                (a) edge node[above]  {$H$} (b)
                (c) edge node[below]  {$\equiv$} (d)
                (a) edge node[left]   {$F$} (c)
                (b) edge node[right]  {$G$} (d)
                ;
            \end{tikzpicture}
        \end{equation}
        natural in the space of pairs $(a,\tilde{a})$ of objects of $A$, 
        more precisely in space $(A^\equiv)^\op \times A^\equiv$.

      \item Additionally, the last three points can also be made coherent by asking for a section 
        of the large categorical presheaf 
        $\Fun(((-)^\equiv)^\op \times (-)^\equiv, \Cat) \colon \twoCat^\op \to \CAT$,
        although we will not need this further coherence.

      \end{enumerate}

      Note in particular, that these ingredients are agnostic to whether one implements $2$-categories
      as so-called $\Cat$-enriched categories or vertically trivial and complete internal category objects in $\Cat$, 
      which, in a combinatorial way we do here.
      So the model-independent universal property of the lax $2$-functor classifier of 
      \autoref{definitionlaxfunctorclassifier} 
      will also be agnostic to these choices.
    \end{remark}

    Since our main theorem is the comparison of the model-independent universal property
    \autoref{definitionlaxfunctorclassifier}
    and the aforementioned model-dependent definition of lax $2$-functors 
    between unstraightened globular complete Segal objects in $\Cat$,
    we will need to instantiate the above list of $2$-categorical ingredients in this model. 
    Nevertheless for the proof of our main theorem we still avoid specific set-theoretic implementations, 
    such as model structures, combinatorial descriptions of $1$-categories, 
    like complete Segal spaces, and the use of ordinary $1$-categories.

    \begin{proof}
      \begin{enumerate}
        \item As outlined in \autoref{recollectiontwocatsasglobularcompletesegalfibrations}, 
          we obtain a $1$-category of our combinatorial model of $2$-categories by 
          starting with the functor $1$-category $\Fun(\simplexcat^\op, \Cat)$ and take its full 
          subcategory of complete Segal objects $X$ such that $X_0$ is a space.
          By unstraightening this is equivalent to $\globCSCocart$.

        \item In this model the adjunction for underlying spaces of objects is obtained by 
          restricting the adjunction 
          \begin{equation}
            \begin{tikzpicture}[diagram]
                \matrix[objects] {%
                |(b)| \Cat \& |(a)| \Fun(\simplexcat^\op,\Cat) \\
                };
                \path[maps,->]
                (a) edge[bend left] node[below] (F) {$\ev_0$} (b)
                ;
                \path[maps,{Hooks[right]}->]
                (b) edge[bend left] node[above] (U) {$\disc$} (a)
                ;
                \node[rotate=-90] at ($ (U) ! 0.5 ! (F) $) {$\adj$};
            \end{tikzpicture}
          \end{equation}
          to the left hand side to the full subcategory on spaces and the right hand side to the 
          full subcategory on globular complete Segal objects. This means we define $X^\equiv \coloneqq X_0$.

        \item For the Hom-$1$-categories and their functoriality we start by looking at 
          the cospan $\Delta^0 \xto{0} \Delta^1 \xleftarrow{1} \Delta^0$ in $\simplexcat$
          and by applying $\Fun((-)^\op, \Cat)$ get 
          \begin{equation*}
            \Fun(\simplexcat^\op,\Cat) \times^{(\ev_0,\incl)}_{\Cat} \Spaces \xto{\ev_0 \leftarrow \ev_1 \rightarrow \ev_0} \Spaces \times_{\Cat} \Fun(\{ 2 \leftarrow 0 \rightarrow 1 \} , \Cat) \times_{\Cat} \Spaces
          \end{equation*}
          which takes a simplicial object $X \colon \simplexcat^\op \to \Cat$ with $X_0$ a space and 
          associates to it its source-target span $X^\equiv \coloneqq X_0 \xleftarrow{s} X_1 \xto{t} X_0 \eqqcolon X^\equiv$.
          To turn this into the shape of an actual Hom-functor parametrised by $(X^\equiv)^\op \times X^\equiv$
          we proceed in the following way.
          The codomain of this functor, \ie the category of spans $Y \leftarrow C \rightarrow Z$ with $Y$ and $Z$ spaces, 
          is itself equivalent to the total category of the unstraightening of 
          the functor 
          \begin{equation*}
            \Cat_{/ (-)_1 \times (-)_2 } \colon \Spaces \times \Spaces \xto{\times} \Spaces \mono \Cat \xto{\Cat_{/ (-)}} \CAT
          \end{equation*}
          To this we first apply the cartesian straightening equivalence $\Str^{\cart}$, in the special case over spaces $Y$, and then apply the cocartesian straightening , 
          also in the special case over spaces $Z$, \ie 
          \begin{equation*}
            \Str^{\cart}_Y \colon \Cat_{/ Y} \equiv \Fun(Y^\op, \Cat) \qquad \text{and} \qquad \Str^{\cocart}_Z \colon \Cat_{/ Z} \equiv \Fun(Z, \Cat)
          \end{equation*}
          to get the chain of equivalences
          \[\begin{tikzcd}[cramped, column sep=large]
            {\Cat_{/ (-)_1 \times (-)_2 }} & {\Fun((-)_{1}^{\op} , \Cat_{/ (-)_2 })} \\
            {(\Cat_{/ (-)_1 })_{/ (\pr_1 \colon (-)_1 \times (-)_2 \to (-)_1)}} & {\Fun((-)_{1}^{\op} , \Fun((-)_2 , \Cat))} \\
            {\Fun((-)_1^{\op} , \Cat )_{/ \const_{(-)_2} }} & {\Fun((-)_{1}^{\op} \times (-)_2 , \Cat))}
            \arrow["\equiv"', from=1-1, to=2-1]
            \arrow["{( \Str^{\cocart} )_{\ast}}", from=1-2, to=2-2]
            \arrow["{(\Str^{\cart} )_{/ \pr_1}}"', from=2-1, to=3-1]
            \arrow["\equiv", from=2-2, to=3-2]
            \arrow["\equiv"{description}, from=3-1, to=1-2]
          \end{tikzcd}\]
          that realize the desired straightening to functors into $\Cat$ with the correct variances.

      \end{enumerate}
    \end{proof}

    Next we review a model of lax functors between $2$-categories modelled as globular complete Segal
    cocartesian fibrations over $\simplexcat^\op$. This is done by relaxing the cocartesian lift preservation 
    of functors over $\simplexcat^\op$ between such fibrations. 
    For this we will now define two special subclasses of morphisms in $\simplexcat$.

    \begin{definition}
      Let $\alpha \colon \Delta^n \to \Delta^m$ be a simplex morphism.
      \begin{enumerate}
        \item $\alpha$ is called 
          \define{active} if $\alpha(0) = 0$ and $\alpha(n) = m$.
        \item $\alpha$ is called 
          \define{inert} if $\forall i \in \{0,\dots,n-1 \} \colon \alpha(i + 1) = \alpha(i) + 1$.
      \end{enumerate}
    \end{definition}

    \begin{remark}
      At least under sufficient knowledge on how to characterise functors into simplices $\Delta^n$,
      these two classes of morphisms constitute a factorisation system on $\simplexcat$.
      For example if we know that $\Map(C,\Delta^n)$ is actually equivalent to the subspace of $\Map(C^\equiv, \{0,\dots,n \})$
      on those functors $F \colon C^\equiv \to \{0,\dots, n \}$ that satisfy for all morphisms 
      $f \colon c \to \tilde{c}$ in $C$ that $F(c) \leq F(\tilde{c})$ with respect to the poset structure 
      $\{0 < \dots < n \}$ on $\{0,\dots, n \}$, one can deduce the factorisation system.
      Explicitely, for a simplex morphism $\alpha \colon \Delta^n \to \Delta^m$ we 
      can then factorise it as 
      \begin{equation*}
        \Delta^n \xto{\text{active}} \Delta^{\{\alpha(0),\dots,\alpha(n)\}} \xmono{\text{inert}} \Delta^m
      \end{equation*}
      where we denoted by $\Delta^{\{\alpha(0),\dots,\alpha(n)\}}$ the convex subsimplex of $\Delta^m$ 
      spanned by the vertices of $\Delta^m$ between $\alpha(0)$ and $\alpha(n)$.
    \end{remark}

    A model of lax functors in this situation is now given by looking at functors over $\simplexcat^\op$ 
    that are only required to preserve cocartesian lifts along (opposites of) inert morphisms.

    \begin{recollection}
      As expressed in \cite[Chapter 10.3]{GR}, this unstraightened model of $2$-categories now supports a very tractable
      model of lax $2$-functors, the \define{inert-cocartesian functors}, 
      \ie the functors between cocartesian fibrations over $\simplexcat^\op$ 
      \begin{equation}
          \begin{tikzpicture}[diagram]
              \matrix[objects] {%
                |(a)| A \& \& |(b)| B \\
              \& |(c)| \simplexcat^\op \\
              };
              \path[maps,->]
              (a) edge node[above]  {$$} (b)
              (a) edge[->>] node[left]   {$$} (c)
              (b) edge[->>] node[right]  {$$} (c)
              ;
          \end{tikzpicture}
      \end{equation}
      which preserve only the cocartesianness of cocartesian 
      lifts of inert simplex morphisms, or more compactly, which are cocartesian functors after pulling
      back along the non-full subcategory inclusion $\simplexcat_{\text{inert}}^\op \mono \simplexcat^\op$ on inert
      simplex morphisms.
      For this model of lax $2$-functors there is also an explicit classifier, generalizing the 
      monoidal envelope construction as for example in \cite[Section 2.2.4]{ha}.
    \end{recollection}

    \begin{example}
      The simplest examples of simplex morphisms which are not inert are $\Delta^1 \to \Delta^0$ and 
      the inclusion $(0,2) \colon \Delta^1 \mono \Delta^2$.
      So for an inert-cocartesian functor 
      \begin{equation}
          \begin{tikzpicture}[diagram]
              \matrix[objects] {%
                |(a)| A \& \& |(b)| B \\
              \& |(c)| \simplexcat^\op \\
              };
              \path[maps,->]
              (a) edge node[above]  {$F$} (b)
              (a) edge[->>] node[left]   {$$} (c)
              (b) edge[->>] node[right]  {$$} (c)
              ;
          \end{tikzpicture}
      \end{equation}
      between $2$-categories the fact that $F$ does not necessarily preserve cocartesian lifts along these morphisms 
      gives us in the case of $\Delta^1 \to \Delta^0$ a not necessarily invertible comparison morphisms
      \begin{equation*}
        \id_{F(x)} = (\Delta^1 \rightarrow \Delta^0)_! F(x) \to F((\Delta^1 \rightarrow \Delta^0)_! x) = F(\id_x)
      \end{equation*}
      and in the case for $(0,2) \colon \Delta^1 \mono \Delta^2$
      not necessarily invertible comparison morphisms
      \begin{equation*}
        F(g) \circ F(f) = (0,2)_! F(f,g) = (0,2)_! (F(f),F(g)) \to F((0,2)_! (f,g)) = F(g \circ f)
      \end{equation*}
      in the category $X_1$, which we interpret as $2$-cells between $1$-morphisms.
    \end{example}

    \subsection{The Envelope Construction and its Fundamental Properties}
    We now turn our attention to an explicit construction of classifying objects for this model of 
    lax functors.
    As not only the definition of inert-cocartesian functors but also their classifier also work in 
    the greater generality of just Segal cocartesian fibrations over $\simplexcat^\op$ (which in turn are
    models for double $\infty$-categories). 
    We will also adopt this generality for the following construction and statements when applicable.

    \begin{definition}
      Let us denote by $\Fun^{\text{active}}(\Delta^1,\simplexcat^\op)$ the full subcategory of 
      $\Fun(\Delta^1,\simplexcat^\op)$ on all the functors which correspond to active simplex morphisms.
      Let $p \colon A \fibration \simplexcat^\op$ be a Segal cocartesian fibration.
      Then its \define{envelope construction} is defined to be functor
      \begin{equation*}
        \ev_1 \colon \Env(A) \coloneqq  A \times_{\simplexcat^\op} \Fun^{\text{active}}(\Delta^1,\simplexcat^\op) \to \simplexcat^\op .
      \end{equation*}
    \end{definition}

    \begin{proposition}
      \label{envelopeissegalcocartfib}
      Let $p \colon A \fibration \simplexcat^\op$ be a Segal cocartesian fibration.
      Then its envelope $\ev_1 \colon \Env(A) \to A$
      is again a cocartesian fibration and Segal.
    \end{proposition}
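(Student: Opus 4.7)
The plan is to prove both properties directly using the active--inert factorization system on $\simplexcat$. For the cocartesian fibration assertion I would construct cocartesian lifts explicitly by combining that factorization with the cocartesian structure of $p\colon A \fibration \simplexcat^\op$; for the Segal assertion I would unfold the pullback definition of $\Env(A)$ and reduce to the Segal condition on $A$ combined with the fact that active simplex maps out of $\Delta^n$ biject with $n$-fold Segal concatenations.

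Concretely, I would first describe an object of $\Env(A)$ as a pair $(a, \alpha)$ with $\alpha\colon \Delta^m \to \Delta^n$ active in $\simplexcat$ and $a$ lying in the fiber $A_{\Delta^n}$, so that $\ev_1$ records $\Delta^m$. Given a morphism of $\simplexcat^\op$ corresponding to $f\colon \Delta^{m'} \to \Delta^m$ in $\simplexcat$, I would form the composite $\alpha \circ f\colon \Delta^{m'} \to \Delta^n$ and factor it uniquely as an active map $\alpha'\colon \Delta^{m'} \to \Delta^k$ followed by an inert map $\iota\colon \Delta^k \to \Delta^n$. The candidate cocartesian lift then has new $\Fun^{\text{active}}$-coordinate $\alpha'$ and new $A$-coordinate $\iota_! a$ (the cocartesian pushforward of $a$ along $\iota$ viewed in $\simplexcat^\op$), with the lift morphism itself the pair consisting of the cocartesian edge $a \to \iota_! a$ in $A$ together with the evident commuting square in $\simplexcat^\op$ built from $\alpha \circ f = \iota \circ \alpha'$. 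Verifying cocartesianness decouples into the two coordinates: in the $\Fun^{\text{active}}$-direction, given a competing morphism $(a, \alpha) \to (b, \beta)$ whose square produces some auxiliary map $h$ in $\simplexcat$, the required factor through $\iota$ is obtained from uniqueness of active--inert factorizations applied to $h$; in the $A$-direction the factor is supplied by the universal property of the cocartesian edge $a \to \iota_! a$. The two factorizations are compatible over the common inert component, and therefore assemble into a morphism of $\Env(A)$.

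For the Segal condition, the fiber $\Env(A)_{\Delta^n}$ unfolds to the category of pairs $(\alpha\colon \Delta^n \to \Delta^k\text{ active},\ a \in A_{\Delta^k})$. Since specifying such an $\alpha$ is equivalent to specifying a Segal concatenation $\Delta^k \simeq \Delta^{k_1} \vee \cdots \vee \Delta^{k_n}$ with $k_i \geq 0$, and the Segal condition on $A$ provides
\[
A_{\Delta^k} \simeq A_{\Delta^{k_1}} \times_{A_{\Delta^0}} \cdots \times_{A_{\Delta^0}} A_{\Delta^{k_n}},
\]
these equivalences assemble into the desired Segal equivalence
\[
\Env(A)_{\Delta^n} \simeq \Env(A)_{\Delta^1} \times_{\Env(A)_{\Delta^0}} \cdots \times_{\Env(A)_{\Delta^0}} \Env(A)_{\Delta^1}.
\]

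The main obstacle is the combinatorial bookkeeping in the cocartesianness verification: one must carefully track how the active--inert factorization system interacts with the cocartesian edges of $p$ to confirm that the two coordinate-wise factorizations glue into an actual morphism of $\Env(A)$. The Segal step is largely formal once the fibers have been correctly identified, reducing to standard manipulations of concatenations of simplices.
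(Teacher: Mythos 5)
The paper does not actually prove this proposition---it defers to \cite[Proposition A.1.2]{gepnerhaugseng}---but the explicit cocartesian lifting formula you propose (factor $\alpha\circ f$ as active followed by inert, keep the active part as the new $\Fun^{\text{active}}$-coordinate, and push $a$ forward along the inert part in $A$) is precisely the description the paper later extracts from that reference in its completeness argument, so your construction is the intended one. Two small points to tighten: verifying cocartesianness should not be left as a pair of decoupled $1$-categorical factorizations, since in the $\infty$-categorical setting the universal property is an equivalence of mapping spaces; the cleanest route is to check it on the counit/unit data or to invoke the left-cancellation criterion (\autoref{cocartesianliftscanbediagrammaticallyleftcancelled}) after observing that your candidate edges compose correctly. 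For the Segal step, beyond the bijection between active maps $\Delta^n\to\Delta^k$ and concatenations $k=k_1+\dots+k_n$, you should check that the comparison you assemble is actually the Segal map of $\Env(A)$, i.e.\ that cocartesian pushforward along the spine inclusions $\{i-1,i\}\colon\Delta^1\hookrightarrow\Delta^n$ sends $(\alpha,a)$ to $(\alpha|_{\{i-1,i\}},(\iota_i)_!a)$ as your lifting formula predicts; once that naturality is in place the reduction to the Segal condition on $A$ goes through as you describe.
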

    \begin{reference}

      This already appeared as \cite[Proposition A.1.2]{gepnerhaugseng}.
    \end{reference}

    The following proposition will introduce the model of the, soon to be proven initial, lax functor 
    $\iota \colon A \laxto \Lax(A)$ out of $A$, as well as establish completeness of the 
    envelope construction, even in the generality of starting with a  
    double $\infty$-category, which is modelled here by a Segal cocartesian fibration.

    \begin{proposition}
      Let $p \colon A \fibration \simplexcat^\op$ be a Segal cocartesian fibration.
      Then the envelope construction 
      \begin{equation*}
        \ev_1 \colon A \times_{\simplexcat^\op} \Fun^{\text{active}}(\Delta^1,\simplexcat^\op) \to \simplexcat^\op
      \end{equation*}
      is always also complete.
      Furthermore, the  functor 
      $\iota$ defined as
      \begin{equation*}
        A \equiv A \times_{\simplexcat^\op} \simplexcat^\op \xto{\id_A \times_{\id} \Delta_{(\simplexcat^\op)}} A \times_{\simplexcat^\op} \Fun^{\text{active}}(\Delta^1,\simplexcat^\op) 
      \end{equation*}
      is an inert-cocartesian functor
      over $\simplexcat^\op$ and an equivalence on fibers over $\Delta^0$. 
      Hence if $A_0$ is a space, then so is $(A \times_{\simplexcat^\op} \Fun^{\text{active}}(\Delta^1,\simplexcat^\op))_0$
      In particular, if $p \colon A \fibration \simplexcat^\op$ models a $2$-category in 
      the sense that it is Segal, complete  and globular, \ie 
      has as fiber over $\Delta^0$ a space, then so does its envelope
      $\ev_1 \colon A \times_{\simplexcat^\op} \Fun^{\text{active}}(\Delta^1,\simplexcat^\op) \to \simplexcat^\op$.
    \end{proposition}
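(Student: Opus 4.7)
The plan is to reduce the three new claims---the equivalence of $\iota$ on fibres over $\Delta^0$, its inert-cocartesianness, and the unconditional completeness of $\Env(A)$---to an explicit analysis of the cocartesian fibration $\ev_1 \colon \Fun^{\text{active}}(\Delta^1, \simplexcat^\op) \to \simplexcat^\op$ and its interaction with $A$ through the defining pullback. By \autoref{envelopeissegalcocartfib}, $\Env(A) \to \simplexcat^\op$ is already a Segal cocartesian fibration, so only these three claims remain to be established.

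I would begin with the fibre over $\Delta^0$, which is the easiest piece. An active morphism in $\simplexcat^\op$ ending at $\Delta^0$ corresponds to an active $\Delta^0 \to \Delta^m$ in $\simplexcat$, and the activeness conditions $\alpha(0) = 0$ and $\alpha(0) = m$ force $m = 0$ and $\alpha = \id_{\Delta^0}$. Hence this fibre is a point, $\Env(A)_0 \simeq A_0$, and $\iota_0$ is the identity, so in particular an equivalence. The preservation of globularity and the final ``in particular'' clause then follow immediately.

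For inert-cocartesianness of $\iota$, I would use the explicit description of cocartesian lifts in $\Env(A)$ that comes out of the proof of \autoref{envelopeissegalcocartfib}: a cocartesian lift at $(a, \beta)$ of a morphism $\alpha$ in $\simplexcat^\op$ is built by factoring $\alpha \circ \beta$ as $b \circ j$ with $j$ inert and $b$ active, pushing $a$ forward along $j$ in $A$, and pairing the result with $b$. Applied to $\iota(a) = (a, \id_{p(a)})$ and an inert $\alpha$, the composite $\alpha \circ \id_{p(a)} = \alpha$ is itself inert, so the active part of the factorisation is trivial and the lift lands at $(\alpha_! a, \id) = \iota(\alpha_! a)$, as required.

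The principal obstacle is the unconditional completeness of $\Env(A)$. My strategy is to exploit the projection $\Env(A) \to \Fun^{\text{active}}(\Delta^1, \simplexcat^\op)$ onto the second factor of the pullback, which is a map of Segal cocartesian fibrations over $\simplexcat^\op$ and therefore preserves invertibility of $1$-cells. The key intermediate claim is that $\Fun^{\text{active}}(\Delta^1, \simplexcat^\op)$ is ``rigidly'' complete: its only invertible $1$-cell is the degenerate one, namely the unique active morphism $\Delta^1 \to \Delta^0$, because Segal composition of active morphisms $\Delta^1 \to \Delta^k$ adds lengths and so $k = 0$ is the only candidate for an invertible cell. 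Consequently, any invertible $1$-cell in $\Env(A)$ must project onto this degenerate, which in turn forces its $A$-component to lie in $A_0$ and identifies it with a degenerate $1$-cell of $\Env(A)$. This yields $\text{Iso}(\Env(A)_1) \simeq A_0 = \Env(A)_0$, i.e.\ completeness. The delicate part---and the main technical obstacle---is justifying rigorously that invertibility in $\Env(A)$ projects faithfully onto invertibility in $\Fun^{\text{active}}(\Delta^1, \simplexcat^\op)$ and that the resulting fibre over the degenerate really reduces to $A_0$; the interplay between the Segal composition on $\Env(A)$ and the active-inert factorisation system of $\simplexcat^\op$ is where the bulk of the proof will reside.
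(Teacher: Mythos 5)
Your proposal is correct and takes essentially the same route as the paper: the fibre over $\Delta^0$ and the inert-cocartesianness of $\iota$ are established by exactly the same active--inert factorisation observations, and your completeness argument is the paper's computation in a lightly reorganised form. Where you first project to $\Fun^{\text{active}}(\Delta^1,\simplexcat^\op)$, argue by length-additivity that its only invertible $1$-cell is the degenerate one, and then identify the fibre over $\{\Delta^1 \to \Delta^0\}$ with $A_0$, the paper performs the identical computation directly on a $2$-simplex $(\alpha,\sigma)$ of $\Env(A)$ (the degenerate $(0,2)$-pushforward forces $m=0$ and $\sigma \equiv a$ simultaneously), and it likewise finishes by noting that $\{\Delta^1 \to \Delta^0\} \mono (\Fun^{\text{active}}(\Delta^1,\simplexcat^\op))_1$ is fully faithful because $\Delta^0$ has no non-trivial endomorphisms.
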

    \begin{proof}
      \begin{enumerate}
        \item To check completeness we first need to know how cocartesian lifting in 
          \begin{equation*}
            \ev_1 \colon A \times_{\simplexcat^\op} \Fun^{\text{active}}(\Delta^1,\simplexcat^\op) \to \simplexcat^\op
          \end{equation*}
          works.
          We extract the necessary parts for this from the aforementioned \cite[Proposition A.1.2]{gepnerhaugseng}
          to make it explicit now. Another excellent reference for the cocartesian lifting of these kinds of fibrations is 
          \cite[Proposition A.0.1]{AMGR}. Let us start with an arbitrary object in 
          $A \times_{\simplexcat^\op} \Fun^{\text{active}}(\Delta^1,\simplexcat^\op)$
          in the fiber over an arbitrary $\Delta^n$, \ie a tuple consisting of an active 
          simplex morphism $\alpha \colon \Delta^n \to \Delta^m$ in $\simplexcat$, viewed as a 
          morphism $\Delta^m \to \Delta^n$ in $\simplexcat^\op$, and an object $\sigma$ in the 
          fiber over $\Delta^m$. Let us furthermore be given another arbitrary simplex
          morphism $\gamma \colon \Delta^k \to \Delta^n$, viewed as a morphism
          $\Delta^n \to \Delta^k$ in $\simplexcat^\op$, along which we want to 
          cocartesian pushforward. From now on we will only talk about simplex morphisms,
          \ie morphisms in $\simplexcat$, but will implicitely use them as morphisms in 
          $\simplexcat^\op$. The first step is to factorize the composite $\alpha \circ \gamma$ 
          into an active followed by an inert simplex morphism, 
          \begin{equation}
              \begin{tikzpicture}[diagram]
                  \matrix[objects] {%
                  |(a)| \Delta^k \& |(b)| \Delta^l \\
                  |(c)| \Delta^n \& |(d)| \Delta^m \\
                  };
                  \path[maps,->]
                  (a) edge node[above]  {$\beta$} (b)
                  (a) edge node[below]  {$\text{active}$} (b)
                  (c) edge node[below]  {$\alpha$} (d)
                  (c) edge node[above]  {$\text{active}$} (d)
                  (a) edge node[left]   {$\gamma$} (c)
                  (b) edge node[right]  {$\theta$} (d)
                  (b) edge node[left]  {$\text{inert}$} (d)
                  ;
              \end{tikzpicture}
          \end{equation}
          using the corresponding unique factorization system.
          Then the $\ev_1$-cocartesian pushforward of $(\alpha, \sigma)$ is exactly
          $(\beta, \theta_! \sigma)$, where $\theta_! \sigma$ is the 
          $p$-cocartesian pushforward of the object $\sigma$ along the morphism $\theta$.
          For our discussion of completeness let us take an arbitrary
          $2$-simplex $(\alpha, \sigma)$ in 
          $A \times_{\simplexcat^\op} \Fun^{\text{active}}(\Delta^1,\simplexcat^\op)$
          such that its cocartesian pushforward along $(0,2) \colon \Delta^1 \to \Delta^2$ is 
          degenerated from a $0$-simplex $(\id_{\Delta^0},a)$. Note that the 
          identity of $\Delta^0$ is the only active map out of $\Delta^0$.

          This means explicitely that for the unique active-inert factorization
          \begin{equation}
              \begin{tikzpicture}[diagram]
                  \matrix[objects] {%
                  |(a)| \Delta^1 \& |(b)| \Delta^m \\
                  |(c)| \Delta^2 \& |(d)| \Delta^m \\
                  };
                  \path[maps,->]
                  (a) edge node[above]  {$(\alpha(0), \alpha(2))$} (b)
                  (a) edge node[below]  {$\text{active}$} (b)
                  (c) edge node[below]  {$\alpha$} (d)
                  (c) edge node[above]  {$\text{active}$} (d)
                  (a) edge node[left]   {$(0,2)$} (c)
                  (b) edge node[right]  {$\id$} (d)
                  (b) edge node[left]  {$\text{inert}$} (d)
                  ;
              \end{tikzpicture}
          \end{equation}
          and the $p$-cocartesian pushforward $\id_! \sigma = \sigma$,
          we have that the cocartesian pushforwarded tuple 
          $((\alpha(0),\alpha(2)), \sigma)$ is
          equivalent to the degeneration of $(\id_{\Delta^0},a)$, \ie
          via the factorization
          \begin{equation}
              \begin{tikzpicture}[diagram]
                  \matrix[objects] {%
                  |(a)| \Delta^1 \& |(b)| \Delta^0 \\
                  |(c)| \Delta^0 \& |(d)| \Delta^0 \\
                  };
                  \path[maps,->]
                  (a) edge node[above]  {$$} (b)
                  (a) edge node[below]  {$\text{active}$} (b)
                  (c) edge node[below]  {$\id$} (d)
                  (c) edge node[above]  {$\text{active}$} (d)
                  (a) edge node[left]   {$$} (c)
                  (b) edge node[right]  {$\id$} (d)
                  (b) edge node[left]  {$\text{inert}$} (d)
                  ;
              \end{tikzpicture}
          \end{equation}
          so the tuple $(\Delta^1 \to \Delta^0, a)$.
          But this means on the one hand side that 
          \begin{equation}
              \begin{tikzpicture}[diagram]
                  \matrix[objects] {%
                  |(a)| \Delta^1 \& |(b)| \Delta^m \\
                  |(c)| \& |(d)| \Delta^0 \\
                  };
                  \path[maps,->]
                  (a) edge node[above]  {$(\alpha(0), \alpha(2))$} (b)
                  (a) edge node[left]   {$$} (d)
                  (b) edge node[right]  {$\equiv$} (d)
                  ;
              \end{tikzpicture}
          \end{equation}
          \ie that $m = 0$, and that on the other hand side $\sigma \equiv a$.
          But put together this says that the $2$-simplex $(\alpha, \sigma)$ 
          itself is a degeneration of the $0$-simplex $(\id_{\Delta^0},a)$.
          Hence we just proved that every $2$-simplex with degenerate 
          $0 \to 2$ edge is already degenerate itself. Hence so will be every invertible $1$-simplex.
          In particular the pushforward along $\Delta^1 \to \Delta^0$ functor 
          \begin{equation*}
            (A \times_{\simplexcat^\op} \Fun^{\text{active}}(\Delta^1,\simplexcat^\op))_0 \to (A \times_{\simplexcat^\op} \Fun^{\text{active}}(\Delta^1,\simplexcat^\op))^{\text{invert}}_1
          \end{equation*}
          whose codomain is restricted to the full subspace on the invertible morphisms, is essentially
          surjective.
          We have also 
          \begin{equation*}
            (A \times_{\simplexcat^\op} \Fun^{\text{active}}(\Delta^1,\simplexcat^\op))_0 \equiv A \times_{\simplexcat^\op} \{ \Delta^0 \xto{\id} \Delta^0 \}
          \end{equation*}
          As seen above, the pushforward along $\Delta^1 \to \Delta^0$ functor factorizes as 
          \[\begin{tikzcd}[cramped]
            {A \times_{\simplexcat^\op} \{ \Delta^0 \xto{\id} \Delta^0 \}} & {A \times_{\simplexcat^\op} \{ \Delta^1 \to \Delta^0 \}} & {(A \times_{\simplexcat^\op} \Fun^{\text{active}}(\Delta^1, \simplexcat^\op))_1} \\
            & {\{ \Delta^1 \to \Delta^0 \}} & {(\Fun^{\text{active}}(\Delta^1, \simplexcat^\op))_1}
            \arrow["\equiv"', from=1-1, to=1-2]
            \arrow["{(\Delta^1 \to \Delta^0)_!}", curve={height=-24pt}, from=1-1, to=1-3]
            \arrow[hook, from=1-2, to=1-3]
            \arrow["{\pr_1}"', from=1-2, to=2-2]
            \arrow["{(\pr_1)_1}", from=1-3, to=2-3]
            \arrow[""{name=0, anchor=center, inner sep=0}, hook, from=2-2, to=2-3]
            \arrow["\lrcorner"{anchor=center, pos=0.125}, draw=none, from=1-2, to=0]
          \end{tikzcd}\]
          where the square is a pullback by definition. The functor including the object
          \begin{equation*}
            \{ \Delta^1 \to \Delta^0 \} \mono (\Fun^{\text{active}}(\Delta^1, \simplexcat^\op))_1
          \end{equation*}
          is in fact fully-faithful as $\Delta^0$ has no non-trivial endofunctors, hence so 
          is its pullback in the above diagram.
          Putting these observations together we proved that the pushforward along
          $\Delta^1 \to \Delta^0$ functor is actually fully-faithful, hence
          we proved that
          \begin{equation*}
            (A \times_{\simplexcat^\op} \Fun^{\text{active}}(\Delta^1,\simplexcat^\op))_0 \to (A \times_{\simplexcat^\op} \Fun^{\text{active}}(\Delta^1,\simplexcat^\op))^{\text{invert}}_1
          \end{equation*}
          is an equivalence.

        \item The functor $\iota$ is defined as
          \begin{equation*}
            A \equiv A \times_{\simplexcat^\op} \simplexcat^\op \xto{\id_A \times_{\id} \Delta_{(\simplexcat^\op)}} A \times_{\simplexcat^\op} \Fun^{\text{active}}(\Delta^1,\simplexcat^\op) 
          \end{equation*}
          By the the explicit formula for cocartesian lifting in the envelope construction
          and the fact that for any inert simplex morphism $i \colon \Delta^n \mono \Delta^m$ the 
          active-inert factorization of the composite $\Delta^n \xmono{i} \Delta^m \xto{\id} \Delta^m$
          is exactly $\Delta^n \xto{\id} \Delta^n \xmono{i} \Delta^m$, we can deduce that 
          $\iota$ is inert-cocartesian.
          Lastly, the functor $\simplexcat^\op \xto{\Delta_{\simplexcat^\op}} \Fun^{\text{active}}(\Delta^1,\simplexcat^\op) $
          is in the fiber over $\Delta^0$ just the functor
          $\{ \Delta^0 \} \to \{ \Delta^0 \xto{\id} \Delta^0 \}$, which is an equivalence.
          Hence so is $\iota$ as a pullback.

      \end{enumerate}
    \end{proof}

    Next we prove that the envelope construction is in fact a classifier for inert-cocartesian functors
    out of $A \fibration \simplexcat^\op$.
    This already appeared for example as \cite[Proposition A.1.3]{gepnerhaugseng}.
    Another proof strategy appears in \cite[Chapter 11, Theorem A.1.5]{GR} and builds on the fact
    that one can relatively (over $\simplexcat^\op$) left Kan extend inert-cocartesian functors 
    from $A \fibration \simplexcat^\op$ to some cocartesian fibration along $\iota$.
    In this general situation, the relative left Kan extensions in $\Cat_{/ \simplexcat^\op}$ are 
    provided by the fact that $\iota$, as a functor between the total categories, is a 
    fully-faithful left adjoint and by the cocartesianness of the target in the extension problem.

    One way to make this precise is for example the following general statement, that left adjoints have 
    a directed kind of left lifting property with respect to cocartesian fibrations and its 
    accompanying specialization, 
    which states that for fully-faithful left adjoints this becomes 
    an actual left lifting property.
    Proofs of these statements can be found in \autoref{subsectiondirectedllpofladjagainstcocartfibs}.

    \begin{restatable}{corollary}{laxliftingpropertyofladjwrtcocartfib}
      \label{laxliftingpropertyofleftadjointsagainstcocartesianfibrations}
      Let $F \colon A \to B$ a left adjoint with right adjoint $U$ and
      $p \colon X \fibration Y$ a cocartesian fibration.
      Then the functor
      \begin{equation*}
        \Fun(\Delta^1,\Fun(A,X)) \hspace{-0.6em} \pullback{\Fun(A,X)} \hspace{-0.6em} \Fun(B,X) \to \Fun(A,X) \hspace{-0.6em} \pullback{\Fun(A,Y)} \hspace{-0.6em} \Fun(\Delta^1,\Fun(A,Y)) \hspace{-0.6em} \pullback{\Fun(A,Y)} \hspace{-0.6em} \Fun(B,Y)
      \end{equation*}
      defined as in \autoref{weirdadjointablecocartfiblemma}
      from the adjointable square
      \begin{equation}
            \begin{tikzpicture}[diagram]
                \matrix[objects] {%
                |(a)| \Fun(B,X) \& |(b)| \Fun(A,X) \\
                |(c)| \Fun(B,Y) \& |(d)| \Fun(A,Y) \\
                };
                \path[maps,->]
                (a) edge node[above]  {$\Fun(F,X)$} (b)
                (c) edge node[below]  {$\Fun(F,Y)$} (d)
                (a) edge node[left]   {$\Fun(B,p)$} (c)
                (b) edge node[right]  {$\Fun(A,p)$} (d)
                ;
            \end{tikzpicture}
      \end{equation}
      has a fully-faithful left adjoint.
    \end{restatable}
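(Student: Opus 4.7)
The plan is to construct the desired left adjoint by combining the adjunction $F \adj U$ with the cocartesian lifting structure of $p$, and then to verify fully-faithfulness by checking directly that the unit of the resulting adjunction is an equivalence.

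First I would unpack the shapes involved. An object of the codomain consists of a triple $(\phi, \psi, \alpha)$ with $\phi \colon A \to X$, $\psi \colon B \to Y$, and $\alpha \colon p \circ \phi \twoto \psi \circ F$ a natural transformation, while an object of the domain consists of a triple $(\phi', \Phi, \beta)$ with $\phi' \colon A \to X$, $\Phi \colon B \to X$, and $\beta \colon \phi' \twoto \Phi \circ F$. Modulo the identification of \autoref{weirdadjointablecocartfiblemma}, the functor in the statement is induced by postcomposition with $p$ and sends $(\phi', \Phi, \beta)$ to $(\phi', p \circ \Phi, p \beta)$.

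Given $(\phi, \psi, \alpha)$ I would produce its proposed left adjoint image as follows. Transpose $\alpha$ across $F \adj U$ to obtain a natural transformation $\tilde{\alpha} \colon p \circ \phi \circ U \twoto \psi$ in $\Fun(B, Y)$, by whiskering with $U$ and composing with the counit $FU \twoto \id_B$. Using that $\Fun(B,p) \colon \Fun(B,X) \to \Fun(B,Y)$ is again a cocartesian fibration, take the $\Fun(B,p)$-cocartesian lift of $\tilde{\alpha}$ with source $\phi \circ U$, producing a functor $\Phi \colon B \to X$ with $p \circ \Phi \equiv \psi$ together with a cocartesian natural transformation $\gamma \colon \phi U \twoto \Phi$ satisfying $p \gamma \equiv \tilde{\alpha}$. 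Define $\beta$ as the composite
\[
\phi \xRightarrow{\phi \eta} \phi U F \xRightarrow{\gamma F} \Phi F ,
\]
where $\eta$ is the unit of $F \adj U$. The assignment $(\phi,\psi,\alpha) \mapsto (\phi, \Phi, \beta)$ is the candidate for the left adjoint, and its total composite with the given functor sends $(\phi,\psi,\alpha)$ to $(\phi, p\Phi, p \beta)$, which by construction is canonically equivalent to $(\phi,\psi,\alpha)$. This equivalence will constitute the unit and in particular show the left adjoint is fully-faithful.

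To establish that this is indeed a left adjoint, I would verify the required universal property by tracing the data: a morphism in the domain from $(\phi,\Phi,\beta)$ to an arbitrary $(\phi_0, \Phi_0, \beta_0)$ consists, essentially, of a compatible pair $\phi \twoto \phi_0$ and $\Phi \twoto \Phi_0$; by the universal property of the cocartesian lift $\gamma$ (applied fibrewise over $B$), the data of a map $\Phi \twoto \Phi_0$ over $\psi \twoto p \Phi_0$ is equivalent to the data of a map $\phi U \twoto \Phi_0$ over $\tilde{\alpha} \twoto p \Phi_0$, which in turn, via $F \adj U$ and the construction of $\beta$, matches the data of a morphism in the codomain $(\phi,\psi,\alpha) \to (\phi_0, p\Phi_0, p\beta_0)$. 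The counit is then the factorisation map constructed from the cocartesian lifts at each stage.

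\textbf{Main obstacle.} The hardest part is not the pointwise construction but its coherent implementation in the synthetic $(\infty,1)$-categorical setting: I need to know that cocartesian lifts of natural transformations through $\Fun(B,p)$ can be taken functorially in the input datum, and that their universal property upgrades to the required equivalence of Hom-spaces in $\Fun(\Delta^1, \Fun(A,X)) \pullback{\Fun(A,X)} \Fun(B,X)$. This is precisely the content that \autoref{weirdadjointablecocartfiblemma} and the supporting appendix on cocartesian fibrations are designed to package, so the real work reduces to identifying the functor above with the mate construction of that lemma and then invoking it.
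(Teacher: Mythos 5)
Your proposal is correct and follows essentially the same route as the paper: the explicit recipe you give (transpose $\alpha$ across $F \adj U$, take the $\Fun(B,p)$-cocartesian lift, then precompose with the unit $\eta$) is precisely the informal picture the paper itself sketches at the start of the subsection, and the rigorous content is, as you say at the end, exactly the application of \autoref{weirdadjointablecocartfiblemma} to the given adjointable square with left vertical leg the cocartesian fibration $\Fun(B,p)$. The only point worth making explicit is that $\Fun(B,p)$ being a cocartesian fibration and the adjointability of the square are themselves consequences of $\Fun(B,-)$ preserving (fully-faithful) adjoints, i.e.\ \autoref{FunApreservesadjunctionsandFunFgivesadjointablesquares}.
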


    \begin{restatable}{corollary}{liftingpropertyofffladjwrtcocartfib}
      \label{liftsoffullandfaithfulleftadjointsagainstcocartesianfibrationsincommutativesquares}
      Let $F \colon A \to B$ a left adjoint and
      $p \colon X \fibration Y$ a cocartesian fibration.
      If we furthermore assume the left adjoint $F$ to be fully-faithful,
      then the adjunction from 
      \autoref{laxliftingpropertyofleftadjointsagainstcocartesianfibrations}
      restricts on both sides to an adjunction
      \begin{equation}
            \begin{tikzpicture}[diagram]
                \matrix[objects] {%
                |(a)| \Fun(B,X) \& \& |(b)| \Fun(A,X) \times^{(p_\ast,{}_\ast F)}_{\Fun(A,Y)} \Fun(B,Y) \\
                \& |(c)| \Fun(B,Y)  \\
                };
                \path[maps,->]
                (a) edge node[above] (U) {$({}_\ast F, p_\ast)$} (b)
                (a) edge node[below left]   {$p_\ast$} (c)
                (b) edge node[below right]  {$\pr_1$} (c)
                ;
                \path[maps,{Hooks[left]}->]
                (b) edge[bend right] node[above] (F) {$$} (a)
                ;
                \node[rotate=-90] at ($ (U) ! 0.5 ! (F) $) {$\adj$};
            \end{tikzpicture}
      \end{equation}
      which even lives over $\Fun(B,Y)$ via the indicated functors in the 
      above diagram.

      In this particular situation, by further unpacking the proof of 
      \autoref{laxliftingpropertyofleftadjointsagainstcocartesianfibrations},
      we can characterize the essential image of the fully-faithful left adjoint 
      of this adjunction via \autoref{characterizationofessimoffullandfaithfulradj} 
      as exactly those functors $L \colon B \to X$ whose whiskering $L \epsilon$, 
      where $\epsilon$ is the counit of the adjunction $F \adj U$, 
      is a $p$-cocartesian lift.

    \end{restatable}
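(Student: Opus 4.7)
The plan is to restrict the adjunction of \autoref{laxliftingpropertyofleftadjointsagainstcocartesianfibrations} to suitable full subcategories on both sides. Both $\Fun(B,X)$ and $\Fun(A,X) \times^{(p_\ast,{}_\ast F)}_{\Fun(A,Y)} \Fun(B,Y)$ embed into their lax counterparts as the full subcategories on pairs whose $\Delta^1$-component is a constant (identity) natural transformation, and the indicated projections to $\Fun(B,Y)$ are preserved by these inclusions. The canonical right adjoint in the lax adjunction visibly restricts, because it sends a strict pair $(\id_{LF}, L)$ to $(LF, \id_{pLF}, pL)$, which is strict again.

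The content of the corollary is that the fully-faithful left adjoint also restricts, which is where fully-faithfulness of $F$ enters. To this end I would unpack the construction of that left adjoint on strict input $(h, M)$ satisfying $ph = MF$: it should produce $L \colon B \to X$ as the pointwise $p$-cocartesian lift in $\Fun(B,X)$ starting at $hU$ and lying over the whiskered counit $M\epsilon \colon MFU \twoto M$, together with a comparison $\phi \colon h \twoto LF$ assembled as $h \xto{h\eta} hUF \to LF$, where the second morphism is obtained from the cocartesian lift by whiskering with $F$. Fully-faithfulness of $F$ makes $\eta$ invertible, and the triangle identity $\epsilon F \cdot F\eta = \id_F$ then forces $\epsilon F$ to be invertible; since cocartesian pushforwards of equivalences are themselves equivalences, the whiskered lift is invertible as well, so $\phi$ is an equivalence. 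Hence the output lies, up to equivalence, in the strict subcategory, as required. Fully-faithfulness of the restricted left adjoint is inherited from its lax counterpart, and the fact that the restricted adjunction lives over $\Fun(B,Y)$ follows because the units, counits, and comparison natural transformations above all project to identities via $p_\ast \circ \pr_1$.

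For the characterization of the essential image I would apply \autoref{characterizationofessimoffullandfaithfulradj}: in the lax setting the essential image of the fully-faithful left adjoint consists exactly of those pairs $(\phi, L)$ for which $\phi$ coincides with the cocartesian-lift construction sketched above. Restricting to a strict pair $(\id_{LF}, L)$, so that $h = LF$ and $M = pL$, the condition becomes that the cocartesian lift at $hU = LFU$ over $(pL)\epsilon$ is realized by $L\epsilon \colon LFU \twoto L$, \ie that $L\epsilon$ is pointwise $p$-cocartesian.

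The main obstacle is carrying out the unpacking in the middle paragraph in a way that makes precise how the lax left adjoint behaves on strict input; once the concrete description of $\phi$ as $h\eta$ followed by a whiskered cocartesian lift is in hand, the triangle identity and the invertibility of $\eta$ do all the remaining work.
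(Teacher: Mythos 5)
Your proposal is correct and follows essentially the same route as the paper: both restrict the lax lifting adjunction along the diagonal embeddings of the strict (co)domains into their lax counterparts, with the invertibility of the unit $\eta$ of $F \adj U$ (hence of $\epsilon F$ by the triangle identity, hence of the whiskered cocartesian lift over it) doing the work of showing that the left adjoint lands back in the strict subcategory. Your explicit unpacking of the lax left adjoint on strict input, and the resulting identification of the essential image with those $L$ for which $L\epsilon$ is $p$-cocartesian, is precisely the ``further unpacking'' the paper alludes to but leaves implicit.
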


    The fact that $\iota$ is a fully-faithful left adjoint will be deduced from the stability of 
    right adjoints with fully-faithful left adjoints under pullback.
    This can be found proven in \autoref{subsectionadjunctionsadjointabilityandcocartfibs}.

    \begin{restatable}{lemma}{pullbackoflalis}
        \label{pullbackofadjunctions}
        Let
        \begin{equation*}
            \begin{tikzpicture}[diagram]
                \matrix[objects]{%
                    |(a)| X \& |(b)| A \\
                    |(c)| Y \& |(d)| B \\
                };
                \path[maps,->]
                    (a) edge node[above] {$H$} (b)
                    (a) edge node[left] {$V$} (c)
                    (b) edge node[right] {$U$} (d)
                    (c) edge node[below] {$K$} (d)
                ;
                \node at (barycentric cs:a=0.8,b=0.3,c=0.3) (phi) {\mbox{\LARGE{$\lrcorner$}}};
            \end{tikzpicture}
        \end{equation*}
        be a pullback square such that $U$ has a fully-faithful left adjoint $F$.
        Then its pullback $V$ also has a fully-faithful left adjoint $G$, , 
        and the pullback square is furthermore horizontally adjointable.
        Additionally, one can compute the fully-faithful left adjoint $G$ as the pullback of $F$ along $H$,
        and the (co)unit of the adjunction $G \adj V$ 
        as the pullback of the (co)unit of $F \adj U$ along $K$, respectively $H$.
    \end{restatable}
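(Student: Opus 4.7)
The plan is to build $G$ as a pullback of $F$ using the universal property of the pullback square, then transport the adjunction data of $F \adj U$ through the pullback to obtain the adjunction $G \adj V$, and finally verify the triangle identities by projection back to $A$ and $Y$.

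First, since $F$ is fully-faithful, the unit $\eta \colon \id_B \twoto UF$ is invertible, so $UFK \equiv K$ canonically. The universal property of the pullback applied to the compatible pair $(FK \colon Y \to A,\ \id_Y \colon Y \to Y)$ yields a functor $G \colon Y \to X$ equipped with canonical equivalences $HG \equiv FK$ and $VG \equiv \id_Y$. The former realises $G$ as a pullback of $F$ along $H$ and immediately witnesses the horizontal adjointability of the original square; the latter, inverted, is the candidate unit $\eta' \colon \id_Y \twoto VG$ of $G \adj V$ and is invertible by construction, so $G$ will be fully-faithful once the adjunction is established.

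For the counit, I would use that $HGV \equiv FKV \equiv FUH$ (invoking also $KV \equiv UH$ from the pullback square), so the whiskered counit $\epsilon H \colon HGV \twoto H$, paired with $\id \colon VGV \twoto V$, provides the data of a natural transformation $GV \twoto \id_X$ between functors into the pullback $X$. The compatibility of these two components after projection to $B$ reduces to the triangle identity $U \epsilon \cdot \eta U \equiv \id_U$ of the original adjunction and is therefore satisfied, producing $\epsilon'$ by the universal property of the pullback at the level of $2$-cells. The two triangle identities of $G \adj V$ are in turn natural identities between functors into $X$, and hence can be checked after projection along $H$ and $V$, where they reduce to the triangle identities of $F \adj U$ and to trivial identities respectively. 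By construction $\eta'$ and $\epsilon'$ pull back to $\eta$ and $\epsilon$ along $K$ and $H$, which gives the final compatibility claim of the lemma.

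The main obstacle is rigorously applying the universal property of the pullback at the level of natural transformations (equivalently, of $2$-cells) in a synthetic $(\infty,1)$-categorical setting, in particular when constructing $\epsilon'$ and verifying the triangle identities. To handle this cleanly, I would either formulate the whole argument in the language of the mate calculus of adjointable squares, or view natural transformations as functors out of $\Delta^1$ and apply the pullback universal property objectwise, invoking the fact that $\Fun(\Delta^1,-)$ preserves limits and hence commutes with the pullback defining $X$.
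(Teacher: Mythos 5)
Your construction follows the paper's proof essentially step for step: $G$ and the invertible unit $\eta'$ arise from the universal property of the pullback applied to the compatible pair $(FK,\id_Y)$ (compatibility over $B$ coming from the invertible unit of $F \adj U$), and the counit $\epsilon'$ arises from the universal property of $\Fun(\Delta^1,X) \equiv \Fun(\Delta^1,A) \times_{\Fun(\Delta^1,B)} \Fun(\Delta^1,Y)$ applied to the pair $(\epsilon H,\ \eta' V)$, with compatibility over $\Fun(\Delta^1,B)$ supplied by the triangle identity $U\epsilon \equiv (\eta U)^{-1}$. Your proposed fix for the ``main obstacle'' --- treating $2$-cells as functors into $\Fun(\Delta^1,-)$ and using that this preserves the defining pullback --- is exactly how the paper makes these constructions precise, via levelwise-pullback prisms.

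One inference as written does not follow, although its conclusion is true. The identification $HG \equiv FK$ produced by the universal property does not ``immediately witness'' the horizontal adjointability of the square: adjointability is the assertion that the \emph{canonical} mate $FK \twoto FKVG \twoto FUHG \twoto HG$ is invertible, and the mere existence of some equivalence $HG \equiv FK$ is a priori weaker. Since $\eta'$ and the commutativity datum $KV \equiv UH$ are invertible, invertibility of the mate reduces to invertibility of $\epsilon HG$; by construction of $\epsilon'$ one has $\epsilon HG \equiv H_\ast(\epsilon' G)$, and $\epsilon' G \equiv G(\eta')^{-1}$ is invertible --- this is precisely how the paper closes that step, and you should supply this argument rather than the bare identification. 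A related, smaller caution applies to ``checked after projection along $H$ and $V$'' for the triangle identities: identifying two $2$-cells valued in the pullback $X$ requires not only identifying both projections but also making those identifications agree over $\Fun(Y,B)$ (respectively $\Fun(X,B)$); your $\Fun(\Delta^1,-)$ device handles this, but the over-$B$ compatibility is the content, not an afterthought.
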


    \begin{proposition}
      \label{operadicenvelopeconstrisinertcocartfunclassifier}
      Let $p \colon A \fibration \simplexcat^\op$ be a Segal cocartesian fibration.
      Then the envelope construction 
      \begin{equation*}
        \ev_1 \colon A \times_{\simplexcat^\op} \Fun^{\text{active}}(\Delta^1,\simplexcat^\op) \to \simplexcat^\op
      \end{equation*}
      is an \define{inert-cocartesian functor classifier} in the sense that for any 
      other Segal cocartesian fibration $q \colon B \fibration \simplexcat^\op$ we have that 
      precomposition with the inert-cocartesian functor 
      \begin{equation*}
        \iota \colon A \to A \times_{\simplexcat^\op} \Fun^{\text{active}}(\Delta^1,\simplexcat^\op)
      \end{equation*}
      over $\simplexcat^\op$ gives an equivalence of restricted relative functor
      categories over $\simplexcat^\op$
      \begin{equation*}
        \Fun^{\text{cocart}}_{/ \simplexcat^\op}(A \times_{\simplexcat^\op} \Fun^{\text{active}}(\Delta^1,\simplexcat^\op), B) \xto{{}_\ast \iota \quad \equiv} \Fun^{\text{inert-cocart}}_{/ \simplexcat^\op}(A,B)
      \end{equation*}
      where on the left hand side we chose the full subcategory on cocartesian 
      functors and on the right hand side the full subcategory on inert-cocartesian
      functors in $\Fun_{/ \simplexcat^\op}(-,-)$.
      This tells us that $\iota$ is the initial inert-cocartesian functor 
      over $\simplexcat^\op$ out of $p \colon A \fibration \simplexcat^\op$.
    \end{proposition}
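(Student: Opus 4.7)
The plan is to view $\iota$ as a fully-faithful left adjoint between total categories and apply the machinery of Corollary \ref{liftsoffullandfaithfulleftadjointsagainstcocartesianfibrationsincommutativesquares}. The diagonal $\Delta_{\simplexcat^\op}\colon \simplexcat^\op \to \Fun^{\text{active}}(\Delta^1, \simplexcat^\op)$ is fully-faithful left adjoint to $\ev_0$, since $\ev_0 \circ \Delta_{\simplexcat^\op} = \id$ produces an identity unit and the adjunction $\Delta \adj \ev_0$ holds already in $\Fun(\Delta^1, \simplexcat^\op)$ and restricts to the full subcategory on active diagrams. Applying Lemma \ref{pullbackofadjunctions} to the defining pullback of $\Env(A)$ along $p$ and $\ev_0$, the projection $\pr_1 \colon \Env(A) \to A$ then has a fully-faithful left adjoint obtained as the pullback of $\Delta_{\simplexcat^\op}$, and this pullback is readily identified with $\iota$ itself.

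Applying Corollary \ref{liftsoffullandfaithfulleftadjointsagainstcocartesianfibrationsincommutativesquares} to this $\iota$ and the cocartesian fibration $q\colon B \fibration \simplexcat^\op$, and taking fibers over the structure morphism $\ev_1 \circ \pr_2 \colon \Env(A) \to \simplexcat^\op$, yields an adjunction
\begin{equation*}
  \Fun_{/\simplexcat^\op}(\Env(A), B) \rightleftarrows \Fun_{/\simplexcat^\op}(A, B)
\end{equation*}
whose right adjoint is precomposition with $\iota$ and whose fully-faithful left adjoint has essential image exactly those $L \colon \Env(A) \to B$ over $\simplexcat^\op$ for which $L\epsilon$ is pointwise $q$-cocartesian, where $\epsilon$ denotes the counit of $\iota \adj \pr_1$. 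It remains to show that this adjunction further restricts to an equivalence between cocartesian and inert-cocartesian functors.

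The forward direction is immediate: since $\iota$ is inert-cocartesian by the preceding proposition, precomposing a cocartesian functor with $\iota$ produces an inert-cocartesian functor. For the reverse direction, the plan is to use the explicit formula for cocartesian lifts in $\Env(A)$ to verify two facts. First, each counit $\epsilon_{(\alpha,\sigma)} \colon (\id_{\Delta^m}, \sigma) \to (\alpha, \sigma)$ is itself a cocartesian morphism of $\Env(A)$, namely the cocartesian lift of $\alpha^{\op}$ via the trivial active-inert factorization of the already active $\alpha$. Second, any cocartesian lift $(\alpha, \sigma) \to (\beta, \theta_! \sigma)$ along $\gamma^{\op}$, when precomposed with $\epsilon_{(\alpha,\sigma)}$, is equivalent to the composite
\begin{equation*}
  (\id_{\Delta^m}, \sigma) \xrightarrow{\iota(\theta_! \text{-lift})} (\id_{\Delta^l}, \theta_! \sigma) \xrightarrow{\epsilon_{(\beta, \theta_! \sigma)}} (\beta, \theta_! \sigma),
\end{equation*}
since both are cocartesian lifts of $(\alpha\gamma)^{\op} = (\theta\beta)^{\op}$ starting at $(\id_{\Delta^m}, \sigma)$. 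Given $L$ with $L\iota$ inert-cocartesian and $L\epsilon$ $q$-cocartesian, $L$ applied to the right-hand composite is $q$-cocartesian, and the cancellation property of cocartesian morphisms applied to the left-hand composite forces $L$ of the $\gamma$-lift to be $q$-cocartesian, so $L$ is cocartesian.

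The main obstacle is the bookkeeping in the last step: correctly identifying the counit of $\iota \adj \pr_1$ from the pullback description and establishing the two-path decomposition, both of which rely on careful tracking through the active-inert factorization system on $\simplexcat$ together with the explicit cocartesian lifting formula for $\Env(A)$. Once these identifications are in place, matching the subcategories is a purely formal consequence of the adjunction combined with the standard pasting and cancellation properties of cocartesian morphisms.
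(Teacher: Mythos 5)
Your proposal is correct and follows essentially the same route as the paper: realize $\iota$ as a fully-faithful left adjoint by pulling back $\Delta_{\simplexcat^\op}\adj\ev_0$, invoke \autoref{liftsoffullandfaithfulleftadjointsagainstcocartesianfibrationsincommutativesquares} and pass to the fiber over the structure map to get the lifting adjunction over $\simplexcat^\op$, and then match the essential-image condition ``$L\epsilon$ is $q$-cocartesian'' plus inert-cocartesianness of $L\iota$ with cocartesianness of $L$ via the two factorizations of a general cocartesian lift precomposed with the counit, finishing by left cancellation (\autoref{cocartesianliftscanbediagrammaticallyleftcancelled}). The only cosmetic difference is that you justify the agreement of the two composites by essential uniqueness of cocartesian lifts over a common base morphism, where the paper checks it by direct inspection of the active--inert factorizations; both are fine.
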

    \begin{proof}
      We will proceed as layed out in \cite[Chapter 11, Theorem A.1.5]{GR}.
      First, we observe that the canonical adjunction
      \begin{equation*}
        \begin{tikzpicture}[diagram]
          \matrix[objects] {
            |(a)| \Fun(\Delta^1, \simplexcat^\op) \& |(b)| \simplexcat^\op \\
          };
          \path[maps,->] 
            (a) edge[bend left] node[above] (f) {$\Delta_{\simplexcat^\op}$} (b)
            (b) edge[bend left] node[below] (g) {$\ev_0$}  (a)
          ;
          \node[rotate=-90] at ($ (f) ! 0.5 ! (g) $) (psi) {$\adj$};
        \end{tikzpicture}
      \end{equation*}
      restricts from $\Fun(\Delta^1, \simplexcat^\op)$ down to the full subcategory 
      $\Fun^{\text{active}}(\Delta^1, \simplexcat^\op)$
      as identity morphism are in particular active.
      By applying \autoref{pullbackofadjunctions} 
      now to the defining pullback square 
      \[\begin{tikzcd}[cramped]
        {A \times_{\simplexcat^\op} \Fun^{\text{active}}(\Delta^1, \simplexcat^\op)} & A \\
        {\Fun^{\text{active}}(\Delta^1, \simplexcat^\op)} & {\simplexcat^\op}
        \arrow["{\pr_0}", from=1-1, to=1-2]
        \arrow["{\pr_1}", from=1-1, to=2-1]
        \arrow["p", from=1-2, to=2-2]
        \arrow[""{name=0, anchor=center, inner sep=0}, "{\ev_0}"', from=2-1, to=2-2]
        \arrow["\lrcorner"{anchor=center, pos=0.125}, draw=none, from=1-1, to=0]
      \end{tikzcd}\]
      we deduce that the fully-faithful functor $\iota$ which is given by
      \begin{equation*}
        A \equiv A \times_{\simplexcat^\op} \simplexcat^\op \xto{\id_A \times_{\id} \Delta_{(\simplexcat^\op)}} A \times_{\simplexcat^\op} \Fun^{\text{active}}(\Delta^1,\simplexcat^\op) 
      \end{equation*}
      actually is left adjoint to $\rho \coloneqq \pr_0 \colon A \times_{\simplexcat^\op} \Fun^{\text{active}}(\Delta^1,\simplexcat^\op) \to  A$ 
      on total categories with invertible unit.
      Note however, that this adjunction does not live over $\simplexcat^\op$ via $\ev_1$ and $p$, 
      \ie its counit will not become invertible after applying $\ev_1$.
      We know from \autoref{laxliftingpropertyofleftadjointsagainstcocartesianfibrations}
      and
      \autoref{liftsoffullandfaithfulleftadjointsagainstcocartesianfibrationsincommutativesquares}
      that for the adjunction $\iota \adj \rho$ and the cocartesian fibration $q$ we have the 
      restricted lifting adjunction

      \begin{adjustbox}{scale=0.9}
        \begin{tikzcd}[cramped]
          {\Fun(A \times_{\simplexcat^\op} \Fun^{\text{active}}(\Delta^1,\simplexcat^\op),B)} & {\Fun(A,B) \times^{(q_\ast,{}_\ast \iota)}_{\Fun(A,\simplexcat^\op)} \Fun(A \times_{\simplexcat^\op} \Fun^{\text{active}}(\Delta^1,\simplexcat^\op), \simplexcat^\op)} \\
          & {\Fun(A \times_{\simplexcat^\op} \Fun^{\text{active}}(\Delta^1,\simplexcat^\op), \simplexcat^\op)}
          \arrow[""{name=0, anchor=center, inner sep=0}, "{({}_\ast \iota, q_\ast)}", from=1-1, to=1-2]
          \arrow["{q_\ast}"', curve={height=18pt}, from=1-1, to=2-2]
          \arrow[""{name=1, anchor=center, inner sep=0}, curve={height=-18pt}, hook, from=1-2, to=1-1]
          \arrow["{\pr_1}", from=1-2, to=2-2]
          \arrow["\dashv"{anchor=center, rotate=153}, draw=none, from=1, to=0]
        \end{tikzcd}
      \end{adjustbox}
      \newline
      which even lives over 
      $\Fun(A \times_{\simplexcat^\op} \Fun^{\text{active}}(\Delta^1,\simplexcat^\op), \simplexcat^\op)$.
      This means we can pull it back to its fiber over the functor $\ev_1$
      and obtain an adjunction
        \[\begin{tikzcd}[cramped]
          {\Fun_{/ \simplexcat^\op}(A \times_{\simplexcat^\op} \Fun^{\text{active}}(\Delta^1,\simplexcat^\op),B)} & {\Fun_{/ \simplexcat^\op}(A,B)}
          \arrow[""{name=0, anchor=center, inner sep=0}, "{{}_\ast \iota}", from=1-1, to=1-2]
          \arrow[""{name=1, anchor=center, inner sep=0}, curve={height=-18pt}, hook, from=1-2, to=1-1]
          \arrow["\dashv"{anchor=center, rotate=26}, draw=none, from=1, to=0]
        \end{tikzcd}\]
      between the relative functor categories over $\simplexcat^\op$.
      Our goal now is to restrict this adjunction to the desired equivalence.
      In general, one always can restrict such an adjunction with invertible unit to an equivalence 
      if one restricts on the left hand side to the essential image of the fully-faithful left adjoint.
      Now as stated in 
      \autoref{liftsoffullandfaithfulleftadjointsagainstcocartesianfibrationsincommutativesquares}
      we can characterize the essential image of the fully-faithful left 
      adjoint as exactly those functors 
      $L \colon A \times_{\simplexcat^\op} \Fun^{\text{active}}(\Delta^1,\simplexcat^\op) \to B$
      over $\simplexcat^\op$,
      whose whiskering $L \epsilon$, where $\epsilon$ denotes the counit of the 
      adjunction $\iota \adj \rho$, is a $q$-cocartesian lift.
      Next we want to restrict the right hand side of our restricted
      relative functor category equivalence to inert-cocartesian functors.
      To prove the desired equivalence it now remains to show that for a functor 
      $L \colon A \times_{\simplexcat^\op} \Fun^{\text{active}}(\Delta^1,\simplexcat^\op) \to B$
      to be cocartesian is actually equivalent to asking $L \epsilon$ to be a $q$-cocartesian lift 
      together with asking that $L \circ \iota$ is inert-cocartesian.
      In order to prove this claim we first need an explicit description of the 
      counit of the adjunction $\iota \adj \rho$. 
      As the adjunction $\iota \adj \rho$ is the pullback of the 
      adjunction $\Delta_{(\simplexcat^\op)} \adj \ev_0$ its component at an 
      object $(\alpha \colon \Delta^n \xto{\text{active}} \Delta^m, \sigma)$ of 
      $A \times_{\simplexcat^\op} \Fun^{\text{active}}(\Delta^1,\simplexcat^\op)$
      is precisely the square
      \begin{equation}
          \begin{tikzpicture}[diagram]
              \matrix[objects] {%
              |(a)| \Delta^n \& |(b)| \Delta^m \\
              |(c)| \Delta^m \& |(d)| \Delta^m \\
              };
              \path[maps,->]
              (a) edge node[above]  {$\alpha$} (b)
              (a) edge node[below]  {$\text{active}$} (b)
              (c) edge node[below]  {$\id$} (d)
              (c) edge node[above]  {$\text{active}$} (d)
              (a) edge node[left]   {$\alpha$} (c)
              (b) edge node[right]  {$\id$} (d)
              ;
          \end{tikzpicture}
      \end{equation}
      seen as a morphism $\id_{\Delta^m} \to \alpha$ in 
      $\Fun^{\text{active}}(\Delta^1,\simplexcat^\op)$
      and the morphism $\id_\sigma$ over $\id_{\Delta^m}$ in $A$.
      Note that this is in fact a cocartesian lift in
      $A \times_{\simplexcat^\op} \Fun^{\text{active}}(\Delta^1,\simplexcat^\op)$.
      Also, images of inert-cocartesian lifts in $A$ under $\iota$ 
      are cocartesian in
      $A \times_{\simplexcat^\op} \Fun^{\text{active}}(\Delta^1,\simplexcat^\op)$
      as $\iota$ is inert-cocartesian.
      Now, to prove our remaining claim, let us take an arbitrary
      cocartesian lift of an arbitrary object 
      $(\alpha \colon \Delta^n \xto{\text{active}} \Delta^m, \sigma)$ in
      $A \times_{\simplexcat^\op} \Fun^{\text{active}}(\Delta^1,\simplexcat^\op)$,
      along an arbitrary simplex morphism $\gamma \colon \Delta^k \to \Delta^n$,
      \ie 
      \begin{equation}
          \begin{tikzpicture}[diagram]
              \matrix[objects] {%
              |(a)| \Delta^k \& |(b)| \Delta^l \\
              |(c)| \Delta^n \& |(d)| \Delta^m \\
              };
              \path[maps,->]
              (a) edge node[above]  {$\beta$} (b)
              (a) edge node[below]  {$\text{active}$} (b)
              (c) edge node[below]  {$\alpha$} (d)
              (c) edge node[above]  {$\text{active}$} (d)
              (a) edge node[left]   {$\gamma$} (c)
              (b) edge node[right]  {$\theta$} (d)
              (b) edge node[left]  {$\text{inert}$} (d)
              ;
          \end{tikzpicture}
      \end{equation}
      seen as a morphism $\alpha \to \beta$ in 
      $\Fun^{\text{active}}(\Delta^1,\simplexcat^\op)$
      and the $p$-cocartesian lift $\sigma \xto{\text{cocart}} \theta_! \sigma$
      of $\sigma$ along $\theta$.
      Let us depict this as in the following way, 
      where objects in $A$ are drawn over their fibers in $\simplexcat^\op$.
      \[\begin{tikzcd}[cramped]
        && {\theta_! \sigma} \\
        {\Delta^k} && {\Delta^l} & \sigma \\
        & {\Delta^n} && {\Delta^m}
        \arrow[no head, from=1-3, to=2-3]
        \arrow["\beta", from=2-1, to=2-3]
        \arrow["{\text{active}}"', from=2-1, to=2-3]
        \arrow["\gamma"', from=2-1, to=3-2]
        \arrow["\theta", hook, from=2-3, to=3-4]
        \arrow["{\text{inert}}"', hook, from=2-3, to=3-4]
        \arrow["{\text{cocart}}"', from=2-4, to=1-3]
        \arrow[no head, from=2-4, to=3-4]
        \arrow["\alpha", from=3-2, to=3-4]
        \arrow["{\text{active}}"', from=3-2, to=3-4]
      \end{tikzcd}\]
      Let us postcompose this with the component to the counit $\epsilon$ 
      at $(\alpha, \sigma)$ to get the following morphism.
      \[\begin{tikzcd}[cramped]
        && {\theta_! \sigma} \\
        {\Delta^k} && {\Delta^l} & \sigma \\
        & {\Delta^n} && {\Delta^m} & \sigma \\
        && {\Delta^m} && {\Delta^m}
        \arrow[no head, from=1-3, to=2-3]
        \arrow["\beta", from=2-1, to=2-3]
        \arrow["{\text{active}}"', from=2-1, to=2-3]
        \arrow["\gamma"', from=2-1, to=3-2]
        \arrow["\theta", hook, from=2-3, to=3-4]
        \arrow["{\text{inert}}"', hook, from=2-3, to=3-4]
        \arrow["{\text{cocart}}"', from=2-4, to=1-3]
        \arrow[no head, from=2-4, to=3-4]
        \arrow["\alpha", from=3-2, to=3-4]
        \arrow["{\text{active}}"', from=3-2, to=3-4]
        \arrow["\alpha"', from=3-2, to=4-3]
        \arrow[Rightarrow, no head, from=3-4, to=4-5]
        \arrow[Rightarrow, no head, from=3-5, to=2-4]
        \arrow[no head, from=3-5, to=4-5]
        \arrow[Rightarrow, no head, from=4-3, to=4-5]
      \end{tikzcd}\]
      This can in fact be factorized also in a different way as
      \[\begin{tikzcd}[cramped]
        && {\theta_! \sigma} \\
        {\Delta^k} && {\Delta^l} & {\theta_! \sigma} \\
        & {\Delta^l} && {\Delta^l} & \sigma \\
        && {\Delta^m} && {\Delta^m}
        \arrow[no head, from=1-3, to=2-3]
        \arrow["\beta", from=2-1, to=2-3]
        \arrow["{\text{active}}"', from=2-1, to=2-3]
        \arrow["\beta"', from=2-1, to=3-2]
        \arrow[Rightarrow, no head, from=2-3, to=3-4]
        \arrow[Rightarrow, no head, from=2-4, to=1-3]
        \arrow[no head, from=2-4, to=3-4]
        \arrow[Rightarrow, no head, from=3-2, to=3-4]
        \arrow["\theta"', hook, from=3-2, to=4-3]
        \arrow["{\text{inert}}", hook, from=3-2, to=4-3]
        \arrow["{\text{inert}}"', hook, from=3-4, to=4-5]
        \arrow["\theta", hook, from=3-4, to=4-5]
        \arrow["{\text{cocart}}"', from=3-5, to=2-4]
        \arrow[no head, from=3-5, to=4-5]
        \arrow[Rightarrow, no head, from=4-3, to=4-5]
      \end{tikzcd}\]
      Note that we have here another component of the counit $\epsilon$, 
      but at $(\beta, \theta_! \sigma)$, as well as the image 
      of an inert-cocartesian lift 
      $\sigma \xto{\text{cocart}} \theta_! \sigma$ under $\iota$.
      Now if our functor $L$ sends $\epsilon$ to a cocartesian
      lift as well as images of inert-cocartesian lifts under $\iota$,
      then it in particular sends the last depicted composite to a 
      cocartesian lift. But as this agrees with the second to last 
      depicted morphism, thus it also gets sent to a cocartesian lift.
      In this second to last composite, the first morphism is also sent to a 
      cocartesian lift by $L$, so by the left cancellation property of cocartesian lifts,
      for example proven as \autoref{cocartesianliftscanbediagrammaticallyleftcancelled}
      in the Appendix,
      we deduce that also our arbitrary chosen cocartesian lift 
      gets sent to something cocartesian.
      Thus the whole functor $L$ is cocartesian.
    \end{proof}

    Lastly, we will exhibit the existence of the strict $2$-functor $\lambda \colon \Env(A) \to A$ for the
    envelope construction. But in fact we will prove more: $\lambda$ will be the left adjoint to 
    the fully-faithful inert-cocartesian functor $\iota$ over $\simplexcat^\op$.
    The main ingredient for the proof will be the following $1$-categorical statement, a proof of 
    which can also be found in the Appendix in \autoref{subsectionmoreadvancedstabilityproperties}.

    \begin{restatable}{lemma}{superpullbackradjalongcocartfib}
      \label{pullbackofrightadjointalongcocartesianfibrationisrightadjoint}
      Let
      \begin{equation*}
            \begin{tikzpicture}[diagram]
                \matrix[objects]{%
                    |(a)| Y \& |(b)| X \\
                    |(c)| A \& |(d)| B \\
                };
                \path[maps,->]
                    (a) edge node[above] {$V$} (b)
                    (a) edge[->>] node[left] {$q$} (c)
                    (b) edge[->>] node[right] {$p$} (d)
                    (c) edge node[below] {$U$} (d)
                ;
                \node at (barycentric cs:a=0.8,b=0.3,c=0.3) (phi) {\mbox{\LARGE{$\lrcorner$}}};
            \end{tikzpicture}
      \end{equation*}
      be a pullback square with $p$ a cocartesian fibration and $U$ has a left adjoint $F$.
      Then its pullback $V$ also has a left adjoint, denoted by $G$, and the square is adjointable.
      Additionally, 
      $V$ is cocartesian over $U$
      and 
      $G$ is also a cocartesian functor over $F$.
      Furthermore, if the counit of $F \adj U$ is invertible, then the counit of $G \adj V$ will be 
      invertible too.

    \end{restatable}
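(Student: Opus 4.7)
The plan is to build the left adjoint $G$ pointwise as a cocartesian pushforward along the unit $\eta$ of $F \adj U$, and then read off the remaining claims from this explicit construction. Given an object $x$ of $X$, form the component $\eta_{p(x)} \colon p(x) \to UF(p(x))$ in $B$ and lift it to a $p$-cocartesian morphism $\alpha_x \colon x \to \tilde{x}$ in $X$ whose codomain lies over $UF(p(x))$. Since $p(\tilde{x}) = U(F(p(x)))$, the pair $(F(p(x)), \tilde{x})$ defines an object of $Y = A \times_B X$, which we declare to be $G(x)$, with $\alpha_x$ serving as the candidate component $x \to V(G(x)) = \tilde{x}$ of the unit of an adjunction $G \adj V$.

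To verify this adjunction, I would check that for every object $y = (a, z)$ of $Y$ precomposition with $\alpha_x$ induces an equivalence $\Map_Y(G(x), y) \to \Map_X(x, V(y))$. By the pullback description of $Y$, the left-hand side is the space of pairs consisting of a morphism $F(p(x)) \to a$ in $A$ and a compatible morphism $\tilde{x} \to z$ in $X$; transposing the first across $F \adj U$ yields a morphism $\beta \colon p(x) \to U(a) = p(z)$ in $B$, and $p$-cocartesianness of $\alpha_x$ identifies the space of lifts $\tilde{x} \to z$ over the resulting composite with the space of morphisms $x \to z$ in $X$ over $\beta$. Assembling the two equivalences identifies the left-hand side with $\Map_X(x, z)$, proving $G \adj V$; by construction the adjunction lives over $F \adj U$, so the square is horizontally adjointable.

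The cocartesianness claims then follow directly from the explicit description of cocartesian lifts in $Y$, namely that $q$-cocartesian lifts are pairs of an arbitrary morphism in $A$ and a $p$-cocartesian morphism in $X$; in particular $V$ sends them to $p$-cocartesian morphisms, so $V$ is cocartesian over $U$. For $G$, given a $p$-cocartesian $f \colon x \to x'$ over $\beta \coloneqq p(f)$, naturality of $\eta$ yields a commutative square relating $\alpha_x$, $f$, $\alpha_{x'}$, and a second-component morphism $\tilde{x} \to \tilde{x'}$ over $UF(\beta)$, which is by construction the $X$-component of $G(f)$. The composite $\alpha_{x'} \circ f$ is $p$-cocartesian as a composition of cocartesian lifts, factors as $\alpha_x$ followed by this second component, and $\alpha_x$ itself is $p$-cocartesian, so the left cancellation property of cocartesian lifts recalled as \autoref{cocartesianliftscanbediagrammaticallyleftcancelled} forces the second component of $G(f)$ to be $p$-cocartesian, making $G(f)$ a $q$-cocartesian lift.

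Finally, for the invertibility statement: at $y = (a, z)$ the counit $GV(y) \to y$ of $G \adj V$ has as first component the counit $\epsilon_a \colon FU(a) \to a$ of $F \adj U$, and as second component the morphism $\tilde{z} \to z$ over $U(\epsilon_a)$ characterized by the triangle identity $V(\epsilon^{G \adj V}_y) \circ \alpha_z = \id_z$; again this second component is $p$-cocartesian by left cancellation. Assuming $\epsilon$ is invertible, $U(\epsilon_a)$ is an equivalence, a $p$-cocartesian lift over an equivalence is itself an equivalence, and so the full counit of $G \adj V$ is invertible. The step I expect to require the most care is promoting the pointwise definition of $G$ to a genuine $\infty$-functor rather than a mere assignment on objects; I would sidestep this by appealing to the pointwise representability criterion for left adjoints available in any synthetic $(\infty,1)$-category theory, letting the universal properties of the pullback $Y$ and of the cocartesian lifts package the needed coherences automatically.
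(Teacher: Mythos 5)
Your proof is correct, but it takes a genuinely different route from the paper. You construct $G$ pointwise: push $x$ forward $p$-cocartesianly along $\eta_{p(x)}$, pair the result with $F(p(x))$ in the pullback, and verify initiality via the mapping-space computation (which is right -- the cocartesian pullback square for $\Map_X(\tilde{x},z)$ collapses against the pullback description of $\Map_Y$ exactly as you say). The paper instead argues globally: it translates ``$U$ has a left adjoint'' into ``$\ev_0 \colon \Fun(\Delta^1,B)\times_B A \to B$ has a fully-faithful left adjoint'' (the dual of \autoref{havingaradjointisequivtofreecocartfibhavingaffradj}), pulls that adjunction back through a factorisation of $\ev_0 \colon \Fun(\Delta^1,X)\times_X Y \to X$ into two functors each admitting fully-faithful left adjoints via \autoref{pullbackofadjunctions}, and then extracts adjointability from \autoref{adjointablilityequivtounitscommute} and cocartesianness of $G$ from \autoref{cocartiscocartfun}, \autoref{pullbackofcocartfunsgivescocartfib} and \autoref{enhancedpullbackofadjunctions}. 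Your approach is more elementary and yields an explicit object-level formula for $G$, from which the cocartesianness of $G$ (via left cancellation, \autoref{cocartesianliftscanbedia\-grammaticallyleftcancelled}) and the invertibility of the counit fall out transparently; the cost is that all the functoriality and coherence is delegated to the pointwise representability criterion for adjoints, which sits outside the elementary toolkit the appendix sets up for itself (though the paper does invoke the dual criterion later, in the proof of \autoref{relativeadjointsexistiftheyexistfiberwise}, so it is not foreign to the paper's foundations). The paper's route never leaves the mate/pullback calculus and so produces $G$, the adjointability datum, and its cocartesianness as functorial packages from the start. Both are valid; yours is the shorter argument if one is willing to grant the pointwise criterion as background.
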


    \begin{lemma}
      \label{leftadjtoiotaforoperadicenvconstr}
      Let $p \colon A \fibration \simplexcat^\op$ be a Segal cocartesian fibration.
      Then the inert-cocartesian functor 
      $$\iota \colon A \to A \times_{\simplexcat^\op} \Fun^{\text{active}}(\Delta^1,\simplexcat^\op)$$
      has a left adjoint $\lambda$ over $\simplexcat^\op$, such that the 
      counit of this adjunction is invertible, and which is also 
      a cocartesian functor.
      Furthermore, it corresponds to the identity functor
      on $A$ under the equivalence 
      \begin{equation*}
        \Fun^{\text{cocart}}_{/ \simplexcat^\op}(A \times_{\simplexcat^\op} \Fun^{\text{active}}(\Delta^1,\simplexcat^\op), A) \xto{{}_\ast \iota \quad \equiv} \Fun^{\text{inert-cocart}}_{/ \simplexcat^\op}(A,A)
      \end{equation*}
      of 
      \autoref{operadicenvelopeconstrisinertcocartfunclassifier}.
    \end{lemma}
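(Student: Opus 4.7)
The plan is to construct $\lambda$ by pulling back to the envelope the adjunction $\ev_1 \adj \Delta$ that exists on the base, and then to match the resulting functor with the classifier of $\id_A$ under the equivalence of \autoref{operadicenvelopeconstrisinertcocartfunclassifier}. First, the triple adjunction $\ev_1 \adj \Delta \adj \ev_0$ between $\Fun(\Delta^1, \simplexcat^\op)$ and $\simplexcat^\op$ restricts to an adjunction $\ev_1 \adj \Delta$ between $\Fun^{\text{active}}(\Delta^1, \simplexcat^\op)$ and $\simplexcat^\op$: the right adjoint $\Delta$ lands in identity simplex morphisms, which are in particular active, and the hom-set computation witnessing $\ev_1 \adj \Delta$ is unaffected by this restriction. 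Since $\ev_1 \circ \Delta = \id$, the counit of this restricted adjunction is moreover an identity, hence invertible.

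Next, observe that $\iota$ is, by construction, the pullback of $\Delta$ along $p$, fitting in the pullback square whose upper edge is $\iota$, left edge $p$, lower edge $\Delta$, and right edge $\pr_1 \colon \Env(A) \to \Fun^{\text{active}}(\Delta^1, \simplexcat^\op)$; here $\pr_1$ is a cocartesian fibration, being a pullback of $p$ along $\ev_0$. Applying \autoref{pullbackofrightadjointalongcocartesianfibrationisrightadjoint} to this pullback square produces a left adjoint $\lambda \adj \iota$ with invertible counit (by the preservation-of-invertible-counit clause of that lemma) such that $\lambda$ is a cocartesian functor over $\ev_1 \colon \Fun^{\text{active}}(\Delta^1, \simplexcat^\op) \to \simplexcat^\op$. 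In particular $p \circ \lambda \equiv \ev_1 \circ \pr_1$, which is exactly the structure map of $\Env(A)$ to $\simplexcat^\op$, so $\lambda$ automatically lives over $\simplexcat^\op$.

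It remains to upgrade "cocartesian over $\ev_1$" to "cocartesian over $\simplexcat^\op$" and to identify $\lambda$ via \autoref{operadicenvelopeconstrisinertcocartfunclassifier}. For the first, we invoke the explicit description of cocartesian lifts in $\Env(A)$ already gathered in the proof of \autoref{operadicenvelopeconstrisinertcocartfunclassifier}: a cocartesian lift in $\Env(A)$ over an arbitrary $\simplexcat^\op$-morphism $\gamma$ starting from $(\sigma, \alpha)$ is produced via the active-inert factorization $\alpha \gamma = \theta \beta$ together with the $p$-cocartesian pushforward of $\sigma$ along $\theta$, which is exactly the formula for the $\pr_1$-cocartesian lift over the resulting square in $\Fun^{\text{active}}(\Delta^1, \simplexcat^\op)$. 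Hence every cocartesian lift in $\Env(A)$ over $\simplexcat^\op$ is in particular $\pr_1$-cocartesian, and $\lambda$ therefore sends it to a $p$-cocartesian arrow, as required. For the second, the invertible counit gives $\lambda \circ \iota \equiv \id_A$, so under the equivalence ${}_\ast \iota$ of \autoref{operadicenvelopeconstrisinertcocartfunclassifier} the cocartesian functor $\lambda$ is mapped to the identity inert-cocartesian functor $\id_A$. The main obstacle is precisely this compatibility between the two notions of cocartesianness, which is resolved by inspecting the explicit formulae already present in the proof of \autoref{operadicenvelopeconstrisinertcocartfunclassifier}.
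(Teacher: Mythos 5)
Your proposal is correct and follows essentially the same route as the paper: restrict $\ev_1 \adj \Delta_{(\simplexcat^\op)}$ to the active arrow category, apply \autoref{pullbackofrightadjointalongcocartesianfibrationisrightadjoint} to the defining pullback square of $\iota$ to obtain $\lambda \adj \iota$ with invertible counit and cocartesian over $\ev_1$, and then read off the identification with $\id_A$ from $\lambda\circ\iota\equiv\id$. The only divergence is the upgrade from cocartesianness over $\ev_1$ to cocartesianness over $\simplexcat^\op$, which the paper handles abstractly by composing with the cocartesian functor $\ev_1$ over $\id$ and checking invertibility of a whiskered counit, whereas you verify it concretely from the explicit active--inert description of cocartesian lifts in $\Env(A)$; both arguments are valid.
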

    \begin{proof}
      The left adjoint $\lambda$ is constructed via 
      \autoref{pullbackofrightadjointalongcocartesianfibrationisrightadjoint}
      applied to the defining pullback square for $\iota$, \ie
      the left hand pullback square in the pullback factorisation
      \begin{equation*}
            \begin{tikzpicture}[diagram]
                \matrix[objects]{%
                  |(a)| A \& |(b)| A \times_{\simplexcat^\op} \Fun^{\text{active}}(\Delta^1,\simplexcat^\op) \& |(x)| A \\
                  |(c)| \simplexcat^\op \& |(d)| \Fun^{\text{active}}(\Delta^1,\simplexcat^\op) \& |(y)| \simplexcat^\op \\
                };
                \path[maps,->]
                    (a) edge node[above] {$\iota$} (b)
                    (b) edge node[above] {$\pr_0$} (x)
                    (a) edge[->>] node[left] {$$} (c)
                    (x) edge[->>] node[left] {$$} (y)
                    (b) edge[->>] node[right] {$\pr_1$} (d)
                    (c) edge node[below] {$\Delta_{(\simplexcat^\op)}$} (d)
                    (d) edge node[below] {$\ev_0$} (y)
                    (a) edge[-,double distance=0.2em,out=20,in=160] (x)
                    (c) edge[-,double distance=0.2em,out=-20,in=-160] (y)
                ;
                \node at (barycentric cs:a=0.8,b=0.3,c=0.3) (phi) {\mbox{\LARGE{$\lrcorner$}}};
                \node at (barycentric cs:b=0.8,x=0.3,d=0.3) (phi) {\mbox{\LARGE{$\lrcorner$}}};
            \end{tikzpicture}
      \end{equation*}
      where we use that the other canonical adjunction $\ev_1 \adj \Delta_{(\simplexcat^\op)}$
      also restricts along the full subcategory inclusion 
      $\Fun^{\text{active}}(\Delta^1,\simplexcat^\op) \mono \Fun(\Delta^1,\simplexcat^\op)$,
      and that its counit is invertible.
      The adjointability statement of
      \autoref{pullbackofrightadjointalongcocartesianfibrationisrightadjoint}
      automatically gives us that 
      \begin{equation*}
            \begin{tikzpicture}[diagram]
                \matrix[objects]{%
                  |(a)| A \& |(b)| A \times_{\simplexcat^\op} \Fun^{\text{active}}(\Delta^1,\simplexcat^\op) \\
                  |(c)| \simplexcat^\op \& |(d)| \Fun^{\text{active}}(\Delta^1,\simplexcat^\op) \\
                };
                \path[maps,->]
                    (b) edge node[above] {$\lambda$} (a)
                    (a) edge[->>] node[left] {$$} (c)
                    (b) edge[->>] node[right] {$\pr_1$} (d)
                    (d) edge node[below] {$\ev_1$} (c)
                ;
            \end{tikzpicture}
      \end{equation*}
      commutes, \ie $\lambda$ defines a functor over $\simplexcat^\op$, which is even cocartesian 
      over $\ev_1$.
      But we also have that $\ev_1$
      \begin{equation*}
            \begin{tikzpicture}[diagram]
                \matrix[objects]{%
                  |(a)| \simplexcat^\op \& |(b)| \Fun^{\text{active}}(\Delta^1,\simplexcat^\op) \\
                  |(c)| \simplexcat^\op \& |(d)| \simplexcat^\op \\
                };
                \path[maps,->]
                    (b) edge node[above] {$\ev_1$} (a)
                    (a) edge[->>] node[left] {$\id$} (c)
                    (b) edge[->>] node[right] {$\ev_1$} (d)
                    (d) edge node[below] {$\id$} (c)
                ;
            \end{tikzpicture}
      \end{equation*}
      is itself a cocartesian functor from itself to 
      the identity. Putting these two observations together we obtain, that
      $\lambda$ is a cocartesian functor
      \begin{equation*}
            \begin{tikzpicture}[diagram]
                \matrix[objects]{%
                  |(a)| A \& |(b)| A \times_{\simplexcat^\op} \Fun^{\text{active}}(\Delta^1,\simplexcat^\op) \\
                  |(c)| \simplexcat^\op \& |(d)| \Fun^{\text{active}}(\Delta^1,\simplexcat^\op) \\
                  |(x)| \simplexcat^\op \& |(y)| \simplexcat^\op \\
                };
                \path[maps,->]
                    (b) edge node[above] {$\lambda$} (a)
                    (a) edge[->>] node[left] {$$} (c)
                    (b) edge[->>] node[right] {$\pr_1$} (d)
                    (d) edge node[below] {$\ev_1$} (c)
                    (c) edge[->>] node[left] {$\id$} (x)
                    (d) edge[->>] node[right] {$\ev_1$} (y)
                    (y) edge node[below] {$\id$} (x)
                ;
            \end{tikzpicture}
      \end{equation*}
      essentially because the needed vertical adjointability boils down to having a whiskered counit 
      \begin{equation*}
        (\epsilon^{\cocart_p \adj (\ev_0,p_\ast)}) \circ \lambda_\ast \circ \cocart_{\pr_1} \circ \cocart_{\ev_1}
      \end{equation*}
      invertible.
      Here $(\epsilon^{\cocart_p \adj (\ev_0,p_\ast)})$ denotes the counit of the adjunction 
      $\cocart_p \adj (\ev_0,p_\ast)$ which witnesses the cocartesianness of $p$ as in 
      \autoref{definitioncocartesianfibrationandfunctor}.
      This in turn follows from the cocartesianness of $\lambda$ over $\ev_1$, 
      \ie the fact that the whiskered counit 
      $\epsilon^{\cocart_p \adj (\ev_0,p_\ast)} \circ \lambda_\ast \circ \cocart_{\pr_1}$
      is already invertible.
      The fact that $\lambda$ is cocartesian and $\lambda \circ \iota \equiv \id$ also gives us the last statement.
    \end{proof}

  \section{The Envelope satisfies the Universal Property of the Lax 2-Functor Classifier}
    \label{sectiontheenvelopesatisfiestheUPofthelaxfunclassifier}

    In this section we will show that the envelope construction $\Env(A)$ together with its 
    canonical strict $2$-functor $\lambda \colon \Env(A) \to A$ satisfies the model-independent 
    universal property of the lax $2$-functor classifier given in \autoref{definitionlaxfunctorclassifier}.

    As a preliminary step we prove that for a $2$-functor
    \begin{equation*}
      \begin{tikzpicture}[diagram]
          \matrix[objects]{%
            |(a)| X \& \& |(b)| A \\
            \& |(c)| \simplexcat^\op \\
          };
          \path[maps,->]
              (a) edge node[above] {$F$} (b)
              (a) edge[->>] node[left] {$$} (c)
              (b) edge[->>] node[right] {$$} (c)
          ;
      \end{tikzpicture}
    \end{equation*}
    in our model of $2$-categories, having local right adjoint sections 
    is actually equivalent to having a fully-faithful inert-cocartesian right adjoint 
    on total categories, just as observed for 
    $\lambda \colon \Env(A) \to A$ in \autoref{leftadjtoiotaforoperadicenvconstr}.
    Equipped with this statement and its corresponding statement for $2$-functors commuting with 
    local right adjoint sections, we can then reformulate the model-independently defined 
    $1$-category $\LaxSect(A)$, which hosts our universal property, in a way that is 
    closely tailored to the model of globular complete Segal cocartesian fibrations 
    over $\simplexcat^\op$.
    After having achieved this goal we then proceed to step-by-step enhance the 
    inert-cocartesian functor classification property of $\Env(A)$ to finally ressemble initiality
    in our aformentioned model-dependent reformulation of $\LaxSect(A)$.

    The first two main ingredients for the preliminary step are about glueing fiberwise adjoints
    for a cocartesian functor between cocartesian fibrations together to an actual adjoint on the total categories, 
    which can already be found in \cite[Proposition 7.3.2.6]{ha},
    and what we need to know about the fiberwise adjoints in order to deduce further cocartesianness 
    of this global adjoint.
    These are proven in \autoref{subsectionrelativeadjoints}.

    \begin{restatable}{proposition}{reladjcanbegivenobjwinbase}
      \label{relativeadjointsexistiftheyexistfiberwise}
        Let $p \colon A \fibration I$, $q \colon B \fibration I$ be two cocartesian
        fibrations over some category $I$ with $\alpha \colon p \equiv q F$ and
        $F \colon A \to B$ a cocartesian functor over $I$. Let us denote for every object
        $i$ of $I$ by $F_i \colon A_i \to B_i$ the restriction of $F$, $A$ and $B$ to the
        fiber over $i$, \ie the pullbacks along $\Delta^0 \xto{i} I$.
        Then the following two statements are equivalent.
        \begin{enumerate}
            \item For all objects $i$ of $I$ the functor $F_i$ has a right adjoint $U_i$.
            \item the functor $F \colon A \to B$ has a right adjoint $U$, which is also a functor over
                $I$, \ie we have that the canonical mate 
                $q\epsilon \circ \alpha U \colon p U \twoto q F U \twoto q$ of $\alpha$ is invertible, 
                or equivalently the counit $\epsilon \colon FU \twoto \id$ of the adjunction is a 
                natural transformation over $I$, \ie the whiskering $q \epsilon$ is invertible.
        \end{enumerate}
    \end{restatable}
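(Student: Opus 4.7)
The plan is to prove the two implications separately, with $(2) \Rightarrow (1)$ being a routine restriction argument and $(1) \Rightarrow (2)$ carrying the main content. For the easy direction, the assumption that $q \epsilon \circ \alpha U$ is an equivalence $p U \twoto q$ together with $\alpha \colon p \equiv q F$ says that both $F$ and $U$ live over $I$, and the triangle identities then force the unit $\eta$ to also lie over $I$. Pulling back along $\Delta^0 \xto{i} I$ to each fiber yields the restricted functors $F_i \colon A_i \to B_i$ and $U_i \colon B_i \to A_i$, and restriction of the unit, counit, and triangle identities witnesses a fiberwise adjunction $F_i \adj U_i$.

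For the hard direction $(1) \Rightarrow (2)$, I would build $U$ and its counit pointwise using the cocartesianness of $p$, $q$, and $F$, and then invoke the pointwise representability criterion for right adjoints to assemble this into an actual adjunction. Concretely, for $b \in B$ with $q(b) = i$, set $U(b) \coloneqq U_i(b) \in A_i \subseteq A$ and choose as prospective counit component $\epsilon_b \colon F U(b) \to b$ the fiberwise counit of $F_i \adj U_i$, which is vertical in $B_i$. The universal property to verify is that precomposition with $\epsilon_b$ induces, for every $a \in A$, an equivalence between the mapping space in $A$ from $a$ to $U(b)$ and the mapping space in $B$ from $F(a)$ to $b$. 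Both sides fiber over the mapping space in $I$ from $p(a)$ to $i$, and I would check the equivalence fiberwise over each morphism $\gamma \colon p(a) \to i$. Using the $p$-cocartesian lift $a \to \gamma_! a \in A_i$, the left-hand fiber identifies with the space of vertical morphisms $\gamma_! a \to U_i(b)$ in $A_i$. Using the $q$-cocartesian lift $F(a) \to \gamma_! F(a) \in B_i$ together with the cocartesianness of $F$, which supplies $\gamma_! F(a) \equiv F_i(\gamma_! a)$, the right-hand fiber identifies with the space of vertical morphisms $F_i(\gamma_! a) \to b$ in $B_i$. The fiberwise adjunction $F_i \adj U_i$ then identifies these two spaces, and naturality in $\gamma$, $a$, and $b$ is inherited from the naturality of the cocartesian lifts and the fiberwise counit.

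The main obstacle is promoting this pointwise representability into an honest right adjoint functor $U \colon B \to A$ over $I$ together with a coherent counit $\epsilon \colon F U \twoto \id_B$, staying inside the synthetic $(\infty,1)$-categorical framework. I would handle this by invoking the standard pointwise existence criterion for right adjoints: once a representing object $U(b)$ and universal counit morphism $\epsilon_b$ have been specified for every $b \in B$ in a way compatible with the mapping-space equivalences above, this data assembles into a right adjoint $U$ with counit $\epsilon$. That $q \epsilon$ is automatically invertible follows from the fact that each chosen $\epsilon_b$ is vertical in $B_i$ by construction. This strategy parallels the proof of \cite[Proposition 7.3.2.6]{ha} cited in the excerpt.
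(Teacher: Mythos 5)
Your proposal is correct in outline but takes a genuinely different route for the hard direction $(1)\Rightarrow(2)$. The paper also reduces to a pointwise criterion for adjointness, but phrases it as: $F$ has a right adjoint iff each comma category $A \times_B B_{/b}$ has a terminal object. Its key technical input is that the inclusion $A_{q(b)} \times_{B_{q(b)}} (B_{q(b)})_{/b} \mono A \times_B B_{/b}$ is a (fully-faithful) \emph{right adjoint} --- obtained by building the cocartesian fibration $A \times_B \Fun(\Delta^1,B) \fibration \Fun(\Delta^1,I)$ from the cocartesianness of $F$, pulling back the adjunction $\ev_1 \adj \Delta_I$ along it, and checking the resulting adjunction lives over $B$. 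Right adjoints preserve terminal objects, so the fiberwise terminal object (supplied by $F_i \adj U_i$) is already terminal globally, and the counit is vertical by construction; the converse direction is the observation that a terminal object lying in a full subcategory is terminal there. You instead verify the representability directly, by fibering $\Map_A(a, U_i b)$ and $\Map_B(Fa, b)$ over $\Map_I(pa, i)$ and identifying the fibers over each $\gamma$ via cocartesian pushforward and the fiberwise adjunction. This is the classical computation and it does work, but note that the decomposition of mapping spaces of a cocartesian fibration as a ``fibration over $\Map_I$ with fibers the vertical mapping spaces after pushforward,'' together with its naturality in $a$, $b$ and $\gamma$, is itself a nontrivial coherence statement that the paper's elementary, model-independent framework deliberately avoids --- this is precisely why the paper routes everything through adjunctions between comma categories rather than through mapping spaces. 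If you want your argument to meet the paper's standard of rigor, that decomposition (and the subsequent assembly of the pointwise data into a functor $U$ with a coherent counit) is the step you would need to supply; the paper's construction of the right adjoint inclusion is, in effect, a way of packaging exactly that coherence. Your easy direction $(2)\Rightarrow(1)$ via restriction of the relative adjunction to fibers is fine and matches the paper's base-change lemma for relative adjunctions, though the paper's own proof of that direction again goes through terminal objects.
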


    %
    \begin{restatable}{corollary}{cocartnessofreladj}
      \label{cocartesiannessofradjisequivtoadjointabilityofcocartpushforwardsquareforladj}
      Let $F$ be a cocartesian functor
      \begin{equation}
            \begin{tikzpicture}[diagram]
                \matrix[objects] {%
                  |(a)| A \& \& |(b)| B \\
                  \& |(c)| I \\
                };
                \path[maps,->]
                (a) edge node[above]  {$F$} (b)
                (a) edge[->>] node[below left]   {$p$} (c)
                (b) edge[->>] node[below right]  {$q$} (c)
                ;
            \end{tikzpicture}
      \end{equation}
      between cocartesian fibrations $p$ and $q$.
      Let us futhermore assume that $F$ has a right adjoint $U \colon B \to A$ over the base $I$.
      Then the following statements are equivalent.
      \begin{enumerate}
        \item $U$ is a cocartesian functor.



        \item For every morphism $k \colon i \to j$ in $I$ the square
          \begin{equation}
                \begin{tikzpicture}[diagram]
                    \matrix[objects]{%
                        |(a)| A_i \& |(b)| B_i \\
                        |(c)| A_j \& |(d)| B_j \\
                    };
                    \path[maps,->]
                        (a) edge node[above] {$F_i$} (b)
                        (a) edge node[left] {$k_!$} (c)
                        (b) edge node[right] {$k_!$} (d)
                        (c) edge node[below] {$F_j$} (d)
                    ;
                \end{tikzpicture}
          \end{equation}
          is horizontally adjointable.

      \end{enumerate}
    \end{restatable}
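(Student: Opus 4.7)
The plan is to identify both conditions with the invertibility, for every morphism $k \colon i \to j$ in $I$, of a single natural transformation $k_!^A U_i \twoto U_j k_!^B$ comparing cocartesian pushforward with the fiberwise right adjoints. First I would recall, as developed in the Appendix, that a functor over $I$ between cocartesian fibrations is cocartesian precisely when the canonical Beck--Chevalley comparison on each fiber square indexed by a morphism in $I$ is an equivalence. Applied to $U$, condition (1) translates to invertibility of $k_!^A U_i \twoto U_j k_!^B$ for every $k$. Simultaneously, since $F$ is a cocartesian functor over $I$, the analogous comparison $k_!^B F_i \twoto F_j k_!^A$ is already invertible, so the $F$-square on fibers commutes up to equivalence. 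By \autoref{relativeadjointsexistiftheyexistfiberwise} the hypothesis $F \adj U$ over $I$ restricts to fiberwise adjunctions $F_i \adj U_i$ and $F_j \adj U_j$, so we can form the mate of this commuting $F$-square; horizontal adjointability of the fiber square in (2) is by definition the invertibility of precisely this mate $k_!^A U_i \twoto U_j k_!^B$.

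The heart of the proof is then to identify these two natural transformations: the intrinsic Beck--Chevalley comparison for $U$ on one hand, and the mate of the $F$-square on the other. Both are governed by the same universal recipe. Given an object $b$ in the fiber $B_i$, one applies $U$ to the $q$-cocartesian lift $b \to k_!^B b$ to obtain a morphism $U_i(b) \to U_j(k_!^B b)$ lying over $k$, and then takes its unique factorisation through the $p$-cocartesian lift of $k$ at $U_i(b)$. Both candidate transformations evaluate at $b$ to exactly this factorisation, as can be seen by expanding the mate using the triangle identities for $F_i \adj U_i$, the naturality of the counit $\epsilon$ of $F \adj U$, and invertibility of the $F$-comparison. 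With this identification in hand, the equivalence of (1) and (2) is immediate.

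The main obstacle is precisely this identification step, since it must be carried out model-independently, using only universal properties and without recourse to pointwise formulas. In the synthetic setting the mate is encoded as an adjointability condition on a square of functors rather than a formula on objects, and cocartesianness is packaged as an adjunction between $\cocart$-lift functors as in the preceding sections. The diagram chase thus becomes a manipulation of $2$-cells in $\Cat$ between functors built from $\cocart_p$, $\cocart_q$, the counit of $F \adj U$, and the $F$-cocartesianness datum, and one must verify that two such $2$-cells coincide after invoking the triangle identities. This is routine but not altogether trivial; once it is carried out, the equivalence of (1) and (2) is formal, and the two implications can be written down in parallel.
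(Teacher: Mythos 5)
Your proposal is correct and follows essentially the same route as the paper: both reduce the two conditions to the invertibility of one and the same comparison transformation $k_!\,U_i \twoto U_j\,k_!$, and the crux — that the Beck–Chevalley comparison for $U$ agrees with the mate of the $F$-square — is exactly what the paper isolates as the double-mate compatibility statement \autoref{bothwaysadjointablegivessametotalmate}, proved via the functoriality of the mate correspondence rather than a pointwise computation. The only real difference is packaging: the paper performs this identification once at the global level, on the squares built from $\Fun(\Delta^1,-)$ and the $\cocart$ adjunctions, and restricts objectwise only at the very end, whereas you carry it out fiberwise over each morphism $k$ from the start.
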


    The second main ingredient for the preliminary step is about pullbacks of left adjoints 
    being left adjoints again, a proof can be found in \autoref{subsectionmoreadvancedstabilityproperties}.

    \begin{restatable}{lemma}{cubepullbackofladj}
      \label{cubepullbackofladjwithadjfacesisladj}
        Consider the commutative cube of categories such that the top and bottom faces are
        pullbacks.
        \begin{equation}
            \begin{tikzpicture}[diagram]
                \matrix[objects,narrow] {%
                    |(a)| A_2 \& \& |(b)| A_1 \\
                    \& |(c)| A_0 \& \& |(d)| A \\
                    |(e)| B_2 \& \& |(f)| B_1 \\
                    \& |(g)| B_0 \& \& |(h)| B \\
                };
                \path[maps,->]
                (b) edge node[above]  {$$} (a)
                (f) edge node[above]  {$$} (e)

                (d) edge node[above]  {$$} (c)
                (h) edge node[above]  {$$} (g)

                (c) edge node[left]   {$$} (a)
                (g) edge node[left]   {$$} (e)

                (d) edge node[left]   {$$} (b)
                (h) edge node[left]   {$$} (f)

                (a) edge node[left]   {$F_2$} (e)
                (c) edge node[below left]   {$F_0$} (g)
                (b) edge node[below left]   {$F_1$} (f)
                (d) edge node[left]   {$F$} (h)
                ;
            \end{tikzpicture}
        \end{equation}
        If the $F_i$ are left adjoints and the back face as well as
        the left face are adjointable, then the pullback induced functor $F$ is also a left
        adjoint and the right and front faces are also adjointable.
        Additionally, if all the left adjoints $F_i$, or all the right
        adjoints $U_i$
        respectively, are fully-faithful, then so is the pulled-back one.
    \end{restatable}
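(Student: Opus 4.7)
The plan is to construct the right adjoint $U : B \to A$ via the pullback property of $A = A_0 \times_{A_2} A_1$, and to verify each further claim by detection on the pullback projections. Write $p_i^A : A \to A_i$ and $p_i^B : B \to B_i$ for the top and bottom pullback projections, and $q_i^A : A_i \to A_2$, $q_i^B : B_i \to B_2$ for the remaining edges (for $i = 0, 1$).

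To construct $U$, it suffices by the pullback property of $A$ to exhibit functors $V_i : B \to A_i$ together with an equivalence $q_0^A V_0 \simeq q_1^A V_1$. Take $V_i := U_i p_i^B$; the required equivalence is the composite of the inverse of the left-face mate $q_0^A U_0 \simeq U_2 q_0^B$, the pullback identification $q_0^B p_0^B \simeq q_1^B p_1^B$, and the back-face mate $U_2 q_1^B \simeq q_1^A U_1$. The two mates are equivalences by the assumed adjointability of the left and back faces, so this produces $U$ with $p_i^A U \simeq U_i p_i^B$; this last equivalence says precisely that the front and right faces are adjointable.

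For the adjunction $F \dashv U$, build the unit $\eta : \id_A \to UF$ via the pullback property of $A$ at the level of natural transformations. Using the commutativity $p_i^B F \simeq F_i p_i^A$ (front and right faces) together with the identification from the previous step, one has $p_i^A UF \simeq U_i F_i p_i^A$, so we may take $\eta_i \cdot p_i^A$ as the two components. Their compatibility after postcomposition with $q_i^A$ reduces, by standard mate bookkeeping, to the fact that in an adjointable square of left adjoints the mate precisely realizes the comparison between the unit on the boundary and the unit in the interior; this is guaranteed by the adjointability of the left and back faces. The counit $\epsilon : FU \to \id_B$ is constructed dually via the pullback property of $B$, and the triangle identities are checked after post-composing with the projections $p_i^A, p_i^B$, where they reduce to the corresponding identities for $F_i \dashv U_i$.

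For the fully-faithfulness claim, invertibility of $\eta$ is detected after postcomposition with $p_0^A$ and $p_1^A$, where each component is $\eta_i \cdot p_i^A$, invertible whenever $F_i$ is fully-faithful; the dual argument handles the case of fully-faithful $U_i$. The principal obstacle is the mate bookkeeping needed to verify that the two candidate components of the unit (and of the counit) agree after projection to $A_2$ (respectively $B_2$): while the individual components are immediate, matching them under the identifications supplied by the face adjointabilities requires carefully invoking the triangle identities and the standard compatibility between mates and units/counits in adjointable squares.
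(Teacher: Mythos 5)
Your construction of $U$ by gluing $U_0 p_0^B$ and $U_1 p_1^B$ along the mates of the left and back faces is sound, and it agrees in substance with what the paper does; likewise the candidate unit glued from the $\eta_i \cdot p_i^A$ is essentially the cube of units the paper assembles via \autoref{adjointablesquaregivesunitscommute}. The genuine gap is in the final step: you assert that the triangle identities ``are checked after post-composing with the projections $p_i^A, p_i^B$, where they reduce to the corresponding identities for $F_i \dashv U_i$.'' In the $(\infty,1)$-categorical setting this does not work. A triangle identity is not a property but a datum: a path in the mapping space $\Map_{\Fun(B,A)}(U,U)$ from $(U\epsilon)(\eta U)$ to $\id_U$. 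Since $\Fun(B,A) \simeq \Fun(B,A_0) \times_{\Fun(B,A_2)} \Fun(B,A_1)$, a path in this pullback is a pair of paths in the two factors \emph{together with} a homotopy between their images in $\Fun(B,A_2)$. The componentwise triangle identities of $F_0 \dashv U_0$ and $F_1 \dashv U_1$ supply the two paths, but the required compatibility over $A_2$ is a higher coherence between the triangle-identity witnesses of the three given adjunctions under the face mates; it is neither part of your hypotheses nor constructed by your argument, and producing it is the actual content of the lemma.

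This is exactly the difficulty the paper's proof is engineered to avoid. It first converts each adjunction $F_i \dashv U_i$ into a single equivalence $B_i \times_{A_i} \Fun(\Delta^1,A_i) \simeq \Fun(\Delta^1,B_i) \times_{B_i} A_i$ over $B_i \times A_i$ (\autoref{triangleidentityadjunctionsareequivalenttohomcatdef}), shows that adjointability of the left and back faces upgrades these to a map of cospans (\autoref{adjointablesquaresgivecommutativesquaresofhomcatequivalences}), and then simply pulls back: a pullback of equivalences is an equivalence, with no residual coherence to check. Converting back yields $F \dashv U$, and the identity $\phi \circ \Delta_F = (\eta,F)$ gives the fully-faithfulness and adjointability claims. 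If you want to salvage a direct argument closer to yours, replace the triangle-identity check by the mapping-space criterion: show that $\Map_B(Fa,b) \to \Map_A(a,Ub)$ is an equivalence because it is a map of pullback squares of spaces that is an equivalence on each corner of the cospan --- but note that this is just the fiberwise shadow of the paper's parametrized equivalence, so the repair converges to the paper's proof.
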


    The strategy to relate having local right adjoint sections and having a fully-faithful inert-cocartesian 
    right adjoint over $\simplexcat^\op$ now will be roughly the following.
    \begin{itemize}
      \item First we will pass from the Hom-functor properties of having local right adjoint sections to having 
        right adjoint sections for the functors between the fibers over $\Delta^1$ in $\simplexcat^\op$, as the 
        Hom-categories $X(x,y)$ are exactly the fibers of the source-target functor $(s,t) \colon X_1 \to X_0 \times X_0$ 
        in our model.
      \item Then we will glue these right adjoint sections using \autoref{cubepullbackofladjwithadjfacesisladj} along the inverse equivalences of $F^\equiv$
        to get right adjoint sections on all fibers over arbitrary $\Delta^n$ via the Segal axiom. 
      \item At last, we use \autoref{relativeadjointsexistiftheyexistfiberwise} to glue the fiberwise adjoints together to a global adjoint on total categories.
    \end{itemize}

    \begin{proposition}
      \label{laxadjointsectionmodelindependentlyandinCSOCocartfiboverDeltaOP}

        Let $F$ be a cocartesian functor
        \begin{equation}
              \begin{tikzpicture}[diagram]
                  \matrix[objects] {%
                    |(a)| X \& \& |(b)| A \\
                    \& |(c)| \simplexcat^\op \\
                  };
                  \path[maps,->]
                  (a) edge node[above]  {$F$} (b)
                  (a) edge node[below left]   {$p$} (c)
                  (b) edge node[below right]  {$q$} (c)
                  ;
              \end{tikzpicture}
        \end{equation}
        between Segal cocartesian fibrations $p$ and $q$. 
        Then the following are equivalent.
        \begin{enumerate}
          \item $F$ has a (fully-faithful) right adjoint $U$ over $\simplexcat^\op$, \ie the counit 
            is a natural transformation over $\simplexcat^\op$, and $U$ is
            inert-cocartesian.
          \item For all $n \in \N$ the functor $F_n$ in the fiber over $\Delta^n$ 
            has a (fully-faithful) right adjoint $U_n$ and for every
            inert simplex map $\alpha \colon \Delta^n \mono \Delta^m$ the induced square
            \begin{equation}
              \label{pushforwardadjointablility}
                  \begin{tikzpicture}[diagram]
                      \matrix[objects]{%
                          |(a)| X_n \& |(b)| X_m \\
                          |(c)| A_n \& |(d)| A_m \\
                      };
                      \path[maps,->]
                          (a) edge node[above] {$\alpha_!$} (b)
                          (a) edge node[left] {$F_n$} (c)
                          (b) edge node[right] {$F_m$} (d)
                          (c) edge node[below] {$\alpha_!$} (d)
                      ;
                  \end{tikzpicture}
            \end{equation}
            whitnessing cocartesian pushforward preservation of $F$,
            is vertically adjointable.
          \item $F_0$ and $F_1$ have (fully-faithful) right adjoints $U_0$, respectively $U_1$,
            and for both $s,t \colon \Delta^0 \to \Delta^1$ the vertical adjointablility of 
            \autoref{pushforwardadjointablility} holds.

        \end{enumerate}
        If we furthermore assume that the fibers of $p$ and $q$ over $\Delta^0$ are spaces, then we have 
        even more equivalent statements, extending the list of equivalent statements above.
        \begin{enumerate}
          \setcounter{enumi}{3}

          \item $F_0$ is an equivalence and $F_1$ has a (fully-faithful) right adjoint $U_1$
          \item $F_0$ is an equivalence and for all objects $x, y$ of $X$ the action on hom-categories functors
            \begin{equation}
              X(x,y) \xto{F_{x,y}} A(Fx,Fy)
            \end{equation}
            of $F$ have (fully-faithful) right adjoints.

        \end{enumerate}
    \end{proposition}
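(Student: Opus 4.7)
The plan is to establish $(1) \Leftrightarrow (2) \Leftrightarrow (3)$ for arbitrary Segal cocartesian fibrations, and then to add the globular hypothesis to extract $(3) \Leftrightarrow (4) \Leftrightarrow (5)$. Throughout, the parenthetical fully-faithfulness is tracked by the observation that fully-faithfulness of a right adjoint is a fiberwise and pullback-stable property, detected by invertibility of the counit.

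For $(1) \Leftrightarrow (2)$ I would apply \autoref{relativeadjointsexistiftheyexistfiberwise} to $F$ as a cocartesian functor over $\simplexcat^\op$: this identifies existence of a right adjoint $U$ over $\simplexcat^\op$ with the existence of fiberwise right adjoints $U_n$, and fully-faithfulness of $U$ with fiberwise fully-faithfulness of the $U_n$. That $U$ be \emph{inert}-cocartesian is then the content of \autoref{cocartesiannessofradjisequivtoadjointabilityofcocartpushforwardsquareforladj} applied after pulling back along the inert subcategory $\simplexcat^\op_{\text{inert}} \hookrightarrow \simplexcat^\op$, which says $U$ preserves cocartesian lifts of inert morphisms if and only if every pushforward square along an inert morphism is vertically adjointable. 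The equivalence $(2) \Leftrightarrow (3)$ is then handled by noting that the forward direction is immediate by restriction to $n \in \{0,1\}$, while for the reverse I would exploit the Segal axiom to write $X_n \simeq X_1 \times^{t,s}_{X_0} \cdots \times^{t,s}_{X_0} X_1$, and similarly for $A_n$, realizing $F_n$ as an iterated pullback of copies of $F_1$ along $F_0$; iterating \autoref{cubepullbackofladjwithadjfacesisladj} then produces right adjoints $U_n$ together with adjointability of the Segal projection squares, and an arbitrary inert $\alpha \colon \Delta^n \hookrightarrow \Delta^m$ decomposes into iterated $d_0$- and $d_m$-face maps whose pushforwards are, up to the Segal identification, precisely these projection squares.

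Under the globular hypothesis that $X_0$ and $A_0$ are spaces, the equivalence $(3) \Leftrightarrow (4)$ uses that any functor between spaces which has a right adjoint must be an equivalence (all morphisms in a space are invertible, forcing unit and counit to be so); conversely, given $(4)$, invertibility of $\eta_0$ together with fully-faithfulness of $U_1$ (so that $\epsilon_1$ is invertible) makes the canonical vertical mate of the $s$- and $t$-squares reduce, up to an invertible whiskering by $U_0$, to $s_! \epsilon_1$ and $t_! \epsilon_1$, which are therefore invertible. Finally, $(4) \Leftrightarrow (5)$ is obtained by applying \autoref{relativeadjointsexistiftheyexistfiberwise} a second time to $F_1$ viewed as a functor between the fibrations $(s,t) \colon X_1 \to X_0 \times X_0$ and $(s,t) \colon A_1 \to A_0 \times A_0$, identified via the equivalence $F_0 \times F_0$; the cocartesianness hypothesis is automatic because the base is a space, and the fibers over a pair $(x, \tilde x)$ are by construction the Hom-categories $X(x, \tilde x)$ and $A(F x, F \tilde x)$ with restriction of $F_1$ being exactly $F_{x,\tilde x}$.

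I expect the main obstacle to be the bookkeeping in $(3) \Rightarrow (2)$: verifying that the adjointability hypotheses required for each iterated application of the cube lemma are genuinely furnished by adjointability of the $s$- and $t$-squares alone, and that the adjointability so obtained for arbitrary inert pushforward squares is correctly indexed by the Segal decomposition without covertly invoking adjointability along non-inert (active) morphisms.
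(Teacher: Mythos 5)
Your route coincides with the paper's at every stage: \autoref{relativeadjointsexistiftheyexistfiberwise} plus \autoref{cocartesiannessofradjisequivtoadjointabilityofcocartpushforwardsquareforladj} after restriction to $\simplexcat^\op_{\text{inert}}$ for $(1)\Leftrightarrow(2)$ (you should also cite the pullback-stability of relative adjunctions, \autoref{pullbackpreservesadjunctionsoverbase}, to know the restricted adjunction is still one over $\simplexcat^\op_{\text{inert}}$ before invoking that corollary), iterated application of \autoref{cubepullbackofladjwithadjfacesisladj} along the Segal decomposition together with the generation of inert maps by outer cofaces for $(2)\Leftrightarrow(3)$, and a second application of \autoref{relativeadjointsexistiftheyexistfiberwise} to $F_1$ over $(s,t)$ for $(4)\Leftrightarrow(5)$.

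The one genuine flaw is in your argument for $(4)\Rightarrow(3)$. You derive invertibility of the vertical mate of the $s$- and $t$-squares from ``fully-faithfulness of $U_1$ (so that $\epsilon_1$ is invertible).'' But the proposition is stated in two variants, and in the variant without the parenthetical $U_1$ is merely a right adjoint, so $\epsilon_1$ need not be invertible and your reduction collapses. The correct (and simpler) observation, which is the one the paper uses, is that the mate is a natural transformation between functors valued in $X_0$, which is a space by the globularity hypothesis, and any natural transformation valued in a space is invertible --- no information about $\epsilon_1$ or $\eta_0$ is needed. Your computation can be repaired along the same lines: the whiskered counit $s\circ\epsilon_1$ is valued in the space $A_0$ and is therefore invertible for that reason, not because $\epsilon_1$ itself is. With that substitution the argument covers both variants and the rest of your proposal goes through as in the paper.
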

    \begin{proof}
      We will show that each assertion is equivalent to the next one.
      \begin{enumerate}
        \item $\iff$ 2. The first part about the right adjoints follows from 
          \autoref{relativeadjointsexistiftheyexistfiberwise}.
          By definition $U$ being inert-cocartesian means that its pullback along 
          $\simplexcat^\op_{\text{inert}} \mono \simplexcat^\op$, let us denote this
          by $\tilde{U} \colon \tilde{A} \to \tilde{X}$, is a cocartesian functor.
          Let us also denote the pullback of $F$ along $\simplexcat^\op_{\text{inert}} \mono \simplexcat^\op$
          by $\tilde{F} \colon \tilde{X} \to \tilde{A}$. 
          By stability of relative adjunctions under base change, \ie pullback, 
          proved for example in \autoref{pullbackpreservesadjunctionsoverbase},
          this is still a relative adjunction over $\simplexcat^\op_{\text{inert}}$.
          Now applying
          \autoref{cocartesiannessofradjisequivtoadjointabilityofcocartpushforwardsquareforladj}
          gives us the second part of the equivalence.

        \item $\iff$ 3. For the non-trivial implication, we iteratively apply 
          \autoref{cubepullbackofladjwithadjfacesisladj} to commutatives cubes like
          \[\begin{tikzcd}[cramped]
            {X_1 \times_{X_0} X_1} && {X_1} \\
            & {X_1} && {X_0} \\
            {A_1 \times_{A_0} A_1} && {A_1} \\
            & {A_1} && {A_0}
            \arrow[from=1-1, to=1-3]
            \arrow[from=1-1, to=2-2]
            \arrow["{F_1 \times_{F_0} F_1}"', from=1-1, to=3-1]
            \arrow["t", from=1-3, to=2-4]
            \arrow["{F_1}"{pos=0.7}, from=1-3, to=3-3]
            \arrow["s"{pos=0.3}, from=2-2, to=2-4]
            \arrow["{F_1}"{pos=0.7}, from=2-2, to=4-2]
            \arrow["{F_0}"', from=2-4, to=4-4]
            \arrow[from=3-1, to=3-3]
            \arrow[from=3-1, to=4-2]
            \arrow["t", from=3-3, to=4-4]
            \arrow["s"', from=4-2, to=4-4]
          \end{tikzcd}\]
          with top and bottom square pullbacks, 
          and then use the Segal property.
          To see that we actually get all vertical adjointability of 
          \autoref{pushforwardadjointablility} for all inert
          simplex morphisms from just the ones of $s$ and $t$, one just needs to 
          observe that every inert simplex morphism can be iteratively decomposed 
          into the simpler inert simplex morphisms
          \begin{equation*}
            \Delta^n \equiv \Delta^n \pushout{\Delta^0} \Delta^0 \xto{\id \pushout{\id} s} \Delta^n \pushout{\Delta^0} \Delta^1 \equiv \Delta^{n+1}
          \end{equation*}
          and 
          \begin{equation*}
            \Delta^n \equiv \Delta^0 \pushout{\Delta^0} \Delta^n \xto{t \pushout{\id} \id} \Delta^1 \pushout{\Delta^0} \Delta^n \equiv \Delta^{n+1}
          \end{equation*}
            The vertical adjointability of \autoref{pushforwardadjointablility}
            for these two types of simplex morphisms follows directly from the ones
            of $s$ and $t$ and the Segal property.

      \end{enumerate}

      If we furthermore assume that the fibers of $p$ and $q$ over $\Delta^0$ are spaces, then we can prove
      also the following equivalences.

      \begin{enumerate}
      \setcounter{enumi}{2}
        \item $\iff$ 4. First note, that any adjunction between spaces automatically also is an 
          adjoint equivalence. Vice versa every equivalence can be upgraded to an adjoint equivalence.
          To see that we get the vertical adjointability for $s$ and $t$ for 
          free with our assumptions one just needs to observe that the vertical mate
          \[\begin{tikzcd}[cramped]
            {A_1} & {X_1} & {X_0} \\
            & {A_1} & {A_0} & {X_0}
            \arrow["{U_1}", from=1-1, to=1-2]
            \arrow[""{name=0, anchor=center, inner sep=0}, Rightarrow, no head, from=1-1, to=2-2]
            \arrow["s", from=1-2, to=1-3]
            \arrow["{F_1}", from=1-2, to=2-2]
            \arrow["{F_0}"', from=1-3, to=2-3]
            \arrow[""{name=1, anchor=center, inner sep=0}, Rightarrow, no head, from=1-3, to=2-4]
            \arrow["s"', from=2-2, to=2-3]
            \arrow["{U_0}"', from=2-3, to=2-4]
            \arrow["{\epsilon_1}"', shorten >=2pt, Rightarrow, from=1-2, to=0]
            \arrow["{\eta_0}", shorten <=2pt, Rightarrow, from=1, to=2-3]
          \end{tikzcd}\]
          will always be invertible as $X_0$ was by assumption a space.

        \item $\iff$ 5. This equivalence follows from \autoref{relativeadjointsexistiftheyexistfiberwise}
          applied to 
          \begin{equation}
                  \begin{tikzpicture}[diagram]
                      \matrix[objects]{%
                          |(a)| X_1 \& |(b)| A_1 \\
                          |(c)| X_0 \times X_0 \& |(d)| A_0 \times A_0 \\
                      };
                      \path[maps,->]
                          (a) edge node[above] {$F_1$} (b)
                          (a) edge[->>] node[left] {$(\ev_0,\ev_1)$} (c)
                          (b) edge[->>] node[right] {$(\ev_0,\ev_1)$} (d)
                          (c) edge node[above] {$\equiv$} (d)
                          (c) edge node[below] {$F_0 \times F_0$} (d)
                      ;
                  \end{tikzpicture}
            \end{equation}
            and the facts that $(\ev_0,\ev_1)$ are trivially cocartesian fibrations, as their
            codomains are spaces, and similarly that $F_1$ is trivially a cocartesian functor.

      \end{enumerate}
    \end{proof}

    Now that we have seen how to model-dependently reformulate the property of having
    local right adjoint sections, we also need to know how to reformulate what it means 
    for $2$-functors over $A$ to preserve these, \ie commute with these.
    \begin{proposition}
      \label{moroflaxadjointsectionmodelindependentlyandinCSOCocartfiboverDeltaOP}
      Let
      \begin{equation}
        \begin{tikzpicture}[diagram]
            \matrix[objects] {%
              |(a)| X \& \& |(b)| Y \\
              \& |(c)| A \\ 
              \& |(d)| \simplexcat^\op \\
            };
            \path[maps,->]
            (b) edge[bend left] (d)
            (c) edge (d)
            (a) edge[bend right] (d)

            (a) edge node[below left]  {$F$} (c)
            (a) edge node[above]   {$H$} (b)
            (b) edge node[below right]  {$G$} (c)
            ;
        \end{tikzpicture}
      \end{equation}
      be a commutative triangle of cocartesian functors between cocartesian fibrations 
      over $\simplexcat^\op$, such that both $F$ and $G$ satisfy both the assumptions and the 
      equivalent conditions of the first part of \autoref{laxadjointsectionmodelindependentlyandinCSOCocartfiboverDeltaOP}.
      Then the following are equivalent.
      \begin{enumerate}
        \item The commutative triangle of total categories
          \begin{equation}
            \begin{tikzpicture}[diagram]
                \matrix[objects] {%
                  |(a)| X \& \& |(b)| Y \\
                  \& |(c)| A  \\
                };
                \path[maps,->]
                (a) edge node[below left]  {$F$} (c)
                (a) edge node[above]   {$H$} (b)
                (b) edge node[below right]  {$G$} (c)
                ;
            \end{tikzpicture}
          \end{equation}
          is vertically adjointable, \ie as a square after adding in the identity functor on $A$, 
          with respect to the right adjoint $U$ of $F$ over
          $\simplexcat^\op$ and the right adjoint $V$ of $G$ over $\simplexcat^\op$.
        \item For all $n \in \N$ the commutative triangle of functors between the fibers 
          over $\Delta^n$
          \begin{equation}
            \begin{tikzpicture}[diagram]
                \matrix[objects] {%
                  |(a)| X_n \& \& |(b)| Y_n \\
                  \& |(c)| A_n  \\
                };
                \path[maps,->]
                (a) edge node[below left]  {$F_n$} (c)
                (a) edge node[above]   {$H_n$} (b)
                (b) edge node[below right]  {$G_n$} (c)
                ;
            \end{tikzpicture}
          \end{equation}
          is vertically adjointable with respect to the fiberwise right adjoints
          $U_n$ of $F_n$ and $V_n$ of $G_n$.
        \item For $i \in \{0,1\}$ the commutative triangle of functors between the fibers 
          over $\Delta^i$
          \begin{equation}
            \begin{tikzpicture}[diagram]
                \matrix[objects] {%
                  |(a)| X_i \& \& |(b)| Y_i \\
                  \& |(c)| A_i  \\
                };
                \path[maps,->]
                (a) edge node[below left]  {$F_i$} (c)
                (a) edge node[above]   {$H_i$} (b)
                (b) edge node[below right]  {$G_i$} (c)
                ;
            \end{tikzpicture}
          \end{equation}
          is vertically adjointable with respect to the fiberwise right adjoints
          $U_i$ of $F_i$ and $V_i$ of $G_i$.
      \end{enumerate}

      If we furthermore assume, as in the second part of 
      \autoref{laxadjointsectionmodelindependentlyandinCSOCocartfiboverDeltaOP},
      that $X_0$, $Y_0$ and $A_0$ are spaces, then we can extend the above list of 
      equivalent statements by the following.

      \begin{enumerate}
        \setcounter{enumi}{3}
        \item 
          For all objects $x, \tilde{x}$ of $X$ the commutative triangle of Hom-functors
          \begin{equation}
            \begin{tikzpicture}[diagram]
                \matrix[objects] {%
                  |(a)| X(x,\tilde{x}) \& \& |(b)| Y(H(x),H(\tilde{x})) \\
                  \& |(c)| A(F(x),F(\tilde{x}))  \\
                };
                \path[maps,->]
                (a) edge node[below left]  {$F$} (c)
                (a) edge node[above]   {$H$} (b)
                (b) edge node[below right]  {$G$} (c)
                ;
            \end{tikzpicture}
          \end{equation}
          is vertically adjointable with respect to the right adjoints on Hom-categories of 
          $F$ and $G$.
      \end{enumerate}
    \end{proposition}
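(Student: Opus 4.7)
The plan is to recycle the structure of the proof of \autoref{laxadjointsectionmodelindependentlyandinCSOCocartfiboverDeltaOP}: to prove (1)$\iff$(2) by fiberwise checking of invertibility of natural transformations between cocartesian functors, to reduce (2) to (3) via the Segal decomposition together with \autoref{cubepullbackofladjwithadjfacesisladj}, and to use \autoref{relativeadjointsexistiftheyexistfiberwise} in its ``fibration-over-a-space'' form to pass from (3) to (4) under the space assumption. Throughout I interpret vertical adjointability of the triangle over $A$ as vertical adjointability of the square obtained by adjoining the identity functor on $A$ at the bottom.

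For the equivalence of (1) and (2), I first invoke \autoref{laxadjointsectionmodelindependentlyandinCSOCocartfiboverDeltaOP} to extract from $F$ and $G$ their right adjoints $U, V$ over $\simplexcat^\op$, which are inert-cocartesian and restrict fiberwise to right adjoints $U_n, V_n$ of $F_n, G_n$. Vertical adjointability amounts to the canonical mate $U \twoto V H$ being invertible. Since $U, V, H$ are all functors over $\simplexcat^\op$ whose fiberwise restrictions are the $U_n, V_n, H_n$, this mate is a natural transformation over $\simplexcat^\op$ whose fiber over $\Delta^n$ is precisely the fiberwise mate of the fiberwise triangle. An equivalence between functors over a fixed base can be detected fiberwise, so invertibility on total categories reduces to invertibility over every $\Delta^n$, which is (2).

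The implication (2)$\Rightarrow$(3) is trivial. For (3)$\Rightarrow$(2) I would proceed inductively using the Segal decomposition $X_n \equiv X_1 \times_{X_0} \cdots \times_{X_0} X_1$ and similarly for $Y_n, A_n$, so that each $F_n, G_n, H_n$ becomes an iterated fiber product of $F_1, G_1, H_1$ over $F_0, G_0, H_0$. Applying \autoref{cubepullbackofladjwithadjfacesisladj} to cubes of the same shape as the one exhibited in the previous proposition's proof — whose back and left adjointable faces come from the already-established cases (3) in dimensions $0$ and $1$ — then yields vertical adjointability over $\Delta^n$. For (3)$\iff$(4) under the space assumption, I observe that $(\ev_0, \ev_1) \colon X_1 \to X_0 \times X_0$ is trivially a cocartesian fibration (target a space), with fibers exactly the Hom-categories $X(x, \tilde{x})$, and similarly for $Y, A$. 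A direct application of \autoref{relativeadjointsexistiftheyexistfiberwise} then identifies adjointability of the triangle over $X_1$ with the family of adjointabilities over each Hom-category, while adjointability over $X_0$ is automatic because $X_0$ is a space and $F_0, G_0, H_0$ are all equivalences there.

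The main delicate point will be the bookkeeping of the mate under all these identifications: in particular, verifying that the cubes arising in the Segal induction have the back and left faces adjointable in the precise sense required by \autoref{cubepullbackofladjwithadjfacesisladj}, and confirming that fiberwise restriction of the triangle's mate coincides with the mate of the restricted triangle rather than some twisted version. Once these compatibilities are checked, the whole chain of equivalences follows formally from the lemmas already in the paper.
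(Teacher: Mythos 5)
Your overall route coincides with the paper's: detect invertibility of the mate fiberwise over $\simplexcat^\op$ for the equivalence of (1) and (2), reduce dimension $n$ to dimensions $0,1$ via the Segal decomposition, and handle (4) by viewing $(\ev_0,\ev_1)\colon X_1 \to X_0 \times X_0$ as a fibration over a space and running the fiberwise argument again. The first and last steps are fine as sketched (modulo the compatibility of global and fiberwise mates, which you correctly flag and which the paper also just reads off from the proof of \autoref{relativeadjointsexistiftheyexistfiberwise}).

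The one step that does not go through as stated is your passage from (3) to (2). \autoref{cubepullbackofladjwithadjfacesisladj} concludes adjointability of the \emph{right and front faces} of a pullback cube, i.e.\ of the squares relating the induced functor on the fiber product to its projections; it does not conclude adjointability of a square whose bottom edge is $\id_{A_n}$ and whose top edge is $H_n$. Moreover, if you try to feed the triangle-adjointability squares for $i=0,1$ in as the ``back and left faces,'' the top and bottom faces of the resulting cube would have to be pullbacks along $H_1$ and $H_0$, which is not the Segal decomposition — so the cube you describe does not exist in the required form. What the cube lemma actually buys you (and what the paper extracts from its use in the proof of \autoref{laxadjointsectionmodelindependentlyandinCSOCocartfiboverDeltaOP}) is that the adjunction data of $U_n$ and $V_n$ is \emph{factorwise} with respect to $X_n \equiv X_1 \times_{X_0} \cdots \times_{X_0} X_1$. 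You then still need the separate observation that, as a consequence, the mate of the triangle over $\Delta^n$ is itself the iterated fiber product of the mates over $\Delta^1$ and $\Delta^0$, and that a natural transformation given factorwise on a fiber product is invertible if and only if each factor is. With that supplement your argument closes; without it, the cited lemma alone does not yield (2).
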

    \begin{proof}
      We will show that each assertion is equivalent to the next one.
      \begin{enumerate}
        \item $\iff$ 2. A natural transformation over a base category is, as 
          every natural transformation, invertible if and only if it is so 
          objectwise. But every object lives in some fiber over the base, hence 
          a natural transformation over a base category is invertible if and only 
          if it is fiberwise invertible.

          But by the proof of \autoref{relativeadjointsexistiftheyexistfiberwise} 
          we know that all the data of the global adjunction agrees with the data of 
          the fiber adjunctions, when restricted to the fibers. Hence the formation of 
          the mate for the global commutative triangle and the global adjunction 
          on total categories agrees with the fiberwise formed mates of the fiber 
          commutative triangles and the fiber adjunctions, when restricted to the 
          fibers.
          Thus our observation about the invertibility of natural transformations over 
          some base tells us that the global mate is invertible if and only if all the 
          fiber mates are invertible.

        \item $\iff$ 3. We have the commutative prism
          \[\begin{tikzcd}
            {X_n} && {Y_n} \\
            & {A_n} \\
            {X_1 \times_{X_0} \dots \times_{X_0} X_1} && {Y_1 \times_{Y_0} \dots \times_{Y_0} Y_1} \\
            & {A_1 \times_{A_0} \dots \times_{A_0} A_1}
            \arrow["{H_n}", from=1-1, to=1-3]
            \arrow["{F_n}"', from=1-1, to=2-2]
            \arrow["\equiv"', from=1-1, to=3-1]
            \arrow["{G_n}", from=1-3, to=2-2]
            \arrow["\equiv", from=1-3, to=3-3]
            \arrow["\equiv"{pos=0.2}, from=2-2, to=4-2]
            \arrow["{H_1 \times_{H_0} \dots \times_{H_0} H_1}"'{pos=0.25}, from=3-1, to=3-3]
            \arrow["{F_1 \times_{F_0} \dots \times_{F_0} F_1}"', from=3-1, to=4-2]
            \arrow["{G_1 \times_{G_0} \dots \times_{G_0} G_1}", from=3-3, to=4-2]
          \end{tikzcd}\]
          Furthermore we know from the proof of 
          \autoref{laxadjointsectionmodelindependentlyandinCSOCocartfiboverDeltaOP},
          \ie the application of \autoref{pullbackofadjunctions},
          that the two front vertical faces of this prism are in fact horizontally adjointable 
          and that the adjunction data for the two bottom diagonal functors between the 
          iterated fiber products are in fact factorwise.
          This means in particular that forming the mate upstairs is equivalent to downstairs 
          forming the mate, which in itself is done fiber product factorwise.
          Now we observe that such a factorwise natural transformation between 
          factorwise given functors 
          \[\begin{tikzcd}[cramped]
            {A_1 \times_{A_0} \dots \times_{A_0} A_1} & {\Fun(\Delta^1, Y_1 \times_{Y_0} \dots \times_{Y_0} Y_1)} \\
            & {\Fun(\Delta^1, Y_1) \times_{\Fun(\Delta^1, Y_0)} \dots \times_{\Fun(\Delta^1, Y_0)} \Fun(\Delta^1, Y_1)}
            \arrow["{\text{mate}_1 \times_{\text{mate}_0} \dots \times_{\text{mate}_0} \text{mate}_1}"', from=1-1, to=2-2]
            \arrow["\equiv"', from=2-2, to=1-2]
          \end{tikzcd}\]
          is invertible if and only if all its factors are invertible, which follows from the fact 
          that a morphism in an iterated fiber product is invertible if and only if 
          its projections to all its factors is invertible.

      \end{enumerate}

      If we furthermore assume, as in the second part of 
      \autoref{laxadjointsectionmodelindependentlyandinCSOCocartfiboverDeltaOP},
      that $X_0$, $Y_0$ and $A_0$ are spaces, then we can also prove the following.

      \begin{enumerate}
        \setcounter{enumi}{2}
        \item $\iff$ 4. First we observe that because $F_0$ and $G_0$ are equivalences $H_0$ 
          also has to be an equivalence. Now our situation can be pictured as 
          \[\begin{tikzcd}
            {X_1} && {Y_1} \\
            & {A_1} \\
            {X_0 \times X_0} && {Y_0 \times Y_0} \\
            & {A_0 \times A_0}
            \arrow["{H_1}", from=1-1, to=1-3]
            \arrow["{F_1}", from=1-1, to=2-2]
            \arrow["{(s,t)}"', from=1-1, to=3-1]
            \arrow["{G_0}"', from=1-3, to=2-2]
            \arrow["{(s,t)}"', from=1-3, to=3-3]
            \arrow["{H_0 \times H_0}"{pos=0.2}, from=3-1, to=3-3]
            \arrow["\equiv"'{pos=0.4}, from=3-1, to=3-3]
            \arrow["{F_0 \times F_0}"', from=3-1, to=4-2]
            \arrow["\equiv", from=3-1, to=4-2]
            \arrow["{G_0 \times G_0}", from=3-3, to=4-2]
            \arrow["\equiv"', from=3-3, to=4-2]
            \arrow["{(s,t)}"{pos=0.1}, from=2-2, to=4-2,crossing over]
          \end{tikzcd}\]
          In particular we can now apply the same reasoning as in 
          1. $\iff$ 2. to get the equivalence of being globally adjointable
          in fiber over $\Delta^1$ and of being Hom-wise adjointable.

      \end{enumerate}
    \end{proof}

    Equipped with these two characterisations we are now able to prove the 
    universal property \autoref{definitionlaxfunctorclassifier} 
    of the lax functor classifier for the envelope construction $\lambda \colon \Env(A) \to A$.
    The final step in the proof will again rely on the stability of 
    left adjoints with invertible counit under pullbacks, \ie \autoref{pullbackofadjunctions}.


    \begin{theorem}
      \label{theoremenvelopeislaxfunctorclassifier}
      In the model of $2$-categories as globular complete Segal cocartesian fibrations
      over $\simplexcat^\op$, the envelope construction
      \begin{equation*}
        \ev_1 \colon \Env(A) \coloneqq A \times_{\simplexcat^\op} \Fun^{\text{active}}(\Delta^1,\simplexcat^\op) \to \simplexcat^\op
      \end{equation*}
      for a $2$-category $A \fibration \simplexcat^\op$ together with its 
      canonical $2$-functor $\lambda \colon \Env(A) \to A$ from \autoref{leftadjtoiotaforoperadicenvconstr}
      satisfies the universal property \autoref{definitionlaxfunctorclassifier} of the 
      lax functor classifier of $A \fibration \simplexcat^\op$.
    \end{theorem}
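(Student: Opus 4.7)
The plan is to reinterpret the category $\LaxSect(A)$ in the model of globular complete Segal cocartesian fibrations over $\simplexcat^\op$. By Propositions \ref{laxadjointsectionmodelindependentlyandinCSOCocartfiboverDeltaOP} and \ref{moroflaxadjointsectionmodelindependentlyandinCSOCocartfiboverDeltaOP}, its objects are precisely the cocartesian functors $F \colon X \to A$ over $\simplexcat^\op$ admitting a fully-faithful inert-cocartesian right adjoint $U$ over $\simplexcat^\op$, and its morphisms are cocartesian functors $H$ over $A$ whose induced triangle on total categories is vertically adjointable with respect to these right adjoints. Lemma \ref{leftadjtoiotaforoperadicenvconstr} places $\lambda \colon \Env(A) \to A$ inside $\LaxSect(A)$, with fully-faithful inert-cocartesian right adjoint $\iota$ over $\simplexcat^\op$ and invertible counit.

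For initiality, I would fix an arbitrary object $F \colon X \to A$ of $\LaxSect(A)$ with right adjoint $U$ and analyse the space of morphisms $\lambda \to F$. Proposition \ref{operadicenvelopeconstrisinertcocartfunclassifier} yields the classifier equivalence $\Fun^{\text{cocart}}_{/ \simplexcat^\op}(\Env(A), X) \equiv \Fun^{\text{inert-cocart}}_{/ \simplexcat^\op}(A, X)$ via precomposition with $\iota$, naturally in $X$. Since $\lambda \iota \equiv \id_A$, passing to fibers of postcomposition with $F$ over $\lambda$ and over $\id_A$ on the two sides identifies cocartesian functors $H \colon \Env(A) \to X$ over $A$ with inert-cocartesian sections $K := H \iota \colon A \to X$ of $F$ over $\simplexcat^\op$.

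Next, I would translate the vertical adjointability condition into a condition on $K$. Writing $\epsilon$ for the invertible counit of $\lambda \dashv \iota$, $\eta$ for the unit of $F \dashv U$, and $\alpha \colon FH \equiv \lambda$ for the datum that $H$ lives over $A$, the mate $H \iota \Rightarrow U$ factors as $(U \epsilon) \circ (U \alpha \iota) \circ (\eta \cdot H \iota)$. Since $\alpha$ and $\epsilon$ are invertible, the mate is invertible if and only if $\eta$ restricted to $H \iota$ is, which, since $U$ is fully-faithful, is equivalent to $K = H \iota$ factoring through the essential image of $U$ in $X$. As $U$ itself is fully-faithful with invertible counit $FU \equiv \id_A$, it exhibits $A$ as equivalent to its essential image with $F$ restricted there as a pseudoinverse, so any such section $K$ equipped with its section datum $FK \equiv \id_A$ is canonically equivalent to $U$ equipped with its own section datum; hence the space of these pairs is contractible with canonical point $U$.

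Transporting this contractibility back along the classifier equivalence identifies the unique morphism $\lambda \to F$ in $\LaxSect(A)$ with the cocartesian extension of $U$ to $\Env(A)$, establishing initiality. The main technical obstacle is the final compatibility step: constructing this $H$ so that the vertical adjointability holds intrinsically rather than only through a chosen equivalence $H \iota \equiv U$. This is where Lemma \ref{pullbackofadjunctions} is used, applied to the pullback square relating $\lambda$ and $F$ to produce $H$ as the composite of a fully-faithful right adjoint section of the pulled-back projection with the other leg of the pullback, so that the invertibility of the counits of $\lambda \dashv \iota$ and $F \dashv U$ automatically guarantees both $F H \equiv \lambda$ and the invertibility of the mate by construction.
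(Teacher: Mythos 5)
Your proposal is correct and follows essentially the same route as the paper: reformulate $\LaxSect(A)$ via Propositions \ref{laxadjointsectionmodelindependentlyandinCSOCocartfiboverDeltaOP} and \ref{moroflaxadjointsectionmodelindependentlyandinCSOCocartfiboverDeltaOP}, slice the classifier equivalence of \autoref{operadicenvelopeconstrisinertcocartfunclassifier} over $A$, identify the mate of a triangle $(A,\id_A)\to(X,F)$ with postcomposition by the unit of $F\dashv U$, and use \autoref{pullbackofadjunctions} applied to the pullback of functor categories over $F_\ast$ to exhibit the vertically adjointable sections as the essential image of a fully-faithful right adjoint out of $\Delta^0$, hence contractible with canonical point $U$. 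The only difference is one of packaging: where you assert the contractibility of the space of sections landing in the essential image of $U$ somewhat informally, the paper obtains it by restricting the pulled-back adjunction (with invertible counit) to its essential image, which is exactly the rigorous version of the step you flag as the technical crux.
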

    \begin{proof}
      The universal property of the lax functor classifier
      is stated as initiality in the non-full subcategory $\LaxSect(A)$ of the 
      strict slice-over $1$-category $\twoCat_{/ A}$, defined in 
      \autoref{definitionlaxfunctorclassifier}.
      As described in \autoref{recollectiontwocatsasglobularcompletesegalfibrations}
      $\twoCat_{/ A}$ can be modelled by $\globCSCocart_{/ A}$.
      By \autoref{laxadjointsectionmodelindependentlyandinCSOCocartfiboverDeltaOP}
      and \autoref{moroflaxadjointsectionmodelindependentlyandinCSOCocartfiboverDeltaOP}
      the non-full subcategory $\LaxSect(A)$ can be modelled by the 
      non-full subcategory of $\globCSCocart_{/ A}$ on those cocartesian functors 
      $F \colon X \to A$ over $\simplexcat^\op$ such that the functor $F$ on 
      total categories has a fully-faithful right adjoint $U$ over
      $\simplexcat^\op$ that is also inert-cocartesian, and morphisms
      \begin{equation}
        \begin{tikzpicture}[diagram]
            \matrix[objects] {%
              |(a)| X \& \& |(b)| Y \\
              \& |(c)| A \\ 
              \& |(d)| \simplexcat^\op \\
            };
            \path[maps,->]
            (b) edge[bend left] (d)
            (c) edge (d)
            (a) edge[bend right] (d)

            (a) edge node[below left]  {$F$} (c)
            (a) edge node[above]   {$H$} (b)
            (b) edge node[below right]  {$G$} (c)
            ;
        \end{tikzpicture}
      \end{equation}
      between such cocartesian functors over $A \fibration \simplexcat^\op$
      such that the commutative triangle on total categories is vertically
      adjointable.
      Thus we have rephrased the category $\LaxSect(A)$ where we ask for initiality
      completely into the language of our model $\globCSCocart$.
      The next step is to exhibit our candidate for the lax functor classifier
      of the $2$-category $A \fibration \simplexcat^\op$ in this model.

      The lax functor classifier will be given by the envelope construction 
      \begin{equation*}
        \ev_1 \colon A \times_{\simplexcat^\op} \Fun^{\text{active}}(\Delta^1,\simplexcat^\op) \to \simplexcat^\op .
      \end{equation*}
      and its $2$-functor to $A$ with respect to which we 
      will prove the universal property, \ie the initiality in 
      $\LaxSect(A)$, will be exactly the cocartesian functor $\lambda$ over 
      $\simplexcat^\op$ from 
      \autoref{leftadjtoiotaforoperadicenvconstr}
      which corresponds to the identity functor
      on $A$ under the equivalence 
      \begin{equation*}
        \Fun^{\text{cocart}}_{/ \simplexcat^\op}(A \times_{\simplexcat^\op} \Fun^{\text{active}}(\Delta^1,\simplexcat^\op), A) \xto{{}_\ast \iota \quad \equiv} \Fun^{\text{inert-cocart}}_{/ \simplexcat^\op}(A,A)
      \end{equation*}
      of 
      \autoref{operadicenvelopeconstrisinertcocartfunclassifier},
      but is also the left adjoint to the inert-cocartesian
      functor $\iota$.

      Now let us start proving the initiality statement.
      For this we take an arbitrary object of our model-dependent 
      version of $\LaxSect(A)$, \ie a cocartesian functor 
      $F \colon X \to A$ between globular complete Segal cocartesian fibrations 
      over $\simplexcat^\op$ such that the functor $F$ on 
      total categories has a fully-faithful right adjoint $U$ over
      $\simplexcat^\op$ that is also inert-cocartesian.
      The first step is to slice the equivalence
      \begin{equation*}
        \Fun^{\text{cocart}}_{/ \simplexcat^\op}(A \times_{\simplexcat^\op} \Fun^{\text{active}}(\Delta^1,\simplexcat^\op), X) \xto{{}_\ast \iota \quad \equiv} \Fun^{\text{inert-cocart}}_{/ \simplexcat^\op}(A,X)
      \end{equation*}
      from
      \autoref{operadicenvelopeconstrisinertcocartfunclassifier}.
      over $A \fibration \simplexcat^\op$ itself, \ie to transform it 
      into a statement in 
      \begin{equation*}
        (\Cat_{/ \simplexcat^\op})_{/ (A \fibration \simplexcat^\op)} \equiv \Cat_{/ A}
      \end{equation*}
      by the cube
      \[\begin{tikzcd}[cramped,column sep={8em,between origins}]
        & {\Fun^{\text{cocart}}_{/ \simplexcat^\op}(\Env(A), X)} && {\Fun^{\text{inert-cocart}}_{/ \simplexcat^\op}(A,X)} \\
        {\Fun^{\text{cocart}}_{/ A}((\Env(A),\lambda), (X,F))} && {\Fun^{\text{inert-cocart}}_{/ A}((A,\id_A),(X,F))} \\
        & {\Fun^{\text{cocart}}_{/ \simplexcat^\op}(\Env(A), A)} && {\Fun^{\text{inert-cocart}}_{/ \simplexcat^\op}(A,A)} \\
        {\Delta^0} && {\Delta^0}
        \arrow["{{}_\ast \iota}", from=1-2, to=1-4]
        \arrow["\equiv"', from=1-2, to=1-4]
        \arrow["{F_\ast}"{pos=0.3}, from=1-2, to=3-2]
        \arrow["{F_\ast}", from=1-4, to=3-4]
        \arrow[from=2-1, to=1-2]
        \arrow["\equiv"'{pos=0.3}, from=2-1, to=2-3]
        \arrow[from=2-1, to=4-1]
        \arrow[from=2-3, to=1-4]
        \arrow[from=2-3, to=4-3]
        \arrow["{{}_\ast \iota}"{pos=0.3}, from=3-2, to=3-4]
        \arrow["\equiv"'{pos=0.3}, from=3-2, to=3-4]
        \arrow["\lambda"', from=4-1, to=3-2]
        \arrow[Rightarrow, no head, from=4-1, to=4-3]
        \arrow["{\id_A}"', from=4-3, to=3-4]
        \arrow["{{}_\ast \iota}"{pos=0.3}, from=2-1, to=2-3]
      \end{tikzcd}\]
      where both the left and the right face are 
      pullbacks by definition.
      Note here that the back face of the cube commutes as $\iota$ 
      is in particular a functor $(A, \id_A) \to (\Env(A),\ev_1)$ over $A$
      as the counit of $\lambda \adj \iota$ is invertible.
      We obtain an equivalence
      \begin{equation}
        \label{inertcocartclassifierequivslicedoverA}
        {}_\ast \iota \colon \Fun^{\text{cocart}}_{/ A}((\Env(A),\lambda), (X,F)) \xto{\equiv} \Fun^{\text{inert-cocart}}_{/ A}((A,\id_A),(X,F)) 
      \end{equation}
      which is also given by precomposition of $\iota$ as a functor
      $(A, \id_A) \to (\Env(A),\ev_1)$ over $A$.
      Now we restrict the left hand side of this equivalence to exactly 
      those functors
      \begin{equation}
        \begin{tikzpicture}[diagram]
            \matrix[objects] {%
              |(a)| A \times_{\simplexcat^\op} \Fun^{\text{active}}(\Delta^1,\simplexcat^\op) \& \& |(b)| X \\
              \& |(c)| A \\ 
              \& |(d)| \simplexcat^\op \\
            };
            \path[maps,->]
            (b) edge[bend left] (d)
            (c) edge (d)
            (a) edge[bend right] (d)

            (a) edge node[below left]  {$\lambda$} (c)
            (a) edge node[above]   {$H$} (b)
            (b) edge node[below right]  {$F$} (c)
            ;
        \end{tikzpicture}
      \end{equation}
      such that the commutative triangle of total categories is 
      vertically adjointable. Let us denote this restriction by 
      $\Fun^{\text{cocart}, \text{vadj}}_{/ A}((\Env(A),\lambda), (X,F))$.
      Now we observe that 
      \begin{equation}
        \begin{tikzpicture}[diagram]
            \matrix[objects] {%
              |(a)| A \times_{\simplexcat^\op} \Fun^{\text{active}}(\Delta^1,\simplexcat^\op) \& \& |(b)| X \\
              \& |(c)| A \\ 
            };
            \path[maps,->]
            (a) edge node[below left]  {$\lambda$} (c)
            (a) edge node[above]   {$H$} (b)
            (b) edge node[below right]  {$F$} (c)
            ;
        \end{tikzpicture}
      \end{equation}
      is vertically adjointable if and only if its precomposition 
      with $\iota$ and the invertible counit $\lambda \circ \iota \equiv \id$
      \begin{equation}
        \begin{tikzpicture}[diagram]
            \matrix[objects] {%
              |(x)| A \& |(a)| A \times_{\simplexcat^\op} \Fun^{\text{active}}(\Delta^1,\simplexcat^\op) \& |(b)| X \\
              \& |(c)| A \\ 
            };
            \path[maps,->]
            (x) edge node[above]   {$\iota$} (a)
            (a) edge node[left]  {$\lambda$} (c)
            (a) edge node[above]   {$H$} (b)
            (b) edge node[below right]  {$F$} (c)
            ;
            \path[maps,-]
            (x) edge[double distance=0.2em] (c)
            ;
        \end{tikzpicture}
      \end{equation}
      is as a composed triangle vertically adjointable.
      Hence we may also restrict the right hand side of the equivalence
      \autoref{inertcocartclassifierequivslicedoverA}
      to the vertically adjointable triangles
      \begin{equation}
        \begin{tikzpicture}[diagram]
            \matrix[objects] {%
              |(x)| A \& |(a)|  \& |(b)| X \\
              \& |(c)| A \\ 
            };
            \path[maps,->]
            (x) edge node[above]   {$K$} (b)
            (b) edge node[below right]  {$F$} (c)
            ;
            \path[maps,-]
            (x) edge[double distance=0.2em] (c)
            ;
        \end{tikzpicture}
      \end{equation}
      We will denote this restriction by 
      $\Fun^{\text{inert-cocart}, \text{vadj}}_{/ A}((A,\id_A),(X,F))$
      and get the restricted equivalence
      \begin{equation}
        \Fun^{\text{cocart}, \text{vadj}}_{/ A}((\Env(A),\lambda), (X,F)) \xto{{}_\ast \iota \quad \equiv} \Fun^{\text{inert-cocart}, \text{vadj}}_{/ A}((A,\id_A),(X,F)) 
      \end{equation}
      But as $F$ has the fully-faithful right adjoint $U$ over 
      $\simplexcat^\op$ we also have a postcomposition adjunction
      \begin{equation*}
        \begin{tikzpicture}[diagram]
          \matrix[objects] {
            |(a)| \Fun_{/ \simplexcat^\op}(A,X) \& |(b)| \Fun_{/ \simplexcat^\op}(A,A) \\
          };
          \path[maps,->] 
            (a) edge[bend left] node[above] (f) {$F_\ast$} (b)
            (b) edge[bend left] node[below] (g) {$U_\ast$}  (a)
          ;
          \node[rotate=-90] at ($ (f) ! 0.5 ! (g) $) (psi) {$\adj$};
        \end{tikzpicture}
      \end{equation*}
      with invertible counit.
      And because $F$ and $U$ are both inert-cocartesian we can further 
      restrict this to full subcategories on inert-cocartesian functors
      \begin{equation*}
        \begin{tikzpicture}[diagram]
          \matrix[objects] {
            |(a)| \Fun^{\text{inert-cocart}}_{/ \simplexcat^\op}(A,X) \& |(b)| \Fun^{\text{inert-cocart}}_{/ \simplexcat^\op}(A,A) \\
          };
          \path[maps,->] 
            (a) edge[bend left] node[above] (f) {$F_\ast$} (b)
            (b) edge[bend left] node[below] (g) {$U_\ast$}  (a)
          ;
          \node[rotate=-90] at ($ (f) ! 0.5 ! (g) $) (psi) {$\adj$};
        \end{tikzpicture}
      \end{equation*}
      and get again an adjunction with invertible counit.
      In particular, in the pullback
      \begin{equation}
        \label{finalterminalitypullback}
            \begin{tikzpicture}[diagram]
                \matrix[objects]{%
                    |(a)| \Fun^{\text{inert-cocart}}_{/ A}((A,\id_A),(X,F)) \& |(b)| \Fun^{\text{inert-cocart}}_{/ \simplexcat^\op}(A,X) \\
                    |(c)| \Delta^0 \& |(d)| \Fun^{\text{inert-cocart}}_{/ \simplexcat^\op}(A,A) \\
                };
                \path[maps,->]
                    (a) edge node[above] {$$} (b)
                    (a) edge node[left] {$$} (c)
                    (b) edge node[right] {$F_\ast$} (d)
                    (c) edge node[below] {$\id_A$} (d)
                ;
                \node at (barycentric cs:a=0.8,b=0.3,c=0.3) (phi) {\mbox{\LARGE{$\lrcorner$}}};
            \end{tikzpicture}
      \end{equation}
      from before we now know that the right hand vertical functor has a 
      fully-faithful right adjoint. So we can deduce by 
      \autoref{pullbackofadjunctions} that also the 
      left hand vertical functor to the point has a fully-faithful right adjoint. 
      This means explicitely that the category $\Fun^{\text{inert-cocart}}_{/ A}((A,\id_A),(X,F))$
      has a terminal object, which is exactly
      \begin{equation}
        \begin{tikzpicture}[diagram]
            \matrix[objects] {%
              |(x)| A \& |(a)|  \& |(b)| X \\
              \& |(c)| A \\ 
            };
            \path[maps,->]
            (x) edge node[above]  (f) {$U$} (b)
            (b) edge node[below right] (g) {$F$} (c)
            ;
            \path[maps,-]
            (x) edge[double distance=0.2em] (c)
            ;
            \node[rotate=45] at ($ (f) ! 0.5 ! (c) $) (psi) {$\equiv$};
            \node[maps,below right] at (psi) {$\epsilon$};
        \end{tikzpicture}
      \end{equation}
      for $\epsilon$ the invertible counit of $F \adj U$.
      On the other hand the essential image of the fully-faithful 
      right adjoint of the left hand functor of 
      \autoref{finalterminalitypullback}
      can be described as all objects for which the unit of the adjunction 
      becomes invertible, but unpacking \autoref{pullbackofadjunctions} 
      in this situation tells us that the component of this pulled 
      back unit for an object 
      \begin{equation}
        \label{objectinFunoverAinertcocartfromidAtoF}
        \begin{tikzpicture}[diagram]
            \matrix[objects] {%
              |(x)| A \& |(a)|  \& |(b)| X \\
              \& |(c)| A \\ 
            };
            \path[maps,->]
            (x) edge node[above]   {$K$} (b)
            (b) edge node[below right]  {$F$} (c)
            ;
            \path[maps,-]
            (x) edge[double distance=0.2em] (c)
            ;
        \end{tikzpicture}
      \end{equation}
      of $\Fun^{\text{inert-cocart}}_{/ A}((A,\id_A),(X,F))$
      is exactly its composition with the unit $\eta$ of the adjunction 
      $F \adj U$, \ie
      \begin{equation}
        \begin{tikzpicture}[diagram]
            \matrix[objects] {%
              |(x)| A \& |(a)|  \& |(b)| X \\
              \& |(c)| A  \& \& |(d)| X \\ 
            };
            \path[maps,->]
            (x) edge node[above]   {$K$} (b)
            (b) edge node[above left] (g)  {$F$} (c)
            (c) edge node[below] (f) {$U$} (d)
            ;
            \path[maps,-]
            (x) edge[double distance=0.2em] (c)
            (b) edge[double distance=0.2em] (d)
            ;
            \node[rotate=-135] at ($ (f) ! 0.5 ! (b) $) (psi) {$\twoto$};
            \node[maps,below right] at (psi) {$\eta$};
        \end{tikzpicture}
      \end{equation}
      Note that this composition is in fact exactly the mate transformation 
      of the triangle \autoref{objectinFunoverAinertcocartfromidAtoF}.
      Hence the essential image of the fully-faithful right adjoint 
      to the left hand vertical functor in our pullback square \autoref{finalterminalitypullback} 
      consists exactly of the vertically adjointable triangles.
      Restricting this pulled back adjunction now to its essential image we obtain 
      the second equivalence in
      \begin{equation}
        \Fun^{\text{cocart}, \text{vadj}}_{/ A}((\Env(A),\lambda), (X,F)) \xto{{}_\ast \iota \quad \equiv} \Fun^{\text{inert-cocart}, \text{vadj}}_{/ A}((A,\id_A),(X,F)) \xto{\equiv} \Delta^0 
      \end{equation}
      In particular we deduce from this the equivalence on underlying 
      mapping spaces
      \begin{equation}
        \Map^{\text{cocart}, \text{vadj}}_{/ A}((\Env(A),\lambda), (X,F)) \xto{{}_\ast \iota \quad \equiv} \Map^{\text{inert-cocart}, \text{vadj}}_{/ A}((A,\id_A),(X,F)) \xto{\equiv} \Delta^0 
      \end{equation}
      which now proves initiality of $(\Env(A), \lambda)$ in the model-dependent 
      reformulation of $\LaxSect(A)$.
    \end{proof}

  \appendix
  \section{Background on Synthetic 1-Category Theory}
    \label{appendixbackgroundononecategorytheory}

    In this appendix we recall some fundamental $1$-categorical definitions and prove the 
    $1$-categorical statements that the previous sections proofs relied on, namely
    \autoref{laxliftingpropertyofleftadjointsagainstcocartesianfibrations},
    \autoref{liftsoffullandfaithfulleftadjointsagainstcocartesianfibrationsincommutativesquares},
    \autoref{pullbackofrightadjointalongcocartesianfibrationisrightadjoint},
    \autoref{relativeadjointsexistiftheyexistfiberwise},
    \autoref{cocartesiannessofradjisequivtoadjointabilityofcocartpushforwardsquareforladj},
    \autoref{cubepullbackofladjwithadjfacesisladj},
    and \autoref{pullbackofadjunctions}.

    In this exposition we try to keep the proofs as elementary and model-independent as possible, 
    based on the definitions and facts we chose to base ourselves on, which are recorded in the first subsection.
    This is partly, because we could not find elementary and model-independent proofs for the statements
    \autoref{liftsoffullandfaithfulleftadjointsagainstcocartesianfibrationsincommutativesquares},
    \autoref{pullbackofrightadjointalongcocartesianfibrationisrightadjoint},
    \autoref{cubepullbackofladjwithadjfacesisladj},
    and \autoref{pullbackofadjunctions}.
    We do not claim any originality for these statements or their proofs.
    The statement \autoref{relativeadjointsexistiftheyexistfiberwise}
    can already be deduced from the dual of \cite[Proposition 7.3.2.6]{ha}.
    For its extension
    \autoref{cocartesiannessofradjisequivtoadjointabilityofcocartpushforwardsquareforladj}
    and the directed lifting property of left adjoints against cocartesian fibrations
    \autoref{laxliftingpropertyofleftadjointsagainstcocartesianfibrations},
    we could not at all find a reference in the literature.

    \subsection{Definitions and basic properties}

    We start by reviewing some definitions and facts that we need for the $1$-categorical statements 
    in the previous sections, like adjunctions, (co)cartesian fibrations, (co)cartesian functors 
    between those and the adjointability of commutative squares. 

    \begin{definition}
      An \define{adjunction $F \adj U$} between functor $F \colon A \to B$, called the \define{left adjoint},
      and $U \colon B \to A$, called the \define{right adjoint}, consists of two natural transformations 
      $\eta \colon \id \twoto UF$ and $\epsilon \colon FU \twoto \id$ and two identifications 
      $\id_U \equiv (U \epsilon) \circ (\eta U)$ and $\id_F \equiv (\epsilon F) \circ (F \eta)$, called 
      \define{triangle identities}.
      When the unit $\eta$ is invertible we say that the \define{left adjoint $F$ is fully-faithful}.
      Dually, when the counit $\epsilon$ is invertible we say that the 
      \define{right adjoint $U$ is fully-faithful}.
    \end{definition}

    \begin{definition}
      \label{definitioncocartesianfibrationandfunctor}
      A functor $p \colon X \to B$ is a \define{cocartesian fibration}
      if the functor
      \begin{equation*}
        (\ev_0, p_\ast) \colon \Fun(\Delta^1,X) \to X \times_B \Fun(\Delta^1,B)
      \end{equation*}
      has a fully-faithful left adjoint.
      Dually, it is a \define{cartesian fibration} if the functor
      \begin{equation*}
        (p_\ast, \ev_1) \colon \Fun(\Delta^1,X) \to \Fun(\Delta^1,B) \times_B X
      \end{equation*}
      has a fully-faithful right adjoint.
    \end{definition}

    \begin{definition}
      Let
      \begin{equation*}
            \begin{tikzpicture}[diagram]
                \matrix[objects]{%
                    |(a)| X \& |(b)| A \\
                    |(c)| Y \& |(d)| B \\
                };
                \path[maps,->]
                    (a) edge node[above] {$H$} (b)
                    (a) edge node[left] {$V$} (c)
                    (b) edge node[right] {$U$} (d)
                    (c) edge node[below] {$K$} (d)
                ;
            \end{tikzpicture}
      \end{equation*}
      be a commutative square for which both the functors $U$ and $V$ are 
      right adjoints. From the data of these two adjunctions, more precisely the 
      counit $\epsilon'$ of the adjunction $G \adj V$ and the unit $\eta$ of the 
      adjunction $F \adj U$, we can form the composed natural transformation
      \[\begin{tikzcd}[cramped]
        Y & X & A \\
        & Y & B & A
        \arrow["G", from=1-1, to=1-2]
        \arrow[""{name=0, anchor=center, inner sep=0}, equals, from=1-1, to=2-2]
        \arrow["H", from=1-2, to=1-3]
        \arrow["V", from=1-2, to=2-2]
        \arrow["U"', from=1-3, to=2-3]
        \arrow[""{name=1, anchor=center, inner sep=0}, equals, from=1-3, to=2-4]
        \arrow["K"', from=2-2, to=2-3]
        \arrow["F"', from=2-3, to=2-4]
        \arrow["{\epsilon'}"', shorten >=2pt, Rightarrow, from=1-2, to=0]
        \arrow["\eta", shorten <=2pt, Rightarrow, from=1, to=2-3]
      \end{tikzcd}\]
      which is called the \define{vertical mate} of this commutative square.
      Such a commutative square in which both vertical functors are right adjoints
      is called \define{vertically adjointable} if its vertical mate transformation is 
      invertible.
    \end{definition}

    \begin{definition}
      Let
      \begin{equation*}
            \begin{tikzpicture}[diagram]
                \matrix[objects]{%
                    |(a)| X \& |(b)| Y \\
                    |(c)| A \& |(d)| B \\
                };
                \path[maps,->]
                    (a) edge node[above] {$F$} (b)
                    (a) edge node[left] {$p$} (c)
                    (b) edge node[right] {$q$} (d)
                    (c) edge node[below] {$G$} (d)
                ;
            \end{tikzpicture}
      \end{equation*}
      be a commutative square such that $p$ and $q$ are cocartesian fibrations.
      We call the functor $F$ a \define{cocartesian functor over $G$} if the induced
      commutative square
      \begin{equation*}
            \begin{tikzpicture}[diagram]
                \matrix[objects]{%
                    |(a)| \Fun(\Delta^1,X) \& |(b)| \Fun(\Delta^1,Y) \\
                    |(c)| X \times_A \Fun(\Delta^1,A) \& |(d)| Y \times_B \Fun(\Delta^1,B) \\
                };
                \path[maps,->]
                    (a) edge node[above] {$F_\ast$} (b)
                    (a) edge node[left] {$(\ev_0, p_\ast)$} (c)
                    (b) edge node[right] {$(\ev_0, q_\ast)$} (d)
                    (c) edge node[below] {$F \times_G G_\ast$} (d)
                ;
            \end{tikzpicture}
      \end{equation*}
      is vertically adjointable.
    \end{definition}

    We record the following facts about the arrow categories $\Fun(\Delta^1,A)$ 
    and their domain/codomain-fibrations $\ev_0, \ev_1 \colon \Fun(\Delta^1, A) \to A$ 
    for later reference.

    \begin{proposition}
      \label{evaluationfunctorsarefibrations}
      For every category $A$ we have 
      \begin{enumerate}
        \item $\ev_1 \colon \Fun(\Delta^1,A) \to A$ is a cocartesian fibration.

        \item $\ev_0 \colon \Fun(\Delta^1,A) \to A$ is a cartesian fibration.

        \item The fibers of $(\ev_0,\ev_1) \colon \Fun(\Delta^1,A) \to A \times A$ are spaces, 
          or equivalently $(\ev_0,\ev_1)$ is a conservative functor.

        \item The functor 
          $\Delta_A \coloneqq (\Delta^1 \to \Delta^0)_\ast \colon A \equiv \Fun(\Delta^0,A) \to \Fun(\Delta^1,A)$ 
          is both a section to $\ev_0$ and $\ev_1$ and the invertible natural transformations 
          $\ev_1 \circ \Delta_A \equiv \id$ and $\id \equiv \Delta_A \circ \ev_1$ 
          witnessing this are counit, respectively unit for adjunctions $\ev_1 \adj \Delta_A \adj \ev_0$.

        \item Furthermore, the unit $\id \twoto \Delta_A \circ \ev_1$ is the $\ev_0$-cartesian lift of $\Delta_A \circ \ev_1$ along the canonical natural transformation $\ev_0 \twoto \ev_1$.
          More precisely the composite
          \begin{equation}
            \Fun(\Delta^1,A) \xto{(\id, \Delta_A \ev_1)} \Fun(\Delta^1,A) \times_A \Fun(\Delta^1,A) \xmono{\cart_{\ev_0}} \Fun(\Delta^1,\Fun(\Delta^1,A))
          \end{equation}
          is equal to the unit.

        \item Dually, the counit $\Delta_A \circ \ev_0 \twoto \id$ is the $\ev_1$-cocartesian lift of 
          $\Delta_A \circ \ev_0$ along the canonical natural transformation $\ev_0 \twoto \ev_1$, \ie the composite
          \begin{equation}
            \Fun(\Delta^1,A) \xto{(\Delta_A \ev_0, \id)} \Fun(\Delta^1,A) \times_A \Fun(\Delta^1,A) \xmono{\cocart_{\ev_1}} \Fun(\Delta^1,\Fun(\Delta^1,A))
          \end{equation}
          is equal to the counit.

        \item The cocartesian pushforward $\cocart_{\ev_1} \colon \Fun(\Delta^1,A) \times_A \Fun(\Delta^1,A) \mono \Fun(\Delta^1,\Fun(\Delta^1,A))$
          of $\ev_1$ is $\ev_0$-vertical, \ie factors over $\Delta_A \colon A \mono \Fun(\Delta^1,A)$
          when postcomposed with $(\ev_0)_\ast$.

        \item The cartesian pullback functor $\cart_{\ev_0} \colon \Fun(\Delta^1,A) \times_A \Fun(\Delta^1,A) \mono \Fun(\Delta^1,\Fun(\Delta^1,A))$
          of $\ev_0$ is $\ev_1$-vertical, \ie factors over $\Delta_A \colon A \mono \Fun(\Delta^1,A)$
          when postcomposed with $(\ev_1)_\ast$.

      \end{enumerate}

    \end{proposition}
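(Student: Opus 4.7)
The plan is to address the eight assertions in the order (4), (1)--(2), (5)--(8), (3), treating the adjunctions in (4) as the backbone from which everything else follows with essentially routine bookkeeping.

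For (4), I would recognize that $\ev_0, \ev_1,$ and $\Delta_A$ arise by pulling back along the three nonidentity maps $0, 1 \colon \Delta^0 \rightrightarrows \Delta^1$ and $\Delta^1 \to \Delta^0$ in $\simplexcat$. The two adjunctions $\ev_1 \adj \Delta_A \adj \ev_0$ are witnessed by the two natural morphisms $\Delta^1 \times \Delta^1 \to \Delta^1$ in $\simplexcat$ (the poset join and meet), which encode in $\Delta^1$ that $1$ is terminal and $0$ is initial. The identities $\ev_i \circ \Delta_A \equiv \id_A$ provide the invertible counit of $\ev_1 \adj \Delta_A$ and the invertible unit of $\Delta_A \adj \ev_0$; the remaining unit and counit are the pullbacks of the join and meet under $\Fun(-, A)$. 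The triangle identities reduce to equalities of poset maps on $\Delta^1$, which are automatic.

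Next, for (1), I would explicitly construct the fully-faithful left adjoint to $(\ev_0, (\ev_1)_\ast) \colon \Fun(\Delta^1, \Fun(\Delta^1, A)) \to \Fun(\Delta^1, A) \times_A \Fun(\Delta^1, A)$. Under the identification of the source with $\Fun(\Delta^1 \times \Delta^1, A)$, this adjoint sends a composable pair $(f \colon a \to b, g \colon b \to c)$ to the commutative square with identity top, $f$ on the left, $g$ on the bottom, and $g \circ f$ on the right; concretely it is induced by the map $\Delta^1 \times \Delta^1 \to \Delta^2$ that collapses the top edge, together with the composition functor on $A$. The unit at an arbitrary square has its top edge filled in via the counit of $\ev_1 \adj \Delta_A$ established in (4), and fully-faithfulness reduces to a universal property of squares with identity top edge verified by a mapping space computation. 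Part (2) follows dually by applying (1) to $A^\op$.

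For (5) and (6), I would identify the explicit formulas for the (co)cartesian lifts produced in (1)--(2) with the unit and counit produced in (4); since both have the same underlying data (identity on one leg and the prescribed morphism on the other), this is a direct comparison. Assertions (7) and (8) are then immediate, because the cocartesian lift in (1) always has identity top edge, so applying $(\ev_0)_\ast$ to it yields a natural transformation whose components are identities, which by definition factors through $\Delta_A$. For (3), the fiber of $(\ev_0, \ev_1)$ over $(a, b)$ is by construction the mapping space $A(a, b)$, which is a space by the definition of an $(\infty,1)$-category; equivalence with conservativity follows from the standard criterion that a functor is conservative iff all such fibers are spaces. The main obstacle will be making the fully-faithfulness in (1) precise model-independently: the left adjoint is combinatorially evident, but its universal property rests on an identification of $\Delta^1 \times \Delta^1$ as a pushout of triangles, which in a synthetic framework must either be taken as a primitive fact about $\Cat$ or derived by a direct Yoneda-style argument in $\Fun(-, A)$.
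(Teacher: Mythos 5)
Your proposal is correct and follows essentially the same route as the paper: both reduce all eight statements to adjunctions among $\Delta^0$, $\Delta^1$, $\Delta^2$ and $\Delta^1 \times \Delta^1$ (the initial/terminal‑vertex adjunctions for statement (4), and $(0,2) \dashv \sigma_0$ pushed out along $(0,2)$ for statement (1)) together with the pushout identifications $\Delta^2 \equiv \Delta^1 \cup_{\Delta^0} \Delta^1$ and $\Delta^1 \times \Delta^1 \equiv \Delta^2 \cup_{\Delta^1} \Delta^2$, before applying $\Fun(-,A)$. The pushout decomposition you flag as the remaining foundational input is exactly what the paper also takes as a primitive fact.
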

    \begin{proof}
      All the above statements can be justified by inserting certain adjunctions between 
      $\Delta^0$, $\Delta^1$, $\Delta^2$ and $\Delta^1 \times \Delta^1$ 
      into $\Fun(-,A)$ and using the canonical pushout identifications 
      $\Delta^2 \equiv \Delta^1 \cup_{\Delta^0} \Delta^1$ and 
      $\Delta^1 \times \Delta^1 \equiv \Delta^2 \cup_{\Delta^1} \Delta^2$, 
      \ie that the square can be written as two commutative triangles glued along their composite edges.
      The first statement follows from looking at the adjunction 
      \begin{equation}
            \begin{tikzpicture}[diagram]
                \matrix[objects] {%
                |(a)| \Delta^1 \& |(b)| \Delta^2 \\
                };
                \path[maps,->]
                (b) edge[bend left] node[below] (U) {$\sigma_0$} (a)
                ;
                \path[maps,{Hooks[left]}->]
                (a) edge[bend left] node[above] (F) {$(0,2)$} (b)
                ;
                \node[rotate=-90] at ($ (U) ! 0.5 ! (F) $) {$\adj$};
            \end{tikzpicture}
      \end{equation}
      where the right adjoint $\sigma_0$ contracts the edge $0 \to 1$, by applying pushout along $(0,2) \colon \Delta^1 \mono \Delta^2$ to get
      \begin{equation}
            \begin{tikzpicture}[diagram]
                \matrix[objects] {%
                  |(a)| \Delta^2 \equiv \Delta^2 \cup_{\Delta^1} \Delta^1 \& |(b)| \qquad \qquad \Delta^2 \cup_{\Delta^1} \Delta^2 \equiv \Delta^1 \times \Delta^1 \\
                };
                \path[maps,->]
                (b) edge[bend left] node[below] (U) {$\sigma_0$} (a)
                ;
                \path[maps,{Hooks[left]}->]
                (a) edge[bend left] node[above] (F) {$(0,2)$} (b)
                ;
                \node[rotate=-90] at ($ (U) ! 0.5 ! (F) $) {$\adj$};
            \end{tikzpicture}
      \end{equation}
      The fourth statement can be proven from the inital and terminal object adjunctions 
      \begin{equation}
            \begin{tikzpicture}[diagram]
                \matrix[objects] {%
                |(a)| \Delta^0 \& |(b)| \Delta^1 \\
                };
                \path[maps,->]
                (b) edge node (U) {$!$} (a)
                ;
                \path[maps,{Hooks[left]}->]
                (a) edge[bend left] node[above] (F) {$0$} (b)
                (a) edge[bend right] node[below] (V) {$1$} (b)
                ;
                \node[rotate=-90] at ($ (U) ! 0.5 ! (F) $) {$\adj$};
                \node[rotate=-90] at ($ (V) ! 0.5 ! (U) $) {$\adj$};
            \end{tikzpicture}
      \end{equation}
      for $\Delta^1$. The counit here for the adjunction $0 \adj \hspace{0.3em} !$ 
      is given by the functor $\Delta^1 \times \Delta^1 \to \Delta^1$ contracting the edges $(0,0) \to (0,1)$ and $(0,0) \to (1,0)$.
      In this sense one can then also deduce the fifth statement by observing that this counit agrees with the composite
      \begin{equation*}
        \Delta^1 \leftarrow \Delta^2 \equiv \Delta^2 \pushout{\Delta^1} \Delta^1 \leftarrow \Delta^2 \pushout{\Delta^1} \Delta^2 \equiv \Delta^1 \times \Delta^1 ,
      \end{equation*}
      where the leftmost functor contracts the edge $0 \to 1$.
      All other statements follow similary or by further inspecting these adjunctions.
    \end{proof}

    \begin{proposition}
      \label{functorsinducecocartesianfunctorsbetweencodomainfibrations}
      For every functor $F \colon A \to B$ we have that 
      \begin{enumerate}
        \item in the commutative square
          \[\begin{tikzcd}
            {\Fun(\Delta^1,A)} & {\Fun(\Delta^1,B)} \\
            A & B
            \arrow["{F_\ast}", from=1-1, to=1-2]
            \arrow["{\ev_1}"', from=1-1, to=2-1]
            \arrow["{\ev_1}", from=1-2, to=2-2]
            \arrow["F"', from=2-1, to=2-2]
          \end{tikzcd}\]
          the functor $F_\ast$ is cocartesian over $F$ and
        \item in the commutative square
          \[\begin{tikzcd}
            {\Fun(\Delta^1,A)} & {\Fun(\Delta^1,B)} \\
            A & B
            \arrow["{F_\ast}", from=1-1, to=1-2]
            \arrow["{\ev_0}"', from=1-1, to=2-1]
            \arrow["{\ev_0}", from=1-2, to=2-2]
            \arrow["F"', from=2-1, to=2-2]
          \end{tikzcd}\]
          the functor $F_\ast$ is cartesian over $F$.

      \end{enumerate}
    \end{proposition}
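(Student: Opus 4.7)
Statement~(2) follows from (1) by passage to opposite categories, since $(-)^{\op}$ exchanges cocartesian with cartesian fibrations and identifies $\ev_0 \colon \Fun(\Delta^1, A^{\op}) \to A^{\op}$ with the opposite of $\ev_1 \colon \Fun(\Delta^1, A) \to A$. I therefore focus on~(1).

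Unpacking \autoref{definitioncocartesianfibrationandfunctor}, asserting that $F_\ast$ is cocartesian over $F$ amounts to showing that the induced square
\[
\begin{tikzcd}[cramped]
\Fun(\Delta^1,\Fun(\Delta^1,A)) \ar[r,"{(F_\ast)_\ast}"] \ar[d,"{(\ev_0,(\ev_1)_\ast)}"'] & \Fun(\Delta^1,\Fun(\Delta^1,B)) \ar[d,"{(\ev_0,(\ev_1)_\ast)}"] \\
\Fun(\Delta^1,A)\times_A \Fun(\Delta^1,A) \ar[r,"{F_\ast\times_F F_\ast}"'] & \Fun(\Delta^1,B)\times_B \Fun(\Delta^1,B)
\end{tikzcd}
\]
is vertically adjointable. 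By \autoref{evaluationfunctorsarefibrations} applied to the categories $\Fun(\Delta^1,A)$ and $\Fun(\Delta^1,B)$, both vertical functors are right adjoints with fully-faithful left adjoints $\cocart_{\ev_1}^A$ and $\cocart_{\ev_1}^B$; in particular the units of these adjunctions are invertible. By a standard consequence of the triangle identities, vertical adjointability of a square of right adjoints with invertible units is equivalent to the existence of a natural equivalence
\[
(F_\ast)_\ast \circ \cocart_{\ev_1}^A \;\simeq\; \cocart_{\ev_1}^B \circ (F_\ast \times_F F_\ast)
\]
filling the corresponding square of left adjoints.

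To produce this equivalence I would invoke the explicit construction of $\cocart_{\ev_1}$ recalled in the proof of \autoref{evaluationfunctorsarefibrations}, built out of the retraction $\sigma_0 \colon \Delta^2 \to \Delta^1$ of $(0,2)$ and the pushout identification $\Delta^1 \times \Delta^1 \simeq \Delta^2 \cup_{\Delta^1} \Delta^2$. Since these are purely simplicial data to which one simply applies $\Fun(-,A)$, the construction is strictly natural in the ambient category, and postcomposition with $F$ therefore commutes with the formation of $\cocart_{\ev_1}$ on the nose. The main piece of careful bookkeeping is the reduction invoked above, matching the vertical mate of the original right-adjoint square to the canonical comparison between the two left-adjoint composites; but once this is in place, the invertibility is immediate from the strict naturality of the simplicial construction in $A$.
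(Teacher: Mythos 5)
Your proof is correct and follows essentially the same route as the paper's: the cocartesian-lift adjunction for $\ev_1$ comes from a fixed simplicial adjunction inserted into the first variable of $\Fun(-,-)$, and bifunctoriality of $\Fun(-,-)$ makes this commute with postcomposition by $F$ (part (2) then being the dual). The one step you should make explicit is that reducing vertical adjointability to the mere \emph{existence} of an equivalence filling the square of left adjoints genuinely uses that both left adjoints are fully faithful (via \autoref{liftingalongffladjisequivtohavingpostwhiskeringwithcounitinvertible}, since the mate is then invertible precisely when the left-adjoint composite factors through the fully faithful left adjoint); for general adjunctions the existence of some equivalence would not imply invertibility of the canonical mate.
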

    \begin{proof}
      This follows from the fact that inserting the simplicial adjunction from the proof of 
      \autoref{evaluationfunctorsarefibrations} in the first variable of $\Fun(-,-)$ 
      and observing that this commutes with plugging in $F$ into the second variable of $\Fun(-,-)$.
    \end{proof}

    Furthermore we the following fact about commutative squares in categories, they can be decomposed into two commutative triangles that agree on their composite edges.

    \begin{proposition}
      \label{commutativesquarescanbedecomposedintotwocommtriangles}
      For any category $A$ the square
      \[\begin{tikzcd}
        {\Fun(\Delta^1,\Fun(\Delta^1,A))} && {\Fun(\Delta^1,A) \times_A \Fun(\Delta^1,A)} \\
        \\
        {\Fun(\Delta^1,A) \times_A \Fun(\Delta^1,A)} && {\Fun(\Delta^1,A)}
        \arrow["{((\ev_0)_\ast, \ev_1)}", from=1-1, to=1-3]
        \arrow["{(\ev_0, (\ev_1)_\ast)}"', from=1-1, to=3-1]
        \arrow["\comp", from=1-3, to=3-3]
        \arrow["\comp"', from=3-1, to=3-3]
      \end{tikzcd}\] 
      commutes and is a pullback.
    \end{proposition}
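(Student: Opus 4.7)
The plan is to reduce the statement to the combinatorial pushout decomposition $\Delta^2 \cup_{\Delta^1} \Delta^2 \equiv \Delta^1 \times \Delta^1$ already invoked in the proof of \autoref{evaluationfunctorsarefibrations}, in which the two copies of $\Delta^2$ correspond to the upper and lower triangles of the square, glued along the diagonal edge via $(0,2) \colon \Delta^1 \mono \Delta^2$.

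First I would invoke the exponential law to identify $\Fun(\Delta^1, \Fun(\Delta^1, A)) \equiv \Fun(\Delta^1 \times \Delta^1, A)$. Under this identification, an object is pictured as a square in $A$ with edges $f$ (top), $g$ (bottom), $h$ (left), $k$ (right); the outer $\ev_0, \ev_1$ pick out $f, g$ while the whiskered $(\ev_0)_\ast, (\ev_1)_\ast$ pick out $h, k$. Applying $\Fun(-, A)$ to the pushout turns it into a pullback
\begin{equation*}
  \Fun(\Delta^1 \times \Delta^1, A) \equiv \Fun(\Delta^2, A) \pullback{\Fun(\Delta^1, A)} \Fun(\Delta^2, A)
\end{equation*}
where both legs restrict along $(0,2) \colon \Delta^1 \mono \Delta^2$.

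Next I would combine this with the Segal identification $\Fun(\Delta^2, A) \equiv \Fun(\Delta^1, A) \times_A \Fun(\Delta^1, A)$ given by restriction along $(0,1)$ and $(1,2)$, under which restriction along $(0,2)$ corresponds exactly to the composition functor $\comp$. Orienting the pushout so that the first $\Delta^2$ is the upper triangle with short edges $h$ and $g$ and the second is the lower triangle with short edges $f$ and $k$, the combined pullback rewrites as
\begin{equation*}
  \bigl(\Fun(\Delta^1,A) \times_A \Fun(\Delta^1,A)\bigr) \times_{\Fun(\Delta^1,A)} \bigl(\Fun(\Delta^1,A) \times_A \Fun(\Delta^1,A)\bigr),
\end{equation*}
and the projections to the two factors of $\Fun(\Delta^1, A) \times_A \Fun(\Delta^1, A)$ identify respectively with $((\ev_0)_\ast, \ev_1)$ and $(\ev_0, (\ev_1)_\ast)$. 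This is precisely the claimed pullback square, and commutativity is automatic because both paths around it compute the diagonal edge of the square in $A$.

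I do not anticipate any real obstacle: the entire argument is a sequence of identifications. The only care needed is in the bookkeeping of the final step, where one must match the short edges of each triangular $\Delta^2$ with the correct pair among $(h, g)$ and $(f, k)$ via the inclusions $(0,1), (1,2) \colon \Delta^1 \mono \Delta^2$. This matching is forced by the chosen orientation of the pushout $\Delta^1 \times \Delta^1 \equiv \Delta^2 \cup_{\Delta^1} \Delta^2$.
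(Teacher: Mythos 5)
Your proposal is correct and is essentially the paper's own argument: the paper's proof consists precisely of exhibiting the pushout square $\Delta^1 \xrightarrow{(0,2)} \Delta^2$, $\Delta^1 \xrightarrow{(0,2)} \Delta^2$ with pushout $\Delta^1 \times \Delta^1$, then (implicitly) applying $\Fun(-,A)$ and the Segal identification $\Fun(\Delta^2,A) \equiv \Fun(\Delta^1,A) \times_A \Fun(\Delta^1,A)$. You have simply made the bookkeeping explicit that the paper leaves to the reader.
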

    \begin{proof}
      We use the following canonical pushout square.
      \[\begin{tikzcd}[cramped]
        {\Delta^1} & {\Delta^2} \\
        {\Delta^2} & {\Delta^1 \times \Delta^1}
        \arrow["02"', from=1-1, to=1-2]
        \arrow["02", from=1-1, to=2-1]
        \arrow[from=1-2, to=2-2]
        \arrow[from=2-1, to=2-2]
        \arrow["\lrcorner"{anchor=center, pos=0.125, rotate=180}, draw=none, from=2-2, to=1-1]
      \end{tikzcd}\]
    \end{proof}

  \subsection{Adjunctions, Adjointablility and Cocartesian Fibrations}
    \label{subsectionadjunctionsadjointabilityandcocartfibs}

    In this subsection we collect some general statements about adjunctions, cocartesian fibrations and some 
    elementary stability properties of them.

    The first statement is about the functor $\Fun(A,-)$ preserving adjoints.
    Furthermore this adjointability is in a sense preserved when varying $A$, \ie 
    we get adjointable squares.

    \begin{lemma}
      \label{FunApreservesadjunctionsandFunFgivesadjointablesquares}
      Let $A$ be a category and $F \colon X \to Y$ a (fully-faithful) left adjoint.
      Then $\Fun(A,X) \to \Fun(A,Y)$ is again a (fully-faithful) left adjoint. 
      Furthermore, for any functor $H \colon A \to B$, the commutative square
      \begin{equation}
            \begin{tikzpicture}[diagram]
                \matrix[objects] {%
                |(a)| \Fun(B,X) \& |(b)| \Fun(A,X) \\
                |(c)| \Fun(B,Y) \& |(d)| \Fun(A,Y) \\
                };
                \path[maps,->]
                (a) edge node[above]  {$\Fun(H,X)$} (b)
                (c) edge node[below]  {$\Fun(H,Y)$} (d)
                (a) edge node[left]   {$\Fun(B,F)$} (c)
                (b) edge node[right]  {$\Fun(A,F)$} (d)
                ;
            \end{tikzpicture}
      \end{equation}
      is vertically adjointable.
      The same is true for right adjoints.
    \end{lemma}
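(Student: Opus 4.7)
The plan is to build the adjunction on functor categories by whiskering the unit and counit of $F \adj U$, and then to check that the resulting mate of the square reduces pointwise to a triangle identity and is therefore the identity $2$-cell.

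First I would define candidate unit and counit for $\Fun(A,F) \adj \Fun(A,U)$. Letting $\eta \colon \id_X \twoto UF$ and $\epsilon \colon FU \twoto \id_Y$ denote the unit and counit of $F \adj U$, set $\bar\eta \colon \id_{\Fun(A,X)} \twoto \Fun(A,U) \circ \Fun(A,F)$ to have component $\eta \cdot G \colon G \twoto UFG$ at each $G \colon A \to X$, and analogously $\bar\epsilon$. The two triangle identities hold on each $G$ because there they reduce to the triangle identities for $\eta, \epsilon$ whiskered by $G$. If $\eta$ is invertible, then every component $\eta \cdot G$ is pointwise invertible, so $\bar\eta$ is invertible as a natural transformation, proving that $\Fun(A,F)$ is fully-faithful whenever $F$ is. The dual argument yields the statement for right adjoints.

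For the adjointability of the square, I would observe that it commutes strictly because $F \circ K \circ H = FKH$ for every $K \colon B \to X$, so the $2$-cell filling the square is the identity. The vertical mate with respect to the adjunctions $\Fun(B,F) \adj \Fun(B,U)$ and $\Fun(A,F) \adj \Fun(A,U)$ is then the composite of the unit of the latter, the identity filling the square, and the counit of the former. Evaluated at a functor $G \colon B \to Y$ this composite takes the form
\begin{equation*}
  UGH \twoto UFUGH \twoto UGH
\end{equation*}
where the first arrow is $(\eta U) \cdot GH$ and the second is $(U\epsilon) \cdot GH$. By the triangle identity $(U\epsilon) \circ (\eta U) = \id_U$ whiskered with $GH$, the composite is the identity on $UGH$. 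Since a natural transformation is invertible if and only if its components are, the mate is an identity $2$-cell, hence certainly invertible. The argument for right adjoints is symmetric.

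The content of the proof is purely formal, reflecting the $2$-functoriality of $\Fun(-,-)$ in both variables; the only care required is notational, in tracking the direction of the mate and in verifying that its componentwise formula really is a triangle-identity composite rather than some unrelated arrangement of units and counits.
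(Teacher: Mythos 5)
Your proof follows the same route as the paper's: whisker the unit and counit of $F \adj U$ with $\Fun(A,-)$, note that the mate of the square has components of the form $(U\epsilon)GH \circ (\eta U)GH$, and conclude by the triangle identity that it is an identity, hence invertible. The mate computation and the reduction to $\Fun(H,\, U\epsilon \circ \eta U)$ are exactly what the paper does, and checking invertibility componentwise (both for the mate and for $\bar\eta$ in the fully-faithful case) is legitimate, since invertibility is a property.

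The one place where your argument is not yet a proof in the setting this paper works in is the construction of $\bar\eta$, $\bar\epsilon$ and of the triangle identities for them. In a synthetic or $(\infty,1)$-categorical framework you cannot \emph{define} a natural transformation by prescribing its components at each $G$, nor \emph{verify} an identification such as $\id \equiv (U\epsilon)\circ(\eta U)$ by checking it objectwise: natural transformations and identifications are data, and a pointwise check does not assemble them into a coherent global witness. The paper sidesteps this by observing that post-whiskering is itself a functor
\begin{equation*}
  \Fun(A,C) \xto{\alpha_\ast} \Fun(A,\Fun(\Delta^1,D)) \xto{\equiv} \Fun(\Delta^1,\Fun(A,D)),
\end{equation*}
so that $\Fun(A,-)$ carries the entire adjunction datum $(\eta,\epsilon,\text{triangle identities})$ to the corresponding datum for $\Fun(A,F) \adj \Fun(A,U)$ in one stroke; the componentwise formulas you wrote down are then recovered as a consequence rather than taken as the definition. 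With that substitution your argument is complete and agrees with the paper's.
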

    \begin{proof}
      We have the post-whiskering functor
      \begin{equation*}
        \Fun(A,C) \xto{\alpha_\ast} \Fun(A,\Fun(\Delta^1,D)) \xto{\equiv} \Fun(\Delta^1,\Fun(A,D))
      \end{equation*}
      for any natural transformation $C \xto{\alpha} \Fun(\Delta^1,D)$, hence we can say
      that $\Fun(A,-)$ preserves natural transformations.
      In a similar way it can be seen that $\Fun(A,-)$ preserves compositions and identities of
      natural transformations.
      Hence it preserves the data of adjunctions.
      For adjointablility we now just need to observe that we have
      \begin{equation*}
        \Fun(F,D) \circ \Fun(B,\alpha) = \Fun(A,\alpha) \circ \Fun(F,C) 
      \end{equation*}
      which allows us to rewrite the mate of our square into $\Fun(F,U \epsilon \circ \eta U)$,
      which is equivalent to the the identity natural transformation because of the triangle identities.
    \end{proof}

    The following statement is about $\Fun(\Delta^1,-)$ preserving cocartesian fibrations and its 
    evaluations functors $\ev_0$ and $\ev_1$ constituting cocartesian functors.

    \begin{corollary}
      \label{evaluationfunctorsarecocartesian}
      Let $p \colon A \fibration I$ be a cocartesian fibration.
      Then $p_\ast \colon \Fun(\Delta^1,A) \to \Fun(\Delta^1,I)$
      is also a cocartesian fibration and both evaluation functors
      $\ev_0$ and $\ev_1$
      \begin{equation}
            \begin{tikzpicture}[diagram]
                \matrix[objects] {%
                |(a)| \Fun(\Delta^1,A) \& |(b)| A \\
                |(c)| \Fun(\Delta^1,I) \& |(d)| I \\
                };
                \path[maps,->]
                (a) edge node[above]  {$\ev_i$} (b)
                (c) edge node[below]  {$\ev_i$} (d)
                (a) edge node[left]   {$p_\ast$} (c)
                (b) edge node[right]  {$p$} (d)
                ;
            \end{tikzpicture}
      \end{equation}
      are cocartesian functors.
    \end{corollary}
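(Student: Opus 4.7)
The plan is to deduce cocartesianness of $p_\ast$ from that of $p$ by functorially applying $\Fun(\Delta^1,-)$ via \autoref{FunApreservesadjunctionsandFunFgivesadjointablesquares}, and to derive the cocartesian-functor property of $\ev_0,\ev_1$ from the naturality of the evaluation transformations.

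For the first claim, since $p$ is cocartesian we have a fully-faithful left adjoint $\cocart_p \adj (\ev_0,p_\ast)$. Applying $\Fun(\Delta^1,-)$ and invoking \autoref{FunApreservesadjunctionsandFunFgivesadjointablesquares} produces a fully-faithful left adjoint to
\[
((\ev_0)_\ast,(p_\ast)_\ast)\colon \Fun(\Delta^1,\Fun(\Delta^1,A)) \to \Fun(\Delta^1,A) \times_{\Fun(\Delta^1,I)} \Fun(\Delta^1,\Fun(\Delta^1,I))
\]
after identifying $\Fun(\Delta^1, A \times_I \Fun(\Delta^1,I))$ with the iterated pullback. To recognize $p_\ast$ as a cocartesian fibration we must instead exhibit such a left adjoint to $(\ev_0,(p_\ast)_\ast)$, where the first $\ev_0$ evaluates the \emph{outer} rather than the inner $\Delta^1$. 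These two functors are related by the canonical self-equivalence $\sigma$ swapping the two $\Delta^1$-variables: one has $\ev_0 \circ \sigma = (\ev_0)_\ast$, and since post-composition with $p$ commutes with $\sigma$ up to the analogous flip $\sigma_I$ on the $I$-side, one obtains $(\ev_0,(p_\ast)_\ast) \equiv (\id \times \sigma_I) \circ ((\ev_0)_\ast,(p_\ast)_\ast) \circ \sigma^{-1}$. Fully-faithful left adjoints are stable under composition with equivalences, so this produces the desired $\cocart_{p_\ast}$.

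For the second claim, we need to check that the two commutative squares in the statement are vertically adjointable, i.e., that the mate formed through $\cocart_{p_\ast}$ on the left and $\cocart_p$ on the right is invertible. By the construction above, $\cocart_{p_\ast}$ is $\Fun(\Delta^1,\cocart_p)$ conjugated by the appropriate flips. Combining this with the identities $(\ev_i)_\ast \circ \sigma = \ev_i$ and $\ev_i \circ \sigma_I^{-1} = (\ev_i)_\ast$ (immediate from the definition of $\sigma$) and the naturality of the evaluation $\ev_i \colon \Fun(\Delta^1,-) \twoto \id$ applied at $\cocart_p$, a short computation identifies the mate with an identity natural transformation. In particular it is invertible, so $\ev_i$ is cocartesian.

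The main obstacle is purely bookkeeping: consistently tracking inner- versus outer-$\Delta^1$ evaluations across the flip $\sigma$, and matching the resulting identifications with the naturality relations for $\ev_i$ so that the transported adjunction and the mate both come out in the form required by \autoref{definitioncocartesianfibrationandfunctor}. Once the flip identifications are set up cleanly, both parts reduce to straightforward applications of \autoref{FunApreservesadjunctionsandFunFgivesadjointablesquares} and naturality.
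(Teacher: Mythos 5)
Your proof is correct and follows essentially the same route as the paper: both deduce cocartesianness of $p_\ast$ by applying $\Fun(\Delta^1,-)$ to the fully-faithful left adjoint witnessing cocartesianness of $p$ via \autoref{FunApreservesadjunctionsandFunFgivesadjointablesquares}, and both obtain cocartesianness of $\ev_i$ from the adjointability-in-$H$ part of that same lemma (your ``naturality of $\ev_i$ at $\cocart_p$'' is precisely the lemma applied with $H = i\colon \Delta^0 \hookrightarrow \Delta^1$). The only difference is that you spell out the flip $\sigma$ identifying the outer-evaluation functor $(\ev_0,(p_\ast)_\ast)$ with the transported functor $((\ev_0)_\ast,(p_\ast)_\ast)$, a bookkeeping point the paper's commutative triangle passes over silently.
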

    \begin{proof}
      As $\Fun(\Delta^1,-)$ preserves pullbacks we have the following commutative triangle.
      \[\begin{tikzcd}
        {\Fun(\Delta^1,\Fun(\Delta^1,A))} \\
        {\Fun(\Delta^1,A) \times_{\Fun(\Delta^1,I)} \Fun(\Delta^1,\Fun(\Delta^1,I))} & {\Fun(\Delta^1,A \times_I \Fun(\Delta^1,I))}
        \arrow["{(\ev_0, (p_\ast)_\ast)}"', from=1-1, to=2-1]
        \arrow["{(\ev_0,p_\ast)_\ast}", from=1-1, to=2-2]
        \arrow["\equiv", from=2-2, to=2-1]
      \end{tikzcd}\]
      Hence the left hand vertical functor has a fully-faithful left adjoint as the
      right hand diagonal functor has one, because $\Fun(\Delta^1,-)$ preserves
      having fully-faithful adjoints by \autoref{FunApreservesadjunctionsandFunFgivesadjointablesquares}.
      Using $i \colon \Delta^0 \mono \Delta^1$ as $H$ in the same lemma also gives us the needed
      adjointablility of the square witnessing the desired cocartesianness of the 
      evaluation functors $\ev_i$.
    \end{proof}

    We now construct for every adjunction defined using unit and counit its action on 
    the appropriate Hom-spaces.

    \begin{construction}
      \label{constructionofactiononHomspacesforadjunction}
      For an adjunction with left adjoint $F \colon B \to A$, right adjoint $U \colon A \to B$, 
      unit $\eta \colon \id UF$ and counit $\epsilon \colon FU \to \id$ we can construct the 
      following two functors.
      \begin{equation*}
        \phi \coloneqq B \times_{A} \Fun(\Delta^1,A) \xto{\eta \times_{U} (U_\ast, \ev_1)} \Fun(\Delta^1,B) \times_{B} \Fun(\Delta^1,B) \times_{B} A \xto{\comp \times_{\id} \id} \Fun(\Delta^1,B) \times_{B} A
      \end{equation*}
      \begin{equation*}
        \psi \coloneqq \Fun(\Delta^1,B) \times_{B} A  \xto{(\ev_0, F_\ast) \times_{F} \epsilon} B \times_{A} \Fun(\Delta^1,A) \times_{A} \Fun(\Delta^1,A) \xto{\id \times_{\id} \comp} B \times_{A} \Fun(\Delta^1,A)
      \end{equation*}
      These functors are called \define{the action of the adjunction on Hom-spaces}.
      By construction these two functors commute with the
      canonical projections 
      $(\ev_0 \circ \pr_0, \pr_1) \colon \Fun(\Delta^1,B) \times_B A \to B \times A$ and 
      $(\pr_0, \ev_1 \circ \pr_1) \colon B \times_A \Fun(\Delta^1,A) \to B \times A$.
    \end{construction}

    These actions on Hom-spaces are in fact inverse to each other, via the triangle identities of the 
    adjunction. We will later prove that having such a parametrized Hom-equivalence is actually equivalent to 
    constituting an adjunction.

    \begin{lemma}
      \label{firsthalfoftriangleidentityadjunctionsareequivalenttohomcatdef}
      For an adjunction $(F, U, \eta, \epsilon)$ the two functors $\phi$ and $\psi$ constructed in \autoref{constructionofactiononHomspacesforadjunction}
      are inverse to each other.
    \end{lemma}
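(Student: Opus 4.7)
The plan is to verify both composites $\psi \circ \phi$ and $\phi \circ \psi$ reduce to the identity by unpacking the definitions and invoking the two triangle identities, together with the naturality of the unit and counit.

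First I would compute $\psi \circ \phi$. Tracing through the construction, an object $(b, f)$ of $B \times_A \Fun(\Delta^1, A)$ (informally, $b \in B$ together with a morphism $f \colon Fb \to a$ in $A$) is sent by $\phi$ to the pair $(Uf \circ \eta_b, a) \in \Fun(\Delta^1, B) \times_B A$, and then by $\psi$ to $(b, \epsilon_a \circ F(Uf \circ \eta_b)) = (b, \epsilon_a \circ FUf \circ F\eta_b)$. By naturality of $\epsilon$, we have $\epsilon_a \circ FUf \equiv f \circ \epsilon_{Fb}$, so this composite becomes $f \circ (\epsilon_{Fb} \circ F\eta_b)$, which is identified with $f$ via the triangle identity $\epsilon F \circ F \eta \equiv \id_F$. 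The computation for $\phi \circ \psi$ is dual, using naturality of $\eta$ and the other triangle identity $U\epsilon \circ \eta U \equiv \id_U$.

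To make this rigorous model-independently, I would reformulate the above pointwise argument in terms of the composition functor $\comp$ and the associativity/unitality already encoded by the arrow-category structure (\autoref{evaluationfunctorsarefibrations} and \autoref{commutativesquarescanbedecomposedintotwocommtriangles}). Concretely, $\psi \circ \phi$ is the composite built from whiskering $\eta$, whiskering $U$ and $F$ on arrow categories, then whiskering $\epsilon$, and iteratively composing the three resulting arrows in $A$. Using associativity of composition and \autoref{functorsinducecocartesianfunctorsbetweencodomainfibrations}, one rearranges so that $\epsilon$ is whiskered next to $FU f$ rather than next to $F\eta_b$; applying naturality of $\epsilon$ (which in the arrow category amounts to an identification of two ways of building a natural transformation $FU_\ast \Rightarrow \id$ out of $\epsilon$) rewrites this middle block as $f$ pre-composed with $\epsilon_{Fb} \circ F\eta_b$. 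The triangle identity then contracts the latter block to $\id_{Fb}$, and unitality of composition finishes the identification with the identity.

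The main obstacle is bookkeeping rather than conceptual: carefully tracking the two pullback diagrams and the canonical projections over $B \times A$ (which $\phi$ and $\psi$ both commute with by construction) to ensure the rearrangements of composites needed in the middle of the argument are the ones genuinely produced by the composition functor $\comp$. Once the rewrite $\epsilon \circ FU f \equiv f \circ \epsilon_{F(-)}$ is available as a natural transformation in the arrow category and the triangle identity is plugged in, both $\psi \circ \phi \equiv \id$ and $\phi \circ \psi \equiv \id$ follow formally.
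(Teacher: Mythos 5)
Your proposal is correct and follows essentially the same route as the paper: unpack $\psi\circ\phi$, use naturality of $\epsilon$ together with associativity of $\comp$ to isolate the block $\epsilon F \circ F\eta$, and contract it with the triangle identity (the paper's proof is exactly the diagrammatic rendering of your pointwise computation, realized as two large commutative diagrams over the iterated fiber products). The paper likewise dismisses $\phi\circ\psi\equiv\id$ as "seen similarly," so nothing is missing.
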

    \begin{proof}
      For $\psi \circ \phi \equiv \id$ we start by looking at the composite commutative diagram 
      \[\begin{tikzcd}[cramped]
        {B \times_A \Fun(\Delta^1,A) \times_A \Fun(\Delta^1,A)} & {B \times_A \Fun(\Delta^1,A)} \\
        & {\Fun(\Delta^1,B) \times_B \Fun(\Delta^1,B) \times_B A} \\
        {B \times_A \Fun(\Delta^1,A) \times_A \Fun(\Delta^1,A) \times_A \Fun(\Delta^1,A)} & {\Fun(\Delta^1,B) \times_B A} \\
        & {B \times_A \Fun(\Delta^1,A) \times_A \Fun(\Delta^1,A)} \\
        {B \times_A \Fun(\Delta^1,A) \times_A \Fun(\Delta^1,A)} & {B \times_A \Fun(\Delta^1,A)}
        \arrow["{\id \times_{\id} \id \times ((FU)_\ast, \varepsilon \ev_1)}"', from=1-1, to=3-1]
        \arrow["{(\id, F \eta, \id)}"', from=1-2, to=1-1]
        \arrow["{\eta \times_U (U_\ast, \ev_1)}", from=1-2, to=2-2]
        \arrow["{(\ev_0, F_\ast) \times_F (F_\ast, \varepsilon)}"', from=2-2, to=3-1]
        \arrow["{\comp \times_{\id} \id}", from=2-2, to=3-2]
        \arrow["{\id \times_{\id} \comp \times_{\id} \id}"'{pos=0.3}, from=3-1, to=4-2]
        \arrow["{\id \times_{\id} \times_{\id} \comp}"{description}, from=3-1, to=5-1]
        \arrow["{(\ev_0, F_\ast) \times_F \varepsilon}", from=3-2, to=4-2]
        \arrow["{\id \times_{\id} \comp}", from=4-2, to=5-2]
        \arrow["{\id \times_{\id} \comp}"', from=5-1, to=5-2]
      \end{tikzcd}\]
      which on the right hand vertical compostition just unpacks the definitions of $\psi \circ \phi$ 
      and realize that the other composite going around the rectangle can be can reformulated like in the
      diagram
      \begin{adjustbox}{scale=0.9}
        \begin{tikzcd}[cramped]
          & {B \times_A \Fun(\Delta^1,A)} \\
          & {B \times_A \Fun(\Delta^1,A) \times_A \Fun(\Delta^1,A)} \\
          {B \times_A \Fun(\Delta^1,A) \times_A \Fun(\Delta^1,A) \times_A \Fun(\Delta^1,A)} \\
          & {B \times_A \Fun(\Delta^1,A) \times_A \Fun(\Delta^1,A) \times_A \Fun(\Delta^1,A)} \\
          {B \times_A \Fun(\Delta^1,A) \times_A \Fun(\Delta^1,A)} & {B \times_A \Fun(\Delta^1,A) \times_A \Fun(\Delta^1,A)} \\
          & {B \times_A \Fun(\Delta^1,A)}
          \arrow["{(\id, F \eta, \id)}"', from=1-2, to=2-2]
          \arrow["{\id \times_{\id} \id \times_{\id} (\varepsilon \ev_0, \id)}"', from=2-2, to=3-1]
          \arrow["{\id \times_{\id} \id \times ((FU)_\ast, \varepsilon \ev_1)}"', from=2-2, to=4-2]
          \arrow["{\id \times_{\id} \comp \times_{\id} \id}"', from=3-1, to=5-1]
          \arrow["{\id \times_{\id} \id \times_{\id} \comp}"', from=3-1, to=5-2]
          \arrow["{\id \times_{\id} \times_{\id} \comp}"{description}, from=4-2, to=5-2]
          \arrow["{\id \times_{\id} \comp}"{description}, curve={height=18pt}, from=5-1, to=6-2]
          \arrow["{\id \times_{\id} \comp}"', from=5-2, to=6-2]
        \end{tikzcd}
      \end{adjustbox}
      \vspace{1em}
      \newline
      using naturality of $\epsilon$ and associativity.
      Now we just need to observe that the left most vertical composite is equivalent to $(\id, \epsilon F \circ F \eta, \id)$ 
      which is by the triangle identities itself equivalent to the identity.
      The equivalence $\phi \circ \psi \equiv \id$ can be seen similarly.
    \end{proof}

    As a preparation to the functoriality of adjointing natural transformations between adjoints we 
    recall the so-called middle four interchange lemma, which states that the two ways one can compose 
    natural transformations between composable pairs of functors agree.

    \begin{lemma}
      \label{middlefourinterchangefornattrafos}
      For $F, G \colon A \to B$, $H, K \colon B \to C$ and $\alpha \colon F \twoto G$, 
      $\beta \colon H \twoto K$, the middle four interchange holds, \ie
      we have $(\beta G) ( H \alpha) \equiv (K \alpha) (\beta F)$.
    \end{lemma}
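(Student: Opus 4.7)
The plan is to package the two natural transformations $\alpha$ and $\beta$ into a single commutative-square-shaped functor into $C$ parametrized by $A$, and then exploit the pushout decomposition $\Delta^1 \times \Delta^1 \equiv \Delta^2 \pushout{\Delta^1} \Delta^2$ recorded in \autoref{commutativesquarescanbedecomposedintotwocommtriangles}, in which the shared $\Delta^1$ is the diagonal of the square.

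Concretely, I view $\alpha$ and $\beta$ as functors $\alpha \colon A \to \Fun(\Delta^1,B)$ and $\beta \colon B \to \Fun(\Delta^1,C)$. Post-whiskering gives $\beta_\ast \colon \Fun(\Delta^1,B) \to \Fun(\Delta^1,\Fun(\Delta^1,C))$, and precomposing with $\alpha$ produces
\[
  \sigma \coloneqq \beta_\ast \circ \alpha \colon A \to \Fun(\Delta^1,\Fun(\Delta^1,C)) \equiv \Fun(\Delta^1 \times \Delta^1, C).
\]
A short chase using that $\ev_i$ on the outer $\Delta^1$-factor commutes with $\beta_\ast$ (sending it to $\beta$ post-composed with $\ev_i \circ \alpha \in \{F, G\}$) and that $\ev_i$ on the inner $\Delta^1$-factor of $\Fun(\Delta^1,C)$ sends $\beta$ to $H$, respectively $K$, identifies the four edges of $\sigma$ as $\beta F$, $\beta G$, $H\alpha$ and $K\alpha$, realizing $\sigma$ as the naturality square of $\beta$ at $\alpha$.

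Finally, I restrict $\sigma$ along the two inclusions $\Delta^2 \mono \Delta^1 \times \Delta^1$ coming from the pushout decomposition and apply the composition functor $\comp$ to the resulting $2$-simplices. One restriction computes $(\beta G)(H\alpha)$, the other computes $(K\alpha)(\beta F)$, and by the pushout property both restrictions agree along the shared $0 \to 2$-edge. Hence both composites coincide with this common diagonal of $\sigma$, proving the middle four interchange.

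The only step requiring genuine (but purely formal) attention is the identification of the four edges of $\sigma$; this amounts to fixing the convention for which $\Delta^1$-factor of $\Delta^1 \times \Delta^1$ corresponds to $\alpha$ and which to $\beta$, and then applying $\ev_0$ and $\ev_1$ in both variables. Everything else is a direct invocation of the pushout universal property together with the fact that $\comp$ extracts the $0 \to 2$-edge of a $2$-simplex.
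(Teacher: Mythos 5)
Your proof is correct and follows essentially the same route as the paper: both arguments realize the two composites $(\beta G)(H\alpha)$ and $(K\alpha)(\beta F)$ as the two triangle-decompositions of a single square in $\Fun(\Delta^1\times\Delta^1,\Fun(A,C))$ and then invoke \autoref{commutativesquarescanbedecomposedintotwocommtriangles} to identify both with the diagonal. The only (cosmetic) difference is how that square is assembled -- you whisker directly via $\beta_\ast\circ\alpha$, whereas the paper first forms the external product square in $\Fun(A,B)\times\Fun(B,C)$ and then pushes forward along the composition bifunctor.
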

    \begin{proof}
      Let us denote the composite functor of
      \begin{equation*}
            \begin{tikzpicture}[diagram]
                \matrix[objects,narrow]{%
                    |(a)| \Fun(\Delta^1,\Fun(X,Y)) \times \Fun(\Delta^1,\Fun(Y,Z)) \\ 
                    |(b)| \Fun(\Delta^1 \times \Delta^1,\Fun(X,Y)) \times \Fun(\Delta^1 \times \Delta^1,\Fun(Y,Z)) \\
                    |(d)| \Fun(\Delta^1 \times \Delta^1,\Fun(X,Y) \times \Fun(Y,Z)) \\
                    |(c)| \Fun(\Delta^1, \Fun(\Delta^1,\Fun(X,Y) \times \Fun(Y,Z))) \\ 
                };
                \path[maps,->]
                    (a) edge node[left] {$(\pr_0)_\ast \times (\pr_1)_\ast$} (b)
                    (b) edge node[right] {$\equiv$} (d)
                    (d) edge node[right] {$\equiv$} (c)
                ;
            \end{tikzpicture}
      \end{equation*}
      by $(-) \times (-)$.
      Let us also abbreviate $C \coloneqq \Fun(X,Y) \times \Fun(Y,Z)$.
      Then the middle four interchange follows from the commutativity of 
      the following diagram.
      \[\begin{tikzcd}[cramped]
        {\Fun(\Delta^1,\Fun(X,Y)) \times \Fun(\Delta^1,\Fun(Y,Z))} \\
        {\Fun(\Delta^1, \Fun(\Delta^1,C))} & {\Fun(\Delta^1,C) \times_{C} \Fun(\Delta^1,C)} \\
        {\Fun(\Delta^1,C) \times_{C} \Fun(\Delta^1,C)} & {\Fun(\Delta^1,C)} \\
        & {\Fun(\Delta^1,\Fun(X,Z))}
        \arrow["{(-) \times (-)}", from=1-1, to=2-1]
        \arrow["{((\ev_0)_{\ast},\ev_1)}", from=2-1, to=2-2]
        \arrow["{(\ev_0, (\ev_1)_\ast)}"', from=2-1, to=3-1]
        \arrow["{\text{comp}}", from=2-2, to=3-2]
        \arrow["{\text{comp}}"', from=3-1, to=3-2]
        \arrow["{\Fun(\Delta^1,\circ)}", from=3-2, to=4-2]
      \end{tikzcd}\]
      where we use \autoref{commutativesquarescanbedecomposedintotwocommtriangles}.
    \end{proof}

    Using the middle four interchange we can prove the following preliminary result about 
    adjointable squares.

    \begin{lemma}
      \label{adjointablesquaregivesunitscommute}
      Let
      \begin{equation*}
            \begin{tikzpicture}[diagram]
                \matrix[objects]{%
                    |(a)| A \& |(b)| C \\
                    |(c)| B \& |(d)| D \\
                };
                \path[maps,->]
                    (a) edge node[above] {$H$} (b)
                    (a) edge node[left] {$F$} (c)
                    (b) edge node[right] {$G$} (d)
                    (c) edge node[below] {$K$} (d)
                ;
                \node at (barycentric cs:a=0.5,b=0.5,c=0.5,d=0.5) (phi) {$\equiv$};
                \node[maps,right] at (phi) {$\alpha$};
            \end{tikzpicture}
      \end{equation*}
      be a commutative square with adjunctions $F \adj U$ and $G \adj V$, such that the square is vertically 
      adjointable.
      Then the counits of the two adjunctions commute, \ie we can construct a commutative square 
      \begin{equation*}
            \begin{tikzpicture}[diagram]
                \matrix[objects]{%
                    |(a)| B \& |(b)| D \\
                    |(c)| A \times_B \Fun(\Delta^1,B) \& |(d)| C \times_D \Fun(\Delta^1,D) \\
                };
                \path[maps,->]
                    (a) edge node[above] {$K$} (b)
                    (a) edge node[left] {$(U, \epsilon)$} (c)
                    (b) edge node[right] {$(V, \tilde{\epsilon})$} (d)
                    (c) edge node[below] {$H \times_K K_\ast$} (d)
                ;
            \end{tikzpicture}
      \end{equation*}
      and the units commute, \ie we can construct a commutative square 
      \begin{equation*}
            \begin{tikzpicture}[diagram]
                \matrix[objects]{%
                    |(a)| A \& |(b)| C \\
                    |(c)| \Fun(\Delta^1,A) \times_A B \& |(d)| \Fun(\Delta^1,C) \times_C D \\
                };
                \path[maps,->]
                    (a) edge node[above] {$H$} (b)
                    (a) edge node[left] {$(\eta, F)$} (c)
                    (b) edge node[right] {$(\tilde{\eta},G)$} (d)
                    (c) edge node[below] {$H_\ast \times_H K$} (d)
                ;
            \end{tikzpicture}
      \end{equation*}
      where the bottom horizontal functor uses the invertible mate transformation $\mate_\alpha \colon HU \cong VK$.
    \end{lemma}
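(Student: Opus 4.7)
By the universal property of the pullback $C \times_D \Fun(\Delta^1, D)$, constructing the claimed commutative square reduces to two sub-tasks: (i) exhibiting an equivalence of functors $B \to C$ between the two composites, and (ii) producing a compatible equivalence between the two natural transformations obtained as second components, both viewed as objects of $\Fun(\Delta^1, D)$ with target $K$. The right-then-down composite produces the pair $(VK, \tilde{\epsilon} K)$, while the down-then-right composite produces $(HU, K\epsilon)$, with the source $KFU$ of $K\epsilon$ identified with $GHU$ via $\alpha$. Task (i) is exactly the assumption that $\mate_\alpha \colon HU \twoto VK$ is invertible, and task (ii) reduces to the identity of natural transformations
\begin{equation*}
  K\epsilon \circ \alpha^{-1} U \equiv \tilde{\epsilon} K \circ G \mate_\alpha
\end{equation*}
of maps from $GHU$ to $K$.

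The plan is to prove this identity by unfolding
\begin{equation*}
  \mate_\alpha \equiv VK\epsilon \circ V \alpha^{-1} U \circ \tilde{\eta} HU ,
\end{equation*}
whiskering on the left with $\tilde{\epsilon} K$, and applying naturality of $\tilde{\epsilon}$ twice --- first to pull $\tilde{\epsilon} K$ past $GVK\epsilon$, then past $GV\alpha^{-1}U$ --- to obtain $K\epsilon \circ \alpha^{-1} U \circ \tilde{\epsilon} GHU \circ G \tilde{\eta} HU$. The last two factors combine to $\id_{GHU}$ by the triangle identity $\tilde{\epsilon} G \circ G \tilde{\eta} \equiv \id_G$, leaving precisely $K\epsilon \circ \alpha^{-1} U$. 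The unit square is then obtained by the formally dual computation using the other triangle identity $\epsilon F \circ F \eta \equiv \id_F$.

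The main obstacle is not the underlying pasting diagram but rather carrying out these manipulations synthetically, since the argument mixes whiskerings from both variance directions. The natural route is to express both sides of the desired identity as composites of functors built from the Hom-space actions $\phi, \psi$ of \autoref{constructionofactiononHomspacesforadjunction} together with the composition functor $\comp$, use the middle four interchange of \autoref{middlefourinterchangefornattrafos} to commute whiskerings past each other where needed, and invoke the triangle identity at the functor level as packaged in \autoref{firsthalfoftriangleidentityadjunctionsareequivalenttohomcatdef}. In this way the whole argument becomes a sequence of identifications of functors, without recourse to object-level pasting.
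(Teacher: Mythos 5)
Your proposal is correct. The reduction to (i) the invertibility of $\mate_\alpha$ and (ii) the $2$-cell identity $K\epsilon\circ\alpha^{-1}U\equiv\tilde{\epsilon}K\circ G\mate_\alpha$ is exactly what a natural equivalence of functors into the pullback $C\times_D\Fun(\Delta^1,D)$ amounts to, and your verification of (ii) --- expand the mate, apply naturality of $\tilde{\epsilon}$ twice, then the triangle identity for $G\adj V$ --- is the right computation; dually, the unit square uses $\epsilon F\circ F\eta\equiv\id$ together with naturality of $\tilde{\eta}$ and of $\alpha$. The paper packages the same content differently: instead of comparing the two composites componentwise, it postcomposes the down-then-right composite with the Hom-space equivalence $\phi$ of \autoref{firsthalfoftriangleidentityadjunctionsareequivalenttohomcatdef}, observes that invertibility of the mate is precisely the condition for the result to factor through the fully-faithful $\Delta_V$, identifies that factorization with $K$ by projecting, and then applies the inverse $\psi$ together with $\psi\circ\Delta_V\equiv(V,\tilde{\epsilon})$. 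This buys exactly what you flag as the main obstacle: the identity (ii) never has to be assembled by hand as coherent data, because it is absorbed into the already-established equivalence $\psi\circ\phi\equiv\id$. Your closing plan of rewriting everything in terms of $\phi$, $\psi$, $\comp$ and the middle four interchange is viable, but if carried out it essentially reproduces the paper's factorization argument; note that for the unit square the paper does perform your explicit whiskering computation (showing that the mate of $\mate_\alpha$ is $\alpha$) before invoking the analogous factorization through $\Delta_G$.
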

    \begin{proof}
      The adjointablility of the square means by definition that the mate transformation of $\alpha$,
      \ie the composite $\mate_\alpha \coloneqq (VK \epsilon) \circ (V \alpha U) \circ (\tilde{\eta} HU) \colon HU \twoto VK$
      is invertible.
      This tells us diagrammatically, that the following lift exists.
      \[\begin{tikzcd}
        B & D \\
        {A \times^{(F,\ev_0)}_B \Fun(\Delta^1,B)} \\
        {A \times^{(KF,\ev_0)}_D \Fun(\Delta^1,D)} & {\Fun(\Delta^1,C) \times^{(\ev_1,V)}_C D} \\
        {A \times^{(GH,\ev_0)}_D \Fun(\Delta^1,D)} & {C \times^{(G,\ev_0)}_D \Fun(\Delta^1,D)}
        \arrow[dashed, from=1-1, to=1-2]
        \arrow["{(U, \epsilon)}", from=1-1, to=2-1]
        \arrow["{\Delta_V}"', hook', from=1-2, to=3-2]
        \arrow["{\id \times_K K_\ast}", from=2-1, to=3-1]
        \arrow["{\alpha_{\ast}}", from=3-1, to=4-1]
        \arrow["{H \times_{\id} \id}", from=4-1, to=4-2]
        \arrow["\phi", from=4-2, to=3-2]
      \end{tikzcd}\]
      Postcomposing the diagram with $\pr_1 \colon \Fun(\Delta^1,C) \times^{(\ev_1,V)}_C D \to D$ 
      we can see that the lift must be equivalent to $K$.

      Now we can postcompose the inverse $\psi$ to $\phi$, by \autoref{firsthalfoftriangleidentityadjunctionsareequivalenttohomcatdef},
      and obtain
      \[\begin{tikzcd}
        B & D \\
        {A \times^{(F,\ev_0)}_B \Fun(\Delta^1,B)} && {C \times^{(G,\ev_0)}_D \Fun(\Delta^1,D)} \\
        {A \times^{(KF,\ev_0)}_D \Fun(\Delta^1,D)} & {\Fun(\Delta^1,C) \times^{(\ev_1,V)}_C D} \\
        {A \times^{(GH,\ev_0)}_D \Fun(\Delta^1,D)} & {C \times^{(G,\ev_0)}_D \Fun(\Delta^1,D)}
        \arrow["K", from=1-1, to=1-2]
        \arrow["{(U, \epsilon)}", from=1-1, to=2-1]
        \arrow["{(V, \tilde{\epsilon})}", from=1-2, to=2-3]
        \arrow["{\Delta_V}"', hook', from=1-2, to=3-2]
        \arrow["{\id \times_K K_\ast}", from=2-1, to=3-1]
        \arrow["{\alpha_{\ast}}", from=3-1, to=4-1]
        \arrow["\psi", from=3-2, to=2-3]
        \arrow["{H \times_{\id} \id}", from=4-1, to=4-2]
        \arrow[equals, from=4-2, to=2-3]
        \arrow["\phi", from=4-2, to=3-2]
      \end{tikzcd}\]
      so that after investing $\psi \circ \Delta_V \equiv (V,\tilde{\epsilon})$ we have our desired square.

      For the second commutative square we proceed in the following similiar way. 
      Using the middle four interchange \autoref{middlefourinterchangefornattrafos} 
      several times and then the triangle identities
      we can see that the mate of $\mate_\alpha$, \ie the transformation 
      $(\tilde{\epsilon} KF) \circ (G \mate_\alpha F) \circ (GH \eta)$, is 
      equivalent to $\alpha$ via the following equivalences, hence invertible.
      \begin{align*}
        (\tilde{\epsilon} KF) \circ (G \mate_\alpha F) \circ (GH \eta) &\equiv (\tilde{\epsilon} KF) \circ  (GVK \epsilon F) \circ (GV \alpha UF) \circ (G \tilde{\eta} HUF)  \circ (GH \eta) \\
        &\equiv (K \epsilon F) \circ (KF \eta) \circ \alpha \circ (\tilde{\epsilon} GH) \circ (G \tilde{\eta} H) \\
        &\equiv \alpha
      \end{align*}
      This means diagrammatically that we get the dashed lift in the following diagram.
      \[\begin{tikzcd}
        A & C \\
        {\Fun(\Delta^1,A) \times^{(\ev_1,U)}_A B} \\
        {\Fun(\Delta^1,C) \times^{(\ev_1,HU)}_C B} & {C \times^{(G,\ev_0)}_D \Fun(\Delta^1,D)} \\
        {\Fun(\Delta^1,C) \times^{(\ev_1,VK)}_C B} & {\Fun(\Delta^1,C) \times^{(\ev_1,V)}_C D}
        \arrow[dashed, from=1-1, to=1-2]
        \arrow["{(\eta, F)}", from=1-1, to=2-1]
        \arrow["{\Delta_G}"', hook', from=1-2, to=3-2]
        \arrow["{H_\ast \times_H \id}", from=2-1, to=3-1]
        \arrow["{(\mate_{\alpha})_\ast}", from=3-1, to=4-1]
        \arrow["{\id \times_{\id} K}", from=4-1, to=4-2]
        \arrow["\psi", from=4-2, to=3-2]
      \end{tikzcd}\]
      Postcomposing the diagram with the domain functor 
      $\pr_0 \colon C \times^{(G,\ev_0)}_D \Fun(\Delta^1,D) \to C$ 
      we can see that this lift must be equivalent to $H$.
      After postcomposing the diagram with the inverse equivalence 
      \begin{equation}
        \phi \colon C \times^{(G,\ev_0)}_D \Fun(\Delta^1,D) \to \Fun(\Delta^1,C) \times^{(\ev_1,V)}_C D
      \end{equation}
      of $\psi$ we obtain the commutative diagram
      \[\begin{tikzcd}
        A & C \\
        {\Fun(\Delta^1,A) \times^{(\ev_1,U)}_A B} && {\Fun(\Delta^1,C) \times^{(\ev_1,V)}_C D} \\
        {\Fun(\Delta^1,C) \times^{(\ev_1,HU)}_C B} & {C \times^{(G,\ev_0)}_D \Fun(\Delta^1,D)} \\
        {\Fun(\Delta^1,C) \times^{(\ev_1,VK)}_C B} & {\Fun(\Delta^1,C) \times^{(\ev_1,V)}_C D}
        \arrow["H", from=1-1, to=1-2]
        \arrow["{(\eta, F)}", from=1-1, to=2-1]
        \arrow["{(\tilde{\eta}, G)}", from=1-2, to=2-3]
        \arrow["{\Delta_G}"', hook', from=1-2, to=3-2]
        \arrow["{H_\ast \times_H \id}", from=2-1, to=3-1]
        \arrow["{(\mate_{\alpha})_\ast}", from=3-1, to=4-1]
        \arrow["\phi", from=3-2, to=2-3]
        \arrow["{\id \times_{\id} K}", from=4-1, to=4-2]
        \arrow[equals, from=4-2, to=2-3]
        \arrow["\psi", from=4-2, to=3-2]
      \end{tikzcd}\]
      and investing the fact that $(\tilde{\eta},G) \equiv \phi \circ \Delta_G$, 
      we obtain the desired commutative square.
    \end{proof}

    The following lemma is a preliminary version of the pullback stability of left adjoints in 
    $\Fun(\Delta^1,\Cat)$, where we restrict to the situation that the corresponding right adjoints 
    are additionally fully-faithful.

    \pagebreak

    \begin{lemma}
      \label{cubepullbackofladjwithffradjandadjfacesisladj}
        Consider a commutative cube of categories such that the top and bottom faces are
        pullbacks.
        \begin{equation}
            \begin{tikzpicture}[diagram]
                \matrix[objects,narrow] {%
                    |(a)| A_2 \& \& |(b)| A_1 \\
                    \& |(c)| A_0 \& \& |(d)| A \\
                    |(e)| B_2 \& \& |(f)| B_1 \\
                    \& |(g)| B_0 \& \& |(h)| B \\
                };
                \path[maps,->]
                (b) edge node[above]  {$$} (a)
                (f) edge node[above]  {$$} (e)

                (d) edge node[above]  {$$} (c)
                (h) edge node[above]  {$$} (g)

                (c) edge node[left]   {$$} (a)
                (g) edge node[left]   {$$} (e)

                (d) edge node[left]   {$$} (b)
                (h) edge node[left]   {$$} (f)

                (a) edge node[left]   {$F_2$} (e)
                (c) edge node[below left]   {$F_0$} (g)
                (b) edge node[below left]   {$F_1$} (f)
                (d) edge node[left]   {$F$} (h)
                ;
            \end{tikzpicture}
        \end{equation}
        If the $F_i$ have fully-faithful right adjoints and the back face as well as
        the left face are horizontally adjointable, then the pullback induced 
        functor $F$ also has a fully-faithful right adjoint and the right and front faces 
        are also horizontally adjointable.
    \end{lemma}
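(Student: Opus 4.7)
The plan is to exploit the universal property of the top pullback $A \simeq A_1 \times_{A_2} A_0$ to assemble the three fiber right adjoints $U_0, U_1, U_2$ into a single right adjoint $U \colon B \to A$, and then to verify all data of the adjunction $F \dashv U$ componentwise, leveraging that $\Fun(\Delta^1,-)$ and more generally $\Fun(\Delta^n,-)$ preserve pullbacks. Let me set notation so that the top face is the pullback of the cospan $A_1 \xrightarrow{a} A_2 \xleftarrow{a'} A_0$ with projections $p_1 \colon A \to A_1$ and $p_0 \colon A \to A_0$, and analogously $b, b', q_1, q_0$ for the bottom face. Let $U_i$ denote the fully-faithful right adjoint of $F_i$, with unit $\eta_i$ and invertible counit $\epsilon_i$. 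The horizontal adjointability of the back and left faces provides invertible mates $\mu_b \colon U_2 b \simeq a U_1$ and $\mu_\ell \colon U_2 b' \simeq a' U_0$.

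First I would construct $U$. The composites $U_1 q_1 \colon B \to A_1$ and $U_0 q_0 \colon B \to A_0$, glued along the chain of natural identifications $a U_1 q_1 \simeq U_2 b q_1 \simeq U_2 b' q_0 \simeq a' U_0 q_0$, where the middle step uses commutativity $b q_1 \simeq b' q_0$ of the bottom pullback and the outer steps use $\mu_b$ and $\mu_\ell$, assemble into a cone over the cospan $a, a'$. The universal property of the top pullback produces $U \colon B \to A$ equipped with invertible identifications $p_1 U \simeq U_1 q_1$ and $p_0 U \simeq U_0 q_0$. These identifications are, by construction, precisely the mates that will witness horizontal adjointability of the right and front faces, once $F \dashv U$ has been established.

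Next I would construct the counit $\epsilon \colon FU \twoto \id_B$ using the same pullback-of-arrow-categories principle: since $\Fun(\Delta^1,B) \simeq \Fun(\Delta^1,B_1) \times_{\Fun(\Delta^1,B_2)} \Fun(\Delta^1,B_0)$, a natural transformation with codomain $\id_B$ is determined by compatible transformations after postcomposition with $q_0$ and $q_1$. I would take these to be $\epsilon_i q_i$, using $q_i F \simeq F_i p_i$ from the commutativity of the right and front faces together with $p_i U \simeq U_i q_i$ to identify the domain. Compatibility in $B_2$ reduces to the coherence of the mates $\mu_b$ and $\mu_\ell$ with the counits $\epsilon_1$ and $\epsilon_0$, which is intrinsic to their mate construction. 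Invertibility of each $\epsilon_i$ then gives invertibility of $\epsilon$, hence fully-faithfulness of $U$. The unit $\eta \colon \id_A \twoto UF$ is built dually from $\eta_i p_i$ using the top pullback. The triangle identities are verified projection-wise: an identification of $2$-cells in $A$ (respectively $B$) amounts, by the pullback, to compatible identifications after $p_0, p_1$ (respectively $q_0, q_1$), where they reduce to the triangle identities of $F_i \dashv U_i$.

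The main obstacle will be making the compatibility in $A_2$ and $B_2$ genuinely explicit in a model-independent way: in the synthetic $(\infty,1)$-setting there is no strict equality of natural transformations to invoke, so the coherence datum certifying that the two projections of the constructed $U$, $\epsilon$, and $\eta$ agree after mapping to the apex of the pullback cospan must be exhibited as an explicit $2$-cell arising from the naturality of the mates $\mu_b, \mu_\ell$ combined with the triangle identities for $F_i \dashv U_i$. The bookkeeping is essentially forced by the universal property of $\Fun(\Delta^1, A)$ as a pullback, but each of the identifications produced in the construction of $U$, $\epsilon$, and $\eta$ has to be tracked through a short diagram chase, and packaging these into a clean presentation of the adjunction data is where the real work lies.
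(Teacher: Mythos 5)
Your proposal is correct and follows essentially the same route as the paper: glue the $U_i$ into $U$ via the universal property of the bottom-to-top pullback using the invertible mates of the back and left faces, obtain the invertible counit from the levelwise counits (the composite cube being the identity), induce the unit on the pullback of arrow categories, and verify the adjunction data projection-wise. The coherence issue you flag over $A_2$/$B_2$ is exactly what the paper isolates into the auxiliary lemma that adjointable squares make the units (and counits) commute, after which the remaining bookkeeping is the levelwise diagram chase you describe.
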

    \begin{proof}
      Let us denote the right adjoints by $U_i$ and their units by $\eta_i$.
      We produce the candidate for the right adjoint $U$ of $F$, 
      as well as a natural equivalence $\epsilon \colon FU \equiv \id$, via the horizontally levelwise
      pullback diagram
      \[\begin{tikzcd}[cramped]
        {B_2} && {B_1} \\
        & {B_0} && B \\
        {A_2} && {A_1} \\
        & {A_0} && A \\
        {B_2} && {B_1} \\
        & {B_0} && B
        \arrow["{U_2}", hook, from=1-1, to=3-1]
        \arrow[curve={height=12pt}, Rightarrow, no head, from=1-1, to=5-1]
        \arrow[from=1-3, to=1-1]
        \arrow["{U_1}"{pos=0.3}, hook, from=1-3, to=3-3]
        \arrow[curve={height=12pt}, Rightarrow, no head, from=1-3, to=5-3]
        \arrow[from=2-2, to=1-1]
        \arrow["{U_0}"{pos=0.3}, hook, from=2-2, to=4-2]
        \arrow[curve={height=12pt}, Rightarrow, no head, from=2-2, to=6-2]
        \arrow[from=2-4, to=1-3]
        \arrow[from=2-4, to=2-2]
        \arrow["{\exists ! U}", dashed, from=2-4, to=4-4]
        \arrow[curve={height=-30pt}, Rightarrow, no head, from=2-4, to=6-4]
        \arrow["{F_2}", from=3-1, to=5-1]
        \arrow[from=3-3, to=3-1]
        \arrow["{F_1}"{pos=0.3}, from=3-3, to=5-3]
        \arrow[from=4-2, to=3-1]
        \arrow["{F_0}"{pos=0.3}, from=4-2, to=6-2]
        \arrow[from=4-4, to=3-3]
        \arrow[from=4-4, to=4-2]
        \arrow["F", from=4-4, to=6-4]
        \arrow[from=5-3, to=5-1]
        \arrow[from=6-2, to=5-1]
        \arrow[from=6-4, to=5-3]
        \arrow[from=6-4, to=6-2]
      \end{tikzcd}\]
      whose composite cube is, as indicated, the vertical identity cube.
      To produce a candidate for the unit of this adjunction we look at the 
      following cube induced on the back and the left hand vertical faces by 
      \autoref{adjointablesquaregivesunitscommute} and whose top and bottom faces are pullbacks.
      \[\begin{tikzcd}[cramped]
        {A_2} && {A_1} \\
        & {A_0} && A \\
        {\Fun(\Delta^1,A_2)} && {\Fun(\Delta^1,A_1)} \\
        & {\Fun(\Delta^1,A_0)} && {\Fun(\Delta^1,A)}
        \arrow["{\eta_2}", from=1-1, to=3-1]
        \arrow[from=1-3, to=1-1]
        \arrow["{\eta_1}"{pos=0.3}, from=1-3, to=3-3]
        \arrow[from=2-2, to=1-1]
        \arrow["{\eta_0}"{pos=0.3}, from=2-2, to=4-2]
        \arrow[from=2-4, to=1-3]
        \arrow[from=2-4, to=2-2]
        \arrow["{\exists ! \eta}", dashed, from=2-4, to=4-4]
        \arrow[from=3-3, to=3-1]
        \arrow[from=4-2, to=3-1]
        \arrow[from=4-4, to=3-3]
        \arrow[from=4-4, to=4-2]
      \end{tikzcd}\]
      Note that, again by pullback, this natural transformation has the right domain and codomain.
      To deduce that $\eta U$ is invertible we use the following horizontally levelwise pullback
      diagram.

      \[\begin{tikzcd}
        {B_2} && {B_1} \\
        & {B_0} && B \\
        {A_2} && {A_1} \\
        & {A_0} && A \\
        {\Fun(\Delta^1,A_2)} && {\Fun(\Delta^1,A_1)} \\
        & {\Fun(\Delta^1,A_0)} && {\Fun(\Delta^1,A)}
        \arrow["{U_2}", from=1-1, to=3-1]
        \arrow["{U_2 \epsilon^{-1}_2}"{description, pos=0.7}, curve={height=24pt}, from=1-1, to=5-1]
        \arrow[from=1-3, to=1-1]
        \arrow["{U_1}"{pos=0.3}, from=1-3, to=3-3]
        \arrow["{U_1 \epsilon^{-1}_1}"{description, pos=0.4}, curve={height=30pt}, from=1-3, to=5-3]
        \arrow[from=2-2, to=1-1]
        \arrow["{U_0}"{pos=0.3}, from=2-2, to=4-2]
        \arrow["{U_0 \epsilon^{-1}_0}"{description, pos=0.7}, curve={height=30pt}, from=2-2, to=6-2]
        \arrow[from=2-4, to=1-3]
        \arrow[from=2-4, to=2-2]
        \arrow["U", from=2-4, to=4-4]
        \arrow["{U \epsilon^{-1}}"{description}, curve={height=-30pt}, from=2-4, to=6-4]
        \arrow["{\eta_2}", from=3-1, to=5-1]
        \arrow[from=3-3, to=3-1]
        \arrow["{\eta_1}"{pos=0.3}, from=3-3, to=5-3]
        \arrow[from=4-2, to=3-1]
        \arrow["{\eta_0}"{pos=0.3}, from=4-2, to=6-2]
        \arrow[from=4-4, to=3-3]
        \arrow[from=4-4, to=4-2]
        \arrow["{\exists ! \eta}", dashed, from=4-4, to=6-4]
        \arrow[from=5-3, to=5-1]
        \arrow[from=6-2, to=5-1]
        \arrow[from=6-4, to=5-3]
        \arrow[from=6-4, to=6-2]
      \end{tikzcd}\]
      The invertibility of $F \eta$ can be proven similarly, via the 
      diagram
      \[\begin{tikzcd}
        {A_2} && {A_1} \\
        & {A_0} && A \\
        {\Fun(\Delta^1,A_2)} && {\Fun(\Delta^1,A_1)} \\
        & {\Fun(\Delta^1,A_0)} && {\Fun(\Delta^1,A)} \\
        {\Fun(\Delta^1,B_2)} && {\Fun(\Delta^1,B_1)} \\
        & {\Fun(\Delta^1,B_0)} && {\Fun(\Delta^1,B)}
        \arrow["{\eta_2}", from=1-1, to=3-1]
        \arrow["{\epsilon^{-1}_2 F_2}"{description, pos=0.7}, shift right=4, curve={height=30pt}, from=1-1, to=5-1]
        \arrow[from=1-3, to=1-1]
        \arrow["{\eta_1}"{pos=0.3}, from=1-3, to=3-3]
        \arrow["{\epsilon^{-1}_1 F_1}"{description, pos=0.7}, shift right, curve={height=30pt}, from=1-3, to=5-3]
        \arrow[from=2-2, to=1-1]
        \arrow["{\eta_0}"{pos=0.3}, from=2-2, to=4-2]
        \arrow["{\epsilon^{-1}_0 F_0}"{description, pos=0.7}, shift right, curve={height=30pt}, from=2-2, to=6-2]
        \arrow[from=2-4, to=1-3]
        \arrow[from=2-4, to=2-2]
        \arrow["{\exists ! \eta}", dashed, from=2-4, to=4-4]
        \arrow["{\epsilon^{-1} F}"{description}, shift left=4, curve={height=-30pt}, from=2-4, to=6-4]
        \arrow["{(F_2)_\ast}", from=3-1, to=5-1]
        \arrow[from=3-3, to=3-1]
        \arrow["{(F_1)_\ast}"{pos=0.3}, from=3-3, to=5-3]
        \arrow[from=4-2, to=3-1]
        \arrow["{(F_0)_\ast}"{pos=0.3}, from=4-2, to=6-2]
        \arrow[from=4-4, to=3-3]
        \arrow[from=4-4, to=4-2]
        \arrow["{F_\ast}", from=4-4, to=6-4]
        \arrow[from=5-3, to=5-1]
        \arrow[from=6-2, to=5-1]
        \arrow[from=6-4, to=5-3]
        \arrow[from=6-4, to=6-2]
      \end{tikzcd}\]
      In particular, this proves that $\eta$ whiskered with the 
      functors $ A \to A_i \xto{F_i} B_i$ is invertible, which also 
      proves the adjointability claim as the counits of all these 
      adjunctions are invertible.
    \end{proof}

    As a first application we can deduce that some more functors and commutative squares give 
    cocartesian fibrations and functors.

    \begin{lemma}
      \label{freecocartesianfibrationsonfunctorsandsquaresoffunctorsinducecocartesianfunctorsbetweenthose}
      For any functor $F \colon A \to C$ the functor $\ev_1 \colon A \times_{C} \Fun(\Delta^1,C) \to C$
      is a cocartesian fibration.
      Furthermore, for any commutative square of functors
      \begin{equation}
            \begin{tikzpicture}[diagram]
                \matrix[objects] {%
                |(a)| A \& |(b)| B \\
                |(c)| C \& |(d)| D \\
                };
                \path[maps,->]
                (a) edge node[above]  {$H$} (b)
                (c) edge node[below]  {$K$} (d)
                (a) edge node[left]   {$F$} (c)
                (b) edge node[right]  {$G$} (d)
                ;
            \end{tikzpicture}
      \end{equation}
      the functor $H \times_{K} K_\ast$ in the commutative square
      \begin{equation}
            \begin{tikzpicture}[diagram]
                \matrix[objects] {%
                  |(a)| A \times_{B} \Fun(\Delta^1,B) \& |(b)| C \times_{D} \Fun(\Delta^1,D) \\
                |(c)| B \& |(d)| D \\
                };
                \path[maps,->]
                (a) edge node[above]  {$H \times_{K} K_\ast$} (b)
                (c) edge node[below]  {$K$} (d)
                (a) edge node[left]   {$\ev_1$} (c)
                (b) edge node[right]  {$\ev_1$} (d)
                ;
            \end{tikzpicture}
      \end{equation}
      is a cocartesian functor.
    \end{lemma}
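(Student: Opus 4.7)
My plan for the first claim is to exhibit the sought-for fully-faithful left adjoint to $(\ev_0, (\ev_1)_\ast) \colon \Fun(\Delta^1, A \times_C \Fun(\Delta^1, C)) \to (A \times_C \Fun(\Delta^1, C)) \times_C \Fun(\Delta^1, C)$ as a pullback of known fully-faithful left adjoints. Since $\Fun(\Delta^1,-)$ preserves pullbacks, the source identifies with $\Fun(\Delta^1, A) \times_{\Fun(\Delta^1, C)} \Fun(\Delta^1, \Fun(\Delta^1, C))$ and associativity of pullback rewrites the target as $A \times_C (\Fun(\Delta^1, C) \times_C \Fun(\Delta^1, C))$. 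These two pullback squares assemble into a cube whose factorwise vertical maps are the fully-faithful left adjoints $\Delta_A \colon A \to \Fun(\Delta^1, A)$, $\Delta_C \colon C \to \Fun(\Delta^1, C)$ and $\cocart_{\ev_1} \colon \Fun(\Delta^1, C) \times_C \Fun(\Delta^1, C) \to \Fun(\Delta^1, \Fun(\Delta^1, C))$ (all provided by \autoref{evaluationfunctorsarefibrations}), and the pullback-induced vertical map will be exactly the desired fully-faithful left adjoint of $(\ev_0, (\ev_1)_\ast)$.

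I would then apply \autoref{cubepullbackofladjwithffradjandadjfacesisladj} (in its ff left-adjoint version) to this cube. The remaining hypothesis is horizontal adjointability of the back and left faces, which concerns compatibility of $\Delta_A$, $\Delta_C$, $\cocart_{\ev_1}$ with the horizontal connecting maps built from $F$ and $F_\ast$. These are sections with invertible units and units compatible on the nose: $F_\ast \circ \Delta_A \equiv \Delta_C \circ F$ by definition of $\Delta$, and $\cocart_{\ev_1}$ commutes with $F_\ast$-pushforward by its explicit description, so the mates reduce to identities and horizontal adjointability follows.

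For the second claim, the cocartesianness of $H \times_K K_\ast$ over $K$ unfolds to the vertical adjointability of the square obtained by postcomposing $H \times_K K_\ast$ and $K_\ast$ with the corresponding $(\ev_0, (\ev_1)_\ast)$ functors on the two sides. Because the fully-faithful left adjoints on both sides were produced by the same pullback procedure, this adjointability decomposes factorwise into two pieces: adjointability of the square for $H \colon A \to B$ with respect to the $\Delta$-adjunctions, which is trivial since $H_\ast \circ \Delta_A \equiv \Delta_B \circ H$; and adjointability of the square for $K_\ast \colon \Fun(\Delta^1, C) \to \Fun(\Delta^1, D)$ with respect to $\cocart_{\ev_1} \adj (\ev_0, (\ev_1)_\ast)$, which is precisely the statement that $K_\ast$ is a cocartesian functor over $K$ established in \autoref{functorsinducecocartesianfunctorsbetweencodomainfibrations}. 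The main obstacle I foresee is purely bookkeeping: carefully identifying the cube from the nested pullback structure and verifying that the pullback-induced functor really does coincide with the intended $(\ev_0, (\ev_1)_\ast)$, with an analogous check needed for assembling the factorwise pieces in the second claim.
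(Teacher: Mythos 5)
Your treatment of the first claim is essentially the paper's own argument: the same cube, with top face $\Fun(\Delta^1,-)$ of the defining pullback, bottom face the re\-associated pullback $A \times_C (\Fun(\Delta^1,C)\times_C\Fun(\Delta^1,C))$, vertical maps $\ev_0$, $\ev_0$ and $(\ev_0,(\ev_1)_\ast)$, concluded via \autoref{cubepullbackofladjwithffradjandadjfacesisladj}. One correction to your verification of the hypotheses: the two faces you must check are the one built from $F$ and $F_\ast$ (where $F_\ast \circ \Delta_A \equiv \Delta_C \circ F$ does the job, as you say) and the face connecting $\Fun(\Delta^1,\Fun(\Delta^1,C))$ to the gluing object $\Fun(\Delta^1,C)$ via $(\ev_0)_\ast$ on top and $\ev_0 \circ \pr_0$ on the bottom. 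The required compatibility there is \emph{not} that $\cocart_{\ev_1}$ commutes with $F_\ast$-pushforward ($F$ does not appear in that face at all), but that $\cocart_{\ev_1}$ is $\ev_0$-vertical, i.e.\ $(\ev_0)_\ast \circ \cocart_{\ev_1} \equiv \Delta_C \circ \ev_0 \circ \pr_0$; this is exactly item (7) of \autoref{evaluationfunctorsarefibrations}, so the slip is harmless but the face you would actually need to justify is the one you did not describe.

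For the second claim you take a genuinely different route. The paper factors the given square through $C \times_D B$ into a pullback square and a triangle and applies the cube lemma once to each piece, feeding in the adjointable square of \autoref{functorsinducecocartesianfunctorsbetweencodomainfibrations} as a known face. You instead want to check vertical adjointability of a single square whose \emph{both} vertical legs are pullback-induced structure maps, and split its mate factorwise. This works, but be aware it is not a direct application of \autoref{cubepullbackofladjwithffradjandadjfacesisladj}, whose conclusion only gives adjointability of squares relating a pullback-induced functor to one of its \emph{factors}. To make your decomposition rigorous you need three ingredients, all available in the paper: conservativity of the projections out of a pullback (a natural transformation into $\Fun(\Delta^1,B)\times_{\Fun(\Delta^1,D)}\Fun(\Delta^1,\Fun(\Delta^1,D))$ is invertible iff its two projections are); the pasting compatibility of mates from \autoref{matecorrespondanceisfunctorial}, so that projecting your mate identifies it, up to the invertible mates of the front/right faces supplied by the cube lemma's conclusion, with the mates of the two factor squares; and then the two factor squares themselves, the trivial $\Delta$-compatibility for $H$ and \autoref{functorsinducecocartesianfunctorsbetweencodomainfibrations} for $K_\ast$. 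Spelling out that pasting step is the real content you are currently waving at with ``decomposes factorwise''; once it is written down, your version is arguably more direct than the paper's triangle-plus-pullback bookkeeping, at the cost of proving what is in effect a four-dimensional variant of the cube lemma.
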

    \begin{proof}
      We know that $\ev_1 \colon \Fun(\Delta^1, A) \fibration A$ is a cocartesian fibration, and the
      evaluation functors $\ev_0 \colon \Fun(\Delta^1,C) \to C$ always have fully-faithful 
      left adjoints $\Delta_C$. Thus we can deduce from the commutative cube
      \[\begin{tikzcd}[cramped,column sep=tiny]
        {\Fun(\Delta^1,B)} && {\Fun(\Delta^1,\Fun(\Delta^1,B))} \\
        & {\Fun(\Delta^1,A)} && {\Fun(\Delta^1,A \times_B \Fun(\Delta^1,B))} \\
        B && {\Fun(\Delta^1,B) \times_B \Fun(\Delta^1,B)} \\
        & A && {A \times_B \Fun(\Delta^1,B) \times_B \Fun(\Delta^1,B)}
        \arrow["{\ev_0}"', from=1-1, to=3-1]
        \arrow["{\ev_0}", from=1-3, to=1-1]
        \arrow["{(\ev_0, (\ev_1)_\ast)}"'{pos=0.7}, from=1-3, to=3-3]
        \arrow["{F_\ast}"', from=2-2, to=1-1]
        \arrow["{\ev_0}"'{pos=0.7}, from=2-2, to=4-2]
        \arrow[from=2-4, to=1-3]
        \arrow[from=2-4, to=2-2]
        \arrow["{(\ev_0, (\pr_1)_\ast)}"', from=2-4, to=4-4]
        \arrow[from=3-3, to=3-1]
        \arrow["F", from=4-2, to=3-1]
        \arrow[from=4-4, to=3-3]
        \arrow[from=4-4, to=4-2]
      \end{tikzcd}\]
      with top and bottom faces pullbacks and \autoref{cubepullbackofladjwithffradjandadjfacesisladj} 
      the first statement.
      For the second statement we proceed in two steps. As a first step 
      we factorize the given square into a triangle and a pullback as in
      \[\begin{tikzcd}[cramped]
        A & {C \times_D B} & B \\
        & C & D
        \arrow[dashed, from=1-1, to=1-2]
        \arrow["H", curve={height=-18pt}, from=1-1, to=1-3]
        \arrow["F", from=1-1, to=2-2]
        \arrow[from=1-2, to=1-3]
        \arrow[from=1-2, to=2-2]
        \arrow["G", from=1-3, to=2-3]
        \arrow[""{name=0, anchor=center, inner sep=0}, "K"', from=2-2, to=2-3]
        \arrow["\lrcorner"{anchor=center, pos=0.125}, draw=none, from=1-2, to=0]
      \end{tikzcd}\]
      Then for the pullback square we deduce the adjointability claim from 
      \autoref{cubepullbackofladjwithffradjandadjfacesisladj} applied to the 
      cube
      \begin{center}
        \begin{adjustbox}{scale=0.75}
          \begin{tikzcd}[column sep={11em,between origins}]
          {\Fun(\Delta^1, C \times_D B) \times_{\Fun(\Delta^1,C)} \Fun(\Delta^1, \Fun(\Delta^1,C))} 
              \arrow[rr] 
              \arrow[dr]
              \arrow[dd]
          && 
          {\Fun(\Delta^1, \Fun(\Delta^1,C))} 
              \arrow[dd] 
              \arrow[dr]
          \\
          & 
          {\Fun(\Delta^1, B) \times_{\Fun(\Delta^1,D)} \Fun(\Delta^1, \Fun(\Delta^1,D))} 
              \arrow[crossing over, rr] 
          && 
          {\Fun(\Delta^1, \Fun(\Delta^1,D))} 
              \arrow[dd]
          \\
          {(C \times_D B) \times_C \Fun(\Delta^1,C) \times_C \Fun(\Delta^1,C)} 
              \arrow[rr] 
              \arrow[dr]
          && 
          {\Fun(\Delta^1,C) \times_C \Fun(\Delta^1,C)} 
              \arrow[dr]
          \\
          & 
          {B \times_D \Fun(\Delta^1,D) \times_D \Fun(\Delta^1,D)} 
              \arrow[rr]
          && 
          {\Fun(\Delta^1,D) \times_D \Fun(\Delta^1,D)}
          \arrow[crossing over, from=2-2, to=4-2]
          \end{tikzcd}
        \end{adjustbox}
      \end{center}
      with top and bottom faces pullbacks and the fact that the claim holds when 
      $F = \id$ and $G = \id$ by
      \autoref{functorsinducecocartesianfunctorsbetweencodomainfibrations},
      \ie the front face is adjointable.
      For the remaining triangle we can deduce from the cube
      \begin{center}
        \begin{adjustbox}{scale=0.75}
          \begin{tikzcd}[column sep={12em,between origins}]
          {\Fun(\Delta^1, A) \times_{\Fun(\Delta^1,C)} \Fun(\Delta^1, \Fun(\Delta^1,C))} 
              \arrow[rr] 
              \arrow[dr]
              \arrow[dd] 
          && 
          {\Fun(\Delta^1, A)} 
              \arrow[dd] 
              \arrow[dr]
          \\
          & 
          {\Fun(\Delta^1, C \times_D B) \times_{\Fun(\Delta^1,C)} \Fun(\Delta^1, \Fun(\Delta^1,C))} 
              \arrow[crossing over, rr]
          && 
          {\Fun(\Delta^1,C \times_D B)} 
              \arrow[dd]
          \\
          {A \times_C \Fun(\Delta^1,C) \times_C \Fun(\Delta^1,C)} 
              \arrow[rr] 
              \arrow[dr]
          && 
          A 
              \arrow[dr]
          \\
          & 
          {(C \times_D B) \times_C \Fun(\Delta^1,C) \times_C \Fun(\Delta^1,C)} 
              \arrow[rr]
          && 
          {C \times_D B}
          \arrow[crossing over, from=2-2, to=4-2]
          \end{tikzcd}
        \end{adjustbox}
      \end{center}
      with top and bottom faces pullbacks in the same way that the back 
      face is adjointable.
    \end{proof}

    We continue this subsection with the most basic stability property of right adjoints with invertible counit 
    we will use and from which all the other stability properties we want to invoke can be 
    derived from.
    \pullbackoflalis*
    \begin{proof}
        First let us define the functor $G$ that is supposed to be left adjoint section to $V$
        and the invertible unit $\eta' \colon \id \equiv VG$
        via the following pullback
        \[\begin{tikzcd}[sep=large]
          Y & B \\
          X & A \\
          Y & B
          \arrow["K", from=1-1, to=1-2]
          \arrow["{\exists ! G}"', dashed, from=1-1, to=2-1]
          \arrow["\lrcorner"{anchor=center, pos=0.125}, draw=none, from=1-1, to=2-2]
          \arrow[""{name=0, anchor=center, inner sep=0}, shift right=3, curve={height=30pt}, equals, from=1-1, to=3-1]
          \arrow["F", from=1-2, to=2-2]
          \arrow[""{name=1, anchor=center, inner sep=0}, shift left=3, curve={height=-30pt}, equals, from=1-2, to=3-2]
          \arrow["H", from=2-1, to=2-2]
          \arrow["V"', from=2-1, to=3-1]
          \arrow["\lrcorner"{anchor=center, pos=0.125}, draw=none, from=2-1, to=3-2]
          \arrow["U", from=2-2, to=3-2]
          \arrow["K", from=3-1, to=3-2]
          \arrow["{\equiv \exists ! \eta'}"{description}, draw=none, from=0, to=2-1]
          \arrow["{\equiv \eta}"{description}, draw=none, from=1, to=2-2]
        \end{tikzcd}\]
        Next we define the counit $\epsilon'$ of the adjunction also via the following pullback.
        \[\begin{tikzcd}
          Y && B \\
          & X && A \\
          && {\Fun(\Delta^1,X)} && {\Fun(\Delta^1,A)} \\
          & Y && B \\
          && {\Fun(\Delta^1,Y)} && {\Fun(\Delta^1,B)}
          \arrow["K", from=1-1, to=1-3]
          \arrow["G", from=1-1, to=2-2]
          \arrow[equals, from=1-1, to=4-2]
          \arrow["F", from=1-3, to=2-4]
          \arrow[equals, from=1-3, to=4-4]
          \arrow["{\exists! \varepsilon'}"', dashed, from=2-2, to=3-3]
          \arrow["V", from=2-2, to=4-2]
          \arrow["\varepsilon", from=2-4, to=3-5]
          \arrow["U"{pos=0.8}, from=2-4, to=4-4]
          \arrow["\lrcorner"{anchor=center, pos=0.125}, draw=none, from=3-3, to=5-5]
          \arrow["{U_\ast}", from=3-5, to=5-5]
          \arrow["K"{pos=0.4}, from=4-2, to=4-4]
          \arrow["{\eta'}"', from=4-2, to=5-3]
          \arrow["\eta", from=4-4, to=5-5]
          \arrow["{K_\ast}"', from=5-3, to=5-5]
          \arrow["{H_\ast}"{pos=0.4}, from=3-3, to=3-5,crossing over]
          \arrow["{V_\ast}"{pos=0.7}, from=3-3, to=5-3,crossing over]
          \arrow["H"', from=2-2, to=2-4,crossing over]
        \end{tikzcd}\]
        where the bottom square comes from \autoref{adjointablesquaregivesunitscommute}.
        Now, as $\epsilon F$ is equivalent to $F \eta^{-1}$ we also get by pullback
        that $\epsilon' G$ is equivalent to $G (\eta')^{-1}$ and thus that $G$ really 
        is a left adjoint to $V$ with counit $\epsilon'$ and invertible unit $\eta'$.
        To see vertical adjointablility of the original pullback square 
        it is sufficient to see that $\epsilon HG$ is invertible as the 
        adjunction $G \adj V$ has invertible unit. But by the above diagram 
        we already know that $\epsilon HG \equiv H_\ast \circ \epsilon' G$ 
        and we just established invertibility of $\epsilon' G$.
    \end{proof}

    As a corollary we can immediately deduce that cocartesian fibrations are stable under pullback.
    \begin{corollary}
      \label{pullbackofcocartfibs}
      Let
      \begin{equation*}
          \begin{tikzpicture}[diagram]
              \matrix[objects]{%
                  |(a)| Y \& |(b)| X \\
                  |(c)| A \& |(d)| B \\
              };
              \path[maps,->]
                  (a) edge node[above] {$H$} (b)
                  (a) edge node[left] {$q$} (c)
                  (b) edge[->>] node[right] {$p$} (d)
                  (c) edge node[below] {$K$} (d)
              ;
              \node at (barycentric cs:a=0.8,b=0.3,c=0.3) (phi) {\mbox{\LARGE{$\lrcorner$}}};
          \end{tikzpicture}
      \end{equation*}
      be a pullback such that $p$ is a cocartesian fibration.
      Then its pullback $q$ is also a cocartesian fibration and the 
      functor $H$ is a cocartesian functor over $K$.
    \end{corollary}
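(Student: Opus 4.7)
The strategy is to reduce both statements to \autoref{pullbackofadjunctions}, applied to a suitable pullback square of arrow categories. The essential input is that $\Fun(\Delta^1,-)$ preserves pullbacks (being a right adjoint).

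First I would build the relevant pullback square. Applying $\Fun(\Delta^1,-)$ to the given pullback and combining with the evaluations $\ev_0$ in both corners on the right, one obtains the commutative square
\[\begin{tikzcd}[cramped]
  {\Fun(\Delta^1,Y)} & {\Fun(\Delta^1,X)} \\
  {Y \times_A \Fun(\Delta^1,A)} & {X \times_B \Fun(\Delta^1,B)}
  \arrow["{H_\ast}", from=1-1, to=1-2]
  \arrow["{(\ev_0, q_\ast)}"', from=1-1, to=2-1]
  \arrow["{(\ev_0, p_\ast)}", from=1-2, to=2-2]
  \arrow["{H \times_K K_\ast}"', from=2-1, to=2-2]
\end{tikzcd}\]
and a direct unpacking (or a routine pullback pasting) shows that it is itself a pullback square: using $\Fun(\Delta^1,Y) \equiv \Fun(\Delta^1,A) \times_{\Fun(\Delta^1,B)} \Fun(\Delta^1,X)$, an element of the pullback of the bottom-right corner consists of data $(\sigma \in \Fun(\Delta^1,X),\, (a,x) \in Y,\, \alpha \in \Fun(\Delta^1,A))$ with $\ev_0 \sigma = x$, $\ev_0 \alpha = a$, and $p_\ast \sigma = K_\ast \alpha$, which is precisely the data of a pair $(\alpha, \sigma)$ in $\Fun(\Delta^1,Y)$.

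Next I would conclude as follows. Since $p$ is a cocartesian fibration, the right vertical $(\ev_0, p_\ast)$ admits a fully-faithful left adjoint by definition. Applying \autoref{pullbackofadjunctions} to the pullback square above then yields at once: (a) the pulled-back vertical $(\ev_0, q_\ast)$ also has a fully-faithful left adjoint, which by definition is the assertion that $q$ is a cocartesian fibration; and (b) the square is adjointable with respect to these vertical right adjoints, which is exactly the vertical adjointability of the induced arrow-category square demanded by the definition of cocartesian functor, so that $H$ is cocartesian over $K$.

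I do not anticipate any substantive obstacle. The only real content beyond invoking \autoref{pullbackofadjunctions} is the verification that the displayed square is a pullback, which is a routine diagram chase from pullback-preservation of $\Fun(\Delta^1,-)$ together with the definition of the pullback $Y$.
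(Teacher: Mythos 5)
Your proposal is correct and follows essentially the same route as the paper: form the induced square of arrow categories over the free cocartesian fibrations, observe that it is again a pullback, and apply \autoref{pullbackofadjunctions} to the fully-faithful left adjoint of $(\ev_0,p_\ast)$ to obtain both the cocartesianness of $q$ and the adjointability giving that $H$ is cocartesian over $K$. The only difference is that you spell out the verification that the arrow-category square is a pullback, which the paper leaves implicit.
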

    \begin{proof}
      From the pullback we get another pullback
      \begin{equation*}
          \begin{tikzpicture}[diagram]
              \matrix[objects]{%
                  |(a)| \Fun(\Delta^1,Y) \& |(b)| \Fun(\Delta^1,X) \\
                  |(c)| Y \times_A \Fun(\Delta^1,A) \& |(d)| X \times_B \Fun(\Delta^1,B) \\
              };
              \path[maps,->]
                  (a) edge node[above] {$H_\ast$} (b)
                  (a) edge node[left] {$(\ev_0,q_\ast)$} (c)
                  (b) edge node[right] {$(\ev_0,p_\ast)$} (d)
                  (c) edge node[below] {$H \times_K K_\ast$} (d)
              ;
              \node at (barycentric cs:a=0.8,b=0.3,c=0.3) (phi) {\mbox{\LARGE{$\lrcorner$}}};
          \end{tikzpicture}
      \end{equation*}
      to which we can apply \autoref{pullbackofadjunctions} 
      to get the desired statement.
    \end{proof}

    Another corollary of \autoref{pullbackofadjunctions} 
    is that the cocartesian fibrations $\ev_1 \colon A \times_B \Fun(\Delta^1,B) \to B$ inherit the 
    adjunction $\Delta_B \adj \ev_0$ from $\ev_1 \colon \Fun(\Delta^1,B) \to B$.

    \begin{corollary}
      \label{freecocartfibinheritsDeltaadjunction}
      For a functor $F \colon A \to B$ the functor $\pr_0 \colon A \times_B \Fun(\Delta^1,B) \to A$
      has as a fully-faithful left adjoint the functor 
      $\Delta_F \coloneqq (\id, \Delta_B \circ F) \colon A \to A \times_B \Fun(\Delta^1,B)$.
    \end{corollary}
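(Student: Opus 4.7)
The plan is to recognize the corollary as a direct application of \autoref{pullbackofadjunctions} to the defining pullback square of $A \times_B \Fun(\Delta^1,B)$, namely
\begin{equation*}
    \begin{tikzpicture}[diagram]
        \matrix[objects]{
            |(a)| A \times_B \Fun(\Delta^1,B) \& |(b)| \Fun(\Delta^1,B) \\
            |(c)| A \& |(d)| B \\
        };
        \path[maps,->]
            (a) edge node[above] {$\pr_1$} (b)
            (a) edge node[left] {$\pr_0$} (c)
            (b) edge node[right] {$\ev_0$} (d)
            (c) edge node[below] {$F$} (d)
        ;
        \node at (barycentric cs:a=0.8,b=0.3,c=0.3) (phi) {\mbox{\LARGE{$\lrcorner$}}};
    \end{tikzpicture}
\end{equation*}

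First I would invoke \autoref{evaluationfunctorsarefibrations}, which gives us the adjunction chain $\ev_1 \adj \Delta_B \adj \ev_0$. In particular $\Delta_B$ is left adjoint to $\ev_0$, and since $\ev_0 \circ \Delta_B \equiv \id_B$ (as $\Delta_B$ is a section), the unit of this adjunction is invertible. Hence $\Delta_B$ is a fully-faithful left adjoint of the right-hand vertical functor $\ev_0$ of the pullback square above.

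Next, \autoref{pullbackofadjunctions} applied to this pullback square yields that $\pr_0$ admits a fully-faithful left adjoint $G$, computed as the pullback of $\Delta_B$ along $F$. It remains to identify this pullback with the claimed functor $\Delta_F = (\id_A, \Delta_B \circ F)$. By the universal property of the pullback $A \times_B \Fun(\Delta^1,B)$, a functor $A \to A \times_B \Fun(\Delta^1,B)$ is uniquely determined by the pair consisting of its composite with $\pr_0$ and its composite with $\pr_1$, subject to the compatibility that applying $\ev_0$ to the second component recovers $F$ applied to the first. The pullback-induced functor $G$ has first component $\id_A$ (since the left face of the relevant cube is an identity) and second component $\Delta_B \circ F$ (which is consistent with $\ev_0 \circ \Delta_B \circ F = F$). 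Thus $G \equiv \Delta_F$.

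There is no genuine obstacle here; this is essentially a bookkeeping argument, the only subtle step being the verification that $\Delta_B$ qualifies as a fully-faithful left adjoint in the sense required by \autoref{pullbackofadjunctions}, which is immediate from the triangle identities combined with $\ev_0 \circ \Delta_B \equiv \id_B$.
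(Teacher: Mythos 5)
Your proof is correct and takes essentially the same route as the paper: the paper's proof consists precisely of applying \autoref{pullbackofadjunctions} to the defining pullback square of $A \times_B \Fun(\Delta^1,B)$ along $\ev_0$, with the fully-faithful left adjoint $\Delta_B \adj \ev_0$ supplied by \autoref{evaluationfunctorsarefibrations}. Your additional identification of the pulled-back left adjoint with $(\id_A, \Delta_B \circ F)$ via the universal property of the pullback is exactly what the "one can compute $G$ as the pullback of $F$" clause of that lemma provides, so nothing is missing.
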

    \begin{proof}
      We apply \autoref{pullbackofadjunctions} 
      to the defining pullback 
      \begin{equation*}
        \begin{tikzpicture}[diagram]
            \matrix[objects]{%
                |(a)| A \times_B \Fun(\Delta^1,B) \& |(b)| A \\
                |(c)| \Fun(\Delta^1,B) \& |(d)| B \\
            };
            \path[maps,->]
                (a) edge node[above] {$\pr_0$} (b)
                (a) edge node[left] {$$} (c)
                (b) edge node[right] {$$} (d)
                (c) edge node[below] {$\ev_0$} (d)
            ;
            \node at (barycentric cs:a=0.8,b=0.3,c=0.3) (phi) {\mbox{\LARGE{$\lrcorner$}}};
        \end{tikzpicture}
      \end{equation*}
    \end{proof}

    With these statements we can prove the equivalence of adjunctions given in terms of units/counits 
    and Hom-equivalences.

    \begin{lemma}
      \label{triangleidentityadjunctionsareequivalenttohomcatdef}
      Let $F \colon B \to A$ and $U \colon A \to B$ be two functors.
      Then the following pieces of data are equivalent.
      \begin{enumerate}
        \item A unit natural transformation $\eta \colon \id \twoto UF$,
          a counit natural transformation $\epsilon \colon FU \twoto \id$,
          such that the triangle-identities hold, \ie 
          $\epsilon F \circ F \eta \equiv \id$ and $U \epsilon \circ \eta U \equiv \id$

        \item An equivalence
          \begin{equation}
                \begin{tikzpicture}[diagram]
                    \matrix[objects] {%
                      |(a)| B \times_{A} \Fun(\Delta^1,A) \& \& |(b)| \Fun(\Delta^1,B) \times_{B} A \\
                      \& |(c)| B \times A  \\
                    };
                    \path[maps,->]
                    (a) edge node[above]  {$\equiv$} (b)
                    (a) edge node[below left]   {$\ev$} (c)
                    (b) edge node[below right]  {$\ev$} (c)
                    ;
                \end{tikzpicture}
          \end{equation}
          over the canonical functors to $B \times A$.
      \end{enumerate}
    \end{lemma}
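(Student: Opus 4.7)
The plan is to handle the two directions separately, with $(1 \Rightarrow 2)$ reducing essentially to work already done and $(2 \Rightarrow 1)$ requiring a synthetic analogue of the classical Yoneda-style argument. For $(1 \Rightarrow 2)$, the construction of $\phi$ and $\psi$ from the unit/counit data in \autoref{constructionofactiononHomspacesforadjunction} already gives functors over $B \times A$ by design, and \autoref{firsthalfoftriangleidentityadjunctionsareequivalenttohomcatdef}, together with its evident dual symmetric in $F$ and $U$, shows that they are mutually inverse.

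For the reverse direction, given an equivalence $\Phi$ over $B \times A$ with inverse $\Psi$, I would extract the unit and counit by whiskering with the canonical fully-faithful sections $B \to B \times_A \Fun(\Delta^1, A)$ and $A \to \Fun(\Delta^1, B) \times_B A$ from \autoref{freecocartfibinheritsDeltaadjunction} (applied to $F$ and $U$ respectively), each of which names the identity morphism in the appropriate slot. Concretely, $\eta \colon \id_B \twoto UF$ is the natural transformation represented by the composite $B \to B \times_A \Fun(\Delta^1, A) \xrightarrow{\Phi} \Fun(\Delta^1, B) \times_B A \to \Fun(\Delta^1, B)$, with endpoints forced to $\id_B$ and $UF$ by the fact that $\Phi$ lives over $B \times A$; dually $\epsilon$ is extracted from $\Psi$.

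To verify the triangle identities, I would mimic the classical argument via the Hom-bijection. Using the cocartesian structures on $B \times_A \Fun(\Delta^1, A) \to A$ and on $\Fun(\Delta^1, B) \times_B A \to B$ provided by \autoref{freecocartesianfibrationsonfunctorsandsquaresoffunctorsinducecocartesianfunctorsbetweenthose}, functoriality of $\Phi$ over $B \times A$ encodes the naturality of the induced equivalence $A(Fb, a) \simeq B(b, Ua)$ in both arguments. Applying $\Phi$ to the composite $\epsilon_{Fb} \circ F\eta_b$ and invoking naturality in the $B$-argument along $\eta_b$ reduces it to $\Phi(\epsilon_{Fb}) \circ \eta_b \equiv \id_{UFb} \circ \eta_b \equiv \eta_b \equiv \Phi(\id_{Fb})$, where the first identification holds by definition of $\epsilon$; invertibility of $\Phi$ then yields $\epsilon_{Fb} \circ F\eta_b \equiv \id_{Fb}$, and the second triangle identity is symmetric.

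The main obstacle is formalising the synthetic version of this naturality argument. I would do this by identifying the relevant morphisms in the total categories $B \times_A \Fun(\Delta^1,A)$ and $\Fun(\Delta^1,B) \times_B A$ as composites of cocartesian lifts over $A$ (respectively over $B$), so that functoriality of $\Phi$ over $B \times A$ directly yields the required naturality statement; verifying that the two constructions are mutually inverse on the given data then follows by unraveling definitions and reapplying \autoref{firsthalfoftriangleidentityadjunctionsareequivalenttohomcatdef}.
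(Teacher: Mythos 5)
Your proposal is correct and follows essentially the same route as the paper: direction $(1 \Rightarrow 2)$ is exactly the citation of \autoref{constructionofactiononHomspacesforadjunction} and \autoref{firsthalfoftriangleidentityadjunctionsareequivalenttohomcatdef}, and for $(2 \Rightarrow 1)$ the paper likewise extracts $\eta$ and $\epsilon$ by whiskering the given equivalence with $\Delta_F$ and $\Delta_U$ and then derives the triangle identities from the fibration structures. The only difference is that where you run the Yoneda-style computation pointwise and defer its synthetic formalisation, the paper carries it out at the level of whole functors — using that $\phi$, being an equivalence over $A$, sends the $\ev_1$-cocartesian counit of $\Delta_F \adj \ev_0$ to a cocartesian lift, so that uniqueness of cocartesian lifts identifies $\phi$ with $g \mapsto U(g) \circ \eta$, whence $\phi \circ \psi \circ \Delta_U \equiv \Delta_U$ yields the triangle identity — which is precisely the formalisation you flag as the remaining obstacle.
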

    \begin{proof}
      We already deduced the second statement from the first in 
      \autoref{firsthalfoftriangleidentityadjunctionsareequivalenttohomcatdef}.

      For the converse implication we assume to be given inverse equivalences $\phi$ and $\psi$.
      Then we define the unit and the counit as follows.
      \begin{equation*}
        \eta \coloneqq B \xto{\Delta_F} B \times_A \Fun(\Delta^1,A) \xto{\phi} \Fun(\Delta^1,B) \times_B A \xto{\pr_0} \Fun(\Delta^1,B)
      \end{equation*}
      \begin{equation*}
        \epsilon \coloneqq A \xto{\Delta_U} \Fun(\Delta^1,B) \times_B A \xto{\psi} B \times_A \Fun(\Delta^1,A) \xto{\pr_1} \Fun(\Delta^1,A)
      \end{equation*}
      We know from \autoref{freecocartesianfibrationsonfunctorsandsquaresoffunctorsinducecocartesianfunctorsbetweenthose}
      that $\ev_1 \colon B \times_A \Fun(\Delta^1,A) \to A$ is a cocartesian fibration.
      Now we observe that the counit $e \colon \Delta_F \ev_0 \twoto \id$ of the adjunction $\Delta_F \adj \ev_0$, 
      from \autoref{freecocartfibinheritsDeltaadjunction},
      is the $\ev_1$-cocartesian lift of 
      \[\begin{tikzcd}[cramped]
        {B \times_A \Fun(\Delta^1,A)} & B & {B \times_A \Fun(\Delta^1,A)} \\
        && A
        \arrow["{\ev_0}"', from=1-1, to=1-2]
        \arrow[""{name=0, anchor=center, inner sep=0}, "{\ev_1}"', curve={height=12pt}, from=1-1, to=2-3]
        \arrow["{\Delta_F}", from=1-2, to=1-3]
        \arrow["F"', from=1-2, to=2-3]
        \arrow["{\ev_1}", from=1-3, to=2-3]
        \arrow[shorten <=4pt, shorten >=4pt, Rightarrow, from=1-2, to=0]
      \end{tikzcd}\]
      as it is exactly
      \begin{equation*}
        B \times_A \Fun(\Delta^1,A) \xto{(\ev_0 \Delta_F,\pr_1)} B \times_A \Fun(\Delta^1,A) \times_A \Fun(\Delta^1,A) \xto{\cocart_{\ev_1}} \Fun(\Delta^1, B \times_A \Fun(\Delta^1,A))
      \end{equation*}
      just as for $\ev_1 \colon \Fun(\Delta^1,A) \to A$ in \autoref{evaluationfunctorsarefibrations}.
      But as $\phi$ is an equivalence it is in particular a cocartesian functor between cocartesian
      fibrations over $A$ as follows.
      \[\begin{tikzcd}[cramped]
        {B \times_A \Fun(\Delta^1,A)} && {\Fun(\Delta^1,B) \times_B A} \\
        & {B \times A} \\
        & A
        \arrow["{\phi \equiv}", from=1-1, to=1-3]
        \arrow[from=1-1, to=2-2]
        \arrow["{\ev_1}"', curve={height=12pt}, from=1-1, to=3-2]
        \arrow[from=1-3, to=2-2]
        \arrow["{\pr_1}", curve={height=-12pt}, from=1-3, to=3-2]
        \arrow["{\pr_A}", from=2-2, to=3-2]
      \end{tikzcd}\]
      Hence it sends the $\ev_1$-cocartesian morphism $e$ to a $\pr_1$-cocartesian morphism $\phi e$.
      Thus $\phi$ itself is a $\pr_1$-cocartesian lift of
      \[\begin{tikzcd}[cramped]
        {B \times_A \Fun(\Delta^1,A)} & B & {B \times_A \Fun(\Delta^1,A)} & {\Fun(\Delta^1,B) \times_B A} \\
        &&& A
        \arrow["{\pr_0}", from=1-1, to=1-2]
        \arrow[""{name=0, anchor=center, inner sep=0}, "{\ev_1}"', curve={height=12pt}, from=1-1, to=2-4]
        \arrow["{\Delta_F}", from=1-2, to=1-3]
        \arrow["F", from=1-2, to=2-4]
        \arrow["\phi", from=1-3, to=1-4]
        \arrow["{\ev_1}", from=1-3, to=2-4]
        \arrow["{\pr_1}", from=1-4, to=2-4]
        \arrow[shorten <=11pt, shorten >=19pt, Rightarrow, from=1-2, to=0]
      \end{tikzcd}\]
      But cocartesian lifts for the pulled back cocartesian fibration $\Fun(\Delta^1,B) \times_B A \to A$ 
      are essentially given by applying $U$ and then composing.
      Hence $\phi$ itself is equivalent to the top horizontal composite in the 
      following commutative diagram.
      \[\begin{tikzcd}[cramped]
        & { \Fun(\Delta^1,B) \times_B A \times_A \Fun(\Delta^1,A)} \\
        {B \times_A \Fun(\Delta^1,A)} && {\Fun(\Delta^1,B) \times_B A} \\
        & {\Fun(\Delta^1,B) \times_B \Fun(\Delta^1,B) \times_B A}
        \arrow["{\comp \circ (\id \times_{\id} U_\ast)}", from=1-2, to=2-3]
        \arrow["{\id \times_{\id} (U_\ast, \ev_1)}"', from=1-2, to=3-2]
        \arrow["{( \phi \circ \Delta_F ) \times_{\id} \id}", from=2-1, to=1-2]
        \arrow["{\eta \times_U (U_\ast, \ev_1)}"', from=2-1, to=3-2]
        \arrow["{\comp \times_{\id} \id}"', from=3-2, to=2-3]
      \end{tikzcd}\]
      This we can now precompose with $\psi \circ \Delta_U$ to obtain
      \[\begin{tikzcd}[cramped]
        A & {\Fun(\Delta^1,B) \times_B A} \\
        \\
        {B \times_A \Fun(\Delta^1,A)} & {\Fun(\Delta^1,B) \times_B \Fun(\Delta^1,B) \times_B A} & {\Fun(\Delta^1,B) \times_B A}
        \arrow["{\Delta_U}"', from=1-1, to=1-2]
        \arrow["{(U, \varepsilon)}"', from=1-1, to=3-1]
        \arrow["\psi"', from=1-2, to=3-1]
        \arrow[Rightarrow, no head, from=1-2, to=3-3]
        \arrow["{\eta \times_U (U_\ast, \ev_1)}"', from=3-1, to=3-2]
        \arrow[""{name=0, anchor=center, inner sep=0}, "\phi", curve={height=-24pt}, from=3-1, to=3-3]
        \arrow["{\comp \times_{\id} \id}"'{pos=0.9}, from=3-2, to=3-3]
        \arrow["\equiv", draw=none, from=0, to=3-2]
      \end{tikzcd}\]
      \ie the desired triangle identity $U\epsilon \circ \eta U \equiv \id$.
      The other triangle identity can be proven similarly.
    \end{proof}

    The next statement gives a convenient description of the essential image of a fully-faithful 
    functor in the case that it is a right adjoint.

    \begin{lemma}
      \label{characterizationofessimoffullandfaithfulradj}
      Let $U \colon A \to B$ be a right adjoint with left adjoint $F$ and unit $\eta$ such that the counit $\epsilon$ is invertible.
      Then $U$ is fully-faithful and we can characterize its essential image as exactly 
      those objects $b$ in $B$ whose unit component $\eta_b \colon b \to UFb$ is invertible.
    \end{lemma}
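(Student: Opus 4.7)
The first assertion is immediate from the definition adopted earlier in the appendix: a right adjoint is called fully-faithful precisely when its counit is invertible, which is part of the hypothesis.

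For the characterization of the essential image, the plan is to prove the two inclusions separately. For the forward direction, given an object of the form $Ua$ in the essential image, I would invoke the triangle identity $U\epsilon \circ \eta U \equiv \id_U$, evaluated at $a$, to obtain $U\epsilon_a \circ \eta_{Ua} \equiv \id_{Ua}$. Since $\epsilon$ is invertible by hypothesis, so is $U\epsilon_a$, and hence $\eta_{Ua}$ is invertible as its left inverse is invertible (together with the fact that morphisms with an invertible left inverse are themselves invertible). More diagrammatically, one can argue that $\eta_{Ua}$ equals $(U\epsilon_a)^{-1}$, hence is an equivalence.

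For the converse direction, suppose $\eta_b \colon b \to UFb$ is invertible. Then $\eta_b$ itself exhibits $b$ as equivalent to $UFb$, so $b$ lies in the essential image of $U$ via the witness $Fb$. This is the entire content: one does not need to invoke anything beyond the data of the unit.

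I expect no real obstacle in this proof: it reduces to the triangle identity together with two-out-of-three for invertibility (or rather, the observation that a morphism admitting an invertible one-sided inverse is invertible, which itself follows from a short composition argument). The only subtlety worth flagging is to make sure that the statement "$\eta_b$ invertible implies $b$ in the essential image" is interpreted up to equivalence, which is the natural notion in this synthetic setting. No additional lemmas from the appendix are needed.
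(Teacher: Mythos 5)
Your overall strategy coincides with the paper's: the converse direction is exactly the one-line observation that an invertible $\eta_b \colon b \to UFb$ itself witnesses $b$ as lying in the essential image, and the key input for the other direction is the triangle identity $U\epsilon \circ \eta U \equiv \id_U$ together with invertibility of $\epsilon$, which gives $\eta_{Ua} \equiv (U\epsilon_a)^{-1}$.

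There is, however, one elided step in your forward direction, and you attach the ``up to equivalence'' caveat to the wrong implication. An object $b$ of the essential image is in general only \emph{equivalent} to some $Ua$, not literally of that form, so after establishing that $\eta_{Ua}$ is invertible you still owe an argument that $\eta_b$ is invertible. This is where the paper invokes the naturality square of $\eta$ at the equivalence $b \equiv Ua$: its horizontal edges $b \to Ua$ and $UFb \to UFUa$ are equivalences, so $\eta_b$ is invertible if and only if $\eta_{Ua}$ is. The converse direction, by contrast, needs no such care, since $\eta_b$ being invertible directly exhibits $b \equiv U(Fb)$. The fix is a one-line addition, but as written your argument only covers objects literally of the form $Ua$, which is not what the statement asserts.
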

    \begin{proof}
      If $\eta_b$ is invertible for some $b$ in $B$ then it is also in the essential image of $U$ 
      by this equivalence.
      Conversely, if we know $b \equiv U(a)$ for some object $a$ of $A$, then the naturality square of $\eta$ 
      for this equivalence in $B$ is 
      \begin{equation}
            \begin{tikzpicture}[diagram]
                \matrix[objects] {%
                |(a)| b \& |(b)| Ua \\
                |(c)| UFb \& |(d)| UFUa \\
                };
                \path[maps,->]
                (a) edge node[above]  {$\equiv$} (b)
                (c) edge node[below]  {$\equiv$} (d)
                (a) edge node[left]   {$\eta_b$} (c)
                (b) edge node[right]  {$\eta_{Ua}$} (d)
                ;
            \end{tikzpicture}
      \end{equation}
      But by invertibility of the counit $\epsilon$ and the triangle identities of the adjunction,
      $\eta_{Ua}$ is also invertible.
      Hence we can deduce from the naturality square that $\eta_b$ is also invertible.
    \end{proof}

    The following lemma is an explicit characterization of when functors factor through fully-faithful 
    left adjoints, in terms of the adjunction data.

    \begin{lemma}
      \label{liftingalongffladjisequivtohavingpostwhiskeringwithcounitinvertible}
      Let $U \colon A \to B$ be a right adjoint with fully-faithful left adjoint
      $F$ and counit $\epsilon \colon FU \twoto \id$, and $T \colon X \to A$
      another functor. Then the following are equivalent.
      \begin{enumerate}
        \item The whiskering $\epsilon T$ is invertible.
        \item There exists a lift
          \begin{equation}
            \begin{tikzpicture}[diagram]
                \matrix[objects] {%
                |(a)|  \& |(b)| B \\
                |(c)| X \& |(d)| A \\
                };
                \path[maps,->]
                (c) edge[dashed] node[above]  {$\exists l$} (b)
                (c) edge node[below]  {$T$} (d)
                (b) edge node[right]  {$F$} (d)
                ;
                \node at (barycentric cs:d=0.5,b=0.5,c=0.5) (phi) {$\equiv$};
                \node[maps,right] at (phi) {$\alpha$};
            \end{tikzpicture}
          \end{equation}
      \end{enumerate}
    \end{lemma}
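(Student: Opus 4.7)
The plan is to prove the two implications separately, with the bulk of the work resting on the naturality of the counit and the general fact that a fully-faithful left adjoint has an invertible whiskered counit $\epsilon F$.

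For $(1) \Rightarrow (2)$, I would simply take $l \coloneqq U \circ T \colon X \to B$ and use $\alpha \coloneqq \epsilon T \colon FUT \twoto T$, which by assumption is invertible. This produces the desired lift together with the required invertible $2$-cell $Fl \equiv T$.

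For $(2) \Rightarrow (1)$, the first preliminary step is to observe that since $F$ is fully-faithful, the unit $\eta \colon \id \twoto UF$ is invertible, so $F\eta$ is invertible too, and then the triangle identity $\epsilon F \circ F\eta \equiv \id_F$ forces $\epsilon F \equiv (F\eta)^{-1}$ to be invertible. Whiskering with $l \colon X \to B$ yields the invertibility of $\epsilon Fl$. Next, I would look at the naturality square of $\epsilon \colon FU \twoto \id$ applied to the given equivalence $\alpha \colon Fl \twoto T$, namely
\begin{equation*}
\begin{tikzpicture}[diagram]
  \matrix[objects]{
    |(a)| FUFl \& |(b)| FUT \\
    |(c)| Fl \& |(d)| T \\
  };
  \path[maps,->]
    (a) edge node[above] {$FU\alpha$} (b)
    (a) edge node[left] {$\epsilon Fl$} (c)
    (b) edge node[right] {$\epsilon T$} (d)
    (c) edge node[below] {$\alpha$} (d)
  ;
\end{tikzpicture}
\end{equation*}
Here $\alpha$ is invertible by assumption, $FU\alpha$ is invertible because $\alpha$ is, and $\epsilon Fl$ is invertible by the preliminary step. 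Two-out-of-three (or equivalently, rewriting $\epsilon T \equiv \alpha \circ \epsilon Fl \circ (FU\alpha)^{-1}$) then gives that $\epsilon T$ is invertible, as desired.

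There is no serious obstacle here: the argument is a direct unwinding of the triangle identities together with naturality of the counit. The only mild subtlety is making explicit that the proof uses only the fully-faithfulness of $F$ (equivalently the invertibility of $\eta$) and not any stronger property, so that the direction $(2) \Rightarrow (1)$ really goes through.
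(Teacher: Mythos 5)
Your proof is correct and follows essentially the same route as the paper: take $l = UT$ with $\alpha = \epsilon T$ for one direction, and for the converse use the triangle identity plus invertibility of $\eta$ to get $\epsilon F$ invertible, then transport along $\alpha$ via naturality of $\epsilon$. The only difference is that you spell out the naturality square explicitly where the paper compresses it to ``$\epsilon F l \equiv \epsilon T$ by $\alpha$.''
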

    \begin{proof}
      To prove the second assertion from the first we can just choose $U \circ T$ and the invertible natural
      transformation $\epsilon T \colon F U T \equiv T$ to be the lift.
      If we are given a lift $l$ as in the second assertion, then we proceed in the following way.
      The triangle identity $\epsilon F \circ F \eta \equiv \id$ and the invertibility of $\eta$ 
      give us the invertibility of $\epsilon F$. But as $\epsilon F l \equiv \epsilon T$ by 
      $\alpha$ we deduce that $\epsilon T$ is also invertible.
      By \autoref{triangleidentityadjunctionsareequivalenttohomcatdef} 
      and the invertibility of the unit we can also deduce from $\alpha \colon F l \equiv T$ 
      that there is a unique $(U\alpha)(\eta l) \colon l \equiv U F l \equiv U T$, \ie that the lift 
      must be unique in this situation.
    \end{proof}

    The next statement is about an alternative characterisation of the adjointablility of squares, more precisely 
    a square between adjunctions is adjointable if and only if the units commute in the following sense.

    \begin{lemma}
      \label{adjointablilityequivtounitscommute}
      Let
      \begin{equation}
            \begin{tikzpicture}[diagram]
                \matrix[objects] {%
                |(a)| A_1 \& |(b)| A_2 \\
                |(c)| B_1 \& |(d)| B_2 \\
                };
                \path[maps,->]
                (a) edge node[above]  {$H$} (b)
                (c) edge node[below]  {$K$} (d)
                (a) edge node[left]   {$U_1$} (c)
                (b) edge node[right]  {$U_2$} (d)
                ;
            \end{tikzpicture}
      \end{equation}
      be commutative square such that we have adjoints $F_1 \adj U_1$ and $F_2 \adj U_2$.
      Then the following two statements are equivalent.
      \begin{enumerate}
        \item The square is adjointable.
        \item The composed natural transformation
          \begin{equation*}
            \begin{tikzcd}[cramped,column sep=tiny]
              {B_1} &&&& {B_2} \\
              & {B_2 \times_{A_2} \Fun(\Delta^1,A_2)} \\
              & {\Fun(\Delta^1,B_2) \times_{B_2} A_2} && {B_2 \times_{A_2} \Fun(\Delta^1,A_2)} \\
              {\Fun(\Delta^1,B_2) \times_{B_2} A_2} &&&& {\Fun(\Delta^1,B_2) \times_{B_2} A_2}
              \arrow["K", from=1-1, to=1-5]
              \arrow["{(\eta_1, F_1)}"', from=1-1, to=4-1]
              \arrow["{\Delta_{F_2}}", from=1-5, to=3-4]
              \arrow["{(\eta_2, F_2)}", from=1-5, to=4-5]
              \arrow["{\ev_0}", from=2-2, to=1-5]
              \arrow[""{name=0, anchor=center, inner sep=0}, Rightarrow, no head, from=2-2, to=3-4]
              \arrow["{\psi_2}", from=3-2, to=2-2]
              \arrow[Rightarrow, no head, from=3-2, to=4-5]
              \arrow["{\phi_2}", from=3-4, to=4-5]
              \arrow["{K_\ast \times_{K} H}", from=4-1, to=3-2]
              \arrow["{K_\ast \times_{K} H}"', from=4-1, to=4-5]
              \arrow["\epsilon", shorten <=69pt, shorten >=42pt, Rightarrow, from=1-5, to=0]
            \end{tikzcd}
          \end{equation*}
          is invertible.
      \end{enumerate}
    \end{lemma}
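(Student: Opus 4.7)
The plan is to identify the composed natural transformation in (2) with (a reincarnation of) the mate of the square's commutativity datum $\alpha \colon K U_1 \equiv U_2 H$. Via \autoref{triangleidentityadjunctionsareequivalenttohomcatdef} applied to the adjunction $F_2 \adj U_2$, the categories $\Fun(\Delta^1, B_2) \times_{B_2} A_2$ and $B_2 \times_{A_2} \Fun(\Delta^1, A_2)$ are mutually inverse equivalences via $\phi_2$ and $\psi_2$, and this equivalence translates an arrow $b \to U_2 a$ into its adjunct $F_2 b \to a$; in particular $(\eta_2, F_2)$ corresponds under $\psi_2$ to $\Delta_{F_2}$.

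First I would unpack the two legs of the composed transformation pointwise at $b \in B_1$. The upper leg, built from $(\eta_2, F_2) \circ K$ and $\Delta_{F_2}$, produces the unit arrow $Kb \to U_2 F_2 Kb$ with endpoint object $F_2 K b$. The lower leg, built from $(K_\ast \times_K H) \circ (\eta_1, F_1)$, produces the arrow $Kb \to K U_1 F_1 b \equiv U_2 H F_1 b$ with endpoint object $H F_1 b$. Translating both via $\psi_2$ to the $A_2$-side, the former becomes the identity on $F_2 Kb$, while the latter becomes precisely the component at $b$ of the mate $F_2 K \twoto H F_1$ (by the standard formula for the mate in terms of $\eta_1$, the whiskering by $K$, the square $\alpha$, and the counit $\epsilon_2$ that is absorbed into $\psi_2$).

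Since $\phi_2$ and $\psi_2$ are mutually inverse equivalences, a natural transformation in $\Fun(\Delta^1, B_2) \times_{B_2} A_2$ is invertible if and only if its image under $\psi_2$ is. Hence the composed transformation of (2) is invertible if and only if the mate $F_2 K \twoto H F_1$ is invertible, which by definition is (1). The implication $(1) \Rightarrow (2)$ can in fact also be extracted directly from \autoref{adjointablesquaregivesunitscommute}, whose second square exhibits exactly the commutativity of units that (2) asserts (after transporting through $\phi_2$); so the bulk of the work goes into $(2) \Rightarrow (1)$.

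The main obstacle will be to verify that the pointwise identifications above genuinely assemble into an identification of natural transformations in the slice-like category $\Fun(\Delta^1, B_2) \times_{B_2} A_2$, and not merely componentwise. This requires carefully chaining naturality of $\eta_1$, the middle-four interchange (\autoref{middlefourinterchangefornattrafos}), and a triangle identity for $F_2 \adj U_2$ to recognize the composite as the image under $\phi_2$ of the mate. Once this recognition is performed, the equivalence of (1) and (2) is immediate from the invertibility of $\phi_2, \psi_2$.
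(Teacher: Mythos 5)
Your proposal is correct in substance, but it is organized differently from the paper's proof, and the difference is worth spelling out. You identify the composed $2$-cell of (2), after transport along $\psi_2$, with the mate $F_2K \twoto HF_1$ by an object-by-object computation (unit goes to identity, $K\eta_1$ goes to the mate component), and then conclude from invertibility of $\phi_2,\psi_2$. The paper instead never computes the composite pointwise: it rewrites adjointability, via \autoref{triangleidentityadjunctionsareequivalenttohomcatdef}, as the existence of a factorization of the functor $\psi_2 \circ (K_\ast \times_K H) \circ (\eta_1,F_1)$ (which packages the mate as a functor $B_1 \to B_2 \times_{A_2}\Fun(\Delta^1,A_2)$) through the fully-faithful left adjoint $\Delta_{F_2}$, and then invokes \autoref{liftingalongffladjisequivtohavingpostwhiskeringwithcounitinvertible} to convert that lifting problem into invertibility of the whiskered counit of $\Delta_{F_2}\adj\ev_0$ --- which is literally statement (2) up to the equivalences $\phi_2,\psi_2$. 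What the paper's route buys is precisely the step you flag as your ``main obstacle'': in the synthetic setting of this paper you cannot assemble componentwise identifications into an identification of natural transformations by appealing to naturality informally, so the recognition of the composite as (the tautological morphism on) the mate has to be carried out entirely at the whiskering level. The lifting lemma does this once and for all, whereas your plan would have to redo it by hand with \autoref{middlefourinterchangefornattrafos} and the triangle identities; this is doable (it is essentially the computation in \autoref{adjointablesquaregivesunitscommute}) but it is where all the real work sits, and your write-up currently treats it as a deferred verification rather than the core of the argument. Your observation that $(1)\Rightarrow(2)$ already follows from \autoref{adjointablesquaregivesunitscommute} is accurate, and your direction of the mate and the role of the triangle identity in $\psi_2(\eta_2,F_2)\equiv\Delta_{F_2}$ are both right; one further small step you should make explicit is that a morphism in $B_2\times_{A_2}\Fun(\Delta^1,A_2)$ from $(b,\id_{F_2b})$ to $(b,g)$ of the tautological form is invertible precisely when $g$ is, which is what finally ties invertibility of the transported composite to invertibility of the mate.
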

    \begin{proof}
      The first statement is by \autoref{triangleidentityadjunctionsareequivalenttohomcatdef}
      equivalent to the existence of the dashed functor in the top part of the following diagram.
      \[\begin{tikzcd}[cramped]
        {B_1} & {B_2} \\
        {\Fun(\Delta^1,B_1) \times^{(\ev_1,U_1)}_{B_1} A_1} & {B_2 \times^{(F_2,\ev_0)}_{A_2} \Fun(\Delta^1,A_2) } \\
        {B_1 \times^{(K,\ev_0)}_{B_2} \Fun(\Delta^1,B_2) \times^{(\ev_1,K U_1)}_{B_2} A_1} \\
        {B_1 \times^{(K,\ev_0)}_{B_2} \Fun(\Delta^1,B_2) \times^{(\ev_1,U_2 H)}_{B_2} A_1} & {\Fun(\Delta^1,B_2) \times^{(\ev_1,U_2)}_{B_2} A_2} \\
        {B_1 \times A_1} & {B_2 \times A_2}
        \arrow["\exists"', color={rgb,255:red,92;green,92;blue,214}, dashed, from=1-1, to=1-2]
        \arrow["{(\eta_1, F_1)}", from=1-1, to=2-1]
        \arrow["{\Delta_{F_2}}", hook', from=1-2, to=2-2]
        \arrow["{(\ev_0, K_\ast) \times_K \id}", from=2-1, to=3-1]
        \arrow[draw={rgb,255:red,92;green,214;blue,92}, curve={height=-30pt}, from=2-2, to=5-2]
        \arrow["{\id \times_{\id} \alpha_\ast \times_{\id} \id}", from=3-1, to=4-1]
        \arrow[draw={rgb,255:red,92;green,214;blue,92}, curve={height=30pt}, from=3-1, to=5-1]
        \arrow["{(\pr_1, H \pr_2)}"', from=4-1, to=4-2]
        \arrow[draw={rgb,255:red,92;green,214;blue,92}, from=4-1, to=5-1]
        \arrow["{\psi_2}"', from=4-2, to=2-2]
        \arrow[draw={rgb,255:red,92;green,214;blue,92}, from=4-2, to=5-2]
        \arrow["{K \times H}"', color={rgb,255:red,92;green,214;blue,92}, from=5-1, to=5-2]
      \end{tikzcd}\]
      By the bottom colored part of the diagram such a lift, if it exists, must be
      equivalent to the functor $K$.
      By \autoref{liftingalongffladjisequivtohavingpostwhiskeringwithcounitinvertible}
      the existence of such dashed lift along the fully-faithful left adjoint $\Delta_{F_2}$
      is equivalent to the post-whiskering with the counit of $\Delta_{F_2} \adj \ev_0$ 
      being invertible.
      As $\phi_2$ and $\psi_2$ are by definition inverse equivalences this invertibility
      is itself equivalent to the second assertion above.
    \end{proof}

    An easy corollary of the equivalence of the unit/counit description and the Hom-equivalence description 
    of adjunctions is that a functor $F$ has a right adjoint if and only 
    the cocartesian fibration $\ev_1 \colon B \times_{A} \Fun(\Delta^1,A) \to A$ admits 
    a fully-faithful right adjoint. 
    This gives a way of going back and forth between general adjunctions and adjunctions with invertible counit.

    \begin{lemma}
      \label{havingaradjointisequivtofreecocartfibhavingaffradj}
      For any functor $F \colon B \to A$ the following two statements are equivalent.
      \begin{enumerate}
        \item $F$ has a right adjoint.
        \item The functor $\ev_1 \colon B \times_{A} \Fun(\Delta^1,A) \to A$
          has a fully-faithful right adjoint.
      \end{enumerate}
    \end{lemma}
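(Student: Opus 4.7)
The plan is to prove each direction by factorising the functor in question into a composite of two left adjoints.

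\emph{Direction $(2) \Rightarrow (1)$.} Observe that $F$ factors as $B \xto{\Delta_F} B \times_A \Fun(\Delta^1, A) \xto{\ev_1} A$, where $\Delta_F = (\id_B, \Delta_A \circ F)$ sends $b$ to $(b, \id_{Fb})$; indeed $\ev_1 \circ \Delta_F = F$. By \autoref{freecocartfibinheritsDeltaadjunction}, $\Delta_F$ is a fully-faithful left adjoint to $\pr_B$. Hence if $\ev_1$ admits a right adjoint $R$, then $F$ is the composite of two left adjoints and is therefore itself a left adjoint, with right adjoint $\pr_B \circ R \colon A \to B$.

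\emph{Direction $(1) \Rightarrow (2)$.} Given $F \adj U$ with unit $\eta$ and counit $\epsilon$, \autoref{triangleidentityadjunctionsareequivalenttohomcatdef} produces an equivalence $\phi \colon B \times_A \Fun(\Delta^1, A) \equiv \Fun(\Delta^1, B) \times_B A$ over $B \times A$, under which $\ev_1$ on the left is identified with the projection $\pr_A$ on the right. Now $\pr_A$ is the pullback of $\ev_1 \colon \Fun(\Delta^1, B) \to B$ along $U$; by \autoref{evaluationfunctorsarefibrations}, this $\ev_1$ admits the fully-faithful right adjoint $\Delta_B$, whose pullback along $U$ is $(\Delta_B \circ U, \id_A) \colon A \to \Fun(\Delta^1, B) \times_B A$, a fully-faithful right adjoint to $\pr_A$. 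Transporting back across $\phi$ yields the desired fully-faithful right adjoint to $\ev_1$, given concretely by the section $(U, \epsilon) \colon A \to B \times_A \Fun(\Delta^1, A)$.

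The main technical point to justify in the second direction is the stability of fully-faithful right adjoints under pullback, which is the dual of \autoref{pullbackofadjunctions} and is not explicitly recorded in the excerpt. The proof is however entirely analogous (for instance by $\op$-duality, or by dualising the construction of the pulled-back adjoint). Alternatively, one may bypass the dual entirely by directly verifying that the pullback along $U$ of the Hom-equivalence characterising $\ev_1 \adj \Delta_B$ is precisely the Hom-equivalence characterising $\pr_A \adj (\Delta_B \circ U, \id_A)$, and then reapplying \autoref{triangleidentityadjunctionsareequivalenttohomcatdef} in the reverse direction to extract the adjunction.
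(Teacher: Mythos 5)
Your proof is correct and follows essentially the same route as the paper: the direction $(2)\Rightarrow(1)$ factors $F$ as $\ev_1 \circ \Delta_F$, a composite of two left adjoints, and the direction $(1)\Rightarrow(2)$ transports the problem across the Hom-equivalence of \autoref{triangleidentityadjunctionsareequivalenttohomcatdef} to the projection $\Fun(\Delta^1,B)\times_B A \to A$, which carries the fully-faithful right adjoint section $(\Delta_B\circ U,\id_A)$. The dual pullback-stability fact you flag as missing is exactly the same implicit ingredient the paper relies on when it asserts that this projection "always" has the fully-faithful right adjoint $\Delta_U$, so your more explicit accounting of it is, if anything, an improvement rather than a gap.
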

    \begin{proof}
      Let us start by assuming the first assertion. 
      By \autoref{triangleidentityadjunctionsareequivalenttohomcatdef}
      we have an equivalence
      \begin{equation}
            \begin{tikzpicture}[diagram]
                \matrix[objects] {%
                  |(a)| B \times^{(F,\ev_0)}_{A} \Fun(\Delta^1,A) \& \& |(b)| \Fun(\Delta^1,B) \times^{(\ev_1,U)}_{B} A \\
                  \& |(c)| B \times A  \\
                  \& |(d)| A  \\
                };
                \path[maps,->]
                (a) edge node[above]  {$\equiv$} (b)
                (a) edge node[below left]  {$\ev$} (c)
                (b) edge node[above left]   {$\ev$} (c)
                (c) edge node[left]   {$\pr_A$} (d)
                (b) edge node[below right]   {$\pr_1$} (d)
                (a) edge node[below left]   {$\ev_1$} (d)
                ;
            \end{tikzpicture}
      \end{equation}
      But we also know that the functor $\pr_1$ above always has the fully-faithful 
      right adjoint $\Delta_U$. Hence $\ev_1$ is the composite of an equivalence
      and a functor which has a fully-faithful right adjoint, thus it also
      has one.
      To show the backwards implication, let us consider the commutative triangle
      \begin{equation}
            \begin{tikzpicture}[diagram]
                \matrix[objects] {%
                  |(a)| B \& \& |(b)| B \times^{(F,\ev_0)}_{A} \Fun(\Delta^1,A) \\
                  \& |(c)| A \\
                };
                \path[maps,->]
                (a) edge node[above]  {$\Delta_F$} (b)
                (a) edge node[below left]  {$F$} (c)
                (b) edge node[below right]   {$\ev_1$} (c)
                ;
            \end{tikzpicture}
      \end{equation}
      Furthermore, it is always true that $\Delta_F$ is left adjoint to 
      $\pr_1 \colon B \times^{(F,\ev_0)}_{A} \Fun(\Delta^1,A) \to B$.
      Now we have factorized $F$ into two left adjoints, hence it also is one.
    \end{proof}

    The next lemma is about an instance of the so-called mate correspondence, the passage back and forth 
    between natural transformations $F \to G$ between left adjoints $F \adj U$, $G \adj V$ and 
    natural transformations in the reversed directions $V \to U$ between their right adjoints.
    We first construct the equivalence.

    \begin{construction}
      \label{matecorrespondencebetweenadjoints}
      Let $U, V \colon A \to B$ be two right adjoints with left adjoints
      $F \adj U$ and $G \adj V$. Let us denote the Hom-space of $\Fun(A,B)$ from $U$ to $V$ by $U \to V$.
      We can then use the Hom-equivalences from \autoref{triangleidentityadjunctionsareequivalenttohomcatdef} 
      of the adjunctions ${}_\ast U \adj {}_\ast F$ and $G_\ast \adj V_\ast$ to get the 
      \define{mate correspondence equivalence} $\matecorrbetwadjs$
      \begin{align*}
        (U \to V) = ({}_\ast U (\id) \to V) &\equiv (\id \to {}_\ast F (V)) \\ 
        &= (\id \to V F) \\ 
        &= (\id \to V_\ast (F)) \equiv (G_\ast (\id) \to F) = (G \to F)
      \end{align*}
      Note that by associativity of whiskering and natural transformation composition, 
      we could have also define it by first invoking $G_\ast \adj V_\ast$ and then ${}_\ast U \adj {}_\ast F$.
    \end{construction}

    This mate correspondence equivalence is functorial in the following sense.
    \begin{lemma}
      \label{matecorrespondanceisfunctorial}
      Let $U, V, W \colon A \to B$ be three right adjoints with left adjoints
      $F \adj U$, $G \adj V$ and $H \adj W$. 
      The mate correspondence equivalences constructed in \autoref{matecorrespondencebetweenadjoints}
      commute with composition of natural transformations, \ie
      \begin{equation*}
            \begin{tikzpicture}[diagram]
                \matrix[objects]{%
                    |(a)| (U \to V) \times (V \to W) \& |(b)| (U \to W) \\
                    |(c)| (G \to F) \times (H \to G) \& |(d)| (H \to F) \\
                };
                \path[maps,->]
                    (a) edge node[above] {$\text{comp}$} (b)
                    (a) edge node[left] {$\matecorrbetwadjs \times \matecorrbetwadjs$} (c)
                    (b) edge node[right] {$\matecorrbetwadjs$} (d)
                    (c) edge node[below] {$\text{comp}$} (d)
                ;
            \end{tikzpicture}
      \end{equation*}
      commutes.
      Furthermore, they are unital, \ie the functors $\matecorrbetwadjs \colon (U \to U) \to (F \to F)$ send identities to 
      identities.
      In particular we have that the following dashed lift always exists.
      \begin{equation*}
            \begin{tikzpicture}[diagram]
                \matrix[objects]{%
                    |(a)| (U \equiv V) \& |(b)| (G \equiv F) \\
                    |(c)| (U \to V) \& |(d)| (G \to F) \\
                };
                \path[maps,->]
                    (a) edge[dashed] node[above] {$\matecorrbetwadjs$} (b)
                    (a) edge (c)
                    (b) edge (d)
                    (c) edge node[below] {$\matecorrbetwadjs$} (d)
                ;
            \end{tikzpicture}
      \end{equation*}
    \end{lemma}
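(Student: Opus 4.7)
The plan is to unfold the construction of \autoref{matecorrespondencebetweenadjoints} into the classical explicit formula for the mate. Applying the adjoint transpose formulas from \autoref{triangleidentityadjunctionsareequivalenttohomcatdef} to the adjunctions ${}_\ast U \adj {}_\ast F$ and $G_\ast \adj V_\ast$, one obtains for $\alpha \colon U \twoto V$ the formula
\begin{equation*}
  \matecorrbetwadjs(\alpha) \equiv (\epsilon^G F) \circ (G \alpha F) \circ (G \eta^F),
\end{equation*}
where $\eta^F$ denotes the unit of $F \adj U$ and $\epsilon^G$ the counit of $G \adj V$. With this formula in hand, both claims reduce to whiskering calculus in $\Fun(B, A)$, using the triangle identities and \autoref{middlefourinterchangefornattrafos}.

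Unitality is then immediate: specializing to $G = F$, $V = U$, and $\alpha \equiv \id_U$, the formula collapses to $(\epsilon^F F) \circ (F \eta^F)$, which is $\id_F$ by one of the triangle identities for $F \adj U$.

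Compositionality is the main content. For $\alpha \colon U \twoto V$ and $\beta \colon V \twoto W$, expanding $\matecorrbetwadjs(\alpha) \circ \matecorrbetwadjs(\beta)$ yields a six-fold vertical composite in $\Fun(B, A)$. The reduction proceeds in three naturality steps: first, naturality of $\epsilon^H \colon HW \twoto \id_A$ applied to $\matecorrbetwadjs(\alpha) \colon G \twoto F$ moves $\matecorrbetwadjs(\alpha)$ leftward past $\epsilon^H G$; next, naturality of $\beta$ applied to $\matecorrbetwadjs(\alpha)$ commutes $\matecorrbetwadjs(\alpha)$ past $H \beta G$; finally, the resulting internal expression $V \matecorrbetwadjs(\alpha) \circ \eta^G$ reduces to $\alpha F \circ \eta^F$ by naturality of $\eta^G$ together with the triangle identity $(V \epsilon^G) \circ (\eta^G V) \equiv \id_V$ for $G \adj V$. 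Assembling the pieces recovers exactly $\matecorrbetwadjs(\beta \circ \alpha) \equiv (\epsilon^H F) \circ (H \beta F) \circ (H \alpha F) \circ (H \eta^F)$.

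The existence of the dashed lift on equivalences is then a formal consequence: given $\alpha \colon U \equiv V$ with inverse $\alpha^{-1}$, compositionality and unitality together yield $\matecorrbetwadjs(\alpha) \circ \matecorrbetwadjs(\alpha^{-1}) \equiv \matecorrbetwadjs(\alpha^{-1} \circ \alpha) \equiv \matecorrbetwadjs(\id_U) \equiv \id_F$, and symmetrically on the other side, so $\matecorrbetwadjs(\alpha^{-1})$ is inverse to $\matecorrbetwadjs(\alpha)$. The main obstacle is the bookkeeping in the three-step naturality chase; in the synthetic $1$-category setting this amounts to pasting commutative squares built from \autoref{middlefourinterchangefornattrafos} and the triangle identities, with no deeper input required.
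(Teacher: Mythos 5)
Your proof is correct and follows essentially the same route as the paper, which simply defers the whole argument to "repeated application of the middle four interchange and the triangle identities"; you have carried out exactly that computation, via the explicit mate formula $(\epsilon^G F)\circ(G\alpha F)\circ(G\eta^F)$, the triangle identity for $G \adj V$ in the compositionality step and for $F \adj U$ in the unitality step. Deriving the lift on equivalences from a two-sided inverse is likewise the paper's "section and retraction" argument, so no gaps.
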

    \begin{proof}
      This is follows from repeated application of \autoref{middlefourinterchangefornattrafos}
      and the triangle identities of $G \adj V$.
      The second claim is an immediate consequence of the triangle identity of $F \adj U$.
      The third claim follows from the characterisation of invertible maps as having a 
      section and a retraction.
    \end{proof}

    The following lemma is a functorial version of the fact that cocartesian lifts are equivalences if and 
    only if their projection to the base category is so.
    \begin{lemma}
      \label{cocartesianliftsofequivsareequivs}
      Let $p \colon X \fibration B$ be a cocartesian fibration.
      Let $f \colon x \to \tilde{x}$ be a morphism in $X$, or more generally a 
      natural transformation between functors $A \to X$ for some category $A$,
      such that $f$ is a $p$-cocartesian lift.
      If its image $p(f)$ under $p$ is an equivalence, then $f$ is 
      also an equivalence.
    \end{lemma}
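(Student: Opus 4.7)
The plan is first to reduce to the case of a single morphism, $A = \Delta^0$, using that $\Fun(A,-)$ preserves the relevant $1$-categorical structure, and then to construct a two-sided inverse for $f$ by exploiting the uniqueness of factorizations provided by the $p$-cocartesian property.

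For the reduction, by \autoref{FunApreservesadjunctionsandFunFgivesadjointablesquares} the fully-faithful left adjoint $\cocart_p$ to $(\ev_0, p_\ast)$ witnessing that $p$ is a cocartesian fibration is preserved by $\Fun(A,-)$. Combined with the canonical identifications $\Fun(A, \Fun(\Delta^1, X)) \equiv \Fun(\Delta^1, \Fun(A, X))$ and $\Fun(A, X \times_B \Fun(\Delta^1, B)) \equiv \Fun(A, X) \times_{\Fun(A, B)} \Fun(\Delta^1, \Fun(A, B))$, this exhibits $p_\ast \colon \Fun(A, X) \fibration \Fun(A, B)$ as a cocartesian fibration whose cocartesian lifts correspond exactly to $p$-cocartesian natural transformations. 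Under this correspondence, the natural transformation $f$ of the statement becomes a $p_\ast$-cocartesian morphism in $\Fun(A, X)$ whose image in $\Fun(A, B)$ is invertible, and invertibility of natural transformations in $X$ matches invertibility of the associated morphism in $\Fun(A, X)$. Hence it suffices to treat the case $A = \Delta^0$.

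So let $f \colon x \to \tilde{x}$ be $p$-cocartesian with $p(f)$ an equivalence, and choose an inverse $g \colon p\tilde{x} \to px$ with witnesses $g \circ p(f) \equiv \id_{px}$ and $p(f) \circ g \equiv \id_{p\tilde{x}}$. The fully-faithfulness of $\cocart_p$ (i.e., the invertibility of the unit of $\cocart_p \adj (\ev_0, p_\ast)$) yields the familiar universal property: for every morphism $k \colon x \to z$ equipped with a factorization $p(k) \equiv \alpha \circ p(f)$ in $B$, there is a unique $l \colon \tilde{x} \to z$ with $p(l) \equiv \alpha$ and $l \circ f \equiv k$. Applying this with $k = \id_x$ and $\alpha = g$ (using $g \circ p(f) \equiv \id_{px} = p(\id_x)$) produces $r \colon \tilde{x} \to x$ satisfying $p(r) \equiv g$ and $r \circ f \equiv \id_x$. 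To conclude $f \circ r \equiv \id_{\tilde{x}}$, observe that both $f \circ r$ and $\id_{\tilde{x}}$ are endomorphisms of $\tilde{x}$ whose images under $p$ agree with $p(f) \circ g \equiv \id_{p\tilde{x}}$, and both precompose with $f$ to give $f$ (using $r \circ f \equiv \id_x$ for the first, and trivially for the second). The uniqueness clause above, now applied to $k = f$ and $\alpha = \id_{p\tilde{x}}$, forces $f \circ r \equiv \id_{\tilde{x}}$.

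The main obstacle is the reduction step: formalising it requires carefully identifying $p$-cocartesian natural transformations $A \to \Fun(\Delta^1, X)$ with $p_\ast$-cocartesian morphisms in $\Fun(A, X)$, and checking that the two notions of invertibility coincide. Once this translation is in hand, the rest is a standard diagram chase with cocartesian lifts. One could alternatively avoid the reduction and extract the same two uniqueness arguments directly from the $A$-parametrised universal property of $\cocart_p$, but conceptually the reduction to single morphisms is cleaner.
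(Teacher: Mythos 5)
Your proof is correct but takes a genuinely different route from the paper's. The paper argues purely formally: the commutative triangle of right adjoints $\ev_0 = \pr_0 \circ (\ev_0, p_\ast)$, whose three legs have left adjoints $\Delta_X$, $\cocart_p$ and $\Delta_p$, is transported through the functoriality of the mate correspondence (\autoref{matecorrespondanceisfunctorial}) to a commutative triangle of left adjoints, $\cocart_p \circ \Delta_p \equiv \Delta_X$. Since an arrow of $X$ whose image under $p$ is invertible determines an object of $X \times_B \Fun(\Delta^1,B)$ lying in the essential image of $\Delta_p$, its cocartesian lift lands in the essential image of $\Delta_X$, \ie is degenerate, hence an equivalence; the $A$-parametrised case comes along for free. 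Your argument instead reduces to single morphisms via $\Fun(A,-)$ (legitimate by \autoref{FunApreservesadjunctionsandFunFgivesadjointablesquares}) and then runs the classical construction of a two-sided inverse from uniqueness of factorisations through a cocartesian morphism; the two uniqueness applications are set up correctly and the diagram chase goes through. The one point to flag is that the ``familiar universal property'' you invoke --- for each $k$ with $p(k) \equiv \alpha \circ p(f)$ a unique $l$ with $p(l) \equiv \alpha$ and $l \circ f \equiv k$ --- is a mapping-space statement that the paper never extracts from its adjunction-style \autoref{definitioncocartesianfibrationandfunctor}, and deriving it (together with the coherences needed to play the two uniqueness clauses against each other) is precisely the kind of pointwise manipulation the appendix is engineered to avoid. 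So your proof is mathematically sound but would require an additional lemma to be self-contained in this framework, whereas the paper's three-line mate argument buys the statement directly from machinery already in place.
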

    \begin{proof}
      We have the commutative triangle
      \begin{equation*}
            \begin{tikzpicture}[diagram]
                \matrix[objects]{%
                  |(a)| \Fun(\Delta^1,X) \& |(b)| X \times_{B} \Fun(\Delta^1,B) \\
                  |(c)| X \& |(d)|  \\
                };
                \path[maps,->]
                    (a) edge node[above] {$(\ev_0, p_\ast)$} (b)
                    (a) edge node[left] {$\ev_0$} (c)
                    (b) edge node[below right] {$\pr_0$} (c)
                ;
            \end{tikzpicture}
      \end{equation*}
      of right adjoints. By \autoref{matecorrespondanceisfunctorial}
      we may pass to left adjoints and obtain the still commutative triangle
      \begin{equation*}
            \begin{tikzpicture}[diagram]
                \matrix[objects]{%
                  |(a)| \Fun(\Delta^1,X) \& |(b)| X \times_{B} \Fun(\Delta^1,B) \\
                  |(c)| X \& |(d)|  \\
                };
                \path[maps,->]
                    (b) edge node[above] {$\text{cocart}$} (a)
                    (c) edge node[left] {$\Delta_X$} (a)
                    (c) edge node[below right] {$\Delta_p$} (b)
                ;
            \end{tikzpicture}
      \end{equation*}
      from which the desired statement follows immediately.
    \end{proof}

    The next statement is a functorial version of the diagrammatic left cancellation property of 
    cocartesian morphisms in cocartesian fibrations.

    \pagebreak 

    \begin{lemma}
      \label{cocartesianliftscanbediagrammaticallyleftcancelled}
      Let $p \colon X \fibration B$ be a cocartesian fibration.
      Let 
      \begin{equation*}
            \begin{tikzpicture}[diagram]
                \matrix[objects]{%
                  |(a)| x_0 \& |(b)| x_1 \\
                  |(c)| \& |(d)| x_2 \\
                };
                \path[maps,->]
                    (a) edge node[below] {$\text{cocart}$} (b)
                    (a) edge node[above] {$f$} (b)
                    (b) edge node[right] {$g$} (d)
                    (a) edge node[below left] {$g \circ f$} (d)
                ;
            \end{tikzpicture}
      \end{equation*}
      be a commutative triangle of morphisms in $X$, or more general natural 
      transformations between functors $A \to X$ for some category $A$,
      such that $f$ is a $p$-cocartesian lift.
      Then $g$ is a cocartesian lift if and only if $g \circ f$ is a 
      cocartesian lift.
    \end{lemma}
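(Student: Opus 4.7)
The plan is to reduce to the case of ordinary morphisms (where $A = \Delta^0$) and then apply the classical ``left cancellation of cocartesian lifts'' argument. Since $\Fun(A,-)$ preserves adjunctions with fully-faithful left adjoint by \autoref{FunApreservesadjunctionsandFunFgivesadjointablesquares}, applying it to the defining adjunction $\cocart_p \adj (\ev_0, p_\ast)$ shows that $\Fun(A,p) \colon \Fun(A,X) \to \Fun(A,B)$ is itself a cocartesian fibration whose cocartesian morphisms are precisely the $p$-cocartesian natural transformations, reducing the statement to morphisms. For the forward direction, assume $g$ is $p$-cocartesian. Then $g \circ f$ is $p$-cocartesian by two successive applications of the universal property: every morphism out of $x_0$ whose base factors through $p(g \circ f)$ factors uniquely first through $f$ (cocartesianness of $f$) and then through $g$ (cocartesianness of $g$).

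For the backward direction, assume $g \circ f$ is $p$-cocartesian, and form the $p$-cocartesian lift $g' \colon x_1 \to x_2'$ of $p(g)$ starting at $x_1$ by applying $\cocart_p$ to $(x_1, p(g))$. By the forward direction $g' \circ f$ is cocartesian, and both $g' \circ f$ and $g \circ f$ are cocartesian lifts out of $x_0$ of the same base morphism $p(g \circ f)$. The universal property of $g' \circ f$ produces a unique morphism $e \colon x_2' \to x_2$ over $\id_{p(x_2)}$ with $e \circ g' \circ f \equiv g \circ f$, and using the universal property of the cocartesian $f$, this descends to $e \circ g' \equiv g$. Applying the analogous construction with the roles of $g \circ f$ and $g' \circ f$ swapped yields a morphism $e' \colon x_2 \to x_2'$ over $\id_{p(x_2)}$ in the reverse direction, and uniqueness of the two factorizations forces both composites $e \circ e'$ and $e' \circ e$ to be identities. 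Thus $e$ is an equivalence in $\Fun(\Delta^1,X)$ between the cocartesian $g'$ and $g$, so $g$ lies in the essential image of $\cocart_p$ and is itself cocartesian.

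The main obstacle is that the excerpt does not explicitly formulate the classical universal property of cocartesian lifts (``unique factorization of morphisms whose base factors through the cocartesian lift's base''), which the argument uses repeatedly. This has to be extracted from the adjunction-based definition by combining \autoref{characterizationofessimoffullandfaithfulradj} (essential image of $\cocart_p$ equals the locus where the counit is invertible) with the Hom-equivalence characterization of adjunctions in \autoref{triangleidentityadjunctionsareequivalenttohomcatdef}, applied to $\cocart_p \adj (\ev_0, p_\ast)$. Once this universal property is made explicit, all remaining steps are routine diagrammatic manipulations.
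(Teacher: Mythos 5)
Your proposal is the classical element-wise argument (compose/cancel via the unique-factorization property of cocartesian morphisms, as in Lurie's treatment), whereas the paper argues globally: it exhibits a commutative square of right adjoints built from $(\ev_0,p_\ast)$, $((\ev_0)_\ast,p_\ast)$ and their pullbacks, passes to the square of left adjoints using the functoriality of the mate correspondence (\autoref{matecorrespondanceisfunctorial}), and precomposes with $\id_X\times_{\id_B}\cocart_{\ev_1}$. That route never leaves the language of adjunctions and commutative squares, which is exactly why it fits the paper's synthetic framework.

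The genuine gap in your argument is the one you yourself flag and then dismiss as routine: the pointwise universal property of cocartesian lifts. In the paper's setting this is not extractable in a few lines. First, the paper never introduces mapping spaces $\Map_C(x,y)$ between objects of a category at all, so the statement ``every morphism out of $x_0$ whose base factors through $p(g\circ f)$ factors uniquely through $f$'' has no formulation available; developing it would require a synthetic theory of mapping spaces and of fibers of maps between them, which is a substantial addition, not a corollary of \autoref{characterizationofessimoffullandfaithfulradj}. Second, the Hom-equivalence of \autoref{triangleidentityadjunctionsareequivalenttohomcatdef} applied to $\cocart_p\adj(\ev_0,p_\ast)$ parametrizes morphisms \emph{in the arrow category} $\Fun(\Delta^1,X)$ out of a cocartesian lift, i.e.\ commutative squares; the universal property you need is about commutative \emph{triangles} with a prescribed composite and prescribed image in $B$. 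Passing from squares to triangles-with-fixed-composite is precisely a statement of the same flavor and difficulty as the lemma being proved (compare \autoref{commutativesquarescanbedecomposedintotwocommtriangles}, which only handles the unconstrained case), so there is a real risk that the ``routine diagrammatic manipulations'' secretly presuppose what is to be shown. Your reduction to the case $A=\Delta^0$ via \autoref{FunApreservesadjunctionsandFunFgivesadjointablesquares} is fine, and the overall skeleton (forward direction by composing universal properties, backward direction by comparing $g$ with the cocartesian lift $g'$ of $p(g)$ and cancelling $f$) is the correct classical proof; but as written the load-bearing step is asserted rather than proved, and within this paper's foundations it is the hardest part.
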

    \begin{proof}
      We have the following commutative square.
      \begin{equation*}
            \begin{tikzpicture}[diagram]
              \matrix[objects,column sep={7em,between origins},row sep=2em]{%
                  |(a)| \Fun(\Delta^1,\Fun(\Delta^1,X)) \\ 
                  \& |(b)| \Fun(\Delta^1,X) \times^{(p_\ast,(\ev_0)_\ast)}_{\Fun(\Delta^1,B)} \Fun(\Delta^1,\Fun(\Delta^1,B)) \\
                  |(c)| \Fun(\Delta^1,X) \times^{(p_\ast,\ev_0)}_{\Fun(\Delta^1,B)} \Fun(\Delta^1,\Fun(\Delta^1,B)) \\ 
                  \& |(d)| X \times_{B} \Fun(\Delta^1,\Fun(\Delta^1,B)) \\
                };
                \path[maps,->]
                    (a) edge node[above right] {$((\ev_0)_\ast, p_\ast)$} (b)
                    (a) edge node[left] {$(\ev_0, p_\ast)$} (c)
                    (b) edge node[right] {$\ev_0 \times_{\ev_0} \id$} (d)
                    (c) edge node[below left] {$\ev_0 \times_{\ev_0} \id$} (d)
                ;
            \end{tikzpicture}
      \end{equation*}
      The left hand vertical functor is a right adjoint because
      $p_\ast \colon \Fun(\Delta^1,X) \to \Fun(\Delta^1,B)$
      is a cocartesian fibration.
      The upper diagonal functor is a right adjoint because it is 
      $\Fun(\Delta^1, -)$ applied to the adjunction witnessing that $p$
      is a cocartesian fibration.
      The lower diagonal functor is right adjoint as it is the pullback
      of the adjunction witnessing cocartesianness of $p$, which is an 
      adjunction over $\Fun(\Delta^1,B)$, along
      $\ev_0 \colon \Fun(\Delta^1,\Fun(\Delta^1,B)) \to \Fun(\Delta^1,B)$.
      The right hand vertical functor is right adjoint as it is the pullback
      of the adjunction witnessing cocartesianness of $p$, which is an 
      adjunction over $\Fun(\Delta^1,B)$, along
      $(\ev_0)_\ast \colon \Fun(\Delta^1,\Fun(\Delta^1,B)) \to \Fun(\Delta^1,B)$.
      Now we now that we have an commutative square consisting
      of right adjoints, hence 
      \autoref{matecorrespondanceisfunctorial} tells us 
      that the fully adjointed square, where we pass to left adjoints 
      both vertically and horizontally, commutes again.
      Now to deduce our desired statement we just need to take the functor
      \begin{equation*}
        X \times_B \Fun(\Delta^1,B) \times_B \Fun(\Delta^1,B) \xto{\id_X \times_{\id_B} \text{cocart}_{\ev_1}} X \times_{B} \Fun(\Delta^1,\Fun(\Delta^1,B))
      \end{equation*}
      and precompose it onto our commutative square of left adjoints to obtain 
      the desired claim.
    \end{proof}

    In the following lemma we concern ourselves with squares that are horizontally right adjointable as well as 
    vertically left adjointable. We prove that in this case the order of adjointing along the two 
    different orientations does not matter.
    \begin{lemma}
      \label{bothwaysadjointablegivessametotalmate}
      Let 
      \begin{equation*}
            \begin{tikzpicture}[diagram]
                \matrix[objects]{%
                    |(a)| A \& |(b)| B \\
                    |(c)| C \& |(d)| D \\
                };
                \path[maps,->]
                    (a) edge node[above] {$F_0$} (b)
                    (a) edge node[left] {$V_0$} (c)
                    (b) edge node[right] {$V_1$} (d)
                    (c) edge node[below] {$F_1$} (d)
                ;
                \node[rotate=45] at ($ (c) ! 0.5 ! (b) $) (psi) {$\Rightarrow$};
                \node[maps,right] at (psi) {$\alpha$};
            \end{tikzpicture}
      \end{equation*}
      be a natural transformation such that we have adjoints
      $F_0 \adj U_0$, $F_1 \adj U_1$, $G_0 \adj V_0$ and $G_1 \adj V_1$.
      Let us furthermore assume that the vertically adjointed mate as below on the left
      as well as the horizontally adjointed mate as below on the right
      \begin{equation*}
            \begin{tikzpicture}[diagram]
                \matrix[objects]{%
                    |(a)| A \& |(b)| B \\
                    |(c)| C \& |(d)| D \\
                };
                \path[maps,->]
                    (a) edge node[above] {$F_0$} (b)
                    (c) edge node[left] {$G_0$} (a)
                    (d) edge node[right] {$G_1$} (b)
                    (c) edge node[below] {$F_1$} (d)
                ;
                \node[rotate=135] at ($ (c) ! 0.5 ! (b) $) (psi) {$\Rightarrow$};
                \node[maps,right] at (psi) {$\beta$};
            \end{tikzpicture}
            \qquad
            \begin{tikzpicture}[diagram]
                \matrix[objects]{%
                    |(a)| A \& |(b)| B \\
                    |(c)| C \& |(d)| D \\
                };
                \path[maps,->]
                    (b) edge node[above] {$U_0$} (a)
                    (a) edge node[left] {$V_0$} (c)
                    (b) edge node[right] {$V_1$} (d)
                    (d) edge node[below] {$U_1$} (c)
                ;
                \node[rotate=-45] at ($ (c) ! 0.5 ! (b) $) (psi) {$\Rightarrow$};
                \node[maps,right] at (psi) {$\gamma$};
            \end{tikzpicture}
      \end{equation*}
      are invertible.
      Then the horizontally adjointed mate of $\beta^{-1}$ and the vertically adjointed mate
      of $\gamma^{-1}$ as depicted in
      \begin{equation*}
            \begin{tikzpicture}[diagram]
                \matrix[objects]{%
                    |(a)| A \& |(b)| B \\
                    |(c)| C \& |(d)| D \\
                };
                \path[maps,->]
                    (b) edge node[above] {$U_0$} (a)
                    (c) edge node[left] {$G_0$} (a)
                    (d) edge node[right] {$G_1$} (b)
                    (d) edge node[below] {$U_1$} (c)
                ;
                \node[rotate=45] at ($ (a) ! 0.6 ! (d) $) (psi) {$\Rightarrow$};
                \node[maps,right] at (psi) {$\beta^{-1}$};
                \node[rotate=45] at ($ (a) ! 0.4 ! (d) $) (phi) {$\Rightarrow$};
                \node[maps,left] at (phi) {$\gamma^{-1}$};
            \end{tikzpicture}
      \end{equation*}
      are equivalent.
    \end{lemma}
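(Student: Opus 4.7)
The plan is to exhibit both natural transformations as agreeing with a single common ``double mate'' of $\alpha$ (or equivalently of $\alpha^{-1}$), constructed as a canonical pasting composite built from $\alpha^{-1}$ together with units and counits drawn from all four adjunctions.

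A preparatory observation is that $\alpha$ itself is invertible. Indeed, $\beta$ arises from $\alpha$ by an explicit formula involving pre- and post-whiskering with units and counits of the adjunctions $G_i \adj V_i$, and this formula provides an equivalence of Hom-spaces with an explicit inverse running in the opposite direction; since equivalences of Hom-spaces preserve invertibility (as in the final statement of Lemma \ref{matecorrespondanceisfunctorial}), the assumed invertibility of $\beta$ forces $\alpha$ to be invertible as well, so $\alpha^{-1} \colon F_1 V_0 \twoto V_1 F_0$ is at our disposal.

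Next, I would define the canonical double mate $\delta \colon U_0 G_1 \twoto G_0 U_1$ as the pasting composite obtained from $\alpha^{-1}$ by pre-whiskering with the units of $G_1 \adj V_1$ and of $F_0 \adj U_0$ on the appropriate sides, then post-whiskering with the counits of $F_1 \adj U_1$ and of $G_0 \adj V_0$ to collapse to $G_0 U_1$. Thanks to the middle four interchange (Lemma \ref{middlefourinterchangefornattrafos}), the various admissible orderings of these whiskerings all yield the same natural transformation, so $\delta$ is well defined.

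Finally, I verify $\delta_1 \equiv \delta \equiv \delta_2$ by direct unwinding. For $\delta_1$, I expand the horizontal mate of $\beta^{-1}$ via the explicit formula of Construction \ref{matecorrespondencebetweenadjoints}, use the defining identity $\beta \circ \beta^{-1} \equiv \id$ to eliminate the inverse, and rearrange the result using the triangle identities for $F_0 \adj U_0$ and $G_1 \adj V_1$ together with naturality of $\alpha$; the resulting composite coincides with $\delta$. The argument for $\delta_2$ is entirely symmetric, using the triangle identities for $F_1 \adj U_1$ and $G_0 \adj V_0$ in the corresponding roles. The main obstacle is combinatorial bookkeeping: the pasting diagrams involve many whiskered components and care is required to align them through the triangle identities, but every individual manipulation is an instance of a triangle identity, the naturality of $\alpha$ or $\alpha^{-1}$, or the middle four interchange, so no new conceptual ingredients beyond those already developed in the preceding lemmas are needed. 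A conceptually shorter alternative bypasses the combinatorics using the Yoneda principle: both $\delta_1$ and $\delta_2$ induce the same natural equivalence of Hom-spaces at every pair of objects, namely the composite of the four Hom-equivalences from Lemma \ref{triangleidentityadjunctionsareequivalenttohomcatdef} going around the square with $\alpha^{-1}$ inserted at the middle step, and therefore agree.
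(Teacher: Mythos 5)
Your strategy is sound --- the statement is the classical compatibility of horizontal and vertical mates --- but your primary route is genuinely different from the paper's. The paper never writes down a single pasting diagram: it assembles the Hom-space equivalences of \autoref{matecorrespondencebetweenadjoints} into a diagram of mapping spaces between the various spaces of natural transformations, whose triangles commute because the mate correspondence of a composite adjunction decomposes into the mate correspondences of its factors, and whose squares commute by \autoref{matecorrespondanceisfunctorial}; the lemma is then a diagram chase through that picture. You instead build an explicit double mate $\delta$ out of $\alpha^{-1}$ and verify both transformations against it by hand. This can be made to work: your preparatory observation that invertibility of $\beta$ forces invertibility of $\alpha$ is correct (it uses that the mate correspondence is an equivalence compatible with composition and identities, i.e.\ \autoref{matecorrespondanceisfunctorial}, not merely an equivalence of spaces), and each subsequent manipulation is an instance of the triangle identities, naturality, or \autoref{middlefourinterchangefornattrafos}. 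Two caveats, though. First, in the synthetic setting of this paper the ``combinatorial bookkeeping'' you defer is precisely where the cost lies --- compare the effort already needed for \autoref{middlefourinterchangefornattrafos} and \autoref{firsthalfoftriangleidentityadjunctionsareequivalenttohomcatdef} --- and in particular the step replacing $\beta^{-1}$ by a pasting of $\alpha^{-1}$ is itself an application of the functoriality of the mate correspondence rather than a purely diagrammatic cancellation. Second, your ``conceptually shorter alternative'' is for all practical purposes the paper's proof: showing that both transformations induce the same composite of Hom-equivalences is exactly the commutativity of the paper's triangles and squares, so that route should be justified by the decomposition of mates for composite adjunctions together with \autoref{matecorrespondanceisfunctorial}, not by an appeal to Yoneda alone. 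What your explicit approach buys is a concrete formula for the common double mate; what the paper's approach buys is that no pasting computation ever has to be checked.
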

    \begin{proof}
      In the diagram
      \[\begin{tikzcd}[cramped]
        {(F_1 V_0 \to V_1 F_0 )} & { ( V_0 U_0 \to U_1 V_1 )} & {( V_0 U_0 \equiv U_1 V_1 )} \\
        {( F_1 G_1 \to G_0 F_0 )} && {( U_1 V_1 \to V_0 U_0 )} \\
        {( F_1 G_1 \equiv G_0 F_0 )} & {( G_0 F_0 \to F_1 G_1 )} & {( G_0 U_1 \to U_0 G_1 )}
        \arrow["\equiv"', from=1-1, to=2-1]
        \arrow["\equiv"', from=1-2, to=1-1]
        \arrow["\equiv", from=1-2, to=2-1]
        \arrow[hook, from=1-3, to=1-2]
        \arrow[hook, from=1-3, to=2-3]
        \arrow["\equiv"{description}, from=1-3, to=3-1]
        \arrow["\equiv"', from=2-3, to=3-2]
        \arrow["\equiv", from=2-3, to=3-3]
        \arrow[hook, from=3-1, to=2-1]
        \arrow[hook, from=3-1, to=3-2]
        \arrow["\equiv", from=3-3, to=3-2]
      \end{tikzcd}\]
      all functors that are decorated by $\equiv$ are instances of 
      mate correspondence functors.
      Here the upper left hand triangle and the lower right hand triangle commute 
      because of the fact that the mate correspondence for composed adjoints 
      can be decomposed by the mate correspondences for the factor adjoints,
      because units and counits of composed adjoints can be built from the 
      units and counits of their factor adjoints.
      The two middle squares of the diagram commute by \autoref{matecorrespondanceisfunctorial}.
      Now the proof of the desired statement is a simple diagram chase in 
      this diagram.
    \end{proof}

    The next statement is a functorial version of cocartesian pushforward preserving cocartesianness of 
    morphisms.
    \begin{lemma}
      \label{cocartiscocartfun}
      Let $p \colon X \fibration B$ be a cocartesian fibration.
      Then 
      \begin{equation*}
            \begin{tikzpicture}[diagram]
                \matrix[objects]{%
                  |(a)| X \times_B \Fun(\Delta^1,B) \& \& |(b)| \Fun(\Delta^1,X) \\
                  |(c)| \& |(d)| \Fun(\Delta^1,B) \\
                };
                \path[maps,->]
                    (a) edge node[above] {$\text{cocart}$} (b)
                    (b) edge[->>] node[below right] {$p_\ast$} (d)
                    (a) edge[->>] node[below left] {$\pr_1$} (d)
                ;
            \end{tikzpicture}
      \end{equation*}
      is a cocartesian functor between cocartesian fibrations.
    \end{lemma}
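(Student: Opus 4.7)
The plan is first to verify that the source and target of $\cocart$ are cocartesian fibrations over $\Fun(\Delta^1,B)$ and that the displayed triangle commutes, and then to verify the adjointability condition defining a cocartesian functor. The map $p_\ast$ is a cocartesian fibration by \autoref{evaluationfunctorsarecocartesian}, while $\pr_1$ is a cocartesian fibration as the pullback of $p$ along $\ev_0$ by \autoref{pullbackofcocartfibs}. The triangle commutes by the very construction of $\cocart$ as the fully-faithful left adjoint of $(\ev_0, p_\ast)$, the second component of which recovers $\pr_1$.

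Second, to show $\cocart$ is a cocartesian functor over $\Fun(\Delta^1,B)$, I need to check that the induced square
\[
\begin{tikzcd}[column sep=large]
\Fun(\Delta^1, X \times_B \Fun(\Delta^1, B)) \ar[r, "\cocart_\ast"] \ar[d, "{(\ev_0,(\pr_1)_\ast)}"'] & \Fun(\Delta^1, \Fun(\Delta^1, X)) \ar[d, "{(\ev_0,(p_\ast)_\ast)}"] \\
(X \times_B \Fun(\Delta^1, B)) \times_{\Fun(\Delta^1, B)} \Fun(\Delta^1, \Fun(\Delta^1, B)) \ar[r] & \Fun(\Delta^1, X) \times_{\Fun(\Delta^1, B)} \Fun(\Delta^1, \Fun(\Delta^1, B))
\end{tikzcd}
\]
is vertically adjointable. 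Both vertical functors have fully-faithful left adjoints by the cocartesian-fibration structures just established. By \autoref{liftingalongffladjisequivtohavingpostwhiskeringwithcounitinvertible}, and using that both units are invertible, the adjointability amounts to showing that the composite of $\cocart_\ast$ with the lower fully-faithful left adjoint factors through the upper one; by \autoref{characterizationofessimoffullandfaithfulradj} this reduces to the output of this composite being a $p_\ast$-cocartesian lift, i.e.\ having both $\ev_0$- and $\ev_1$-components $p$-cocartesian.

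The main technical step is verifying this componentwise cocartesianness. The left adjoint for $\pr_1$ is obtained, via \autoref{pullbackofadjunctions}, by pulling back the adjunction witnessing that $p$ is a cocartesian fibration along $\ev_0$. Unpacking, $\cocart_\ast$ applied to a $\pr_1$-cocartesian lift yields a commutative square in $X$ whose top edge is the underlying $p$-cocartesian morphism of the lift, whose two vertical edges are $p$-cocartesian lifts of the two endpoints in $\Fun(\Delta^1,B)$, and whose bottom edge is the induced comparison morphism. The top and both vertical edges are $p$-cocartesian by construction; the bottom edge is then also $p$-cocartesian by the left cancellation property \autoref{cocartesianliftscanbediagrammaticallyleftcancelled} applied to the composition of two edges of the square.

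The main obstacle will be the bookkeeping of the multiple $\cocart$ functors involved, especially comparing the explicit left adjoint for $(\ev_0, (\pr_1)_\ast)$ (obtained from pulling back via \autoref{pullbackofadjunctions}) with that for $(\ev_0, (p_\ast)_\ast)$ (obtained from $\Fun(\Delta^1,-)$ applied to the cocartesian-fibration adjunction of $p$); once these are correctly identified, the cocartesianness conclusion follows cleanly from the left-cancellation lemma, with no further input required.
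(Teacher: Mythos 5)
Your proposal is correct, but it takes a genuinely different route from the paper. After the common reduction (both verticals of the naturality square have fully-faithful left adjoints, so vertical adjointability is equivalent, via the invertible units and \autoref{liftingalongffladjisequivtohavingpostwhiskeringwithcounitinvertible}, to $\cocart_\ast$ carrying the essential image of the lower lifting adjoint into that of the upper one), you verify this preservation of cocartesian lifts explicitly: the square in $X$ produced by applying $\cocart_\ast$ to a $\pr_1$-cocartesian lift has three $p$-cocartesian edges by construction, and two applications of \autoref{cocartesianliftscanbediagrammaticallyleftcancelled} (once to see the diagonal is cocartesian as a composite of cocartesian morphisms, once to cancel against the left edge) force the fourth. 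The paper instead never touches individual cocartesian morphisms: it observes that the square to be adjointed is the horizontal mate of a \emph{commutative square of right adjoints} relating $(\ev_0,p_\ast)_\ast$ and $(\ev_0,(p_\ast)_\ast)$, and concludes by the functoriality of the mate correspondence (\autoref{matecorrespondanceisfunctorial}, as packaged in \autoref{bothwaysadjointablegivessametotalmate}) that the fully adjointed square commutes — the same trick by which the paper proves the left-cancellation lemma you invoke, so your argument effectively routes through that lemma as a black box where the paper applies the underlying mate calculus directly. Both arguments are valid; yours is more concrete, the paper's avoids the bookkeeping you rightly flag. Two points to tighten in your version: (i) to stay within the paper's synthetic framework the edge-chasing must be done functorially, which works because \autoref{cocartesianliftscanbediagrammaticallyleftcancelled} is stated for natural transformations of functors $A \to X$, so you can take $A$ to be the domain of the universal $\pr_1$-cocartesian lift rather than arguing object by object; (ii) the identification of the fully-faithful left adjoint of $(\ev_0,(p_\ast)_\ast)$ with $\Fun(\Delta^1,\cocart_p)$ — and hence the description of $p_\ast$-cocartesian morphisms as those with $p$-cocartesian $(\ev_0)_\ast$- and $(\ev_1)_\ast$-components — involves the flip of $\Delta^1\times\Delta^1$ exchanging the inner and outer copies of $\Delta^1$, which should be made explicit when comparing the two lifting adjoints. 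Also note that \autoref{characterizationofessimoffullandfaithfulradj} is stated for fully-faithful right adjoints; you need its evident dual for fully-faithful left adjoints here.
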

    \begin{proof}
      The triangle commutes as $\text{cocart}$ is left adjoint to $(\ev_0, p_\ast)$ with 
      invertible unit.
      $\pr_1$ is a cocartesian fibration as it is the pullback of $p$.
      To show that $\text{cocart}$ is cocartesian we want to show that 
      \begin{equation*}
            \begin{tikzpicture}[diagram]
                \matrix[objects]{%
                  |(b)| \Fun(\Delta^1,X \times_B \Fun(\Delta^1,B)) \& |(d)| X \times_{B} \Fun(\Delta^1,B) \times_{\Fun(\Delta^1,B)} \Fun(\Delta^1,\Fun(\Delta^1,B)) \\
                  |(a)| \Fun(\Delta^1,\Fun(\Delta^1,X)) \& |(c)| \Fun(\Delta^1,X) \times^{(p_\ast,\ev_0)}_{\Fun(\Delta^1,B)} \Fun(\Delta^1,\Fun(\Delta^1,B)) \\
                };
                \path[maps,->]
                    (b) edge node[right] {$\text{cocart}_\ast$} (a)
                    (a) edge node[below] {$(\ev_0, p_\ast)$} (c)
                    (b) edge node[above] {$(\ev_0, (\pr_1)_\ast)$} (d)
                    (d) edge node[right] {$\text{cocart} \times_{\id} \id$} (c)
                ;
            \end{tikzpicture}
      \end{equation*}
      is vertically adjointable.
      But this is exactly the horizontally adjointed square to 
      \begin{equation*}
            \begin{tikzpicture}[diagram]
                \matrix[objects]{%
                  |(a)| \Fun(\Delta^1,\Fun(\Delta^1,X)) \& |(c)| \Fun(\Delta^1,X) \times^{(p_\ast,\ev_0)}_{\Fun(\Delta^1,B)} \Fun(\Delta^1,\Fun(\Delta^1,B)) \\
                  |(b)| \Fun(\Delta^1,X \times_B \Fun(\Delta^1,B)) \& |(d)| X \times_{B} \Fun(\Delta^1,B) \times_{\Fun(\Delta^1,B)} \Fun(\Delta^1,\Fun(\Delta^1,B)) \\
                };
                \path[maps,->]
                    (a) edge node[right] {$(\ev_0,p_\ast)_\ast$} (b)
                    (a) edge node[above] {$(\ev_0, p_\ast)$} (c)
                    (b) edge node[below] {$(\ev_0, (\pr_1)_\ast)$} (d)
                    (c) edge node[right] {$(\ev_0, p_\ast) \times_{\id} \id$} (d)
                ;
            \end{tikzpicture}
      \end{equation*}
      As explained in the proof of \autoref{bothwaysadjointablegivessametotalmate},
      to first horizontally adjoint and then vertically adjoint the result is equivalent to 
      fully passing to left adjoints with all the functors.
      But as the above square is, as explained in the proof of 
      \autoref{cocartesianliftscanbediagrammaticallyleftcancelled},
      a commutative square of right adjoints we can deduce as we did there that the 
      fully adjointed square also commutes, which proves our claim.
    \end{proof}

  \subsection{The directed Left-Lifting Property of Left Adjoints against Cocartesian Fibrations}
    \label{subsectiondirectedllpofladjagainstcocartfibs}

    In this subsection we will discuss a generalisation of the unique left lifting property of cofinal functors
    against left fibrations.
    Informally, we will concern ourselves with laxly commuting squares of the form
    \[\begin{tikzcd}[cramped]
      A & X \\
      B & Y
      \arrow[from=1-1, to=1-2]
      \arrow["F"', from=1-1, to=2-1]
      \arrow["\alpha"', shorten <=6pt, shorten >=6pt, Rightarrow, from=1-2, to=2-1]
      \arrow["p", two heads, from=1-2, to=2-2]
      \arrow[from=2-1, to=2-2]
    \end{tikzcd}\]
    where $F$ is a left adjoint and $p$ a cocartesian fibration.
    We will show that in this situation one can always construct a lax lift $l$ of this square, \ie 
    \[\begin{tikzcd}[cramped]
      A & X \\
      B & Y
      \arrow[""{name=0, anchor=center, inner sep=0}, from=1-1, to=1-2]
      \arrow["F"', from=1-1, to=2-1]
      \arrow["p", two heads, from=1-2, to=2-2]
      \arrow["l"', dashed, from=2-1, to=1-2]
      \arrow[from=2-1, to=2-2]
      \arrow["{\tilde{\alpha}}"', shorten <=8pt, shorten >=8pt, Rightarrow, from=0, to=2-1]
    \end{tikzcd}\]
    together with an identification $p \tilde{\alpha} \equiv \alpha$. 
    This is achieved by employing the adjunction $F \adj U$ and cocartesian lifting for $p$ and letting them
    work together.
    Moreover this lax lift will be special in the sense that it will be initial among all lifts that 
    recover the original lax square via postwhiskering with $p$.
    More precisely the strategy is to precompose the original lax square with the counit $\epsilon \colon FU \twoto \id$ 
    to obtain 
    \[\begin{tikzcd}[cramped]
      B & A & X \\
      & B & Y
      \arrow["U", from=1-1, to=1-2]
      \arrow[""{name=0, anchor=center, inner sep=0}, equals, from=1-1, to=2-2]
      \arrow[from=1-2, to=1-3]
      \arrow["F", from=1-2, to=2-2]
      \arrow["\alpha", shorten <=8pt, shorten >=8pt, Rightarrow, from=1-3, to=2-2]
      \arrow["p", two heads, from=1-3, to=2-3]
      \arrow[from=2-2, to=2-3]
      \arrow["\epsilon", shorten <=4pt, shorten >=4pt, Rightarrow, from=1-2, to=0]
    \end{tikzcd}\]
    and then then cocartesian lift the top horizontal composite functor along this natural transformation 
    to get 
    \[\begin{tikzcd}[sep=large]
      B & A & X \\
      & B & Y
      \arrow["U", from=1-1, to=1-2]
      \arrow[""{name=0, anchor=center, inner sep=0}, "{\exists_! l}"', curve={height=24pt}, dashed, from=1-1, to=1-3]
      \arrow[equals, from=1-1, to=2-2]
      \arrow[from=1-2, to=1-3]
      \arrow["p", two heads, from=1-3, to=2-3]
      \arrow[from=2-2, to=2-3]
      \arrow["{\exists_! \theta}", shorten <=3pt, shorten >=3pt, Rightarrow, from=1-2, to=0]
    \end{tikzcd}\]
    This we can then in turn precompose with the unit $\eta \colon \id \twoto UF$ 
    \[\begin{tikzcd}[sep=large]
      A \\
      B & A & X \\
      & B & Y
      \arrow["F"', from=1-1, to=2-1]
      \arrow[""{name=0, anchor=center, inner sep=0}, equals, from=1-1, to=2-2]
      \arrow["U", from=2-1, to=2-2]
      \arrow[""{name=1, anchor=center, inner sep=0}, "{\exists_! l}"', curve={height=24pt}, dashed, from=2-1, to=2-3]
      \arrow[equals, from=2-1, to=3-2]
      \arrow[from=2-2, to=2-3]
      \arrow["p", two heads, from=2-3, to=3-3]
      \arrow[from=3-2, to=3-3]
      \arrow["\eta"', shorten <=3pt, shorten >=2pt, Rightarrow, from=0, to=2-1]
      \arrow["{\exists_! \theta}", shorten <=3pt, shorten >=3pt, Rightarrow, from=2-2, to=1]
    \end{tikzcd}\]
    to get our lax lift $(l, \tilde{\alpha})$ as indicated in above.

    This directed lifting property itself we be a corollary to the following general lemma 
    applied to the adjointable square obtained by precomposing with $F$ and postcomposing with $p$.

    \begin{lemma}
      \label{weirdadjointablecocartfiblemma}
      Let
      \begin{equation}
            \begin{tikzpicture}[diagram]
                \matrix[objects] {%
                |(a)| A \& |(b)| C \\
                |(c)| B \& |(d)| D \\
                };
                \path[maps,->]
                (a) edge node[above]  {$U$} (b)
                (c) edge node[below]  {$V$} (d)
                (a) edge[->>] node[left]   {$p$} (c)
                (b) edge node[right]  {$q$} (d)
                ;
                \node[rotate=45] at ($ (c) ! 0.5 ! (b) $) (psi) {$\equiv$};
                \node[maps,right] at (psi) {$\alpha$};
            \end{tikzpicture}
      \end{equation}
      be a commutative square of functors with adjunctions
      $F \adj U$, $G \adj V$ such that the square is adjointable.
      Let us furthermore assume that $p$ is a cocartesian fibration.
      Then the functor 
      \begin{equation*}
        (\ev_0, \alpha \circ q_\ast, p) \colon \Fun(\Delta^1,C) \times_{C} A \to C \times_{D} \Fun(\Delta^1,D) \times_{D} B
      \end{equation*}
      has a fully-faithful left adjoint.
    \end{lemma}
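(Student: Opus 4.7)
The plan is to show that, up to natural equivalence, the functor $(\ev_0, \alpha \circ q_\ast, p)$ is the pullback along $F \colon C \to A$ of the structural right adjoint $(\ev_0, p_\ast) \colon \Fun(\Delta^1, A) \to A \times_B \Fun(\Delta^1, B)$ witnessing the cocartesianness of $p$, and then to invoke \autoref{pullbackofadjunctions} to obtain a fully-faithful left adjoint by pullback stability.

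First I would use \autoref{triangleidentityadjunctionsareequivalenttohomcatdef} applied to $F \adj U$ to produce a canonical equivalence over $C \times A$
\begin{equation*}
  \Fun(\Delta^1, C) \times_{C}^{(\ev_1, U)} A \quad \equiv \quad C \times_{A}^{(F, \ev_0)} \Fun(\Delta^1, A),
\end{equation*}
presenting the source as the data of an arrow $F(c) \to a$ in $A$ parametrised by $c \in C$. Applying the same recipe to $G \adj V$ on the right half of the target pullback, and using the pasting law for pullbacks, yields an equivalence $C \times_D \Fun(\Delta^1, D) \times_D B \equiv C \times_B^{(Gq, \ev_0)} \Fun(\Delta^1, B)$ presenting the target as the data of an arrow $Gq(c) \to b$ in $B$. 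The adjointability hypothesis now enters as the invertibility of the mate $Gq \equiv pF$ of $\alpha$, which further identifies the target with $C \times_A^{(F, \pr_1)} \bigl( A \times_B^{(p, \ev_0)} \Fun(\Delta^1, B) \bigr)$, the data of an arrow $pF(c) \to b$ in $B$.

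The main step, which I expect to be the main obstacle, is to verify that under these identifications the functor $(\ev_0, \alpha \circ q_\ast, p)$ is literally the pullback along $F$ of $(\ev_0, p_\ast)$, i.e.\ it sends an arrow $F(c) \to a$ to $c$ together with its image $pF(c) \to p(a)$ in $B$. This is a naturality chase that tracks the middle arrow of the target through the $G \adj V$ Hom-equivalence and the mate $Gq \equiv pF$; the construction of the mate from $\alpha$, combined with the triangle identities of the two adjunctions, makes the two descriptions agree.

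Once this identification is secured, the conclusion is automatic. By the very definition of $p$ being a cocartesian fibration (\autoref{definitioncocartesianfibrationandfunctor}), the functor $(\ev_0, p_\ast)$ admits the fully-faithful left adjoint $\cocart$, so \autoref{pullbackofadjunctions} produces the desired fully-faithful left adjoint to our pulled-back functor. Unpacking the proof of \autoref{pullbackofadjunctions}, this left adjoint is concretely the pullback of $\cocart$ along $F$, which recovers the informal ``adjoint under $G \adj V$, mate-transport, cocartesian-lift, readjoint under $F \adj U$'' recipe described at the opening of the subsection.
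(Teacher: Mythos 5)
Your proposal is correct and follows essentially the same route as the paper: identify the source with $C \times^{(F,\ev_0)}_A \Fun(\Delta^1,A)$ via the Hom-equivalence of $F \adj U$, identify the target with $C \times^{(Gq,\ev_0)}_B \Fun(\Delta^1,B) \equiv C \times^{(pF,\ev_0)}_B \Fun(\Delta^1,B)$ via the Hom-equivalence of $G \adj V$ and the invertible mate of $\alpha$, and then transfer the fully-faithful left adjoint of the pulled-back $(\ev_0,p_\ast)$ using \autoref{pullbackofadjunctions}. The "naturality chase" you flag as the main obstacle is exactly the commutativity the paper dispatches by naturality of $G\alpha$ and the counit of $G \adj V$, so your outline matches the paper's proof step for step.
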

    \begin{proof}
      By definition of cocartesianness we know that 
      \begin{equation*}
        (\ev_0, p_\ast) \colon \Fun(\Delta^1,A) \to A \times_B \Fun(\Delta^1,B)
      \end{equation*}
      has a fully-faithful left adjoint.
      Now we pull back along $F$ and obtain
      \begin{equation}
            \begin{tikzpicture}[diagram]
                \matrix[objects] {%
                |(a)| C \times^{(F,\ev_0)}_A \Fun(\Delta^1,A) \& |(b)| \Fun(\Delta^1,A) \\
                |(c)| C \times^{(pF,\ev_0)}_B \Fun(\Delta^1,B) \& |(d)| A \times^{(p,\ev_0)}_B \Fun(\Delta^1,B) \\
                |(e)| C \& |(f)| A \\
                };
                \path[maps,->]
                (a) edge node[above]  {$\pr_1$} (b)
                (c) edge node[below]  {$F \times_{\id} \id$} (d)
                (e) edge node[below]  {$F$} (f)
                (a) edge node[left]   {$(\id,p_\ast,p)$} (c)
                (c) edge node[left]   {$\pr_0$} (e)
                (b) edge node[right]  {$(\ev_0,p_\ast)$} (d)
                (d) edge node[right]  {$\pr_0$} (f)
                ;
            \end{tikzpicture}
      \end{equation}
      The composite rectangle is a pullback as well as the bottom square, thus
      by pullback cancellation also the top square.
      By \autoref{pullbackofadjunctions} 
      we conclude that the functor $(\id, p_\ast, p)$ also has a fully-faithful left adjoint.
      Next we observe that the diagram
      \[\begin{tikzcd}[cramped]
        {C \times^{(F,\ev_0)}_A \Fun(\Delta^1,A)} & {\Fun(\Delta^1,C) \times^{(\ev_1,U)}_C A} & {C \times^{(q,\ev_0)}_D \Fun(\Delta^1,D) \times^{(\ev_1,V)}_D B} \\
        & {C \times^{(pF,\ev_0)}_B \Fun(\Delta^1,B)} & {C \times^{(Gq,\ev_0)}_B \Fun(\Delta^1,B)}
        \arrow["{\phi^{F \dashv U}}", from=1-1, to=1-2]
        \arrow["{(\id,p_\ast,p)}"', from=1-1, to=2-2]
        \arrow["{(\ev_0, \alpha \circ q_\ast, p)}"', from=1-2, to=1-3]
        \arrow["{C \times_D \psi^{G \dashv V}}", from=1-3, to=2-3]
        \arrow["{\_ \circ \text{mate}_\alpha}"', from=2-2, to=2-3]
      \end{tikzcd}\]
      where the functors denoted $\phi$ and $\psi$ are the Hom-equivalences given by
      \autoref{triangleidentityadjunctionsareequivalenttohomcatdef} 
      from the adjunctions $F \adj U$ and $G \adj V$,
      is commutative by naturality of $G\alpha$ and the counit of $G \adj V$.
      As all the other functors are equivalences we deduce that 
      $(\ev_0, \alpha \circ q_\ast, p)$ also has a fully-faithful left adjoint.
    \end{proof}

    \laxliftingpropertyofladjwrtcocartfib*

    We now discuss special cases of the general directed lifting of left adjoints against cocartesian fibrations,
    namely what happens if the starting lax square is actually already commutative, \ie the natural 
    transformation is invertible, and recover an actual non-lax lifting in the case that $F$ is 
    fully-faithful.

    \liftingpropertyofffladjwrtcocartfib*
    \begin{proof}
      In the situation of \autoref{laxliftingpropertyofleftadjointsagainstcocartesianfibrations} 
      we have that the square
      \begin{equation}
            \begin{tikzpicture}[diagram,row sep=small]
                \matrix[objects,wide origins] {%
                |(a)| \Fun(B,X) \\ 
                \& |(c)| \Fun(\Delta^1,\Fun(A,X)) \times_{\Fun(A,X)} \Fun(B,X) \\
                |(b)| \Fun(A,X) \times_{\Fun(A,Y)} \Fun(B,Y) \\ 
                \& |(d)| \Fun(A,X) \times_{\Fun(A,Y)} \Fun(\Delta^1,\Fun(A,Y)) \times_{\Fun(A,Y)} \Fun(B,Y) \\
                };
                \path[maps,->]
                (a) edge node[left]  {$({}_\ast F , p_\ast)$} (b)
                (c) edge node[right]  {$$} (d)
                (a) edge node[above right]   {$\Delta_{\Fun(F,X)}$} (c)
                (b) edge node[below left]  {$(\id, \Delta_{\Fun(A,Y)}, \id)$} (d)
                ;
            \end{tikzpicture}
      \end{equation}
      commutes.
      By the proof of \autoref{weirdadjointablecocartfiblemma} and the invertibility
      of the unit of the adjunction we then deduce that the left adjoint of 
      \autoref{laxliftingpropertyofleftadjointsagainstcocartesianfibrations}
      also factors through 
      \begin{equation*}
        \Delta_{({}_\ast F)} \colon \Fun(B,X) \to \Fun(\Delta^1,\Fun(A,X)) \times_{\Fun(A,X)} \Fun(B,X).
      \end{equation*}

    \end{proof}

    Informally, this adjunction now tells us that we can lift any commutative square
    \begin{equation}
          \begin{tikzpicture}[diagram]
              \matrix[objects] {%
              |(a)| A \& |(b)| X \\
              |(c)| B \& |(d)| Y \\
              };
              \path[maps,->]
              (a) edge node[above]  {$H$} (b)
              (c) edge node[below]  {$K$} (d)
              (a) edge node[left]   {$F$} (c)
              (b) edge[->>] node[right]  {$p$} (d)
              (c) edge[dashed] node[right]  {$$} (b)
              ;
          \end{tikzpicture}
    \end{equation}
    via pre-whiskering with the counit of $F \adj U$ and then cocartesian lifting along $p$.
    But the adjunction tells us even more, the lift constructed in the explained manner is 
    even initial in $\Fun(B,X)$ among all other lifts of this fixed square.

    To close this subsection we restrict further to the left adjoints 
    $\Delta_G \colon C \mono C \times_Y \Fun(\Delta^1,Y)$. In this situation we can give an even more useful 
    characterisation of the lifts obtained through this lax lifting procedure.

    \begin{corollary}
      \label{liftsofDeltaFagainstcocartesianfibrationsincommutativesquares}
      If we specialize this lifting situation of 
      \autoref{liftsoffullandfaithfulleftadjointsagainstcocartesianfibrationsincommutativesquares}
      even further by taking as the left adjoint $F$ the functor $\Delta_G \colon C \mono C \times_Y \Fun(\Delta^1,Y)$ 
      for some arbitrary functor
      $G \colon C \to Y$, and also take $K$ to be the functor $\ev_1 \colon C \times_Y \Fun(\Delta^1,Y) \to Y$,
      then the prescribed lifting strategy for commutative squares of the form
      \begin{equation}
            \begin{tikzpicture}[diagram]
                \matrix[objects] {%
                |(a)| C \& |(b)| X \\
                |(c)| C \times_Y \Fun(\Delta^1,Y) \& |(d)| Y \\
                };
                \path[maps,->]
                (a) edge node[above]  {$H$} (b)
                (c) edge[->>] node[below]  {$\ev_1$} (d)
                (a) edge node[left]   {$\Delta_G$} (c)
                (b) edge[->>] node[right]  {$p$} (d)
                (c) edge[dashed] (b)
                ;
            \end{tikzpicture}
      \end{equation}
      produces even stronger results then the characterisation of the 
      essential image of the lifting left adjoint. Indeed, a lift
      of such a square is in the essential image of the lifting left
      adjoint if and only if it is a cocartesian functor between the 
      indicated cocartesian fibrations.
    \end{corollary}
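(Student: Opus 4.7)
The strategy is to combine the characterisation already provided by \autoref{liftsoffullandfaithfulleftadjointsagainstcocartesianfibrationsincommutativesquares} with an explicit description of the counit of $\Delta_G \adj \pr_0$ as an $\ev_1$-cocartesian lift, and then deduce cocartesianness of arbitrary lifts by the same kind of left-cancellation argument that appeared at the end of the proof of \autoref{operadicenvelopeconstrisinertcocartfunclassifier}. Once this is in place the corollary follows by simply comparing the essential image description of \autoref{liftsoffullandfaithfulleftadjointsagainstcocartesianfibrationsincommutativesquares} with the definition of a cocartesian functor.

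First I would make the counit $\epsilon$ of $\Delta_G \adj \pr_0$ explicit. By \autoref{freecocartfibinheritsDeltaadjunction} this adjunction arises by pulling back the canonical adjunction $\Delta_Y \adj \ev_0$ along $G \colon C \to Y$, so \autoref{pullbackofadjunctions} tells us that its counit is the pullback of the counit of $\Delta_Y \adj \ev_0$. Combined with the identification of the latter counit as the $\ev_1$-cocartesian lift from \autoref{evaluationfunctorsarefibrations}, this pins down each component $\epsilon_{(c,\alpha)}$ as the unique $\ev_1$-cocartesian lift of $\alpha \colon Gc \to y$ starting at $\Delta_G(c) = (c, \id_{Gc})$.

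The forward implication is then immediate: any cocartesian lift $L$ must send $\epsilon$, whose components are $\ev_1$-cocartesian, to a $p$-cocartesian natural transformation. For the converse, assuming $L\epsilon$ is $p$-cocartesian, an arbitrary $\ev_1$-cocartesian morphism must be of the form $f \colon (c, \alpha) \to (c, \beta \alpha)$ for some $\beta \colon y \to y'$ in $Y$, and the composite $f \circ \epsilon_{(c, \alpha)}$ is itself an $\ev_1$-cocartesian lift of $\beta \alpha$ starting at $(c, \id_{Gc})$, hence equivalent to $\epsilon_{(c, \beta \alpha)}$. So both $L\epsilon_{(c,\alpha)}$ and $L(f) \circ L\epsilon_{(c,\alpha)} \equiv L\epsilon_{(c, \beta\alpha)}$ are $p$-cocartesian by assumption, and left cancellation (\autoref{cocartesianliftscanbediagrammaticallyleftcancelled}) forces $L(f)$ to be $p$-cocartesian as well.

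The main obstacle is to run this pointwise-sounding argument functorially, since \autoref{cocartesianliftscanbediagrammaticallyleftcancelled} is formulated for natural transformations between functors rather than for individual morphisms. To keep the argument clean I would package the factorisation of arbitrary $\ev_1$-cocartesian morphisms through $\epsilon$ as a commutative triangle of natural transformations between functors into $\Fun(\Delta^1, C \times_Y \Fun(\Delta^1, Y))$, using the cocartesian pushforward functor $\cocart_{\ev_1}$ from \autoref{cocartiscocartfun} as the universal source of $\ev_1$-cocartesian lifts, rather than working one morphism at a time.
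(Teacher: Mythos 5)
Your proposal is correct and follows essentially the same route as the paper: identify the counit of $\Delta_G \adj \pr_0$ as an $\ev_1$-cocartesian lift, observe that precomposing any $\ev_1$-cocartesian morphism out of $(c,\alpha)$ with $\epsilon_{(c,\alpha)}$ recovers another counit component, and conclude by the left-cancellation property \autoref{cocartesianliftscanbediagrammaticallyleftcancelled}. Your added remarks — deriving the counit explicitly via \autoref{freecocartfibinheritsDeltaadjunction} and \autoref{pullbackofadjunctions}, and packaging the pointwise argument through $\cocart_{\ev_1}$ to apply the cancellation lemma functorially — are sensible refinements of details the paper treats informally, but do not change the substance of the argument.
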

    \begin{proof}
      To prove this let us start with an arbitrary $\ev_1$-cocartesian pushforward
      of an arbitrary object $(c, f \colon F(c) \to y)$ in 
      $C \times_Y \Fun(\Delta^1,Y)$ along an arbitrary morphism $g \colon y \to y'$ 
      in $Y$.
      Such a cocartesian lift is given by

      \[\begin{tikzcd}
        {F(c)} & {F(c)} \\
        y & {y'}
        \arrow["{F(\id_c)}", equals, from=1-1, to=1-2]
        \arrow["f"', from=1-1, to=2-1]
        \arrow["gf", from=1-2, to=2-2]
        \arrow["g"', from=2-1, to=2-2]
      \end{tikzcd}\]
      We can precompose this with the component of the counit $\epsilon$ of the adjunction 
      $\Delta_G \adj \ev_0$ at the object $(c,f)$ to get

      \[\begin{tikzcd}
        {F(c)} & {F(c)} & {F(c)} \\
        {F(c)} & y & {y'}
        \arrow["{F(\id_c)}", equals, from=1-1, to=1-2]
        \arrow["{F(\id_c)}"', equals, from=1-1, to=2-1]
        \arrow["{F(\id_c)}", equals, from=1-2, to=1-3]
        \arrow["f"', from=1-2, to=2-2]
        \arrow["gf", from=1-3, to=2-3]
        \arrow["f"', from=2-1, to=2-2]
        \arrow["g"', from=2-2, to=2-3]
      \end{tikzcd}\]
      This composite is itself the component of $\epsilon$ at the object
      $(b, g f)$.
      Furthermore, $\epsilon$ is itself an $\ev_1$-cocartesian lift.
      Hence if such a functor $C \times_Y \Fun(\Delta^1,Y) \to X$
      lifting the square sends the components of $\epsilon$ to cocartesian
      lifts then by the cancellation property of cocartesian lifts
      \autoref{cocartesianliftscanbediagrammaticallyleftcancelled}
      also arbitrary $\ev_1$-cocartesian lifts.
    \end{proof}

    \begin{remark}
      The only things that we really need to know in the previous corollary about 
      the functors 
      \begin{equation*}
        C \xto{\Delta_G} C \times_Y \Fun(\Delta^1,Y) \xfibration{\ev_1} Y
      \end{equation*} 
      is the following.
      We need to know that $\Delta_G$ has a retraction $r$, \ie we have $\eta \colon \id \equiv r \circ \Delta_G$,
      that we have a natural transformation $\epsilon \colon \Delta_G \circ r \twoto \id$ such that 
      $\epsilon \Delta_G \equiv \Delta_G \eta^{-1}$, that $\ev_1$ is a cocartesian fibration, that 
      $\epsilon$ is a cocartesian lift of $\ev_1 \epsilon$ and that $\Delta_G \circ r$ 
      sends cocartesian lifts to equivalences. From the last property one can then conclude that 
      $r \equiv r \circ \Delta_G \circ r$ also sends cocartesian lifts to equivalences.
      As $\epsilon$ is a particular such a cocartesian lift we get that $r \epsilon$ is invertible.
      From this one can deduce that $\Delta_G$ is left adjoint to $r$, with invertible unit.
    \end{remark}

    \begin{corollary}
      In particular \autoref{liftsofDeltaFagainstcocartesianfibrationsincommutativesquares}
      proves that we can restrict the adjunction of 
      \autoref{liftsoffullandfaithfulleftadjointsagainstcocartesianfibrationsincommutativesquares}
      to its essential image and obtain an equivalence
      \begin{equation*}
        {}_\ast (\Delta_G) \colon \Fun^\text{cocart}_{/ Y} (\ev_1, p) \xto{\equiv} \Fun_{/ Y}(G,p)
      \end{equation*}
      where we denote by $\Fun^\text{cocart}_{/ Y} (\ev_1, p)$ the full subcategory of 
      $\Fun_{/ Y} (\ev_1, p)$ on the cocartesian functors over $Y$,
      \ie we proved that $\ev_1 \colon C \times_Y \Fun(\Delta^1,Y) \fibration Y$ is the 
      \define{free cocartesian fibration on $G$}.
    \end{corollary}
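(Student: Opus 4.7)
The plan is to simply assemble the two preceding specialisations. Applying \autoref{liftsoffullandfaithfulleftadjointsagainstcocartesianfibrationsincommutativesquares} to the fully-faithful left adjoint $\Delta_G \colon C \mono C \times_Y \Fun(\Delta^1,Y)$ and the cocartesian fibration $p \colon X \fibration Y$ yields an adjunction
\begin{equation*}
    \begin{tikzpicture}[diagram]
        \matrix[objects] {
        |(a)| \Fun(C \times_Y \Fun(\Delta^1,Y), X) \& \& |(b)| \Fun_{/ Y}(G, p) \\
        };
        \path[maps,->]
        (a) edge[bend left] node[above] (U) {${}_\ast (\Delta_G)$} (b)
        ;
        \path[maps,{Hooks[left]}->]
        (b) edge[bend left] node[below] (F) {} (a)
        ;
        \node[rotate=-90] at ($ (U) ! 0.5 ! (F) $) {$\adj$};
    \end{tikzpicture}
\end{equation*}
sitting over $\Fun(C \times_Y \Fun(\Delta^1,Y),Y)$, with fully-faithful right adjoint (here I identify the slice fiber over $\ev_1$ on the left with $\Fun_{/Y}(\ev_1,p)$). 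Because the right adjoint is fully-faithful, the counit is invertible, so restricting the left adjoint to its essential image produces an equivalence of the essential image with $\Fun_{/Y}(G,p)$.

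Next, I invoke \autoref{liftsofDeltaFagainstcocartesianfibrationsincommutativesquares} to identify that essential image. That corollary characterises the essential image of the fully-faithful right adjoint as precisely the functors $L \colon C \times_Y \Fun(\Delta^1,Y) \to X$ over $Y$ whose whiskering $L\epsilon$ with the counit $\epsilon$ of $\Delta_G \adj \ev_0$ is a $p$-cocartesian lift, and it moreover shows that this condition is equivalent to $L$ being a cocartesian functor from $\ev_1$ to $p$, using that $\epsilon$ is an $\ev_1$-cocartesian lift and invoking the left cancellation property \autoref{cocartesianliftscanbediagrammaticallyleftcancelled} of cocartesian lifts. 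Hence the essential image is exactly $\Fun^{\text{cocart}}_{/Y}(\ev_1,p)$.

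Combining these two steps, the restricted adjunction becomes the claimed equivalence
\begin{equation*}
    {}_\ast(\Delta_G) \colon \Fun^{\text{cocart}}_{/Y}(\ev_1,p) \xto{\equiv} \Fun_{/Y}(G,p).
\end{equation*}
No step here poses a real obstacle; the entire argument is bookkeeping, stitching together the general adjunction of \autoref{liftsoffullandfaithfulleftadjointsagainstcocartesianfibrationsincommutativesquares} with the cocartesianness characterisation of \autoref{liftsofDeltaFagainstcocartesianfibrationsincommutativesquares}. The only point that deserves mild care is confirming that the restriction of the adjunction along the slice over $\ev_1 \in \Fun(C \times_Y \Fun(\Delta^1,Y),Y)$ still has invertible counit and fully-faithful right adjoint, which is immediate because the adjunction lives over that functor category and pullback preserves this adjointness data by \autoref{pullbackofadjunctions}.
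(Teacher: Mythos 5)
Your proposal is correct and follows exactly the route the paper intends: this corollary has no independent proof in the text because it is precisely the combination of the adjunction from \autoref{liftsoffullandfaithfulleftadjointsagainstcocartesianfibrationsincommutativesquares} (pulled back to the fiber over $\ev_1$) with the identification of the essential image as the cocartesian functors from \autoref{liftsofDeltaFagainstcocartesianfibrationsincommutativesquares}. The only blemish is a handedness slip: in the paper's conventions the fully-faithful adjoint here is the \emph{left} adjoint (the lifting functor, with invertible \emph{unit}), and ${}_\ast(\Delta_G)$ is the right adjoint which becomes an equivalence upon restriction to that left adjoint's essential image -- not a fully-faithful right adjoint with invertible counit as you wrote -- but this does not affect the substance of the argument.
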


    \subsection{More advanced Stability Properties of Adjunctions and Cocartesian Fibrations}
      \label{subsectionmoreadvancedstabilityproperties}

    In this subsection we address more intricate stability properties of adjunctions, like
    \autoref{cubepullbackofladjwithadjfacesisladj} the stability of pullbacks of left adjoints seen as objects in $\Fun(\Delta^1,\Cat)$,
    under the assumption that the cospan legs are adjointable squares.
    From this one can immediately deduce that pullbacks of cocartesian fibrations seen as 
    objects in $\Fun(\Delta^1,\Cat)$ are cocartesian fibrations again if the assume that the 
    cospan legs are cocartesian functors.
    After that we turn our attention to \autoref{pullbackofrightadjointalongcocartesianfibrationisrightadjoint} the stability of right adjoints under pullback along 
    cocartesian fibrations.

    We start of with an extension of the pullback stability of right adjoints which have fully-faithful 
    left adjoints.

    \begin{lemma}
        \label{enhancedpullbackofadjunctions}
        Let
        \begin{equation*}
          \begin{tikzpicture}[diagram]
              \matrix[objects]{%
                  |(a)| Y \& |(b)| X \\
                  |(c)| A \& |(d)| B \\
              };
              \path[maps,->]
                  (a) edge node[above] {$V$} (b)
                  (a) edge[->>] node[left] {$q$} (c)
                  (b) edge[->>] node[right] {$p$} (d)
                  (c) edge node[below] {$U$} (d)
              ;
              \node at (barycentric cs:a=0.8,b=0.3,c=0.3) (phi) {\mbox{\LARGE{$\lrcorner$}}};
          \end{tikzpicture}
        \end{equation*}
        be a pullback square such that $U$ has a fully-faithful left adjoint 
        $F$ and $p$ is a cocartesian fibration.
        Then its pullback $q$ is also a cocartesian fibration and the 
        functor $V$ is a cocartesian functor over $U$.
        Furthermore, the pullback $V$ of $U$ also has a fully-faithful left adjoint 
        $G$ and the pullback square is furthermore horizontally adjointable and 
        in this adjointed square $G$ is a cocartesian functor over $F$.
    \end{lemma}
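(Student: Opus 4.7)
The cocartesian fibration claim for $q$ and the cocartesian functor claim for $V$ over $U$ are exactly \autoref{pullbackofcocartfibs} applied to the given pullback. Similarly, the existence and fully-faithfulness of $G$ together with the horizontal adjointability of the square follow immediately from \autoref{pullbackofadjunctions}, which also identifies $G$ concretely as the pullback of $F$ along $V$ and produces an invertible counit $V G \equiv \id_X$. This reduces the entire task to establishing the final assertion, namely that $G$ is a cocartesian functor over $F$.

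My plan for this last piece is to exhibit a \emph{second} pullback square
\[\begin{tikzcd}[cramped]
  X & Y \\
  B & A
  \arrow["G", from=1-1, to=1-2]
  \arrow["p"', from=1-1, to=2-1]
  \arrow["q", from=1-2, to=2-2]
  \arrow["F"', from=2-1, to=2-2]
\end{tikzcd}\]
and then to apply \autoref{pullbackofcocartfibs} to it, using that $q$ is already known to be a cocartesian fibration. To produce this pullback I paste the candidate square with the given pullback along the shared edge $V$, obtaining the composite rectangle
\[\begin{tikzcd}[cramped]
  X & Y & X \\
  B & A & B
  \arrow["G", from=1-1, to=1-2]
  \arrow["p"', from=1-1, to=2-1]
  \arrow["V", from=1-2, to=1-3]
  \arrow["q"{description}, from=1-2, to=2-2]
  \arrow["p", from=1-3, to=2-3]
  \arrow["F"', from=2-1, to=2-2]
  \arrow["U"', from=2-2, to=2-3]
\end{tikzcd}\]
whose horizontal composites $V G$ and $U F$ are invertibly equivalent to $\id_X$ and $\id_B$ via the counits of $G \adj V$ and $F \adj U$. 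The outer rectangle is therefore equivalent to the trivial identity square on $(X,p,B)$, which is plainly a pullback, while the right-hand square is a pullback by assumption. Pullback pasting then forces the left-hand square to be a pullback as well. Once this is in place, the desired cocartesianness of $G$ over $F$ is an immediate instance of \autoref{pullbackofcocartfibs} applied to this new pullback.

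The main subtlety I expect is ensuring that pasting applies cleanly in this model-independent $\infty$-categorical setting: the outer rectangle commutes only up to the invertible counits rather than strictly, so the pasting step implicitly uses the $\infty$-categorical pasting lemma together with naturality of the chosen compatibility cells. This should pose no real obstacle, but it needs to be spelled out carefully if one insists on working entirely inside the formal adjunction data provided here.
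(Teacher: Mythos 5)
Your proof is correct and follows essentially the same route as the paper: the paper likewise exhibits $G$ as sitting in a pullback square over $F$ (this is built into the construction of $G$ in \autoref{pullbackofadjunctions}, which states that $G$ \emph{is} the pullback of $F$, so you could cite that directly instead of re-deriving the pullback property by cancellation) and then deduces cocartesianness of $G$ over $F$ from pullback stability of cocartesian fibrations and functors. Two small points: the invertible identifications $VG \equiv \id$ and $UF \equiv \id$ are the \emph{units}, not the counits, of the adjunctions $G \adj V$ and $F \adj U$; and the compatibility of these identifications over $p$ and $q$, which your cancellation step silently uses, is exactly the ``Additionally'' clause of \autoref{pullbackofadjunctions}.
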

    \begin{proof}
      The first part of the statement is just \autoref{pullbackofcocartfibs}.
      From the proof of \autoref{pullbackofadjunctions} we know
      that the left adjoint is produced by the pullback factorization
      \begin{equation*}
          \begin{tikzpicture}[diagram]
              \matrix[objects]{%
                  |(x)| X \& |(a)| Y \& |(b)| X \\
                  |(y)| B \& |(c)| A \& |(d)| B \\
              };
              \path[maps,->]
                  (a) edge node[above] {$V$} (b)
                  (a) edge node[left] {$q$} (c)
                  (b) edge node[right] {$p$} (d)
                  (c) edge node[below] {$U$} (d)

                  (x) edge node[right] {$p$} (y)
                  (x) edge node[above] {$G$} (a)
                  (y) edge node[below] {$F$} (c)
              ;
              \node at (barycentric cs:a=0.8,b=0.3,c=0.3) (phi) {\mbox{\LARGE{$\lrcorner$}}};
              \node at (barycentric cs:x=0.8,y=0.3,a=0.3) (pi) {\mbox{\LARGE{$\lrcorner$}}};

              \path[maps,-] 
              (x) edge[double distance=0.2em,bend left] (b)
              (y) edge[double distance=0.2em,bend right] (d)
              ;
          \end{tikzpicture}
      \end{equation*}
      which in turn induces the pullback factorization
      \begin{equation*}
          \begin{tikzpicture}[diagram]
              \matrix[objects]{%
                  |(x)| \Fun(\Delta^1,X) \& |(a)| \Fun(\Delta^1,Y) \& |(b)| \Fun(\Delta^1,X) \\
                  |(y)| X \times_B \Fun(\Delta^1,B) \& |(c)| Y \times_A \Fun(\Delta^1,A) \& |(d)| X \times_B \Fun(\Delta^1,B) \\
              };
              \path[maps,->]
                  (a) edge node[above] {$V_\ast$} (b)
                  (a) edge node[left] {$(\ev_0,q_\ast)$} (c)
                  (b) edge node[right] {$(\ev_0,p_\ast)$} (d)
                  (c) edge node[below] {$V \times_U U_\ast$} (d)

                  (x) edge node[right] {$(\ev_0,p_\ast)$} (y)
                  (x) edge node[above] {$G_\ast$} (a)
                  (y) edge node[below] {$G \times_F F_\ast$} (c)
              ;
              \node at (barycentric cs:a=0.8,b=0.3,c=0.3) (phi) {\mbox{\LARGE{$\lrcorner$}}};
              \node at (barycentric cs:x=0.8,y=0.3,a=0.3) (pi) {\mbox{\LARGE{$\lrcorner$}}};

              \path[maps,-] 
              (x) edge[double distance=0.2em,out=20,in=160] (b)
              (y) edge[double distance=0.2em,out=-20,in=-160] (d)
              ;
          \end{tikzpicture}
      \end{equation*}
      hence both squares are also vertically adjointable by 
      \autoref{pullbackofadjunctions}.
      In particular, we see that $G$ is cocartesian over $F$.
    \end{proof}

    In order to prove the full version of the pullback stability of left adjoints in $\Fun(\Delta^1,\Cat)$ our 
    strategy will be to translate having a right adjoint into the Hom-equivalence characterisation of 
    adjunctions, see \autoref{triangleidentityadjunctionsareequivalenttohomcatdef}, 
    and then use that commutative squares induce cocartesian functors between the so-called
    free cocartesian fibrations of the left and right hand vertical functors of the square, from 
    \autoref{freecocartesianfibrationsonfunctorsandsquaresoffunctorsinducecocartesianfunctorsbetweenthose}.

    We first produce the appropriate commutative squares of Hom-equivalences from adjointable squares.

    \begin{lemma}
      \label{adjointablesquaresgivecommutativesquaresofhomcatequivalences}
      Let
      \begin{equation}
            \begin{tikzpicture}[diagram]
                \matrix[objects] {%
                |(a)| A_0 \& |(b)| A_1 \\
                |(c)| B_0 \& |(d)| B_1 \\
                };
                \path[maps,->]
                (a) edge node[above]  {$H$} (b)
                (c) edge node[below]  {$K$} (d)
                (a) edge node[left]   {$U_0$} (c)
                (b) edge node[right]  {$U_1$} (d)
                ;
            \end{tikzpicture}
      \end{equation}
      be a commutative square of functors such that the $U_i$ both have
      left adjoints $F_i$ and the square is adjointable.
      The by \autoref{adjointablilityequivtounitscommute} commutative square
      \begin{equation}
            \begin{tikzpicture}[diagram]
                \matrix[objects] {%
                |(a)| B_0 \& |(b)| B_1 \\
                |(c)| \Fun(\Delta^1,B_0) \times_{B_0} A_0 \& |(d)| \Fun(\Delta^1,B_1) \times_{B_1} A_1 \\
                };
                \path[maps,->]
                (a) edge node[above]  {$K$} (b)
                (c) edge node[below]  {$K_\ast \times_{K} H$} (d)
                (a) edge node[left]   {$(\eta_0,F_0)$} (c)
                (b) edge node[right]  {$(\eta_1,F_1)$} (d)
                ;
            \end{tikzpicture}
      \end{equation}
      can be factorized vertically into commutative squares

      \begin{equation}
        \label{adjunctionfactorizationofunitscommutesquare}
        \begin{tikzcd}[cramped]
          {B_0} & {B_1} \\
          {B_0 \times_{A_0} \Fun(\Delta^1,A_0)} & {B_1 \times_{A_1} \Fun(\Delta^1,A_1)} \\
          {  \Fun(\Delta^1,B_0) \times_{B_0} A_0} & {  \Fun(\Delta^1,B_1) \times_{B_1} A_1}
          \arrow["K", from=1-1, to=1-2]
          \arrow["{\Delta_{F_0}}", from=1-1, to=2-1]
          \arrow["{(\eta_0, F_0)}"', shift right=7, curve={height=30pt}, from=1-1, to=3-1]
          \arrow["{\Delta_{F_0}}", from=1-2, to=2-2]
          \arrow["{(\eta_1, F_1)}", shift left=7, curve={height=-30pt}, from=1-2, to=3-2]
          \arrow["{K \times_H H_\ast}", from=2-1, to=2-2]
          \arrow["{\phi_0}", from=2-1, to=3-1]
          \arrow["{\phi_1}", from=2-2, to=3-2]
          \arrow["{H \times_K K_\ast}"', from=3-1, to=3-2]
        \end{tikzcd}
      \end{equation}
    \end{lemma}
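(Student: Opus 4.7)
The plan is to check that each of the two smaller squares in the proposed factorisation commutes on its own, and then observe that their vertical composite agrees with the square already provided by \autoref{adjointablilityequivtounitscommute}. The latter is a direct unpacking: by definition $\Delta_{F_i}$ sends $b \in B_i$ to $(b, \id_{F_i b})$, and the explicit formula for $\phi_i$ from \autoref{constructionofactiononHomspacesforadjunction} then yields $\phi_i(b, \id_{F_i b}) \equiv (U_i(\id_{F_i b}) \circ \eta_{i,b}, F_i b) \equiv (\eta_{i,b}, F_i b)$, which is the value of $(\eta_i, F_i)$ at $b$. This already pins down which square has to be factorised and how.

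For the top square, I would observe that both composites $(K \times_H H_\ast) \circ \Delta_{F_0}$ and $\Delta_{F_1} \circ K$ send $b \in B_0$ to a tuple in $B_1 \times_{A_1} \Fun(\Delta^1, A_1)$ of the shape $(Kb, \id)$, where the identity arrow in $\Fun(\Delta^1, A_1)$ lives at $HF_0 b$ on the one side and at $F_1 Kb$ on the other. These two presentations are identified by the invertible mate transformation $F_1 K \equiv HF_0$ coming from the adjointability hypothesis, and this same mate is exactly what is needed to ensure that the first composite factors through the defining pullback. The resulting equivalence of functors $B_0 \to B_1 \times_{A_1} \Fun(\Delta^1, A_1)$ then follows by the pullback universal property and the naturality of the mate.

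The main obstacle, and the substantive content of the lemma, is the commutativity of the bottom square $(H \times_K K_\ast) \circ \phi_0 \equiv \phi_1 \circ (K \times_H H_\ast)$. Unpacking the definition of $\phi_i$, one must identify the arrows $K(U_0 f \circ \eta_{0,b})$ and $U_1(Hf) \circ \eta_{1, Kb}$ as maps $Kb \to KU_0 a = U_1 H a$ in $B_1$, for an arbitrary object $(b, f \colon F_0 b \to a)$ of $B_0 \times_{A_0} \Fun(\Delta^1, A_0)$. Using the commutativity $KU_0 \equiv U_1 H$ of the original square to rewrite $U_1(Hf)$ as $K(U_0 f)$, the desired equivalence reduces to the identity $K \eta_0 \equiv \eta_1 K$, which is exactly the second half of \autoref{adjointablesquaregivesunitscommute} applied to our adjointable square. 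Promoting these pointwise identifications to genuine natural equivalences between functors landing in pullback categories is then a routine application of the pullback universal property, so the factorisation is established.
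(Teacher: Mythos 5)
Your overall decomposition is the right one, and your identification of $K\eta_0 \equiv \eta_1 K$ as the essential input for the bottom square is correct. The problem is the final step. In the synthetic $(\infty,1)$-categorical setting this paper works in, verifying an identity "for an arbitrary object $(b, f\colon F_0 b \to a)$" and then asserting that "promoting these pointwise identifications to genuine natural equivalences \ldots is a routine application of the pullback universal property" is not a valid inference: two functors into a pullback are not equivalent merely because they agree objectwise, and the universal property of the pullback only says that a functor is the data of its projections together with an identification of their composites into the base. To compare $(K_\ast \times_K H)\circ\phi_0$ with $\phi_1\circ(K\times_H H_\ast)$ you must exhibit a coherent equivalence of their $\Fun(\Delta^1,B_1)$-components as functors out of $B_0\times_{A_0}\Fun(\Delta^1,A_0)$, and that is exactly where the content of the lemma lives. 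The argument is repairable: \autoref{adjointablesquaregivesunitscommute} does give $K_\ast\circ\eta_0\equiv\eta_1\circ K$ as a commutative square of functors, not just pointwise, so you can carry out your computation entirely at the level of the composites defining $\phi_i$ in \autoref{constructionofactiononHomspacesforadjunction}, provided you also invoke the compatibility of $\comp$ and of the whiskerings $U_{i\ast}$ with $K_\ast$ (instances of \autoref{middlefourinterchangefornattrafos} and the pushout description of $\comp$). But none of that is discharged by the pullback universal property, so as written the last paragraph is a gap rather than a proof.

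For comparison, the paper avoids this computation altogether. It observes that both composites of the bottom square are cocartesian functors over $H\colon A_0\to A_1$ (the horizontal functors by \autoref{freecocartesianfibrationsonfunctorsandsquaresoffunctorsinducecocartesianfunctorsbetweenthose}, the $\phi_i$ because they are equivalences), that $\ev_1\colon B_0\times_{A_0}\Fun(\Delta^1,A_0)\to A_0$ is the free cocartesian fibration on $F_0$, and that the two composites agree after restriction along $\Delta_{F_0}$ by the outer rectangle from \autoref{adjointablilityequivtounitscommute} (together with the top square). The uniqueness clause of \autoref{liftsofDeltaFagainstcocartesianfibrationsincommutativesquares} then forces the two composites to coincide. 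This "agree on generators, both cocartesian, hence equal" argument buys exactly the coherence your pointwise check cannot supply, at the price of having set up the free-cocartesian-fibration machinery. Your treatment of the top square and of the identification $\phi_i\circ\Delta_{F_i}\equiv(\eta_i,F_i)$ is fine and matches what the paper leaves implicit (the mate $HF_0\equiv F_1K$ is already built into the definition of $K\times_H H_\ast$).
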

    \begin{proof}
      As 
      \begin{equation}
            \begin{tikzpicture}[diagram]
                \matrix[objects] {%
                  |(a)| B_0 \times_{A_0} \Fun(\Delta^1,A_0) \& |(b)| B_1 \times_{A_1} \Fun(\Delta^1,A_1) \\
                |(c)| A_0 \& |(d)| A_1 \\
                };
                \path[maps,->]
                (a) edge node[above]  {$H \times_{K} K_\ast$} (b)
                (c) edge node[below]  {$K$} (d)
                (a) edge[->>] node[left]   {$\ev_1$} (c)
                (b) edge[->>] node[right]  {$\ev_1$} (d)
                ;
            \end{tikzpicture}
      \end{equation}
      and by pullback also
      \begin{equation}
            \begin{tikzpicture}[diagram]
                \matrix[objects] {%
                  |(a)| \Fun(\Delta^1,B_0) \times_{B_0} A_0 \& |(b)|  \Fun(\Delta^1,B_1) \times_{B_1} A_1\\
                |(c)| A_0 \& |(d)| A_1 \\
                };
                \path[maps,->]
                (a) edge node[above]  {$ K_\ast \times_{K} H$} (b)
                (c) edge node[below]  {$H$} (d)
                (a) edge[->>] node[left]   {$\ev_1$} (c)
                (b) edge[->>] node[right]  {$\ev_1$} (d)
                ;
            \end{tikzpicture}
      \end{equation}
      are cocartesian functors by 
      \autoref{freecocartesianfibrationsonfunctorsandsquaresoffunctorsinducecocartesianfunctorsbetweenthose}
      both composites of the bottom square in \autoref{adjunctionfactorizationofunitscommutesquare}
      are cocartesian functors over the functor $H \colon A_0 \to A_1$.
      As the outer rectangle in \autoref{adjunctionfactorizationofunitscommutesquare} 
      commutes both composites are lifts of the lower rectangle in
      \[\begin{tikzcd}[cramped,column sep=small]
        & {B_0 \times_{A_0} \Fun(\Delta^1,A_0)} & {\Fun(\Delta^1,B_0) \times_{B_0} A_0} \\
        {B_0} & {B_1} & {B_1 \times_{A_1} \Fun(\Delta^1,A_1)} & {\Fun(\Delta^1,B_1) \times_{B_1} A_1} \\
        {B_0 \times_{A_0} \Fun(\Delta^1,A_0)} && {A_0} & {A_1}
        \arrow[""{name=0, anchor=center, inner sep=0}, "{\phi_0}", from=1-2, to=1-3]
        \arrow["{K_\ast \times_K H}"{pos=0.8}, from=1-3, to=2-4]
        \arrow["{\Delta_{F_0}}", hook, from=2-1, to=1-2]
        \arrow["K", from=2-1, to=2-2]
        \arrow["{\Delta_{F_0}}"', hook, from=2-1, to=3-1]
        \arrow[""{name=1, anchor=center, inner sep=0}, "{\Delta_{F_1}}", hook, from=2-2, to=2-3]
        \arrow["{\phi_1}", from=2-3, to=2-4]
        \arrow["{\pr_1}", two heads, from=2-4, to=3-4]
        \arrow[draw={rgb,255:red,153;green,92;blue,214}, dashed, from=3-1, to=2-4]
        \arrow["{\ev_1}"', two heads, from=3-1, to=3-3]
        \arrow["H"', from=3-3, to=3-4]
        \arrow["\equiv"{pos=0.4}, draw=none, from=0, to=1]
      \end{tikzcd}\]
      that are even cocartesian functors over $H$.
      But cocartesian functors in particular preserve the $\ev_1$-cocartesian lift
      \[\begin{tikzcd}[cramped]
        {B \times_A \Fun(\Delta^1,A)} & B & {B \times_A \Fun(\Delta^1,A)} \\
        && A
        \arrow["{\ev_0}", from=1-1, to=1-2]
        \arrow[""{name=0, anchor=center, inner sep=0}, curve={height=24pt}, Rightarrow, no head, from=1-1, to=1-3]
        \arrow["{\Delta_F}", hook, from=1-2, to=1-3]
        \arrow["{\ev_1}", two heads, from=1-3, to=2-3]
        \arrow["{\text{counit}}"{pos=0.3}, shorten <=1pt, shorten >=3pt, Rightarrow, from=1-2, to=0]
      \end{tikzcd}\]
      and by the adjunction of \autoref{laxliftingpropertyofleftadjointsagainstcocartesianfibrations}
      there exists a unique such lift, hence both lifts must be equivalent.

      Alternatively, one could also argue in the following more structured way.
      By pulling back along $H$ we can reduce without loss 
      of generality to talking about our two lifts of the bottem left square in
      \[\begin{tikzcd}[cramped]
        & {B_1} & {B_1 \times_{A_1} \Fun(\Delta^1,A_1)} \\
        {B_0} && {\Fun(\Delta^1,B_1) \times_{B_1} A_0} & {\Fun(\Delta^1,B_1) \times_{B_1} A_1} \\
        {B_0 \times_{A_0} \Fun(\Delta^1,A_0)} && {A_0} & {A_1}
        \arrow["{\Delta_{F_1}}", hook, from=1-2, to=1-3]
        \arrow["{\phi_1}", from=1-3, to=2-4]
        \arrow["K", from=2-1, to=1-2]
        \arrow["{\exists !}", from=2-1, to=2-3]
        \arrow["{\Delta_{F_0}}"', hook, from=2-1, to=3-1]
        \arrow[from=2-3, to=2-4]
        \arrow["{\pr_0}", two heads, from=2-3, to=3-3]
        \arrow["{\pr_1}", two heads, from=2-4, to=3-4]
        \arrow[draw={rgb,255:red,153;green,92;blue,214}, dashed, from=3-1, to=2-3]
        \arrow["{\ev_1}"', two heads, from=3-1, to=3-3]
        \arrow[""{name=0, anchor=center, inner sep=0}, "H"', from=3-3, to=3-4]
        \arrow["\lrcorner"{anchor=center, pos=0.125}, draw=none, from=2-3, to=0]
      \end{tikzcd}\]
      which now have been factorized into cocartesian functors over $A_0$.
      But by \autoref{laxliftingpropertyofleftadjointsagainstcocartesianfibrations}
      and \autoref{liftsofDeltaFagainstcocartesianfibrationsincommutativesquares},
      there already exists a unique such cocartesian lift, hence our two lifts must be equivalent.
    \end{proof}

    Now we are prepared to prove that the pullback of a cospan of left adjoints in $\Fun(\Delta^1,\Cat)$ with 
    legs being adjointable squares produces again a left adjoint and the pullback structure maps will 
    automatically also be adjointable.

    \cubepullbackofladj*
    \begin{proof}
        By \autoref{adjointablesquaresgivecommutativesquaresofhomcatequivalences}
        we have commutative squares
      \[\begin{tikzcd}[column sep=2.25em]
        {B_0} & {B_2} & {B_1} \\
        {B_0 \times_{A_0} \Fun(\Delta^1,A_0)} & {B_2 \times_{A_2} \Fun(\Delta^1,A_2)} & {B_1 \times_{A_1} \Fun(\Delta^1,A_1)} \\
        {\Fun(\Delta^1,B_0) \times_{B_0} A_0} & {\Fun(\Delta^1,B_2) \times_{B_2} A_2} & {\Fun(\Delta^1,B_1) \times_{B_1} A_1}
        \arrow["K"{pos=0.4}, from=1-1, to=1-2]
        \arrow["{\Delta_{F_0}}"', hook, from=1-1, to=2-1]
        \arrow["{(\eta_0, F_0)}"', shift right=5, curve={height=30pt}, from=1-1, to=3-1]
        \arrow["{\Delta_{F_2}}", hook, from=1-2, to=2-2]
        \arrow["{(\eta_2, F_2)}"{pos=0.7}, shift left=5, curve={height=-30pt}, from=1-2, to=3-2]
        \arrow["M", from=1-3, to=1-2]
        \arrow["{\Delta_{F_1}}", hook, from=1-3, to=2-3]
        \arrow["{(\eta_1, F_1)}", shift left=5, curve={height=-30pt}, from=1-3, to=3-3]
        \arrow["{K \times_H H_\ast}", from=2-1, to=2-2]
        \arrow["{\phi_0}"', from=2-1, to=3-1]
        \arrow["{\phi_2}", from=2-2, to=3-2]
        \arrow["{M \times_L L_\ast}"', from=2-3, to=2-2]
        \arrow["{\phi_1}", from=2-3, to=3-3]
        \arrow["{K_\ast \times_K H}"', from=3-1, to=3-2]
        \arrow["{M_\ast \times_M L}", from=3-3, to=3-2]
      \end{tikzcd}\]
      Hence we can now pull back to obtain the commutative diagram
      \[\begin{tikzcd}[cramped,column sep=tiny,row sep=small]
        B && {B_1} \\
        & {B_0} && {B_2} \\
        {B \times_{A} \Fun(\Delta^1,A)} && {B_1 \times_{A_1} \Fun(\Delta^1,A_1)} \\
        & {B_0 \times_{A_0} \Fun(\Delta^1,A_0)} && {B_2 \times_{A_2} \Fun(\Delta^1,A_2)} \\
        {\Fun(\Delta^1,B) \times_{B} A} && {\Fun(\Delta^1,B_1) \times_{B_1} A_1} \\
        & {\Fun(\Delta^1,B_0) \times_{B_0} A_0} && {\Fun(\Delta^1,B_2) \times_{B_2} A_2} \\
        {B \times A} && {B_1 \times A_1} \\
        & {B_0 \times A_0} && {B_2 \times A_2}
        \arrow[from=1-1, to=1-3]
        \arrow[from=1-1, to=2-2]
        \arrow["{\exists ! \Delta_{F}}"', color={rgb,255:red,92;green,214;blue,92}, dashed, hook, from=1-1, to=3-1]
        \arrow["M", from=1-3, to=2-4]
        \arrow["{\Delta_{F_1}}"{pos=0.7}, hook, from=1-3, to=3-3]
        \arrow["K"{pos=0.4}, from=2-2, to=2-4]
        \arrow["{\Delta_{F_0}}"'{pos=0.7}, hook, from=2-2, to=4-2]
        \arrow["{\Delta_{F_2}}", hook, from=2-4, to=4-4]
        \arrow[from=3-1, to=3-3]
        \arrow[from=3-1, to=4-2]
        \arrow["{\exists ! \equiv}"', color={rgb,255:red,92;green,92;blue,214}, dashed, from=3-1, to=5-1]
        \arrow["{\exists !}"', shift right=6, curve={height=30pt}, from=3-1, to=7-1]
        \arrow["{M \times_L L_\ast}"{pos=0.7}, from=3-3, to=4-4]
        \arrow["{\phi_1}"{pos=0.6}, from=3-3, to=5-3]
        \arrow["\equiv"'{pos=0.6}, from=3-3, to=5-3]
        \arrow["{K \times_H H_\ast}"{pos=0.3}, from=4-2, to=4-4]
        \arrow["{\phi_0}"'{pos=0.7}, from=4-2, to=6-2]
        \arrow["\equiv"{pos=0.7}, from=4-2, to=6-2]
        \arrow["{\phi_2}", from=4-4, to=6-4]
        \arrow["\equiv"', from=4-4, to=6-4]
        \arrow[from=5-1, to=5-3]
        \arrow[from=5-1, to=6-2]
        \arrow["{\exists !}"', from=5-1, to=7-1]
        \arrow["{M_\ast \times_M L}"{pos=0.8}, from=5-3, to=6-4]
        \arrow[from=5-3, to=7-3]
        \arrow["{K_\ast \times_K H}"{pos=0.3}, from=6-2, to=6-4]
        \arrow[from=6-2, to=8-2]
        \arrow[from=6-4, to=8-4]
        \arrow[from=7-1, to=7-3]
        \arrow[from=7-1, to=8-2]
        \arrow["{M \times L}", from=7-3, to=8-4]
        \arrow["{K \times H}"', from=8-2, to=8-4]
      \end{tikzcd}\]
      in which each of the horizontal squares is a pullback and the left most vertical
      functors are the unique ones induces by pullback.
      In particular we obtain the equivalence
      \begin{equation}
                \begin{tikzpicture}[diagram]
                    \matrix[objects] {%
                      |(a)| B \times_{A} \Fun(\Delta^1,A) \& \& |(b)| \Fun(\Delta^1,B) \times_{B} A \\
                      \& |(c)| B \times A  \\
                    };
                    \path[maps,->]
                    (a) edge[dashed] node[above]  {$\equiv$} (b)
                    (b) edge node[below right]  {$\ev$} (c)
                    (a) edge node[below left]   {$\ev$} (c)
                    ;
                \end{tikzpicture}
        \end{equation}
        proving that $F$ has a right adjoint.
        As $\phi \circ \Delta_{F} = (\eta, F)$ we can also deduce invertibility of 
        $\eta$ if all the $\eta_i$ are invertible.
        For the counits one can apply a dual argument reversing the adjunction 
        equivalences in the diagrams above.
        From the composed commutative diagram
        \begin{equation}
            \begin{tikzpicture}[diagram]
                \matrix[objects] {%
                  |(a)| B \& |(b)| B_i \& |(x)| \\
                  |(c)| B \times_{A} \Fun(\Delta^1,A)  \& |(d)| B_i \times_{A_i} \Fun(\Delta^1,A_i) \& |(y)|  \\
                  |(e)| \Fun(\Delta^1,B) \times_{B} A \& |(f)| \Fun(\Delta^1,B_i) \times_{B_i} A_i \& |(z)| B_i \times_{A_i} \Fun(\Delta^1,A_i) \\
                };
                \path[maps,->]
                (a) edge node[above]  {$$} (b)
                (e) edge node[below]  {$$} (f)
                (c) edge node[below]  {$$} (d)
                (a) edge[bend right=80] node[above left]   {$(\eta,F)$} (e)
                (b) edge[bend right=80] node[above left]  {$(\eta_i,F_i)$} (f)
                (a) edge node[right] {$\Delta_{F}$} (c)
                (b) edge node[above right] {$\Delta_{F_i}$} (z)
                (b) edge node[left] {$\Delta_{F_i}$} (d)
                (c) edge node[right] {$\phi$} (e)
                (d) edge node[left] {$\phi_i$} (f)
                (f) edge node[below]  {$\psi_i$} (z)
                ;
            \end{tikzpicture}
      \end{equation}
      one can then also deduce the adjointablility claim as in the proof of 
      \autoref{adjointablilityequivtounitscommute}.
    \end{proof}

    The following is an immediate corollary of \autoref{cubepullbackofladjwithffradjandadjfacesisladj} with our definition of cocartesian fibrations.
    \begin{lemma}
        \label{pullbackofcocartfunsgivescocartfib}
        Consider the commutative cube of categories such that the top and bottom faces are
        pullbacks.
        \begin{equation}
            \begin{tikzpicture}[diagram]
                \matrix[objects,narrow] {%
                    |(a)| X_2 \& \& |(b)| X_1 \\
                    \& |(c)| X_0 \& \& |(d)| X \\
                    |(e)| A_2 \& \& |(f)| A_1 \\
                    \& |(g)| A_0 \& \& |(h)| A \\
                };
                \path[maps,->]
                (b) edge (a)
                (f) edge (e)

                (d) edge (c)
                (h) edge (g)

                (c) edge node[below left]   {$$} (a)
                (g) edge (e)

                (d) edge (b)
                (h) edge (f)

                (a) edge[->>] node[left]   {$F_2$} (e)
                (c) edge[->>] node[below left]   {$F_0$} (g)
                (b) edge[->>] node[below left]   {$F_1$} (f)
                (d) edge node[left]   {$F$} (h)
                ;
            \end{tikzpicture}
        \end{equation}
        If the $F_i$ are cocartesian fibrations and $X_0 \to X_2$ and $X_1 \to X_2$ are cocartesian functors,
        then the pullback induced functor $F$ is also a cocartesian fibration
        and $X \to X_0$, $X \to X_1$ are also cocartesian functors.
    \end{lemma}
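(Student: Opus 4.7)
The plan is to reduce the statement to a direct application of \autoref{cubepullbackofladjwithffradjandadjfacesisladj}, using the characterization of cocartesian fibrations in terms of the functor $(\ev_0, p_\ast)$ admitting a fully-faithful left adjoint and the definition of cocartesian functors as the horizontal adjointability of the induced square.

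First I would unfold the goal. Saying that $F \colon X \to A$ is a cocartesian fibration means exactly that
\begin{equation*}
  (\ev_0, F_\ast) \colon \Fun(\Delta^1, X) \to X \times_A \Fun(\Delta^1, A)
\end{equation*}
has a fully-faithful left adjoint. Similarly, each hypothesis that $F_i$ is a cocartesian fibration gives a fully-faithful left adjoint to $(\ev_0, (F_i)_\ast)$, and the hypothesis that $X_1 \to X_2$ and $X_0 \to X_2$ are cocartesian functors over $A_1 \to A_2$ and $A_0 \to A_2$ says precisely that the commuting squares connecting the $(\ev_0, (F_i)_\ast)$ horizontally are vertically adjointable (equivalently, horizontally adjointable in the sense required by \autoref{cubepullbackofladjwithffradjandadjfacesisladj}, after passing to left adjoints).

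Next I would assemble the relevant cube. Its top face is $\Fun(\Delta^1, -)$ applied to the original top pullback; since $\Fun(\Delta^1, -)$ preserves pullbacks, this is again a pullback. Its bottom face is the pullback square with corners $X_i \times_{A_i} \Fun(\Delta^1, A_i)$, and I would verify by commuting pullbacks and the fact that $\Fun(\Delta^1,-)$ preserves pullbacks that the induced object $(X_0 \times_{A_0}\Fun(\Delta^1,A_0)) \times_{X_2 \times_{A_2} \Fun(\Delta^1, A_2)} (X_1 \times_{A_1} \Fun(\Delta^1, A_1))$ is canonically equivalent to $X \times_A \Fun(\Delta^1, A)$. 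The vertical arrows are the four functors $(\ev_0, (F_i)_\ast)$ and the induced $(\ev_0, F_\ast)$, and all the small commutativities amount to naturality of $\ev_0$ and $(-)_\ast$.

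Then I would invoke \autoref{cubepullbackofladjwithffradjandadjfacesisladj} on this cube (viewing the vertical right adjoints $(\ev_0, (F_i)_\ast)$, with their fully-faithful cocartesian lift left adjoints, in the role of the $U_i$ and $F_i$ of that lemma). The back and left faces being horizontally adjointable is exactly our cocartesian-functor hypothesis on $X_1 \to X_2$ and $X_0 \to X_2$. The conclusion of the lemma delivers a fully-faithful left adjoint to $(\ev_0, F_\ast)$, which is the desired cocartesian fibration structure on $F$, and moreover asserts that the right and front faces are horizontally adjointable, which translates back to $X \to X_0$ and $X \to X_1$ being cocartesian functors over $A \to A_0$, $A \to A_1$.

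The only nontrivial bookkeeping is the pullback identification for the bottom face and checking that the $2$-cells decorating the six faces of the cube really are the canonical ones so that \autoref{cubepullbackofladjwithffradjandadjfacesisladj} applies verbatim; everything else is a direct instantiation of definitions.
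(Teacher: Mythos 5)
Your proof is correct and follows essentially the same route as the paper: you form the cube of the functors $(\ev_0,(F_i)_\ast)$ with top and bottom faces pullbacks (identifying the bottom corner with $X \times_A \Fun(\Delta^1,A)$), translate the cocartesian-fibration and cocartesian-functor hypotheses into fully-faithful adjoints on the verticals and adjointability of the back and left faces, and conclude by the cube-pullback-of-adjoints lemma. The only caveat is that \autoref{cubepullbackofladjwithffradjandadjfacesisladj} is stated for left adjoints with fully-faithful right adjoints, so you are really invoking its dual (right adjoints with fully-faithful left adjoints) — the paper glosses over the same point, so this is not a gap in your argument relative to the source.
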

    \begin{proof}
        The commutative cube gives us the following commutative cube in which
        the top and bottom faces are again pullbacks.
        \begin{equation*}
            \begin{tikzpicture}[diagram]
                \matrix[objects,column sep=0.5em,row sep=2em] {%
                    |(a)| \Fun(\Delta^1 , X_2) \& \& |(b)| \Fun(\Delta^1 , X_1) \\
                    \& |(c)| \Fun(\Delta^1 , X_0) \& \& |(d)| \Fun(\Delta^1 , X) \\
                    |(e)| X_2 \times_{A_2} \Fun(\Delta^1,A_2) \& \& |(f)| X_1 \times_{A_1} \Fun(\Delta^1,A_1) \\
                    \& |(g)| X_0 \times_{A_0} \Fun(\Delta^1,A_0) \& \& |(h)| X \times_{A} \Fun(\Delta^1,A) \\
                };
                \path[maps,->]
                (b) edge node[above]  {$$} (a)
                (f) edge  (e)

                (d) edge  (c)
                (h) edge  (g)

                (c) edge node[below left]   {$$} (a)
                (g) edge  (e)

                (d) edge  (b)
                (h) edge  (f)

                (a) edge node[left]   {$$} (e)
                (c) edge node[below left]   {$$} (g)
                (b) edge node[below left]   {$$} (f)
                (d) edge node[left]   {$$} (h)
                ;
            \end{tikzpicture}
        \end{equation*}
        By definition of cocartesian fibrations we know that all the vertical
        functors except the most right one have fully-faithful left adjoints.
        By definition of cocartesian functors we have that the back face and the
        left face are adjointable. Now \autoref{pullbackofadjunctions} tells us that
        also the right most vertical functor has a fully-faithful left adjoint
        and that the front face and right face of the cube are also adjointable.
        Thus we get that $F$ is a cocartesian fibration and the
        $A \to A_i$ are cocartesian functors.
    \end{proof}

    At last we are equipped to prove that general right adjoints are stable under pullbacks along 
    cocartesian fibrations.
    Indeed, the bare fact that the pullback of the right adjoint is again a right adjoint can be obtained 
    from the adjunction characterisation \autoref{havingaradjointisequivtofreecocartfibhavingaffradj} and several applications of \autoref{pullbackofadjunctions}.
    But we actually also recover the additional conclusions from \autoref{enhancedpullbackofadjunctions}, \ie that the pullback square is 
    adjointable and that the pulled back right adjoint as well as its left adjoint constitute 
    cocartesian functors.

    \pagebreak 

    \superpullbackradjalongcocartfib*
    \begin{proof}
      From \autoref{pullbackofcocartfibs} we already know that $V$ is cocartesian over $U$.
      We have the following commutative diagram
      \[\begin{tikzcd}
        { \Fun(\Delta^1,X) \times^{(\ev_1,V)}_X Y} & {\Fun(\Delta^1,X)} \\
        {X \times^{(p,\ev_0)}_B \Fun(\Delta^1,B) \times^{(\ev_1,U)}_B A} & {X \times^{(p,\ev_0)}_B \Fun(\Delta^1,B)} \\
        {\Fun(\Delta^1,B) \times^{(\ev_1,U)}_B A} & {\Fun(\Delta^1,B)}
        \arrow["{\pr_0}", from=1-1, to=1-2]
        \arrow["{(\ev_0,p_\ast, q)}"', from=1-1, to=2-1]
        \arrow["{p_\ast \times_p q}"', shift right=20, curve={height=30pt}, from=1-1, to=3-1]
        \arrow["{(\ev_0,p_\ast)}", from=1-2, to=2-2]
        \arrow["{p_\ast}", shift left=10, curve={height=-30pt}, from=1-2, to=3-2]
        \arrow[""{name=0, anchor=center, inner sep=0}, "{(\pr_0,\pr_1)}", from=2-1, to=2-2]
        \arrow["{(\pr_1,\pr_2)}"', from=2-1, to=3-1]
        \arrow["{\pr_1}", from=2-2, to=3-2]
        \arrow[""{name=1, anchor=center, inner sep=0}, "{\pr_0}"', from=3-1, to=3-2]
        \arrow["\lrcorner"{anchor=center, pos=0.125}, draw=none, from=1-1, to=0]
        \arrow["\lrcorner"{anchor=center, pos=0.125}, draw=none, from=2-1, to=1]
      \end{tikzcd}\]
      in which the bottom square is a pullback by definition and the composed rectangle
      by the defining pullback of $Y$.
      Hence by pullback cancellation we deduce that the top square is also a pullback.
      But by $p$ being a cocartesian fibration we know that $(\ev_0,p_\ast)$ has a 
      fully-faithful left adjoint. Thus we can infer from 
      \autoref{pullbackofadjunctions} that $(\ev_0, p_\ast, q)$ also has one.
      On the other hand, $U$ having a left adjoint gives us by the dual of
      \autoref{havingaradjointisequivtofreecocartfibhavingaffradj}
      that the right hand vertical functor in the pullback 
      \begin{equation*}
            \begin{tikzpicture}[diagram]
                \matrix[objects]{%
                  |(a)| X \times^{(p,\ev_0)}_B \Fun(\Delta^1,B) \times^{(\ev_1,U)}_B A \& |(b)| \Fun(\Delta^1,B) \times^{(\ev_1,U)}_B A \\
                  |(c)| X \& |(d)| B \\
                };
                \path[maps,->]
                    (a) edge node[above] {$$} (b)
                    (a) edge node[left] {$\pr_0$} (c)
                    (b) edge node[right] {$\ev_0$} (d)
                    (c) edge node[below] {$p$} (d)
                ;
                \node at (barycentric cs:a=0.8,b=0.3,c=0.3) (phi) {\mbox{\LARGE{$\lrcorner$}}};
            \end{tikzpicture}
      \end{equation*}
      has a fully-faithful left adjoint. By \autoref{pullbackofadjunctions}
      this means that also its pullback, the left hand vertical functor in the above
      diagram has a fully-faithful left adjoint.
      Now the commutative triangle
      \begin{equation}
            \begin{tikzpicture}[diagram]
                \matrix[objects] {%
                  |(a)| \Fun(\Delta^1,X) \times^{(\ev_1,V)}_X Y \& |(b)| X \times^{(p,\ev_0)}_B \Fun(\Delta^1,B) \times^{(\ev_1,U)}_B A \\
                  \& |(c)| X \\
                };
                \path[maps,->]
                (a) edge node[above]  {$(\ev_0,p_\ast, q)$} (b)
                (a) edge node[below left]  {$\ev_0$} (c)
                (b) edge node[below right]   {$\pr_0$} (c)
                ;
            \end{tikzpicture}
      \end{equation}
      tells us that we factorized $\ev_0$ into two functors that both have 
      fully-faithful left adjoints, hence it itself has one.

      By carefully tracing through the these steps one can also conclude that if the
      counit of $F \adj U$ is invertible, that the counit of $G \adj V$ is invertible too.

      To show adjointability of the pullback with respect to this newly constructed adjoint
      it suffices by \autoref{adjointablilityequivtounitscommute} to show that
      \begin{equation}
        \label{equivcharofadjointabilityforpbofradjalongcocartfib}
        \begin{tikzcd}[cramped]
        X &&&& B \\
        & {B \times_{A} \Fun(\Delta^1,A)} \\
        & {\Fun(\Delta^1,B) \times_{B} A} && {B \times_{A} \Fun(\Delta^1,A)} \\
        {\Fun(\Delta^1,X) \times_{X} Y} &&&& {\Fun(\Delta^1,B) \times_{B} A}
        \arrow["p", from=1-1, to=1-5]
        \arrow["{(\tilde{\eta}, G)}"', from=1-1, to=4-1]
        \arrow["{\Delta_{F}}", color={rgb,255:red,92;green,92;blue,214}, from=1-5, to=3-4]
        \arrow["{(\eta, F)}", color={rgb,255:red,92;green,92;blue,214}, from=1-5, to=4-5]
        \arrow["{\ev_0}", color={rgb,255:red,92;green,92;blue,214}, from=2-2, to=1-5]
        \arrow[""{name=0, anchor=center, inner sep=0}, color={rgb,255:red,92;green,92;blue,214}, Rightarrow, no head, from=2-2, to=3-4]
        \arrow["\psi", color={rgb,255:red,92;green,92;blue,214}, from=3-2, to=2-2]
        \arrow[color={rgb,255:red,92;green,92;blue,214}, Rightarrow, no head, from=3-2, to=4-5]
        \arrow["\phi", color={rgb,255:red,92;green,92;blue,214}, from=3-4, to=4-5]
        \arrow["{p_\ast \times_{p} q}", from=4-1, to=3-2]
        \arrow["{p_\ast \times_{p} q}"', from=4-1, to=4-5]
        \arrow["\epsilon", color={rgb,255:red,92;green,92;blue,214}, shorten <=63pt, shorten >=38pt, Rightarrow, from=1-5, to=0]
      \end{tikzcd}
      \end{equation}
      is invertible, where $G$ is the fully-faithful left adjoint of $V$ and $\tilde{\eta}$ the 
      unit of this adjunction.
      We furthermore know from the application of \autoref{pullbackofadjunctions}
      above that 
      \begin{equation*}
          \begin{adjustbox}{scale=0.9}
            \begin{tikzpicture}[diagram]
                \matrix[objects,column sep=3em]{%
                  |(x)| X \& |(a)| X \times^{(p,\ev_0)}_B \Fun(\Delta^1,B) \times^{(\ev_1,U)}_B A \& |(b)| \Fun(\Delta^1,B) \times^{(\ev_1,U)}_B A \\
                  \& |(c)| X \& |(d)| B \& |(y)| \Fun(\Delta^1,B) \times^{(\ev_1,U)}_B A \\
                };
                \path[maps,->]
                    (x) edge node[above] {$(\id , \eta ,F)$} (a)
                    (a) edge node[above] {$(\pr_1,\pr_2)$} (b)
                    (a) edge node[left] {$\pr_0$} (c)
                    (b) edge[color=pastellviolett] node[right] {$\ev_0$} (d)
                    (c) edge node[below] {$p$} (d)
                    (d) edge[color=pastellviolett] node[below] {$(\eta, F)$} (y)
                ;
                \path[maps,-] 
                (x) edge[double distance=0.2em] node[below left] (idx) {$$} (c)
                (b) edge[color=pastellviolett,double distance=0.2em] node[above right] (idrest) {$$} (y)
                ;
                \node[rotate=45] at ($ (idx) ! 0.5 ! (a) $) (psi) {$\Rightarrow$};
                \node[maps,right] at (psi) {$\bar{\eta}$};

                \node[color=pastellviolett,rotate=45] at ($ (idrest) ! 0.5 ! (d) $) (eps) {$\Rightarrow$};
                \node[color=pastellviolett,maps,right] at (eps) {$\epsilon'$};
            \end{tikzpicture}
          \end{adjustbox}
      \end{equation*}
      is invertible, but as $\bar{\eta}$ was invertible to begin with, this is equivalent to 
      the whiskering of $\epsilon'$ being invertible.
      Note also that the color-highlighted natural transformations in these two diagrams
      coincide.
      We also know by \autoref{havingaradjointisequivtofreecocartfibhavingaffradj}
      that the functor $(\tilde{\eta},G)$ is exactly the fully-faithful left adjoint  of $\ev_0 \colon \Fun(\Delta^1,X) \times^{(\ev_1,V)}_X Y \to X$ .
      By construction of this left adjoint we have also the commutative diagram
      \begin{equation*}
            \begin{tikzpicture}[diagram]
                \matrix[objects,column sep=3em]{%
                  |(a)| X \\
                  |(c)| \Fun(\Delta^1,X) \times^{(\ev_1,V)}_X Y \& |(d)| X \times^{(p,\ev_0)}_B \Fun(\Delta^1,B) \times^{(\ev_1,U)}_B A \& |(y)| \Fun(\Delta^1,B) \times^{(\ev_1,U)}_B A \\
                };
                \path[maps,->]
                    (a) edge node[left] {$(\tilde{\eta},G)$} (c)
                    (a) edge node[above right] {$(\id , \eta ,F)$} (d)
                    (c) edge node[below] {$(\ev_0,p_\ast, q)$} (d)
                    (d) edge node[below] {$(\pr_1,\pr_2)$} (y)
                    (c) edge[out=-20,in=-160] node[above] {$p_\ast \times_p q$} (y)
                ;
            \end{tikzpicture}
      \end{equation*}
      That means we can actually deduce from the invertibility of 
      $\epsilon' (\pr_1,\pr_2) (\id, \eta, F)$
      the desired invertibility of \autoref{equivcharofadjointabilityforpbofradjalongcocartfib}.

      Let us now look into why $G$ is a cocartesian functor.
      In the first step we produced a fully-faithful left adjoint for $(ev_0,p_\ast,q)$ by 
      applying \autoref{pullbackofadjunctions}.
      One can picture the pullback definition of this left adjoint $L$ as in 
      diagram of pullbacks
      \[\begin{tikzcd}[cramped]
        {X \times^{(p,\ev_0)}_B \Fun(\Delta^1,B) \times^{(\ev_1,U)}_B A} & {X \times^{(p,\ev_0)}_B \Fun(\Delta^1,B)} \\
        { \Fun(\Delta^1,X) \times^{(\ev_1,V)}_X Y} & {\Fun(\Delta^1,X)} \\
        {X \times^{(p,\ev_0)}_B \Fun(\Delta^1,B) \times^{(\ev_1,U)}_B A} & {X \times^{(p,\ev_0)}_B \Fun(\Delta^1,B)} \\
        {\Fun(\Delta^1,B) \times^{(\ev_1,U)}_B A} & {\Fun(\Delta^1,B)}
        \arrow["{(\pr_0,\pr_1)}"', from=1-1, to=1-2]
        \arrow["L"', hook, from=1-1, to=2-1]
        \arrow["{(\pr_1, \pr_2)}"', shift right=30, curve={height=50pt}, from=1-1, to=4-1]
        \arrow["{\text{cocart}}", hook, from=1-2, to=2-2]
        \arrow["{\pr_1}", shift left=20, curve={height=-50pt}, from=1-2, to=4-2]
        \arrow[""{name=0, anchor=center, inner sep=0}, "{\pr_0}", from=2-1, to=2-2]
        \arrow["{(\ev_0,p_\ast, q)}"', from=2-1, to=3-1]
        \arrow["{p_\ast \times_p q}"', shift right=20, curve={height=30pt}, from=2-1, to=4-1]
        \arrow["{(\ev_0,p_\ast)}", from=2-2, to=3-2]
        \arrow["{p_\ast}", shift left=10, curve={height=-30pt}, from=2-2, to=4-2]
        \arrow[""{name=1, anchor=center, inner sep=0}, "{(\pr_0,\pr_1)}", from=3-1, to=3-2]
        \arrow["{(\pr_1,\pr_2)}"', from=3-1, to=4-1]
        \arrow["{\pr_1}", from=3-2, to=4-2]
        \arrow[""{name=2, anchor=center, inner sep=0}, "{\pr_0}"', from=4-1, to=4-2]
        \arrow["\lrcorner"{anchor=center, pos=0.125}, draw=none, from=1-1, to=0]
        \arrow["\lrcorner"{anchor=center, pos=0.125}, draw=none, from=2-1, to=1]
        \arrow["\lrcorner"{anchor=center, pos=0.125}, draw=none, from=3-1, to=2]
      \end{tikzcd}\]
      But as we proved in \autoref{cocartiscocartfun}, the functor 
      $\text{cocart}$ is in fact a cocartesian functor from $\pr_1$ to $p_\ast$.
      Hence by \autoref{pullbackofcocartfunsgivescocartfib} the functor 
      $L$ is also a cocartesian functor between its pulled back cocartesian fibrations
      $(\pr_1, \pr_2)$ and $p_\ast \times_p q$.
      In the second step we applied \autoref{pullbackofadjunctions} to the pullback 
      \begin{equation*}
            \begin{tikzpicture}[diagram]
                \matrix[objects]{%
                  |(a)| X \times^{(p,\ev_0)}_B \Fun(\Delta^1,B) \times^{(\ev_1,U)}_B A \& |(b)| \Fun(\Delta^1,B) \times^{(\ev_1,U)}_B A \\
                  |(c)| X \& |(d)| B \\
                };
                \path[maps,->]
                    (a) edge node[above] {$$} (b)
                    (a) edge node[left] {$\pr_0$} (c)
                    (b) edge node[right] {$\ev_0$} (d)
                    (c) edge node[below] {$p$} (d)
                ;
                \node at (barycentric cs:a=0.8,b=0.3,c=0.3) (phi) {\mbox{\LARGE{$\lrcorner$}}};
            \end{tikzpicture}
      \end{equation*}
      But as $p$ is a cocartesian fibration we can deduce from 
      \autoref{enhancedpullbackofadjunctions} that the pulled 
      back fully-faithful left adjoint $T$ of the functor $\pr_0$ in the 
      pullback above is in fact cocartesian over the left adjoint $(\eta, F)$ 
      of $\ev_0$ above.
      Let us now put these two cocartesianness observations together
      with the way we composed $L$ and $T$ to get $(\tilde{\eta}, G)$
      \begin{equation*}
          \begin{tikzpicture}[diagram]
              \matrix[objects,wide]{%
                |(x)| X \& |(a)| X \times^{(p,\ev_0)}_B \Fun(\Delta^1,B) \times^{(\ev_1,U)}_B A \& |(b)| \Fun(\Delta^1,X) \times^{(\ev_1,V)}_X Y \& |(z)| Y \\
                  |(y)| B \& |(c)| \Fun(\Delta^1,B) \times^{(\ev_1,U)}_B A \& \& |(d)| A \\
              };
              \path[maps,->]
                  (a) edge node[above] {$L$} (b)
                  (a) edge[->>] node[left] {$$} (c)
                  (b) edge[->>] node[below right] {$p_\ast \times_p q$} (c)
                  (c) edge node[below] {$\pr_A$} (d)
                  (b) edge node[below] {$\pr_Y$} (z)

                  (x) edge[->>] node[right] {$p$} (y)
                  (z) edge[->>] node[right] {$q$} (d)
                  (x) edge node[above] {$T$} (a)
                  (y) edge node[below] {$(\eta, F)$} (c)
                  (x) edge[out=20,in=160] node[above] {$G$} (z)
                  (y) edge[out=-20,in=-160] node[below] {$F$} (d)
              ;
          \end{tikzpicture}
      \end{equation*}
      to see that $G$ is in fact cocartesian over $F$, as we can just post compose these to cocartesian
      functors with the indicated cocartesian functor $\pr_Y$ over $\pr_A$.
    \end{proof}

  \subsection{Relative Adjunctions}
    \label{subsectionrelativeadjoints}

    In this subsection we give an elementary proof of the statement that a cocartesian functor 
    \begin{equation}
          \begin{tikzpicture}[diagram]
              \matrix[objects] {%
                |(a)| A \& \& |(b)| B \\
                \& |(c)| I \\
              };
              \path[maps,->]
              (a) edge node[above]  {$F$} (b)
              (a) edge[->>] node[below left]   {$p$} (c)
              (b) edge[->>] node[below right]  {$q$} (c)
              ;
          \end{tikzpicture}
    \end{equation}
    between cocartesian fibrations over a base category $I$ has a relative right adjoint $U$ over $I$ 
    if and only if it does so fiberwise in the objects of $I$, \ie the functors $F_i$ on the fibers.
    We will furthermore enhance this result by \autoref{cocartesiannessofradjisequivtoadjointabilityofcocartpushforwardsquareforladj}, a characterisation of the cocartesianness 
    of the right adjoint $U$ in terms of the squares 
    \begin{equation}
        \begin{tikzpicture}[diagram]
            \matrix[objects]{%
                |(a)| A_i \& |(b)| B_i \\
                |(c)| A_j \& |(d)| B_j \\
            };
            \path[maps,->]
                (a) edge node[above] {$F_i$} (b)
                (a) edge node[left] {$k_!$} (c)
                (b) edge node[right] {$k_!$} (d)
                (c) edge node[below] {$F_j$} (d)
            ;
        \end{tikzpicture}
    \end{equation}
    being adjointable.

    The hard part is to show that the fiberwise given right adjoints glue together to give a right adjoint 
    on the total categories, but we can alternatively characterise the existence of both fiberwise right 
    adjoints and the total category right adjoint in terms of their relative slice categories 
    \begin{equation*}
      A_{q(b)} \times_{B_{q(b)}} (B_{q(b)})_{/ b} \qquad \text{and} \qquad A \times_{B} B_{/ b} \\
    \end{equation*}
    having terminal objects.
    Our strategy will be to exhibit the canonical inclusion functor 
    \begin{equation*}
      A_{q(b)} \times_{B_{q(b)}} (B_{q(b)})_{/ b} \mono A \times_{B} B_{/ b} \\
    \end{equation*}
    as a right adjoint, thus preserving terminal objects.

    We will obtain this by pulling back, via \autoref{pullbackofrightadjointalongcocartesianfibrationisrightadjoint}, the canonical fully-faithful right adjoint 
    $\Delta_I \colon I \mono \Fun(\Delta^1,I)$ along a certain cocartesian fibration that we 
    are going to construct in the following lemma.

    \begin{lemma}
      \label{cocartesianfunctorcommacategorycocartesianfibration}
      For any cocartesian functor
      \begin{equation}
            \begin{tikzpicture}[diagram]
                \matrix[objects] {%
                  |(a)| A \& \& |(b)| B \\
                  \& |(c)| I \\
                };
                \path[maps,->]
                (a) edge node[above]  {$F$} (b)
                (a) edge node[below left]   {$p$} (c)
                (b) edge node[below right]  {$q$} (c)
                ;
            \end{tikzpicture}
      \end{equation}
      between cocartesian fibrations $p$ and $q$ the functor
      $q_\ast \circ \pr_1 \colon A \times_{B} \Fun(\Delta^1,B) \to \Fun(\Delta^1,I)$
      is also a cocartesian fibration.
    \end{lemma}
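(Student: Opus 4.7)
The plan is to exhibit $E := A \pullback{B} \Fun(\Delta^1,B)$ as a pullback of cocartesian fibrations over $\Fun(\Delta^1,I)$ in such a way that the map induced on pullback corners is exactly $q_\ast \circ \pr_1$, and then to invoke \autoref{pullbackofcocartfunsgivescocartfib}.

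First I would introduce the auxiliary categories $\tilde{Y} := A \pullback{I} \Fun(\Delta^1,I)$ and $Y := B \pullback{I} \Fun(\Delta^1,I)$, both pulled back along $\ev_0 \colon \Fun(\Delta^1,I) \to I$. Their second projections to $\Fun(\Delta^1,I)$ are cocartesian fibrations by \autoref{pullbackofcocartfibs} applied to $p$ and $q$, and $q_\ast \colon \Fun(\Delta^1,B) \to \Fun(\Delta^1,I)$ is a cocartesian fibration by \autoref{evaluationfunctorsarecocartesian}. Comparing objects directly then yields the pullback identification
\begin{equation*}
E \;\equiv\; \tilde{Y} \pullback{Y} \Fun(\Delta^1,B),
\end{equation*}
where the two legs into $Y$ are the functor $F \times_I \id$, $(a,\alpha) \mapsto (F(a),\alpha)$, and the functor $(\ev_0, q_\ast)$, $(f \colon b_0 \to b_1) \mapsto (b_0, q(f))$; both projections from this pullback to $\Fun(\Delta^1,I)$ recover $q_\ast \circ \pr_1$.

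Next I would check that both legs into $Y$ are cocartesian functors over $\Fun(\Delta^1,I)$. For $F \times_I \id$ this uses that cocartesian lifts in $\tilde{Y}$ (respectively in $Y$) over a morphism $\Phi$ in $\Fun(\Delta^1,I)$ are the $p$-cocartesian (respectively $q$-cocartesian) lifts of its left edge $\ev_0(\Phi)$ paired with $\Phi$ itself, and by assumption $F$ sends $p$-cocartesian morphisms to $q$-cocartesian ones. For $(\ev_0, q_\ast)$ one uses that a $q_\ast$-cocartesian lift in $\Fun(\Delta^1,B)$ is a square in $B$ whose two horizontal edges are $q$-cocartesian, so that its $\ev_0$-edge is itself a $q$-cocartesian morphism, i.e.\ the desired $Y$-cocartesian lift.

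Finally, assembling these ingredients into a commutative cube over $\Fun(\Delta^1,I)$ whose top face is the pullback square identifying $E$ and whose bottom face is the trivial pullback $\Fun(\Delta^1,I) = \Fun(\Delta^1,I) \pullback{\Fun(\Delta^1,I)} \Fun(\Delta^1,I)$, I would apply \autoref{pullbackofcocartfunsgivescocartfib} to conclude that $q_\ast \circ \pr_1 \colon E \to \Fun(\Delta^1,I)$ is a cocartesian fibration. The main creative step is the pullback identification of $E$; once that is in place, the remaining verifications are routine applications of the stability results established earlier in the appendix.
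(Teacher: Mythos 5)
Your proof is correct and rests on the same key lemma as the paper's, namely \autoref{pullbackofcocartfunsgivescocartfib}, but the cube you feed into it is different. The paper takes the defining pullback square $A \times^{(F,\ev_0)}_B \Fun(\Delta^1,B)$ itself as the top face, sitting over the bottom pullback $\Fun(\Delta^1,I) \equiv I \times^{(\id,\ev_0)}_I \Fun(\Delta^1,I)$, with vertical fibrations $p$, $q$ and $q_\ast$; the two legs are then cocartesian by the hypothesis on $F$ and by \autoref{evaluationfunctorsarecocartesian}, and nothing else needs to be checked. You instead base-change everything to $\Fun(\Delta^1,I)$ first, introducing $\tilde{Y} = A \times_I \Fun(\Delta^1,I)$ and $Y = B \times_I \Fun(\Delta^1,I)$, and re-identify the total category as $\tilde{Y} \times_Y \Fun(\Delta^1,B)$. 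That identification is correct: since $\tilde{Y} \equiv A \times^{(F,\pr_0)}_B Y$, pasting of pullbacks gives $\tilde{Y} \times_Y \Fun(\Delta^1,B) \equiv A \times^{(F,\ev_0)}_B \Fun(\Delta^1,B)$ with the projection to $\Fun(\Delta^1,I)$ being $q_\ast \circ \pr_1$, and the resulting cube has constant base $\Fun(\Delta^1,I)$, so the application of \autoref{pullbackofcocartfunsgivescocartfib} is legitimate. The price is the extra re-identification step and the need to verify cocartesianness of the two legs $F \times_I \id$ and $(\ev_0, q_\ast)$ by hand, which you do via explicit descriptions of cocartesian lifts; in the paper's synthetic setting those descriptions should themselves be traced back to \autoref{pullbackofcocartfibs} and \autoref{evaluationfunctorsarecocartesian} (where cocartesianness is phrased as adjointability of squares), which is routine but worth making explicit. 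The paper's cube avoids both steps, which is why its proof is shorter; your version buys a formulation in which all three corners and both legs live over a single fixed base, at the cost of one more pullback manipulation.
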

    \begin{proof}
      We can apply \autoref{pullbackofcocartfunsgivescocartfib} to the
      cube
      \[\begin{tikzcd}[cramped]
        {A \times_B \Fun(\Delta^1,B)} && {\Fun(\Delta^1,B)} \\
        & A && B \\
        {\Fun(\Delta^1,I)} && {\Fun(\Delta^1,I)} \\
        & I && I
        \arrow[from=1-1, to=1-3]
        \arrow[from=1-1, to=2-2]
        \arrow[two heads, from=1-1, to=3-1]
        \arrow["{\ev_0}", from=1-3, to=2-4]
        \arrow["{q_\ast}"{pos=0.2}, two heads, from=1-3, to=3-3]
        \arrow["F"{pos=0.3}, from=2-2, to=2-4]
        \arrow["q", two heads, from=2-4, to=4-4]
        \arrow[Rightarrow, no head, from=3-1, to=3-3]
        \arrow["{\ev_0}", from=3-1, to=4-2]
        \arrow["{\ev_0}", from=3-3, to=4-4]
        \arrow[Rightarrow, no head, from=4-2, to=4-4]
        \arrow["p"'{pos=0.3}, two heads, from=2-2, to=4-2]
      \end{tikzcd}\]
      because we know by assumption that $F$ is a cocartesian functor and by
      \autoref{evaluationfunctorsarecocartesian} that $q_\ast$ is a 
      cocartesian fibration and $\ev_0$ is also a cocartesian functor.
    \end{proof}

    \begin{corollary}
      \label{relativecommacategoryreflectivelyincommacategory}
      Let $F$ be a cocartesian functor
      \begin{equation}
            \begin{tikzpicture}[diagram]
                \matrix[objects] {%
                  |(a)| A \& \& |(b)| B \\
                  \& |(c)| I \\
                };
                \path[maps,->]
                (a) edge node[above]  {$F$} (b)
                (a) edge node[below left]   {$p$} (c)
                (b) edge node[below right]  {$q$} (c)
                ;
            \end{tikzpicture}
      \end{equation}
      between cocartesian fibrations $p$ and $q$.
      Applying \autoref{pullbackofrightadjointalongcocartesianfibrationisrightadjoint} 
      to the cocartesian fibration
      \begin{equation*}
        A \times_{B} \Fun(\Delta^1,B) \fibration \Fun(\Delta^1,I) 
      \end{equation*}
      obtained from
      \autoref{cocartesianfunctorcommacategorycocartesianfibration} and the adjunction
      $\ev_1 \adj \Delta_I$ we can conclude that the top
      horizontal functor in the pullback
      \begin{equation*}
            \begin{tikzpicture}[diagram]
                \matrix[objects]{%
                  |(a)| A \times_{B} \Fun_{/I}(\Delta^1,B) \& |(b)| A \times_{B} \Fun(\Delta^1,B) \\
                    |(c)| I \& |(d)| \Fun(\Delta^1,I) \\
                };
                \path[maps,->]
                    (a) edge node[above] {$$} (b)
                    (a) edge[->>] node[left] {$$} (c)
                    (b) edge[->>] node[right] {$q_\ast$} (d)
                    (c) edge node[below] {$\Delta_I$} (d)
                ;
                \node at (barycentric cs:a=0.8,b=0.3,c=0.3) (phi) {\mbox{\LARGE{$\lrcorner$}}};
            \end{tikzpicture}
      \end{equation*}
      is a fully-faithful right adjoint.
    \end{corollary}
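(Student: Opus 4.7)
The plan is to apply \autoref{pullbackofrightadjointalongcocartesianfibrationisrightadjoint} directly to the displayed pullback square, so the work reduces to verifying its two hypotheses and carefully reading off what its conclusion gives in this specific instance. First, I would identify the roles of the objects and functors: the right vertical map is $q_\ast \circ \pr_1 \colon A \times_B \Fun(\Delta^1,B) \to \Fun(\Delta^1,I)$, the bottom horizontal map is $\Delta_I \colon I \to \Fun(\Delta^1,I)$, and the top horizontal map $V$ into $A \times_B \Fun(\Delta^1,B)$ is whose adjoint behaviour we want to control.

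The first hypothesis, that the right vertical map is a cocartesian fibration, is precisely the content of the immediately preceding \autoref{cocartesianfunctorcommacategorycocartesianfibration}. The second hypothesis, that the bottom horizontal functor $\Delta_I$ admits a left adjoint, is furnished by \autoref{evaluationfunctorsarefibrations}, which gives the canonical adjoint string $\ev_1 \adj \Delta_I \adj \ev_0$ together with the observation that the counit of $\ev_1 \adj \Delta_I$ is the invertible (in fact identity) natural transformation $\ev_1 \circ \Delta_I \equiv \id_I$.

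With these in place, \autoref{pullbackofrightadjointalongcocartesianfibrationisrightadjoint} produces a left adjoint $G$ to $V$, so $V$ is a right adjoint; and since the counit of the bottom adjunction $\ev_1 \adj \Delta_I$ is invertible, the additional clause of that lemma transfers invertibility of the counit to the adjunction $G \adj V$. An adjunction with invertible counit exhibits its right adjoint as fully-faithful, which is the desired conclusion.

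Since the corollary is essentially a bookkeeping exercise that repackages the two ingredients it explicitly cites, I do not anticipate any significant obstacle. The only minor point worth spelling out is the unfolding of the notation $\Fun_{/I}(\Delta^1,B)$ as the pullback $I \times_{\Fun(\Delta^1,I)} \Fun(\Delta^1,B)$ along $\Delta_I$, so that the square displayed in the statement really is a pullback in the sense required by the invoked lemma.
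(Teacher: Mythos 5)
Your proposal is correct and follows exactly the route the paper intends: the corollary is proved by instantiating \autoref{pullbackofrightadjointalongcocartesianfibrationisrightadjoint} with $p = q_\ast \circ \pr_1$ (a cocartesian fibration by \autoref{cocartesianfunctorcommacategorycocartesianfibration}) and $U = \Delta_I$ (a right adjoint with invertible counit via $\ev_1 \adj \Delta_I$ from \autoref{evaluationfunctorsarefibrations}), so the pulled-back functor inherits a left adjoint with invertible counit and is therefore a fully-faithful right adjoint. Your remark that the displayed square is a pullback because $\Fun_{/I}(\Delta^1,B) = I \times_{\Fun(\Delta^1,I)} \Fun(\Delta^1,B)$ is the only bookkeeping detail worth recording, and you have it right.
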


    Now the only thing left to do is to exhibit the adjunction we obtained as an adjunction over 
    $B$, so that we can deduce from it also the fiberwise adjunctions.

    \begin{lemma}
      \label{relativecommaadjunctionlivesoverB}
      The adjunction 
      \begin{equation*}
        \begin{tikzpicture}[diagram]
          \matrix[objects] {
            |(a)| A \times_{B} \Fun_{/I}(\Delta^1,B) \& |(b)| A \times_{B} \Fun(\Delta^1,B) \\
          };
          \path[maps,,->] 
            (b) edge[bend right] node[above] (f) {$$} (a)
            (a) edge node[below] (g) {$\incl$}  (b)
          ;
          \node[rotate=-90] at ($ (f) ! 0.5 ! (g) $) (adj) {$\adj$};
        \end{tikzpicture}
      \end{equation*}
      from \autoref{relativecommacategoryreflectivelyincommacategory}
      lives over $B$ via the functors
      \begin{equation}
            \begin{tikzpicture}[diagram]
                \matrix[objects] {%
                  |(a)| A \times_{B} \Fun_{/I}(\Delta^1,B) \& \&  |(b)| A \times_{B} \Fun(\Delta^1,B) \\
                  \& |(c)| B \\
                };
                \path[maps,->]
                (a) edge node[above]  {$\incl$} (b)
                (a) edge node[below left]   {$\ev_1$} (c)
                (b) edge node[below right]  {$\ev_1$} (c)
                ;
            \end{tikzpicture}
      \end{equation}
      \ie the unit and the counit are invertible after postcomposing with the respective functors $\ev_1$.
    \end{lemma}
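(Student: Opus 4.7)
The plan is to verify the two whiskerings separately. Since $\incl$ is fully-faithful by \autoref{relativecommacategoryreflectivelyincommacategory}, the counit of $L \adj \incl$ is already globally invertible, and hence its whiskering with $\ev_1$ is trivially so. It therefore remains to show that the whiskered unit $\ev_1 \eta$ is invertible, where $\eta$ denotes the unit of the adjunction.

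For this, I would first extract from the proof of \autoref{pullbackofrightadjointalongcocartesianfibrationisrightadjoint}, applied to the pullback of $\Delta_I \colon I \mono \Fun(\Delta^1, I)$ along the cocartesian fibration $q_\ast \circ \pr_1 \colon A \times_B \Fun(\Delta^1, B) \fibration \Fun(\Delta^1, I)$ produced in \autoref{cocartesianfunctorcommacategorycocartesianfibration}, an explicit description of $\eta$. Namely, at an object $(a, h \colon F(a) \to b)$, the component $\eta_{(a,h)}$ is the cocartesian lift in this fibration, starting at $(a, h)$, of the component at $q(h)$ of the unit of $\ev_1 \adj \Delta_I$; the latter is the commutative square $(q(h), \id_{q(b)})$ in $I$, viewed as a morphism $q(h) \to \id_{q(b)}$ in $\Fun(\Delta^1, I)$.

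The crucial observation is then that this unit square has second evaluation equal to the identity on $q(b)$. Unpacking the construction of the cocartesian fibration $q_\ast \circ \pr_1$ via \autoref{cocartesianfunctorcommacategorycocartesianfibration} and \autoref{pullbackofcocartfunsgivescocartfib}, the cocartesian lift of such a square behaves componentwise, so its $\ev_1$-projection to $B$ is a $q$-cocartesian lift of $\id_{q(b)}$, which by \autoref{cocartesianliftsofequivsareequivs} is necessarily an equivalence. Hence $\ev_1 \eta_{(a,h)}$ is invertible, and thus so is the whole whiskered natural transformation $\ev_1 \eta$, as required.

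The main obstacle is the careful identification of $\eta$ with the cocartesian lift described above; this requires unpacking the construction internal to \autoref{pullbackofrightadjointalongcocartesianfibrationisrightadjoint} and tracking how the unit of $\ev_1 \adj \Delta_I$, whose target already has trivial second evaluation, interacts with the pullback cocartesian structure on $A \times_B \Fun(\Delta^1, B)$. Once this identification is established, the invertibility of $\ev_1 \eta$ reduces to the elementary fact that cocartesian lifts of identities are equivalences.
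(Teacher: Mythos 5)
Your proposal is correct and follows essentially the same route as the paper: the counit is handled by fully-faithfulness of the inclusion, the unit is identified as the cocartesian lift of the unit of $\ev_1 \adj \Delta_I$ (whose $\ev_1$-whiskering is an identity), and the conclusion follows from cocartesianness of the projection to $B$ together with \autoref{cocartesianliftsofequivsareequivs}. No gaps.
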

    \begin{proof}
      The counit is already invertible by construction.
      Unpacking the proof of \autoref{pullbackofrightadjointalongcocartesianfibrationisrightadjoint}
      one sees that the unit of this adjunction is actually the cocartesian lift of 
      \[\begin{tikzcd}[cramped]
        {A \times^{(F,\ev_0)}_B \Fun(\Delta^1,B)} && {A \times^{(F,\ev_0)}_B \Fun(\Delta^1,B)} \\
        {\Fun(\Delta^1,I)} \\
        & I & {\Fun(\Delta^1,I)}
        \arrow[equals, from=1-1, to=1-3]
        \arrow[two heads, from=1-1, to=2-1]
        \arrow[two heads, from=1-3, to=3-3]
        \arrow["{\ev_1}"', from=2-1, to=3-2]
        \arrow[""{name=0, anchor=center, inner sep=0}, curve={height=-12pt}, equals, from=2-1, to=3-3]
        \arrow["{\Delta_I}"', hook, from=3-2, to=3-3]
        \arrow["\eta"', shorten <=3pt, Rightarrow, from=0, to=3-2]
      \end{tikzcd}\]
      where $\eta$ is the unit of the adjunction $\Delta_I \adj \ev_1$.
      Note that this whiskered natural transformation becomes invertible after
      post-whiskering with $\ev_1 \colon \Fun(\Delta^1,I) \to I$.
      Recall furthermore from the applied \autoref{pullbackofcocartfunsgivescocartfib}
      that the top horizontal functors in
      \begin{equation*}
            \begin{tikzpicture}[diagram]
                \matrix[objects,column sep=3em]{%
                  |(a)| A \times^{(F,\ev_0)}_B \Fun(\Delta^1,B) \& |(b)| \Fun(\Delta^1,B) \& |(x)| B \\
                  |(c)| \Fun(\Delta^1,I) \& |(d)| \Fun(\Delta^1,I) \& |(y)| I \\
                };
                \path[maps,->]
                (a) edge[->>] node[left] {$q_\ast \circ \pr_1$} (c)
                    (a) edge node[above] {$\pr_1$} (b)
                    (b) edge[->>] node[right] {$q_\ast$} (d)
                    (x) edge[->>] node[right] {$q$} (y)
                    (b) edge node[below] {$\ev_1$} (x)
                    (d) edge node[below] {$\ev_1$} (y)
                    (c) edge[bend right] node[above] {$\ev_1$} (y)
                    (a) edge[bend left] node[above] {$\ev_1$} (x)
                ;\path[maps,-] 
                (c) edge[double distance=0.2em] (d)
                ;
            \end{tikzpicture}
      \end{equation*}
      are cocartesian.
      Hence they preserve cocartesian lifts. The statement now follows from
      \autoref{cocartesianliftsofequivsareequivs}.
    \end{proof}

    Next we make precise what a relative adjunction, \ie an adjunction over a base category $I$ is.
    \begin{definition}
      Let
      \[\begin{tikzcd}[cramped]
        A && B \\
        & I
        \arrow["F", shift left, from=1-1, to=1-3]
        \arrow["G"', shift right, from=1-1, to=1-3]
        \arrow["p"', from=1-1, to=2-2]
        \arrow["q", from=1-3, to=2-2]
      \end{tikzcd}\]
      be two functors over $I$. We define a \define{natural transformation over $I$} a to be natural transformation $\alpha \colon F \twoto G$
      if as a functor $\alpha \colon A \to \Fun(\Delta^1,B)$ 
      we have a commutative diagram
      \[\begin{tikzcd}[cramped]
        A && {\Fun_{/I}(\Delta^1,B)} & {\Fun(\Delta^1,B)} \\
        && I & {\Fun(\Delta^1,I)}
        \arrow[dashed, from=1-1, to=1-3]
        \arrow["\alpha", curve={height=-30pt}, from=1-1, to=1-4]
        \arrow["p"', from=1-1, to=2-3]
        \arrow[from=1-3, to=1-4]
        \arrow[from=1-3, to=2-3]
        \arrow["{q_\ast}", from=1-4, to=2-4]
        \arrow[""{name=0, anchor=center, inner sep=0}, "{\Delta_I}", from=2-3, to=2-4]
        \arrow["\lrcorner"{anchor=center, pos=0.125}, draw=none, from=1-3, to=0]
      \end{tikzcd}\]
      Furthermore, we say that a functor $F$ over $I$ as below on the left is 
      \define{left adjoint over $I$} to a functor $U$ over $I$ as below on the right in
      \[\begin{tikzcd}[cramped]
        A && B & B && A \\
        & I &&& I
        \arrow["F", from=1-1, to=1-3]
        \arrow["p"', from=1-1, to=2-2]
        \arrow["q", from=1-3, to=2-2]
        \arrow["U", from=1-4, to=1-6]
        \arrow["q"', from=1-4, to=2-5]
        \arrow["p", from=1-6, to=2-5]
      \end{tikzcd}\]
      if we have natural transformations $\eta \colon \id \twoto UF$ and $\epsilon \colon FU \twoto \id$ over $I$,
      such that $\eta$ and $\epsilon$ are unit and counit of an adjunction $F \adj U$.
    \end{definition}

    The final step is to show that relative adjunctions are in fact stable under changing the base category 
    via pulling back, in order to deduce the desired fiberwise adjunctions.
    \begin{lemma}
      \label{pullbackpreservesadjunctionsoverbase}
      Let
      \begin{equation}
            \begin{tikzpicture}[diagram]
                \matrix[objects] {%
                  |(a)| A \& \& |(b)| B \\
                  \& |(c)| J \\
                };
                \path[maps,->]
                (a) edge node[above]  {$F$} (b)
                (a) edge node[below left]   {$p$} (c)
                (b) edge node[below right]  {$q$} (c)
                ;
            \end{tikzpicture}
      \end{equation}
      be a functor over a base category $J$ such that it is left adjoint to $U$ over $J$.
      Let us also be given another functor $t \colon I \to J$.
      Then
      \begin{equation}
            \begin{tikzpicture}[diagram]
                \matrix[objects] {%
                  |(a)| I \times_J A \& \& |(b)| I \times_J B \\
                  \& |(c)| I \\
                };
                \path[maps,->]
                (a) edge node[above]  {$I \times_J F$} (b)
                (a) edge node[below left]   {$I \times_J p$} (c)
                (b) edge node[below right]  {$I \times_J q$} (c)
                ;
            \end{tikzpicture}
      \end{equation}
      is left adjoint to $I \times_J U$ over $I$.
    \end{lemma}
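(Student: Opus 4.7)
The plan is to transport the entire adjunction datum from $J$ to $I$ by pulling back along $t$. An adjunction over the base is specified by the unit, the counit, and the two triangle identities, all of which can be viewed as lying in a 2-category of categories over the base; since pullback along $t$ is a functor $\Cat_{/J} \to \Cat_{/I}$ preserving finite limits and hence all pointwise operations on natural transformations, the pullback of the adjunction data should again constitute an adjunction, and moreover one that lives over $I$.

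The only non-obvious bookkeeping is that natural transformations over $J$ do pull back to natural transformations over $I$. The key identification I would establish is
\begin{equation*}
  I \times_J \Fun_{/J}(\Delta^1, A) \equiv \Fun_{/I}(\Delta^1, I \times_J A),
\end{equation*}
together with its analogue for $B$. Both sides unfold, using that $\Fun(\Delta^1, -)$ preserves pullbacks and by pullback cancellation, to the common iterated fibre product $I \times_{\Fun(\Delta^1,J)} \Fun(\Delta^1, A)$, where the map $I \to \Fun(\Delta^1, J)$ is $\Delta_J \circ t \equiv t_\ast \circ \Delta_I$ and the map $\Fun(\Delta^1, A) \to \Fun(\Delta^1, J)$ is $p_\ast$.

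With this identification in hand, I would regard $\eta \colon \id_A \twoto UF$ and $\epsilon \colon FU \twoto \id_B$ as functors $A \to \Fun_{/J}(\Delta^1, A)$ and $B \to \Fun_{/J}(\Delta^1, B)$, apply $I \times_J (-)$, and reinterpret the outputs as natural transformations $\tilde{\eta} \colon \id \twoto (I \times_J U)(I \times_J F)$ and $\tilde{\epsilon} \colon (I \times_J F)(I \times_J U) \twoto \id$ over $I$, using that pullback commutes with composition of functors to match the endpoints. For the triangle identities, I would use that vertical composition and whiskering are themselves pullback-compatible operations, so that $(I \times_J U) \tilde{\epsilon} \circ \tilde{\eta}(I \times_J U)$ and $\tilde{\epsilon}(I \times_J F) \circ (I \times_J F) \tilde{\eta}$ are precisely the pullbacks of $U\epsilon \circ \eta U \equiv \id_U$ and $\epsilon F \circ F \eta \equiv \id_F$, hence themselves identities. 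The main obstacle is really just bookkeeping the several pullback identifications coherently; there is no conceptual difficulty, since every step reduces to a universal property of a pullback that has already been recorded.
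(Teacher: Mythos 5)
Your proposal is correct and follows essentially the same route as the paper's proof: both hinge on the identification $I \times_J \Fun_{/J}(\Delta^1, A) \equiv \Fun_{/I}(\Delta^1, I \times_J A)$ (which you obtain by unfolding both sides to a common iterated fibre product, while the paper phrases the same pullback-cancellation argument as a cube with three pullback faces), and both then transport unit, counit, and triangle identities by observing that composition, whiskering, and identity of relative natural transformations are themselves pullback-compatible. No substantive difference.
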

    \begin{proof}
      Let us denote the unit of the adjunction $F \adj U$ by 
      $\eta \colon A \to \Fun(\Delta^1,A)$
      and the counit by 
      $\epsilon \colon B \to \Fun(\Delta^1,B)$.
      The fact that $\eta$ becomes invertible when postwhiskered
      with the functor $p$ to $J$ can be expressed as
      the commutativity of the following square.
      \begin{equation}
            \begin{tikzpicture}[diagram]
                \matrix[objects] {%
                  |(a)| A \& |(b)| \Fun(\Delta^1,A) \\
                  |(c)| J \& |(d)| \Fun(\Delta^1,J) \\
                };
                \path[maps,->]
                (a) edge node[above]  {$\eta$} (b)
                (a) edge node[left]   {$p$} (c)
                (b) edge node[right]  {$p_\ast$} (d)
                (c) edge node[below]  {$\Delta_J$} (d)
                ;
            \end{tikzpicture}
      \end{equation}
      By definition of the relative functor category $\Fun_{/ J}(\Delta^1, A)$ 
      this induces us by pullback a functor $A \to \Fun_{/ J}(\Delta^1, A)$ 
      over $J$, which we will also call $\eta$ by abuse of notation.
      Similarly, we can do this for $\epsilon$ to get a functor
      $B \to \Fun_{/ J}(\Delta^1, B)$.
      Note that the identity natural transformation between on the 
      functor $\id_A$ can also be upgraded to a functor
      $\Delta_A \colon A \to \Fun_{/ J}(\Delta^1,A)$ over $J$, similarly for $B$.
      As morphism composition 
      $\Fun(\Delta^1,A) \times_A \Fun(\Delta^1,A) \to \Fun(\Delta^1,A)$ also
      pulls back to a functor
      $\Fun_{/ J}(\Delta^1,A) \times_A \Fun_{/ J}(\Delta^1,A) \to \Fun_{/ J}(\Delta^1,A)$,
      over $J$, one can also express the triangle identities of $F \adj U$ 
      using these functors over $J$.
      As we know that in the commutative cube
      \[\begin{tikzcd}[cramped]
        {I \times_J \Fun_{/ J}(\Delta^1,A)} && {\Fun_{/ J}(\Delta^1,A)} \\
        & {\Fun(\Delta^1,I \times_J A)} && {\Fun(\Delta^1,A)} \\
        I && J \\
        & {\Fun(\Delta^1,I)} && {\Fun(\Delta^1,J)}
        \arrow[from=1-1, to=1-3]
        \arrow["{\exists !}"', from=1-1, to=2-2]
        \arrow[from=1-1, to=3-1]
        \arrow[from=1-3, to=2-4]
        \arrow[from=1-3, to=3-3]
        \arrow[from=2-2, to=2-4]
        \arrow[from=2-2, to=4-2]
        \arrow["{p_\ast}", from=2-4, to=4-4]
        \arrow["t"', from=3-1, to=3-3]
        \arrow["{\Delta_I}"', from=3-1, to=4-2]
        \arrow["{\Delta_I}"', from=3-3, to=4-4]
        \arrow["{t_\ast}"', from=4-2, to=4-4]
      \end{tikzcd}\]
      the back, the right and the front face are pullbacks, we are able to 
      deduce by pullback cancellation that also the left hand face is a 
      pullback, \ie that we have an equivalence
      \begin{equation*}
        I \times_J \Fun_{/ J}(\Delta^1, A) \xto{\equiv} \Fun_{/ I}(\Delta^1,I \times_J A)
      \end{equation*}
      over $I$.
      Hence by pulling back our relative version of $\eta$ 
      for example we get
      \begin{equation*}
        I \times_J A \xto{I \times_J \eta} I \times_J \Fun_{/ J}(\Delta^1, A) \xto{\equiv} \Fun_{/ I}(\Delta^1,I \times_J A)
      \end{equation*}
      \ie a relative natural transformation over $I$ between the functors 
      \begin{equation*}
        \id_{(I \times_J A)} \text{ and } (I \times_J U)(I \times_J F) \colon I \times_J A \to I \times_J A
      \end{equation*}
      over $I$.
      Now one just needs to contemplate that the all the squares in 
      the commutative prisms
      \[\begin{tikzcd}[cramped]
        {\Fun_{/ I}(\Delta^1,I \times_J A) \times_{(I \times_J A)} \Fun_{/ I}(\Delta^1,I \times_J A)} && {\Fun_{/ I}(\Delta^1,I \times_J A)} \\
        & I \\
        {\Fun_{/ J}(\Delta^1,A) \times_A \Fun_{/ J}(\Delta^1,A)} && {\Fun_{/ J}(\Delta^1,A)} \\
        & J
        \arrow["{\text{comp}}", from=1-1, to=1-3]
        \arrow[from=1-1, to=2-2]
        \arrow[from=1-1, to=3-1]
        \arrow[from=1-3, to=2-2]
        \arrow[from=1-3, to=3-3]
        \arrow["t"{pos=0.3}, from=2-2, to=4-2]
        \arrow["{\text{comp}}"{pos=0.3}, from=3-1, to=3-3]
        \arrow[from=3-1, to=4-2]
        \arrow[from=3-3, to=4-2]
      \end{tikzcd}\]
      and
      \[\begin{tikzcd}[cramped]
        {I \times_J A} && {\Fun_{/ I}(\Delta^1,I \times_J A)} \\
        & I \\
        A && {\Fun_{/ J}(\Delta^1,A)} \\
        & J
        \arrow["{\Delta_{(I \times_J A)}}", from=1-1, to=1-3]
        \arrow[from=1-1, to=2-2]
        \arrow[from=1-1, to=3-1]
        \arrow[from=1-3, to=2-2]
        \arrow[from=1-3, to=3-3]
        \arrow["t"{pos=0.3}, from=2-2, to=4-2]
        \arrow["{\Delta_A}"{pos=0.3}, from=3-1, to=3-3]
        \arrow["p"', from=3-1, to=4-2]
        \arrow[from=3-3, to=4-2]
      \end{tikzcd}\]
      are in fact pullbacks, to see that compositions of relative 
      natural transformations over $J$ and identity relative natural 
      transformations over $J$ pull back to compositions of and identity 
      natural transformations over $I$.
      The same is also true for whiskerings of relative natural transformations, 
      and also if we replace $p \colon A \to J$ by $q \colon B \to J$.
      Thus also the triangle identities of $F \adj U$, which live over 
      $J$ pull back to triangle identities for $I \times_J F$ and 
      $I \times_J U$.
    \end{proof}

    \begin{corollary}
      \label{objectwiserelativesliceadjunction}
      Let $F$ be a cocartesian functor
      \begin{equation}
            \begin{tikzpicture}[diagram]
                \matrix[objects] {%
                  |(a)| A \& \& |(b)| B \\
                  \& |(c)| I \\
                };
                \path[maps,->]
                (a) edge node[above]  {$F$} (b)
                (a) edge node[below left]   {$p$} (c)
                (b) edge node[below right]  {$q$} (c)
                ;
            \end{tikzpicture}
      \end{equation}
      Then pulling back the adjunction over $B$ from \autoref{relativecommaadjunctionlivesoverB}
      for each object $b$ of $B$ tells us that the inclusions
      \begin{equation*}
        A_{q(b)} \times_{B_{q(b)}} (B_{q(b)})_{/b} \mono A \times_{B} B_{/b}
      \end{equation*}
      are all fully-faithful right adjoints.
    \end{corollary}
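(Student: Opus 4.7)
The plan is to obtain the desired fiberwise adjunctions by pulling back the adjunction from \autoref{relativecommacategoryreflectivelyincommacategory} along each object $b \colon \Delta^0 \to B$. The key input is \autoref{relativecommaadjunctionlivesoverB}, which tells us that this adjunction lives over $B$ via the respective $\ev_1$ functors, so we are in a position to invoke \autoref{pullbackpreservesadjunctionsoverbase} on stability of relative adjunctions under base change. Since fully-faithfulness of a right adjoint (equivalently, invertibility of the counit) is detected objectwise and is preserved by pullback, the pulled-back inclusion will again be a fully-faithful right adjoint.

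The only real work, then, is identifying the fibers of the two sides over $b$ with the categories appearing in the statement. For the right-hand side this is direct: since $\ev_1 \colon \Fun(\Delta^1, B) \to B$ is the codomain fibration, its fiber over $b$ is by definition the slice $B_{/ b}$, and pulling back $A \times_B \Fun(\Delta^1, B) \to B$ along $b$ yields $A \times_B B_{/ b}$. For the left-hand side, I would unpack the defining pullback $\Fun_{/I}(\Delta^1, B) = \Fun(\Delta^1, B) \times_{\Fun(\Delta^1, I)} I$, note that $\ev_1$ on it factors through $\ev_1$ on $\Fun(\Delta^1, B)$, and combine with the pullback cancellation used in the proof of \autoref{pullbackpreservesadjunctionsoverbase} to identify the fiber of $\Fun_{/I}(\Delta^1, B)$ over $b$ with $(B_{q(b)})_{/ b}$, the slice inside the fiber $B_{q(b)}$ of $q$.

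It then remains to compute the further pullback along $F$. An object of $A \times_B (B_{q(b)})_{/ b}$ consists of an object $a$ of $A$ together with a morphism $\beta \colon c \to b$ in the fiber $B_{q(b)}$ such that $F(a) = c$. Since $q F = p$, the condition $c \in B_{q(b)}$ forces $p(a) = q(b)$, that is, $a$ lies in $A_{q(b)}$. This gives the identification
\begin{equation*}
  A \times_B (B_{q(b)})_{/ b} \;\equiv\; A_{q(b)} \times_{B_{q(b)}} (B_{q(b)})_{/ b} ,
\end{equation*}
and the pulled-back inclusion functor is visibly the one asserted in the statement.

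The main potential obstacle is purely bookkeeping, namely keeping track of the various pullbacks and verifying that composing them in either order yields the same fiber; this is handled as in the proof of \autoref{pullbackpreservesadjunctionsoverbase} by iterated pullback cancellation. Once the fibers are identified, the conclusion is immediate from \autoref{relativecommaadjunctionlivesoverB} together with \autoref{pullbackpreservesadjunctionsoverbase}.
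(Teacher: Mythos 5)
Your proposal is correct and follows the paper's own (very terse) proof exactly: the adjunction of \autoref{relativecommacategoryreflectivelyincommacategory} lives over $B$ by \autoref{relativecommaadjunctionlivesoverB}, and pulling it back along each $b \colon \Delta^0 \to B$ via \autoref{pullbackpreservesadjunctionsoverbase} yields the fiberwise fully-faithful right adjoints. The fiber identifications you spell out (the fiber of $\ev_1$ on $A \times_B \Fun(\Delta^1,B)$ being $A \times_B B_{/b}$, and the verticality constraint in $\Fun_{/I}(\Delta^1,B)$ forcing both $\beta$ and $a$ into the fibers over $q(b)$) are exactly the bookkeeping the paper leaves implicit.
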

    \begin{proof}
      This follows from \autoref{relativecommaadjunctionlivesoverB} and 
      \autoref{pullbackpreservesadjunctionsoverbase}.
    \end{proof}

    We are now equipped to conclude the fact that relative right adjoints can be given fiberwise.

    \reladjcanbegivenobjwinbase*
    \begin{proof}
        We employ the equivalent characterisation of a functor $G \colon X \to Y$ being left adjoint if and
        only if for all objects $y$ of $Y$ the relative slice categories
        $X \times_{Y} Y_{/y}$ have terminal objects, for both our statements.
        By \autoref{objectwiserelativesliceadjunction}
        now if $A_{q(b)} \times_{B_{q(b)}} (B_{q(b)})_{/b}$ admits a terminal object,
        then this will be preserved by the right adjoint
        \begin{equation*}
          A_{q(b)} \times_{B_{q(b)}} (B_{q(b)})_{/b} \mono A \times_{B} B_{/b}
        \end{equation*}
        Thus we have that also $A \times_{B} B_{/b}$ admits a terminal object.
        The other way around, if we know that $A \times_{B} B_{/b}$ has a terminal object
        and that the counit map $\epsilon_b$ witnessing this terminality actually
        already lives in the fiber over $q(b)$, then the terminal object already lives in and is terminal
        in $A_{q(b)} \times_{B_{q(b)}} (B_{q(b)})_{/b}$.
    \end{proof}

    For our purposes this is actually, as mentioned in the beginning of this subsection, not 
    enough, we also need to control the cocartesianness of the glued-together right adjoint $U$.
    To this end, the following characterisation of cocartesianness of $U$ in terms of just the 
    fiberwise adjunctions $F_i \adj U_i$ is useful.

    \cocartnessofreladj*
    \begin{proof}

      Cocartesianness of $F$ means by definition that
      \begin{equation}
        \label{defofcocartfunbetweencocartfib}
            \begin{tikzpicture}[diagram]
                \matrix[objects]{%
                    |(a)| \Fun(\Delta^1, A) \& |(b)| \Fun(\Delta^1, B) \\
                    |(c)| A \times_{I} \Fun(\Delta^1, I) \& |(d)| B \times_{I} \Fun(\Delta^1, I) \\
                };
                \path[maps,->]
                    (a) edge node[above] {$F_\ast$} (b)
                    (a) edge node[left] {$\ev_0$} (c)
                    (b) edge node[right] {$\ev_0$} (d)
                    (c) edge node[below] {$F \times_{\id} \id$} (d)
                ;
            \end{tikzpicture}
      \end{equation}
      is vertically adjointable, \ie the vertical mate is invertible and thus makes the square
      \begin{equation}
        \label{vertadjsquare}
            \begin{tikzpicture}[diagram]
                \matrix[objects]{%
                    |(a)| \Fun(\Delta^1, A) \& |(b)| \Fun(\Delta^1, B) \\
                    |(c)| A \times_{I} \Fun(\Delta^1, I) \& |(d)| B \times_{I} \Fun(\Delta^1, I) \\
                };
                \path[maps,->]
                    (a) edge node[above] {$F_\ast$} (b)
                    (c) edge node[left] {$\text{cocart}$} (a)
                    (d) edge node[right] {$\text{cocart}$} (b)
                    (c) edge node[below] {$F \times_{\id} \id$} (d)
                ;
            \end{tikzpicture}
      \end{equation}
      commutative. But by the assumption that $U$ is a relative right adjoint over $I$ we also know that its 
      horizontal mate is invertible, \ie make the square
      \begin{equation}
        \label{horadjsquare}
            \begin{tikzpicture}[diagram]
                \matrix[objects]{%
                    |(a)| \Fun(\Delta^1, A) \& |(b)| \Fun(\Delta^1, B) \\
                    |(c)| A \times_{I} \Fun(\Delta^1, I) \& |(d)| B \times_{I} \Fun(\Delta^1, I) \\
                };
                \path[maps,->]
                    (b) edge node[above] {$U_\ast$} (a)
                    (a) edge node[left] {$\ev_0$} (c)
                    (b) edge node[right] {$\ev_0$} (d)
                    (d) edge node[below] {$U \times_{\id} \id$} (c)
                ;
            \end{tikzpicture}
      \end{equation}
      commutative. Now we can apply \autoref{bothwaysadjointablegivessametotalmate}
      to conclude that the horizontal mate of \autoref{vertadjsquare} is equivalent to the
      vertical mate of \autoref{horadjsquare}.
      But vertical adjointability of \autoref{horadjsquare} is by definition 
      cocartesianness of the functor $U$, \ie the first statement.
      On the other hand, the composite diagram
      \begin{equation}
            \begin{tikzpicture}[diagram]
                \matrix[objects]{%
                    |(x)| A \& |(y)| B \\
                    |(a)| \Fun(\Delta^1, A) \& |(b)| \Fun(\Delta^1, B) \\
                    |(c)| A \times_{I} \Fun(\Delta^1, I) \& |(d)| B \times_{I} \Fun(\Delta^1, I) \\
                };
                \path[maps,->]
                    (x) edge node[above] {$F$} (y)
                    (a) edge node[left] {$\ev_0$} (x)
                    (b) edge node[right] {$\ev_0$} (y)

                    (a) edge node[above] {$F_\ast$} (b)
                    (c) edge node[left] {$\text{cocart}$} (a)
                    (d) edge node[right] {$\text{cocart}$} (b)
                    (c) edge node[below] {$F \times_{\id} \id$} (d)
                ;
            \end{tikzpicture}
      \end{equation}
      is always horizontally adjointable as we have for every cocartesian fibration 
      by invertibility of the unit of the adjunction $\text{cocart} \adj (\ev_0, p_\ast)$ that
      \begin{equation}
            \begin{tikzpicture}[diagram]
                \matrix[objects]{%
                    |(a)| A \times_I \Fun(\Delta^1,I) \& |(b)| \Fun(\Delta^1,A) \\
                    |(c)|  \& |(d)| A \times_I \Fun(\Delta^1,A) \& |(x)| A \\
                };
                \path[maps,->]
                    (a) edge node[above] {$\text{cocart}$} (b)
                    (d) edge node[below] {$\pr_0$} (x)
                    (b) edge node[left] {$(\ev_0, p_\ast)$} (d)
                    (b) edge node[above right] {$\ev_0$} (x)
                ;
                \path[maps,-]
                    (a) edge[double distance=0.2em] (d)
                ;
            \end{tikzpicture}
      \end{equation}
      commutes.
      Now as $(\ev_0, \ev_1) \colon \Fun(\Delta^1, A) \to A \times A$ 
      is conservative, 
      the horizontal adjointability of \autoref{vertadjsquare}
      is equivalent to horizontal adjointability of
      \begin{equation}
            \begin{tikzpicture}[diagram]
                \matrix[objects]{%
                    |(x)| A \& |(y)| B \\
                    |(a)| \Fun(\Delta^1, A) \& |(b)| \Fun(\Delta^1, B) \\
                    |(c)| A \times_{I} \Fun(\Delta^1, I) \& |(d)| B \times_{I} \Fun(\Delta^1, I) \\
                };
                \path[maps,->]
                    (x) edge node[above] {$F$} (y)
                    (a) edge node[left] {$\ev_1$} (x)
                    (b) edge node[right] {$\ev_1$} (y)

                    (a) edge node[above] {$F_\ast$} (b)
                    (c) edge node[left] {$\text{cocart}$} (a)
                    (d) edge node[right] {$\text{cocart}$} (b)
                    (c) edge node[below] {$F \times_{\id} \id$} (d)
                ;
            \end{tikzpicture}
      \end{equation}
      The equivalence of this and the second statement is due to the fact that we can 
      check invertibility of the mate natural transformation objectwise.
    \end{proof}

  \printbibliography

\end{document}